\DeclareMathOperator{\pr}{pr}
\newtheorem{thm}{Theorem}[section]
\newtheorem{dfn}[thm]{Definition}
\newtheorem{prp}[thm]{Proposition}
\newtheorem{lmm}[thm]{Lemma}
\newtheorem{crl}[thm]{Corollary}
\newtheorem{rem}[thm]{Remark}
\title{Long-Moody construction of braid representations and Katz middle convolution}          
\date{}
\author{Kazuki Hiroe\footnote{
	K. H. is supported byJSPS KAKENHI Grant Number 20K03648. Department of Mathematics and Informatics, Chiba University 
	1-33, Yayoi-cho, Inage-ku, Chiba-shi, Chiba, 263-8522 JAPAN 
	email: {\tt kazuki@math.s.chiba-u.ac.jp}} and Haru Negami \footnote{H. N. is supported by the ANRI Fellowship. Department of Mathematics and Informatics, Chiba University 
	1-33, Yayoi-cho, Inage-ku, Chiba-shi, Chiba, 263-8522 JAPAN 
	email: {\tt 21wm0102@student.gs.chiba-u.jp}}
} 
\begin{document}
\maketitle
\begin{abstract}      
	The Long-Moody construction is a method 
	to obtain representations of braid groups
	introduced by Long and Moody.
	Also the Katz middle convolution is 
	known to be a  
	method to construct local systems on 
	$\mathbb{C}\backslash\{n\text{-points}\}$
	introduced by Katz.
	In this paper, we explain that 
	these two methods 
	are naturally  unified and 
	define a new functor which we call the Katz-Long-Moody functor.
	This functor extends the framework of  Katz algorithm
	to categories of local systems on various topological spaces,
	for example, $B_{n}$-bundles 
	associated with simple Weierstrass polynomials, 
	complements of hyperplane arrangements of fiber-type,
	link complements in the solid torus, and so on.
\end{abstract}
\tableofcontents      
\section*{Introduction}
Katz introduced in his famous book \cite{Katz}
a machinery constructing  
local systems on 
$\mathbb{C}\backslash\{n\text{-points}\}$,
and this 
efficient machinery and 
the algorithm for constructing local systems
are called Katz middle convolution 
and Katz algorithm.
This machinery plays 
important roles 
not only for number theory,
but also for 
various areas in mathematics.

At the same time,
Long and Moody introduced 
a method to construct representations of 
braid groups in \cite{Long} as 
a natural generalization of Burau representation.
This Long-Moody construction 
is known to be an efficient 
way to obtain 
braid group representations. For example,
Burau representation, Gassner representation,
Jones representations,
and etc., are obtained through 
the Long-Moody construction, see \cite{Long}, \cite{Big}.
Also it has many applications for 
knot invariants, see \cite{Con} and \cite{Tak},
and homological stability, see \cite{Sou},
and so on.

This paper explains that 
these two seemingly different 
machineries by Katz and Long-Moody 
are naturally unified 
through the twisted homology theory.
This enable us to 
establish the Katz algorithm not only for 
local systems on $\mathbb{C}\backslash\{n\text{-points}\}$
but also for those on many other topological spaces
whose fundamental groups are related to braid groups.

Let us explain our results more precisely.
Let us take a pair $(G,\alpha)$
of a group $G$ and a homomorphism 
$\alpha\colon G\rightarrow \mathrm{Aut}_{\mathbf{Grp}}(F_{n})$,
which factors through  the Artin representation $\theta\colon B_{n}\hookrightarrow \mathrm{Aut}_{\mathbf{Grp}}(F_{n})$.
Then we can  
define the semi-direct product $F_{n}\rtimes_{\alpha}G$.
In particular, when $G$ is a free group  $F_{s}$ of rank $s$,
the map $\alpha\colon F_{s}\rightarrow B_{n}$
is called the braid monodromy map 
associated with $\rho:=\theta\circ \alpha\colon F_{s}\rightarrow \mathrm{Aut}_{\mathbf{Grp}}(F_{n})$,
which is introduce by Moishezon \cite{Moi} to obtain explicit descriptions 
of fundamental groups of plane algebraic curves, see also 
\cite{CS} by Cohen and Suciu.
More generally, when $G$ is a fundamental group 
of a topological space with the homotopy type of a CW-complex,
the group $F_{n}\rtimes_{\alpha}G$ appears as the 
fundamental group of the complement of a polynomial covering 
in the sense of Hansen \cite{Han}.

Then we shall construct the following functor as a generalization of the
Long-Moody construction of braid group representations.
\begin{thm}[Twisted Long-Moody functor]
	Let $k$ be a field and let us take $\lambda\in k^{\times}$.
	Then there exits an endfunctor $\mathcal{LM}_{\lambda}$ of the 
	category of left $k[F_{n}\rtimes_{\alpha}G]$-modules such that the following holds.
	\begin{enumerate}
		\item Suppose that $(G,\alpha)=(B_{n},\theta)$. Then 
		the functor $\mathrm{Res}^{k[F_{n}\rtimes_{\alpha} B_{n}]}_{k[B_{n}]}\circ \mathcal{LM}_{\lambda}$
		is isomorphic to the Long-Moody functor.

		\item Suppose that $G=\{e\}$ and identify $F_{n}\rtimes_{\alpha} \{e\}$ with $F_{n}$.
		Then the functor $\mathcal{LM}_{\lambda}$ is isomorphic to the convolution 
		functor introduce by Dettweiler-Reiter in \cite{DR}.
	\end{enumerate}
	Here $\mathrm{Res}^{R}_{P}$ for the rings $P\subset R$
	is the restriction functor from the category of left $R$-modules 
	to that of left $P$-modules.

	We call this functor $\mathcal{LM}_{\lambda}$ the twisted Long-Moody functor.
\end{thm}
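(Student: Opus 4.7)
The plan is to define $\mathcal{LM}_\lambda$ uniformly via a $\lambda$-twisted tensor / group-homology construction on the free factor $F_n$, and then recover the two classical functors by direct specialisation. Fix the standard generators $x_1,\dots,x_n$ of $F_n$. For a left $k[F_n\rtimes_\alpha G]$-module $M$, I would set $\mathcal{LM}_\lambda(M)$ to be the cokernel (or, equivalently, the $H_1$) of a two-term complex
\[
M \xrightarrow{\ \delta_\lambda\ } M^{\oplus n}
\]
modelling $\lambda$-twisted Fox calculus of $F_n$ with coefficients in $M$. The $F_n$-action on $M^{\oplus n}$ is given by the Dettweiler--Reiter block-matrix formulas with parameter $\lambda$: each generator $x_i$ acts by a matrix whose off-diagonal blocks are built from $(x_j - 1)$ and whose $i$-th diagonal block carries the factor $\lambda$. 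The $G$-action on $M^{\oplus n}$ combines the given $G$-action on each coordinate of $M$ with the permutation-with-twist of the indices dictated by the automorphism $\alpha(g)\in\mathrm{Aut}_{\mathbf{Grp}}(F_n)$.

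Once the formulas are written down, I would check the following in turn: (i) the $F_n$- and $G$-actions on $M^{\oplus n}$ satisfy the semi-direct product relation, so they assemble into a $k[F_n\rtimes_\alpha G]$-action --- this is where the hypothesis that $\alpha$ factors through the Artin representation $\theta$ is used, since the Dettweiler--Reiter matrices are tailored precisely to the Artin braid automorphisms; (ii) the map $\delta_\lambda$ is $F_n\rtimes_\alpha G$-equivariant, so the cokernel inherits a module structure; (iii) $\mathcal{LM}_\lambda$ is functorial in $M$, which is automatic from the naturality of $\delta_\lambda$.

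Specialisation (2) then follows by inspection: with $G = \{e\}$ the $G$-twist disappears and the construction reduces verbatim to the Dettweiler--Reiter convolution of \cite{DR}. Specialisation (1) is the main obstacle. Taking the classical Long--Moody functor to be defined by the twisted first homology $H_1(F_n;\widetilde{M})$ of $F_n$ with local coefficients in $M$ (regarded as an $F_n$-module through the inclusion $F_n\hookrightarrow F_n\rtimes_\theta B_n$), Fox calculus identifies this $H_1$ with the underlying $k$-module of our $\mathcal{LM}_\lambda(M)$. The real work is to show that the $B_n$-action obtained on our side by restricting the constructed $F_n\rtimes_\theta B_n$-action agrees with the Long--Moody $B_n$-action coming from the mapping-class-group description. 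I would verify this on the Artin generators $\sigma_i$ by showing that the permutation-with-twist induced by $\theta(\sigma_i)$ on $M^{\oplus n}$ descends in the cokernel to the classical Long--Moody formula for $\sigma_i$, with the $\lambda$-normalisation arranged so that the identification is an isomorphism of functors and not merely of underlying $k$-modules. The hard part here is purely bookkeeping: tracking signs, orientations, and normalisations so that the two pictures --- algebraic twisted Fox calculus on one side and geometric twisted homology of the punctured disc on the other --- line up exactly.
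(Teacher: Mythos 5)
Your core instinct is right: build the $F_n\rtimes_\alpha G$-action on an $n$-fold direct sum $M^{\oplus n}$ using the Dettweiler--Reiter matrices for the $F_n$-action and the Artin representation for the $G$-action, then check compatibility and specialise. This is what the paper does (see Remark \ref{rem:DR} and Proposition 1.6). But there is a genuine conceptual error in your framing that would cause the specialisations to fail.

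The error is defining $\mathcal{LM}_\lambda(M)$ as the cokernel (or $H_1$) of a two-term complex $M\to M^{\oplus n}$. The twisted Long--Moody functor is \emph{not} a group-homology subquotient of $M^{\oplus n}$: the paper defines it as $\mathcal{LM}_\lambda(V)=I_{F_n,\lambda}\otimes_{k[F_n]}V$, where $I_{F_n,\lambda}$ is the augmentation ideal equipped with a $\lambda$-twisted left $F_n$-action $\circ_\lambda$, and since $I_{F_n}$ is free of rank $n$ as a right $k[F_n]$-module this is isomorphic to \emph{all of} $V^{\oplus n}$. Both objects you need to match --- the Long--Moody functor $\mathcal{LM}(V)=I_{F_n}\otimes_{k[F_n]}V\cong V^{\oplus n}$ from Definition \ref{dfn:LM} and the Dettweiler--Reiter convolution functor --- live on the full $V^{\oplus n}$, which has dimension $n\dim V$. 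The cokernel of any nonzero $\delta_\lambda\colon M\to M^{\oplus n}$ (or the kernel of the Fox boundary $M^{\oplus n}\to M$, i.e. genuine $H_1(F_n;M)$) is strictly smaller for a generic faithful $M$, so the ``bookkeeping'' you defer to in specialisation (1) could never close the dimension gap. Relatedly, your identification of the classical Long--Moody functor with $H_1(F_n;\widetilde{M})$ is not the paper's convention; the homological avatar that \emph{does} recover $V^{\oplus n}$ is a relative/twisted homology with an \emph{extra} puncture carrying the Kummer local system $K_\lambda$ (this is the Euler transform $\mathcal{E}_\lambda$ of Section 2, where Theorem \ref{thm:poch} shows the Pochhammer cycles give a basis of size $n\dim V$ precisely because $\lambda\neq 1$ kills the would-be relations).

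Once you drop the quotient and work on all of $M^{\oplus n}$, the remaining plan is sound and close to the paper's. The paper packages the DR-matrix $F_n$-action cleanly as the twisted action $\circ_\lambda$ on $I_{F_n}$, proves the compatibility with the $G$-action abstractly (Proposition 1.6, by reduction to $(B_n,\theta_{\mathrm{Artin}})$ and a direct or homological argument), and then both specialisations become essentially immediate: (1) holds because $\circ_\lambda$ twists only the $F_n$-action while leaving the $B_n$-action $b\cdot(i\otimes v)=\theta_{\mathrm{Artin}}(b)(i)\otimes(b\cdot v)$ untouched, so restricting to $k[B_n]$ literally gives back the Long--Moody functor with no normalisation to chase; (2) holds because for $G=\{e\}$ the module is $V^{\oplus n}$ with exactly the DR matrices of Remark \ref{rem:DR}. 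Also note that your description of the $G$-action as a ``permutation-with-twist of indices'' is too coarse: under the identification with $V^{\oplus n}$ the $\sigma_i$-action has the nontrivial off-diagonal block $\rho(\sigma_i)-\rho(\sigma_i x_{i+1})$ (Definition \ref{dfn:LM}), which arises from re-expanding $\theta_{\mathrm{Artin}}(\sigma_i)(x_{i+1})-1=x_{i+1}^{-1}x_i x_{i+1}-1$ in the free basis $\{x_j-1\}$ of $I_{F_n}$.
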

The convolution functor appearing in the 
above was introduced by Dettweiler-Reiter
in preparation for 
defining the Katz middle convolution under their setting in \cite{DR}.
According to the definition of 
Katz middle convolution in \cite{DR},
we shall define a new 
functor from 
the twisted Long-Moody functor and show that 
this new functor
extends the Katz algorithm to $k[F_{n}\rtimes_{\alpha}G]$-modules.
\begin{thm}
	Let us take $\lambda\in k^{\times}$.
	There exists a full subcategory $\mathbf{Mod}^{\mathrm{NT}}_{k[F_{n}\rtimes_{\alpha}G]}$
	of the category of left $k[F_{n}\rtimes_{\alpha}G]$-modules and also exists
	an endfunctor $\mathcal{KLM}_{\lambda}$ of this subcategory such that 
	the following holds.
	\begin{enumerate}
		\item Suppose that $G=\{e\}$ and identify $F_{n}\rtimes_{\alpha} \{e\}$ with $F_{n}$.
		Then the functor $\mathcal{KLM}_{\lambda}$ is isomorphic to the Katz middle convolution 
		functor introduced in \cite{DR}.
		\item (Multiplicativity). For $\lambda,\tau\in k^{\times}$, we have the isomorphism 
		\[
			\mathcal{KLM}_{\lambda}\circ\mathcal{KLM}_{\tau}\cong \mathcal{KLM}_{\lambda\tau}
		\]
		as endfunctors of $\mathbf{Mod}^{\mathrm{NT}}_{k[F_{n}\rtimes_{\alpha}G]}$.
		\item (Auto-equivalence). For $\lambda\in k^{\times}$,
		$\mathcal{KLM}_{\lambda}$ is an auto-equivalence of the category $\mathbf{Mod}^{\mathrm{NT}}_{k[F_{n}\rtimes_{\alpha}G]}$,
		and $\mathcal{KLM}_{\lambda^{-1}}$ is an inverse functor.
	\end{enumerate}
	We call this functor $\mathcal{KLM}_{\lambda}$ the Katz-Long-Moody functor.
\end{thm}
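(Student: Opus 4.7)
The strategy is to mirror the Dettweiler--Reiter construction of \cite{DR} on top of the twisted Long-Moody functor $\mathcal{LM}_{\lambda}$ provided by the previous theorem, verifying at each step that the additional action of $G$ through $\alpha$ is preserved. Inside $\mathcal{LM}_{\lambda}(V)$ I would isolate two distinguished $k[F_{n}\rtimes_{\alpha}G]$-submodules $\mathcal{K}(V)$ and $\mathcal{L}(V)$ generalising the ``kernel'' and ``image'' subspaces of \cite{DR}, namely the subspace spanned by the local invariants under each $F_{n}$-generator and the joint kernel of (twisted) multiplication by $\lambda-1$. The compatibility $\rho=\theta\circ\alpha$ ensures that these submodules are stable under the $G$-action, hence are genuine $k[F_{n}\rtimes_{\alpha}G]$-submodules, and the Katz-Long-Moody functor is set to be
\[
\mathcal{KLM}_{\lambda}(V):=\mathcal{LM}_{\lambda}(V)\big/\bigl(\mathcal{K}(V)+\mathcal{L}(V)\bigr).
\]
The subcategory $\mathbf{Mod}^{\mathrm{NT}}_{k[F_{n}\rtimes_{\alpha}G]}$ is defined to be the full subcategory on which the analogous local invariants and coinvariants of $V$ itself vanish --- the natural generalisation of the ``no trivial local sub- or quotient-system'' condition in the Katz/DR setting.

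Claim (1) is then immediate: when $G=\{e\}$, the previous theorem identifies $\mathcal{LM}_{\lambda}$ with the DR convolution, and by construction $\mathcal{K}(V)$ and $\mathcal{L}(V)$ reduce to the DR submodules, so $\mathcal{KLM}_{\lambda}$ coincides with their middle convolution.

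The main work lies in (2). I would lift the DR proof of $MC_{\lambda}\circ MC_{\tau}\cong MC_{\lambda\tau}$ to the twisted setting. The standard argument writes $\mathcal{LM}_{\lambda}\circ\mathcal{LM}_{\tau}(V)$ on an explicit $(2n\times 2n)$-block space, constructs a natural intertwiner to $\mathcal{LM}_{\lambda\tau}(V)$ on an $(1\times n)$-block space, and shows that after passing to the quotient by $\mathcal{K}+\mathcal{L}$ on both sides this intertwiner becomes an isomorphism. The main obstacle is verifying that this intertwiner, originally designed only to commute with the $F_{n}$-action, also commutes with the $G$-action: this requires careful bookkeeping of how $\alpha(g)$ permutes the generators of $F_{n}$ across the block decomposition, together with a check that the ``trivial'' subspaces $\mathcal{K},\mathcal{L}$ are respected by this shuffling. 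Extracting a clean functorial isomorphism (and not merely an ad hoc linear one) will be the most delicate point, and I expect the bulk of the proof to consist of these compatibility verifications.

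Finally, (3) follows formally from (2) once one shows that $\mathcal{KLM}_{1}$ is isomorphic to $\mathrm{id}$ on $\mathbf{Mod}^{\mathrm{NT}}_{k[F_{n}\rtimes_{\alpha}G]}$. For an NT-module, the only part of $\mathcal{LM}_{1}(V)$ not cancelled by $\mathcal{K}(V)+\mathcal{L}(V)$ is a canonical copy of $V$, giving a natural isomorphism $\mathcal{KLM}_{1}(V)\cong V$. Combined with the multiplicativity of (2) one concludes $\mathcal{KLM}_{\lambda}\circ\mathcal{KLM}_{\lambda^{-1}}\cong\mathcal{KLM}_{1}\cong\mathrm{id}$ and, symmetrically, the other composition, proving the auto-equivalence with explicit quasi-inverse $\mathcal{KLM}_{\lambda^{-1}}$.
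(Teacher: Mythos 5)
Your overall structure matches the paper: define $\mathcal{KLM}_{\lambda}$ as the quotient of $\mathcal{LM}_{\lambda}$ by the sum of the local-invariant submodules $\sum_{i}\mathcal{LM}_{\lambda}(V)^{[i]}$ and the $F_{n}$-invariant submodule $\mathcal{LM}_{\lambda}(V)^{F_{n}}$, take the NT-subcategory to be the modules satisfying (P1) and (P2), and deduce (3) from (2) together with $\mathcal{KLM}_{1}\cong\mathrm{id}$ on NT-objects. That much is faithful to the paper (your $\mathcal{L}(V)$ is really $\mathcal{LM}_{\lambda}(V)^{F_{n}}$, the invariants under the whole free group, not a ``joint kernel of multiplication by $\lambda-1$'', but the idea is the right one), and your observation that the $G$-stability of $\mathcal{K}(V)$ requires using that $\alpha$ factors through $B_{n}$ — so $\alpha(g)$ permutes the generators $x_{i}$ up to conjugacy — is exactly the lemma the paper proves.

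Where you diverge, and where your proposal would cost you a great deal of extra work, is in the mechanism for (2). You plan to lift the DR matrix-level intertwiner and then verify its $G$-equivariance by ``careful bookkeeping'' of how $\alpha(g)$ shuffles blocks. The paper instead takes the intertwiner to be the multiplication map $\nabla\colon\mathcal{LM}_{\lambda}\circ\mathcal{LM}_{\tau}\to\mathcal{LM}_{\lambda\tau}$, and its $k[F_{n}\rtimes_{\alpha}G]$-equivariance is established once and for all in Proposition~\ref{prp:lmtrans}: it is a formal consequence of the twisted associativity identity $(x\circ_{\lambda}g)\circ_{\mu}h=x\circ_{\lambda\mu}(g\circ_{\mu}h)$ (Lemma~\ref{lmm:associative}) together with the already-known $G$-equivariance of the untwisted $\nabla$. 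This removes the block-bookkeeping entirely: $\nabla$ is a natural transformation of endofunctors of $\mathbf{Mod}_{k[F_{n}\rtimes_{\alpha}G]}$ by construction. The remaining content of (2) is then purely homological: one shows (Lemmas~\ref{lem:prep3}--\ref{lem:exactup}, Corollaries~\ref{cor:inverse} and \ref{cor:injective}) that $\nabla^{-1}(\mathrm{Ker\,}\theta^{\lambda\tau}_{V})=\mathrm{Ker\,}\theta^{\lambda}_{\mathcal{LM}_{\tau}(V)}+\mathrm{Ker\,}\mathcal{LM}_{\lambda}(\theta^{\tau}_{V})$ under (P1), and that $\nabla$ is surjective onto $\mathcal{LM}_{\lambda\tau}(V)$ under (P2), so that the induced map $\widehat{\nabla}\colon\mathcal{KLM}_{\lambda}\circ\mathcal{KLM}_{\tau}(V)\to\mathcal{KLM}_{\lambda\tau}(V)$ is both injective and surjective. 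Your matrix route would in principle also succeed, but you would be re-proving $G$-equivariance by hand in every step, and without the twisted-associativity identity you risk not seeing that the ``correct'' intertwiner is the multiplication map, which is what makes the whole argument naturally functorial rather than merely a pointwise linear isomorphism. (Minor: the iterated space $\mathcal{LM}_{\lambda}\circ\mathcal{LM}_{\tau}(V)$ sits on $n^{2}$ blocks of $V$, not $2n$.)
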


The resemblance between the Katz middle convolution and the Burau representation 
had already been pointed out by V\"olklein in \cite{Vol}.
Our definition of the Katz-Long-Moody functor is based 
on this V\"olklein's idea 
and  generalizes his insight into the relationship between the Katz 
middle convolution and braid group representations.

As well as the usual Katz middle convolution,
we shall show that the Katz-Long-Moody functor preserves the 
irreducibility.
\begin{thm}
	If a left $k[F_{n}\rtimes_{\alpha}G]$-module $V$ is $F_{n}$-irreducible,
	then $\mathcal{KLM}_{\lambda}(V)$ is $F_{n}$-irreducible for any $\lambda\in k^{\times}$.	
\end{thm}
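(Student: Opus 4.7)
The plan is to reduce the statement to the classical theorem of Dettweiler--Reiter that the Katz middle convolution preserves irreducibility of $k[F_n]$-modules. Since $F_n$-irreducibility of a $k[F_n\rtimes_\alpha G]$-module $V$ means precisely that $\mathrm{Res}^{k[F_n\rtimes_\alpha G]}_{k[F_n]}(V)$ is irreducible as a $k[F_n]$-module, the theorem will follow at once from an intertwining isomorphism
\[
\mathrm{Res}^{k[F_n\rtimes_\alpha G]}_{k[F_n]}\circ \mathcal{KLM}_\lambda \;\cong\; \mathrm{MC}_\lambda\circ \mathrm{Res}^{k[F_n\rtimes_\alpha G]}_{k[F_n]},
\]
where $\mathrm{MC}_\lambda$ denotes the Dettweiler--Reiter middle convolution, i.e.\ the specialization of $\mathcal{KLM}_\lambda$ to $G=\{e\}$ appearing in part (1) of the previous theorem.

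My first step is to establish the analogous intertwining on the level of the twisted Long--Moody functor $\mathcal{LM}_\lambda$. The underlying $k$-vector space of $\mathcal{LM}_\lambda(V)$ and the $F_n$-action on it are produced by a twisted homological/tensor recipe that refers only to the $F_n$-module structure on $V$; the role of the $G$-action is to equip the output with an additional compatible action obtained by transporting $\alpha$ through the Artin representation. Consequently, forgetting the $G$-action on $\mathcal{LM}_\lambda(V)$ reproduces Dettweiler--Reiter's convolution functor applied to $\mathrm{Res}(V)$.

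Next I would check that the passage from $\mathcal{LM}_\lambda$ to $\mathcal{KLM}_\lambda$ is compatible with restriction. Concretely, the two canonical $k[F_n]$-submodules used to cut the convolution down to the middle convolution---the intersection of the kernels of the standard generators and the fixed-point subspace of their product---must be shown to be stable under the $G$-action. This $G$-stability is a consequence of the fact that $\alpha$ factors through $\theta\colon B_n\hookrightarrow\mathrm{Aut}_{\mathbf{Grp}}(F_n)$, so $G$ permutes the standard generators of $F_n$ up to conjugation, which preserves both of these distinguished submodules. Taking the quotient produces the intertwining isomorphism displayed above. Finally, by the Dettweiler--Reiter/Katz irreducibility theorem applied to $\mathrm{Res}(V)$ inside $\mathbf{Mod}^{\mathrm{NT}}_{k[F_n]}$, the module $\mathrm{MC}_\lambda(\mathrm{Res}(V))$ is irreducible, and hence so is $\mathrm{Res}(\mathcal{KLM}_\lambda(V))$, which is what had to be proved.

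The main obstacle I anticipate is the verification in the preceding step: namely, checking that the $G$-action on $\mathcal{LM}_\lambda(V)$ really does preserve both canonical submodules and descends to a well-defined $G$-action on the middle quotient. This requires carefully matching the cyclic indexing of the generators used in the Long--Moody construction with the way $\alpha$ permutes them through $\theta$; once this bookkeeping is sorted out, the rest of the argument is a clean application of the classical Dettweiler--Reiter result.
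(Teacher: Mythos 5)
Your proposal is correct and in essence follows the same route as the paper. The paper's proof is by contradiction: it assumes a proper nonzero $k[F_n]$-submodule $\widetilde W\subset\mathcal{KLM}_\lambda(V)$, regards everything as $k[F_n]$-modules, and applies $\mathcal{KLM}_{\lambda^{-1}}$ to obtain a proper nonzero $k[F_n]$-submodule of $V$ via the multiplicativity $\mathcal{KLM}_{\lambda^{-1}}\circ\mathcal{KLM}_\lambda\cong\mathrm{id}$; this step implicitly uses exactly the intertwining you state, namely that $\mathcal{KLM}_\lambda$ applied to a $k[F_n\rtimes_\alpha G]$-module and then restricted to $k[F_n]$ coincides with the middle convolution of the restriction. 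You make that intertwining explicit and then cite the Dettweiler--Reiter irreducibility corollary directly instead of re-running the contradiction argument, which is a legitimate and slightly cleaner organization of the same mathematics; the underlying input (multiplicativity of the middle convolution on the nontrivial subcategory) is identical. One small clarification: the $G$-stability of $\sum_i\mathcal{LM}_\lambda(V)^{[i]}$ and $\mathcal{LM}_\lambda(V)^{F_n}$ is what makes $\mathcal{KLM}_\lambda(V)$ a well-defined $k[F_n\rtimes_\alpha G]$-module (and the paper proves this), but it is not what you need for the intertwining itself --- that follows for free because both subspaces are defined purely from the $F_n$-action, which is unchanged under restriction. Also, both your proof and the paper's quietly rely on the claim that an $F_n$-irreducible module lies in $\mathbf{Mod}^{\mathrm{NT}}_{k[F_n]}$; strictly speaking this excludes the almost trivial one-dimensional modules, so that claim should either carry a small caveat or be read with the implicit assumption that $V$ is not almost trivial.
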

Here the $F_{n}$-irreducible is a little stronger 
condition which 
implies the usual irreducibility.
Therefore, 
the Katz-Long-Moody 
functor enable us to construct 
many irreducible $k$-local systems on various topological spaces.

We shall further give a homological interpretation of the Katz-Long-Moody functor which 
naturally arises from Katz' original definition of the middle convolution in terms of perverse sheaves.
Let $Q_{n}=\{1,\ldots,n\}$ be the set of points in $\mathbb{C}$.
Also consider the set $Q_{n}^{0,\infty}=\{0,1,\ldots,n,\infty\}$  of 
points in the Riemann sphere $\mathbb{P}^{1}=\mathbb{C}\sqcup\{\infty\}$.
For the configuration space 
$\mathcal{F}_{2}(\mathbb{C}\backslash Q_{n})=\{(t,z)\in (\mathbb{C}\backslash Q_{n})^{2}\mid t\neq z\}$
of $2$ points in $\mathbb{C}\backslash Q_{n}$,
we consider 
projection maps 
$\mathrm{pr}_{t}\colon \mathcal{F}_{2}(\mathbb{C}\backslash Q_{n})\ni (t,z)\mapsto t\in \mathbb{C}\backslash Q_{n}$,
$\mathrm{pr}_{t-z}\colon \mathcal{F}_{2}(\mathbb{C}\backslash Q_{n})\ni (t,z)\mapsto t-z\in \mathbb{C}^{\times}$,
and the embedding 
$\iota\colon \mathbb{P}^{1}\backslash Q_{n}^{0,\infty}\ni t \mapsto (t,0) \in \mathcal{F}_{2}(\mathbb{C}\backslash Q_{n}).$

Then for a left $k[F_{n}\rtimes_{\alpha}G]$-module $V$ and 
the Kummer local system $K_{\lambda}$ on $\mathbb{C}^{\times}$ associated with the 
character $\chi_{\lambda}\colon \pi_{1}(\mathbb{C}^{\times})\cong \mathbb{Z}\rightarrow k^{\times}$
satisfying $\chi_{\lambda}(1)=\lambda$,
we define the new left $k[F_{n}\rtimes_{\alpha}G]$-module in terms of homology groups with 
local system coefficients
by 
\begin{multline*}
\mathcal{MC}_{\lambda}(V)=\mathrm{Im\,}(H_{1}(\mathbb{P}^{1}\backslash Q_{n}^{0,\infty};\iota^{*}(\mathrm{pr}_{t}^{*}(V)\otimes_{k}
\mathrm{pr}_{t-z}^{*}(K_{\lambda})))\\
	\longrightarrow H^{\mathrm{BM}}_{1}(\mathbb{P}^{1}\backslash Q_{n}^{0,\infty};\iota^{*}(\mathrm{pr}_{t}^{*}(V)\otimes_{k}
	\mathrm{pr}_{t-z}^{*}(K_{\lambda})))).
\end{multline*}
Here $H_{1}^{\mathrm{BM}}$ stands for the Borel-Moore homology group or 
equivalently the homology group of locally finite chains.
Then we shall show the following.
\begin{thm}
	For $\lambda\in k^{\times}\backslash\{1\}$, 
	$\mathcal{KLM}_{\lambda}$ and $\mathcal{MC}_{\lambda}$
	are isomorphic as 
	endfunctors of the category of left $k[F_{n}\rtimes_{\alpha}G]$-modules.
\end{thm}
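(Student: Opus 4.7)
The plan is to construct an explicit natural isomorphism $\mathcal{KLM}_{\lambda}(V)\cong\mathcal{MC}_{\lambda}(V)$ at the chain level, first matching the underlying vector spaces and the $F_{n}$-action, then promoting the comparison to one of $F_{n}\rtimes_{\alpha}G$-modules. By V\"olklein's insight and the previously established fact that $\mathcal{KLM}_{\lambda}$ restricts to the Dettweiler--Reiter middle convolution when $G=\{e\}$, the $F_{n}$-level comparison reduces to the classical identification of Dettweiler--Reiter's algebraic middle convolution with Katz's homological one; the novelty is to verify this identification is $G$-equivariant.

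\textbf{Homological computation.} Fix a base point $\ast$ on the real axis to the left of $0$ and take standard loops $\sigma_{0},\sigma_{1},\ldots,\sigma_{n}$ around the punctures $0,1,\ldots,n$ generating $\pi_{1}(\mathbb{P}^{1}\setminus Q_{n}^{0,\infty},\ast)$ modulo the product relation. Writing $T_{1},\ldots,T_{n}\in GL(V)$ for the $F_{n}$-generators acting on $V$, the local system $\iota^{*}(\mathrm{pr}_{t}^{*}V\otimes_{k}\mathrm{pr}_{t-z}^{*}K_{\lambda})$ has monodromy $\lambda\cdot\mathrm{id}_{V}$ along $\sigma_{0}$ (the $V$-part is trivial since $0\notin Q_{n}$, while the $K_{\lambda}$-part contributes $\lambda$ via $\mathrm{pr}_{t-z}\circ\iota(t)=t$), and monodromy $T_{i}$ along each $\sigma_{i}$ (the $K_{\lambda}$-part is trivial for small loops around $i\neq 0$). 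Using a CW model for $\mathbb{P}^{1}\setminus Q_{n}^{0,\infty}$ with one $0$-cell at $\ast$ and $n+1$ one-cells realising the $\sigma_{i}$, the twisted cellular complex yields
\[
H_{1}(\mathbb{P}^{1}\setminus Q_{n}^{0,\infty};\mathcal{L})=\ker\bigl(V^{n+1}\xrightarrow{\,\partial\,}V\bigr),\quad \partial(v_{0},\ldots,v_{n})=(\lambda-1)v_{0}+\sum_{i=1}^{n}(T_{i}-1)v_{i},
\]
while $H_{1}^{\mathrm{BM}}$ is computed either via Poincar\'e--Lefschetz duality as $H^{1}(\pi_{1};\mathcal{L})=V^{n+1}/\mathrm{im}(\delta)$ with $\delta(v)=((\lambda-1)v,(T_{1}-1)v,\ldots,(T_{n}-1)v)$, or equivalently via a locally finite refinement of the cellular model near each puncture. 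Because $\lambda\neq 1$, the component $v_{0}$ is determined by $(v_{1},\ldots,v_{n})$, giving $H_{1}\cong V^{n}$, and $\delta$ is injective, giving $H_{1}^{\mathrm{BM}}$ of the same dimension. Analysing the long exact sequence of the pair $(\bar X,\partial\bar X)$ for the boundary compactification identifies the kernel of $H_{1}\to H_{1}^{\mathrm{BM}}$ with $\bigoplus_{i=1}^{n}\ker(T_{i}-1)$ together with the kernel of the total monodromy around $\infty$, so that the image matches the middle quotient $V^{n}/(\mathcal{K}+\mathcal{L})$ of Dettweiler--Reiter. Combined with the $G=\{e\}$ case of the earlier theorem identifying $\mathcal{KLM}_{\lambda}$ with that middle convolution, this produces a canonical $F_{n}$-equivariant isomorphism $\mathcal{MC}_{\lambda}(V)\cong\mathcal{KLM}_{\lambda}(V)$.

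\textbf{$G$-equivariance and main obstacle.} For each $g\in G$, Artin's theorem furnishes a self-homeomorphism $\phi_{g}$ of $\mathbb{C}\setminus Q_{n}$ induced by $\alpha(g)\in B_{n}$, which extends to $\mathbb{P}^{1}\setminus Q_{n}^{0,\infty}$ fixing $0$ and $\infty$. The semi-linear relation $g\cdot(fv)=\alpha(g)(f)\cdot gv$ gives an isomorphism of local systems between $\mathcal{L}$ and its $\phi_{g}$-pullback, and the composite action on $H_{1}$ and $H_{1}^{\mathrm{BM}}$ defines the $G$-action on $\mathcal{MC}_{\lambda}(V)$. The plan is to check that, under the identification of the previous step, this geometric action agrees with the $G$-action on $\mathcal{KLM}_{\lambda}(V)$ coming from the twisted Long-Moody functor. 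The main obstacle is this last matching: it requires choosing Pochhammer-type chain representatives whose images under $(\phi_{g})_{\ast}$ can be written explicitly in terms of the half-twist generators of $B_{n}$, and then comparing the resulting transformations of $V^{n}$ with the defining formulae of the twisted Long-Moody action. Since both actions are determined by their behaviour on braid generators via $\alpha$, the verification reduces to a finite check on half-twists, where the classical description of the $B_{n}$-action on $\pi_{1}(\mathbb{C}\setminus Q_{n})$ and a direct computation of how half-twists permute the Pochhammer cycles make the comparison transparent.
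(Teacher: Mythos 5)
Your strategy is correct in outline, and it reaches the same destination as the paper's proof, but the technical route differs. The paper does not compute the twisted homology via a CW model or Poincar\'e--Lefschetz duality; instead it first builds an isomorphism of the \emph{unreduced} functors $\widetilde{\Phi}\colon \mathcal{E}_{\lambda}\to\mathcal{LM}_{\lambda}$ (where $\mathcal{E}_{\lambda}(V)=H_{1}(\mathbb{P}^{1}\backslash Q_{n}^{\infty}(a_{0});\iota_{a_{0}}^{*}(C_{\lambda}(V)))$), using a Mayer--Vietoris decomposition into Pochhammer cycles (Theorem \ref{thm:poch}) plus a K\"unneth step (Proposition \ref{prop:kunneth}) to reduce to the case $V=k[F_{n}]$; the compatibility with the full $F_{n}\rtimes_{\alpha}G$-action is then established once and for all in Proposition \ref{prop:aughom} by computing the matrices of $\sigma_{j}$ and $x_{i}$ on the Pochhammer basis and matching them to the Long--Moody formulae of Remark \ref{rem:DR}. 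The passage from the unreduced to the reduced statement then follows by identifying the kernel of $H_{1}\to H_{1}^{\mathrm{BM}}$ (Proposition \ref{cor:mc}, after a $\varprojlim^{1}$-vanishing argument via Spanier) with $\bigoplus_{i}\mathcal{LM}_{\lambda}(V)^{[i]}\oplus\mathcal{LM}_{\lambda}(V)^{F_{n}}$ under $\widetilde{\Phi}$. Your proposal instead matches $\mathcal{MC}_{\lambda}$ to the Dettweiler--Reiter middle convolution at the $F_{n}$-level first and then treats $G$-equivariance as a separate afterthought.

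The step you flag as the ``main obstacle'' is indeed where the substance lies, and it is not a routine finite verification: showing that the geometric $\eta(\sigma_{j})$-action on Pochhammer chains $\widetilde{[\gamma_{0},\gamma_{i}]}\otimes c$ agrees with the twisted Long--Moody matrices requires the chain-level computations occupying all of Proposition \ref{prop:aughom}, including the lift $\bar{\eta}$ to the universal cover and the identity $\widetilde{[\gamma_{0},\gamma_{i}]}\otimes\chi=\widetilde{\gamma_{0}}\otimes\lambda(\gamma_{i}-1)\chi+\widetilde{\gamma_{i}}\otimes(1-\lambda)\gamma_{i}\chi$. Your outline correctly identifies the idea, but you should be aware that absorbing $v_{0}$ into $(v_{1},\ldots,v_{n})$ via the CW boundary map and then invoking the $G=\{e\}$ case of the earlier theorem does \emph{not} yet hand you a canonical $k[F_{n}\rtimes_{\alpha}G]$-linear map; you still need to show that your chosen identification $H_{1}\cong V^{\oplus n}$ intertwines the push-forward action of $\bar{\eta}(h)$ with the twisted Long--Moody action on $I_{F_{n},\lambda}\otimes_{k[F_{n}]}V$, and this is precisely the content of Proposition \ref{prop:aughom}. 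Also, the claim that $H^{\mathrm{BM}}_{1}$ can be read off ``equivalently via a locally finite refinement'' silently uses the $\varprojlim^{1}$-vanishing for the directed system of compact exhaustions, which the paper proves using a deformation-retract argument and Spanier's theorem --- worth making explicit rather than treating as automatic.
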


As we shall explain in the final section of this paper,
categories of left $k[F_{n}\rtimes_{\alpha}G]$-modules 
for several $(G,\alpha)$
are isomorphic to categories of $k$-local systems 
on various important topological spaces.
Therefore we can establish a framework of the Katz algorithm
on these local systems.
As a typical example, we can consider $B_{n}$-bundles
associated with simple Weierstrass polynomials
in the sense of 
Hansen.
It is known that some complements of hyperplane arrangements are 
obtained as these $B_{n}$-bundles. For instance, complements of
hyperplane arrangements of fiber-type have the $B_{n}$-bundle structure.
Whereas, in particular for complements of 
hyperplane arrangements,
Haraoka defined an analogue 
of the Katz middle convolution for 
logarithmic connections over 
complements of 
hyperplane arrangements in \cite{Har1}.
The Katz-Long-Moody functor 
will provide the local system counterpart 
of this Haraoka's middle convolution 
through
the Riemann-Hilbert correspondence 
in some cases.

Furthermore, we shall explain that categories of left $k[F_{n}\rtimes_{\alpha}G]$-modules can be 
identified with categories of $k$-local systems on link complements in the solid torus.
Note that link complements in the solid torus are equivalent to 
the complements of mixed links in $S^{3}$, see \cite{Lam}.
There are many known 
methods to obtain 
irreducible representations of knot groups,
see Kronheimer and Mrowka \cite{KR},
Klassen \cite{Kla}, Herald \cite{Her},
and references in Heusener \cite{Heu} for instance,
and then the Katz-Long-Moody functor 
gives a new method to 
obtain irreducible representations
of knot groups.

The Katz-Long-Moody functor will also useful 
to find integral representations 
of solutions of differential equations with accessory parameters.
In general, it may not be able to expect that 
differential equations with accessory parameters
have solutions expressed by integrations 
whose integrands are written by some elementary functions.
As we shall explain in this paper, however,
the Katz-Long-Moody functor 
will give a way to find some special non-rigid 
local systems whose corresponding differential equations may have 
solutions with 
integral representations.

\vspace{2mm}
	\noindent
	\textbf{Acknowledgement}
The authors express their gratitude to 
Arthur Souli\'e and Akihiro Takano 
for the valuable discussion on the Long-Moody functor.
K. H.  
thanks to Koki Ito, Yosuke Ohyama, and Claude Sabbah
for their constructive comments for the author's talk on the Conference 
at Universit\'e de Strasbourg. 
H. N. thanks to Shunya Adachi,
Saiei-Jaeyeong Matsubara-Heo, and Hiroshi Ogawara
for variable comments in the seminar at
Kumamoto University.
Finally the authors 
express their respects to 
Yoshishige Haraoka and Toshio Oshima 
for their inspiring works on 
the middle convolution for KZ-type equations.
\section{Twisted Long-Moody functor}\label{sec:tlm}
Long and Moody introduced 
a functor between categories of 
linear representations of the mixed braid group $B_{1,n}\cong F_{n}\rtimes B_{n}$ and that of $B_{n}$
in the paper \cite{Long}.
We shall consider a 
slight generalization of this functor which we call the twisted Long-Moody functor, 
and explain some properties of this new functor.

\subsection{Artin representation of the braid group $B_{n}$ on the free group $F_{n}$}
The Artin braid group $B_{n}$ associated with a positive integer $n\ge 2$
is the group presented by 
$n-1$ generators $\sigma_{1},\sigma_{2},\ldots,\sigma_{n-1}$
with the braid relations
\begin{align*}
	\sigma_{i}\sigma_{j}&=\sigma_{j}\sigma_{i}\quad\text{for all }i,j=1,2,\ldots,n-1\text{ with }|i-j|>2,\\
	\sigma_{i}\sigma_{i+1}\sigma_{i}&=\sigma_{i+1}\sigma_{i}\sigma_{i+1}\quad \text{for all }i=1,2,\ldots,n-2.
\end{align*}
Now we recall the Artin representation of $B_{n}$
on the free group $F_{n}$ of rank $n$. We take $x_{1},\ldots,x_{n}$ as 
a set of free generators of $F_{n}$.
Let 
$Q_{n}$ be the set of points $a_{i}:=(i,0)$, $i=1,2,\ldots,n$ in $\mathbb{R}^{2}$
and $D$ a closed Euclidean disk in $\mathbb{R}^{2}$ centered at $((n-1)/2,0)$ which contains 
$Q_{n}$ in its interior.
We write the mapping class group of the topological pair $(D,Q_{n})$ as 
$\mathfrak{M}(D,Q_{n})$, which consists of isotopy classes 
of self-homeomorphism of the pair $(D,Q_{n})$ fixing the boundary $\partial D$ pointwise.
For each $i=1,\ldots,n-1$, we define the element $\tau_{i}\in \mathfrak{M}(D,Q_{n})$
called the \emph{half-twist}
as follows. 
We may suppose that the radius of $D$ is sufficiently large
and identify $\mathbb{R}^{2}$ with the complex plane $\mathbb{C}$ as topological spaces.
Let us set $c_{i}:=(a_{i+1}-a_{i})/2$,
and then we define the homeomorphism $\tau_{i}\colon D\rightarrow D$ by 
\[
	\tau_{i}(z-c_{i}):=\begin{cases}
		z-c_{i}&(|z-c_{i}|\ge 1)\\
		\mathrm{exp}(-2\pi i|z-c_{i}|)(z-c_{i})&(\frac{1}{2}\le |z-c_{i}|<1)\\
		-(z-c_{i})&(|z-c_{i}|\le \frac{1}{2})
	\end{cases}	.
\] 
Then it is known that these half-twists $\tau_{1},\tau_{2},\ldots,\tau_{n-1}$
generate the group $\mathfrak{M}(D,Q_{n})$ and the 
correspondence  
\[
	\begin{array}{cccc}
		\eta\colon &B_{n}&\longrightarrow& \mathfrak{M}(D,Q_{n})\\
		&\sigma_{i}&\longmapsto &\tau_{i}
	\end{array}	
\]
extends to the group isomorphism.

The fundamental group $\pi_{1}(D\backslash Q_{n},d)$ with 
a base point $d\in \partial D$ is known to be isomorphic to the free group 
$F_{n}$. To give a precise description of the isomorphism,
we fix the base point $d=((n-1)/2,-R)$ where $R$ is the radius of $D$.  
Then for each $i=1,\ldots,n$ we take the closed path $\gamma_{i}\colon [0,1]\rightarrow D\backslash Q_{n}$
with the base point $d$ as follows.
Let us fix a sufficiently small $\epsilon >0$
so that $D_{<\epsilon}(a_{i}):=\{z\in \mathbb{C}\mid |z-a_{i}|<\epsilon\}$
are inside  $D$ for all $i=1,\ldots,n$
 and have no intersections with the other $D_{<\epsilon}(a_{j})$ for $j\neq i$.
Let $\iota_{i}$ be the intersection point of $D_{<\epsilon}(a_{i})$
and the line segment $\overline{d\,a_{i}}$ connecting $d$ and $a_{i}$.
Then we take the closed path $\gamma_{i}\colon [0,1]\rightarrow D\backslash Q_{n}$ with the 
base point $d$ so that 
it is homotopic to the 
path going from $d$ to $\iota_{i}$ along the segment $\overline{d\,a_{i}}$
and encircling clockwise $a_{i}$ along the circle $\partial D_{<\epsilon}(a_{i})$ 
and finally going back from $\iota_{i}$ to $d$
along the segment $\overline{d\,a_{i}}$.
Then it is known that $\pi_{1}(D\backslash Q_{n},d)$
is freely generated by $\gamma_{1},\ldots,\gamma_{n}$.
Thus the correspondence $\pi_{1}(D\backslash Q_{n},d)\ni \gamma_{i}\mapsto x_{i}\in F_{n}$
extends to the group isomorphism.

The product $\alpha\cdot \beta\colon [0,1]\rightarrow X$ 
of paths $\alpha,\beta\colon [0,1]\rightarrow X$
in a topological space $X$ satisfying $\alpha(1)=\beta(0)$ is 
defined by 
\[
	\alpha\cdot \beta(t):=\begin{cases}
		\alpha(2t)&(t\in [0,1/2])\\
		\beta(2t-1)&(t\in [1/2,1])
	\end{cases}.
\]

The push-forward of a path $\gamma\colon [0,1]\rightarrow X$
by a continuous map $\phi\colon X\rightarrow Y$ of
topological spaces
is denoted by $\phi_{*}(\gamma):=\phi\circ \gamma\colon [0,1]\rightarrow Y$.
Since the base point $d\in \partial D$ is fixed by $\mathfrak{M}(D,Q_{n})$,
the push-forward defines the group homomorphism
\[
	\begin{array}{cccc}
		\theta\colon &\mathfrak{M}(D,Q_{n})&\longrightarrow &\mathrm{Aut}_{\mathbf{Grp}}(\pi_{1}(D\backslash Q_{n},d))\\
		&\phi&\longmapsto &\phi_{*}
	\end{array},
\]
which is known to be injective and characterized by the following relations,
\[
	\theta(\tau_{i})(\gamma_{j})=\begin{cases}
		\gamma_{i+1}& j=i\\
		\gamma_{i+1}^{-1}\gamma_{i}\gamma_{i+1}&j=i+1\\
		\gamma_{j}& j \neq i,\,i+1
	\end{cases}.
\]
Here $\mathrm{Aut}_{\mathbf{Grp}}(G)$ denotes the 
group of automorphisms of a group $G$.
Then, under the identifications $\pi_{1}(D\backslash Q_{n},d)\cong F_{n}$ and $\mathfrak{M}(D,Q_{n})\cong B_{n}$
defined above,
this map $\theta$ induces the injective 
group homomorphism 
\[
	\theta_{\text{Artin}}\colon B_{n}\hookrightarrow \mathrm{Aut}_{\mathbf{Grp}}(F_{n}),	
\]
which is called the \emph{Artin representation} of $B_{n}$ on $F_{n}$.

We extend this Artin representation of $B_{n}$ to some other groups as follows.
\begin{dfn}[Artin representation of a group $G$ on $F_{n}$]\normalfont
	For a group $G$, we call a group homomorphism $\alpha\colon G\rightarrow \mathrm{Aut}_{\mathbf{Grp}}(F_{n})$,
	or the pair $(G,\alpha)$ of them,
	an \emph{Artin representation}, if $\alpha$ factors through the Artin representation 
	$\theta_{\text{Artin}}\colon B_{n}\hookrightarrow \mathrm{Aut}_{\mathbf{Grp}}(F_{n})$ of $B_{n}$. 
	Namely, there exists a group homomorphism $\alpha_{B_{n}}\colon G\rightarrow B_{n}$ which makes
	the diagram
	\[
	\begin{tikzcd}
		G \arrow[r,"\alpha"] \arrow[d,"\alpha_{B_{n}}"']& \mathrm{Aut}_{\mathbf{Grp}}(F_{n})\\
		B_{n}\arrow[ru,"\theta_{\mathrm{Artin}}"']&
	\end{tikzcd}
	\]
	commutative. Here we note that $\alpha_{B_{n}}$ is unique 
	since $\theta_{\mathrm{Artin}}$ is injective.
\end{dfn}

\subsection{Long-Moody functor}
Let us consider the outer semi-direct product 
$F_{n}\rtimes_{\theta_{\mathrm{Artin}}}B_{n}$ 
which we simply denote by $F_{n}\rtimes B_{n}$ if 
there is no risk of confusion.
We recall a procedure for constructing 
representations of $B_{n}$ from those of $F_{n}\rtimes B_{n}$
which is called the 
Long-Moody construction of braid group representations introduced by Long and Moody in \cite{Long}.

\subsubsection{Long-Moody induced representation and augmentation ideal of free group}\label{sec:LMr}
Let us recall the definition of Long-Moody induced representation.
\begin{dfn}[Long-Moody induced representation]\label{dfn:LM}\normalfont
	Let $k$ be a field, $V$ a $k$-vector space, and  
	$\rho \colon F_{n}\rtimes B_{n}\rightarrow \mathrm{Aut}_{k}(V)$
	a group homomorphism. 
	Here $\mathrm{Aut}_{k}(V)$ is the automorphism group of the $k$-vector space $V$.
	Then the \emph{Long-Moody induced representation}
	associated with $\rho$ is the linear representation 
	\[
		\rho^{\mathrm{LM}}\colon B_{n}\rightarrow \mathrm{Aut}_{k}(V^{\oplus n})	
	\]
	of $B_{n}$ which is defined by 
	\begin{align*}
		&\rho^{\mathrm{LM}}(\sigma_{i}):=\\
		&\begin{pmatrix}
			\overbrace{
				\begin{array}{cccc}
					\rho(\sigma_{i})&&&\\
					&\rho(\sigma_{i})&&\\
					&&\ddots&\\
					&&&\rho(\sigma_{i})
				\end{array}
			}^{i-1}&&&\\
			&0&\rho(\sigma_{i}x_{i})&\\
			&\rho(\sigma_{i})&\rho(\sigma_{i})-\rho(\sigma_{i}x_{i+1})&\\
			&&&
			\overbrace{
				\begin{array}{cccc}
					\rho(\sigma_{i})&&&\\
					&\rho(\sigma_{i})&&\\
					&&\ddots&\\
					&&&\rho(\sigma_{i})
				\end{array}
			}^{n-i-1}
		\end{pmatrix}
	\end{align*}
	for $i=1,\ldots,n-1$.
	Here the matrices in the right hand side
	are  block matrices whose entries are in
	 $\mathrm{Aut}_{k}(V)$. 
\end{dfn}
This can be seen as
a generalization of the Burau representation.
In fact, the representation $\rho$ is $1$-dimensional,
the resulting $\rho^{\mathrm{LM}}$
is exactly the Burau representation.
In the paper \cite{Long},
several approaches to define $\rho^{\mathrm{LM}}$
are explained. Among them, we explain 
the following construction 
by using the augmentation ideal of $F_{n}$.

Let $k[G]$ denote the group ring generated by a group $G$
over a field $k$.
A group homomorphism $\rho_{V}\colon G\rightarrow \mathrm{Aut}_{k}(V)$
from $G$ to the automorphism group of a $k$-vector space $V$
induces the left $k[G]$-module structure on $V$ and vice versa.
We frequently do not distinguish between them.
Let $I_{G}$ be 
the \emph{augmentation ideal} of the group ring $k[G]$ which 
is the two-sided ideal defined as 
the kernel of 
the \emph{augmentation map}
\[
	k[G]\ni\sum_{g\in G}r_{g}g\longmapsto \sum_{g\in G}r_{g}\in k.
\]

Let us go back to the setting in Definition \ref{dfn:LM}.
Let us regard $V$ as the left $k[F_{n}\rtimes B_{n}]$-module by the representation $\rho$
and then consider the vector space $I_{F_{n}}\otimes_{k[F_{n}]}V$.
We define a left $k[B_{n}]$-module structure on $I_{F_{n}}\otimes_{k[F_{n}]}V$
by 
\[
	b\cdot (i\otimes v):=\theta_{\mathrm{Artin}}(b)(i)\otimes (b\cdot v)\qquad (b\in B_{n},\,i\in I_{F_{n}},\,v\in V).	
\]
Since $I_{F_{n}}$ is freely generated by ${x_{i}-1,\, i=1,\ldots,n}$
as the right $k[F_{n}]$-module, the $k$-linear map
\begin{equation}\label{eq:indentfy}
	I_{F_{n}}\otimes_{k[F_{n}]}V\ni \sum_{i=1}^{n}(x_{i}-1)\otimes v_{i}
	\longmapsto (v_{1},\ldots,v_{n})\in V^{\oplus n}	
\end{equation}
is an isomorphism 
through which $V^{\oplus n}$ is equipped with the action of $B_{n}$.
Then a direct computation tells us that 
this $B_{n}$-action on $V^{\oplus n}$ 
coincides with
the Long-Moody induced representation
$\rho^{\mathrm{LM}}\colon B_{n}\rightarrow \mathrm{Aut}_{k}(V^{\oplus n})$.

We can slightly generalize this procedure as follows.
Let consider a semi-direct product group $F_{n}\rtimes_{\alpha} G$
associated with an Artin representation $(G,\alpha)$ and take
a left $k[F_{n}\rtimes_{\alpha}G]$-module $V$. Then
$I_{F_{n}}\otimes_{k[F_{n}]}V$ has the natural $k[G]$-module 
structure through the Artin representation $\alpha \colon G\rightarrow \mathrm{Aut}(F_{n})$.

\begin{dfn}[Long-Moody functor]\normalfont
	For a $k$-algebra $R$, we write  the category of left $R$-modules by 
	$\mathbf{Mod}_{R}$. 
	Let $(G,\alpha)$ be an Artin representation on $F_{n}$.
	Then we define a functor 
	$\mathcal{LM}\colon \mathbf{Mod}_{k[F_{n}\rtimes_{\alpha} G]}\rightarrow \mathbf{Mod}_{k[G]}$
	by 
	\begin{enumerate}
		\item $$\mathcal{LM}(V):=I_{F_{n}}\otimes_{k[F_{n}]}V$$ for $V\in \mathscr{O}(\mathbf{Mod}_{k[F_{n}\rtimes_{\alpha} G]})$,
		\item $$\mathcal{LM}(\phi):=\mathrm{id}_{I_{F_{n}}}\otimes \phi
		\in \mathrm{Hom}_{k[G]}(\mathcal{LM}(V),\mathcal{LM}(W))$$
		for $V,W\in \mathscr{O}(\mathbf{Mod}_{k[F_{n}\rtimes_{\alpha} G]})$ and $\phi\in \mathrm{Hom}_{k[F_{n}\rtimes_{\alpha} G]}(V,W)$,
	\end{enumerate}
	and call this functor the \emph{Long-Moody functor}.
	Here $\mathscr{O}(\mathcal{C})$ denotes the class of objects in a category $\mathcal{C}$.
\end{dfn}
Let us note that 
the augmentation ideal $I_{F_{n}}$
is flat as the right $k[F_{n}]$-module since this is free.
Therefore the Long-Moody functor which is defined by the 
tensor product with the flat module $I_{F_{n}}$
is an exact functor.
\subsubsection{Multiplication map}
Let  $(G,\alpha)$ be  an Artin representation on $F_{n}$
and $V$ a left $k[F_{n}\rtimes_{\alpha} G]$-module.
Then we define a $k$-linear map from $\mathcal{LM}(V)$ to $V$
by
\[
	\nabla\colon 	I_{F_{n}}\otimes_{k[F_{n}]}V
	\ni \sum_{i}f_{i}\otimes v_{i}
	\longmapsto \sum_{i}f_{i}\cdot v_{i}\in V,
\]
which we call the \emph{multiplication map}.
This is moreover a $k[G]$-module homomorphism
since we have 
\begin{align*}
	\nabla\left(g\cdot \left(\sum_{i}f_{i}\otimes v_{i}\right)\right)&=\nabla\left(\sum_{i}\alpha(g)(f_{i})\otimes g\cdot v_{i}\right)
	=\sum_{i}\alpha(g)(f_{i})\cdot (g\cdot v_{i})\\
	&=\sum_{i}(\alpha(g)(f_{i})\cdot g)\cdot v_{i}
	=\sum_{i}(g\cdot f_{i})\cdot v_{i}\\
	&=g\cdot \sum_{i}f_{i}\cdot v_{i}=g\cdot \nabla\left(\sum_{i}f_{i}\otimes v_{i}\right)
\end{align*}
for any $g\in G$ and $\sum_{i}f_{i}\otimes v_{i}\in I_{F_{n}}\otimes_{k[F_{n}]}V$.
\subsection{Twisted Long-Moody functor}
The Long-Moody functor is a functor between $\mathbf{Mod}_{k[F_{n}\rtimes_{\alpha} G]}$
and $\mathbf{Mod}_{k[G]}$.
We can extend this to the 
endfunctor of $\mathbf{Mod}_{k[F_{n}\rtimes_{\alpha} G]}$
in the obvious way. Namely,
by the left $k[F_{n}]$-module structure of 
$I_{F_{n}}$ as the two-sided ideal, we can define the left $F_{n}$-module structure
on 
$\mathcal{LM}(V)=I_{F_{n}}\otimes_{k[F_{n}]}V$
which is obviously compatible with the $k[G]$-module structure 
and thus $\mathcal{LM}(V)$ can be seen as a 
left $k[F_{n}\rtimes_{\alpha}G]$-module.

Now we shall introduce a twisted version of this left $k[F_{n}]$-module structure
which is compatible with the $k[G]$-module structure. 
\subsubsection{Twisted Long-Moody functor}
Let us fix an element $\lambda\in k^{\times}$ and 
consider the following functions
\[ 
	\lambda_{\ge i}\colon \{1,\ldots,n\}\ni j\mapsto \begin{cases}
		\lambda&\text{if }j\ge i,\\
		1&\text{if }j<i,
	\end{cases}
\]
for $i=0,1,\ldots,n$.
Then we define a left action of $F_{n}$ on $I_{F_{n}}$ by
\[
	x_{i}\circ_{\lambda}  (x_{j}-1):=\lambda_{\ge i}(j)x_{i}(x_{j}-1)-(\lambda_{\ge (i-1)}(j)-1)(x_{j}-1)\qquad (i,j=1,\ldots,n).
\]
If $\lambda=1$, this action coincides with the usual left action of $F_{n}$
on $I_{F_{n}}$ as the left ideal of $k[F_{n}]$.

\begin{prp}
	The left action $\circ_{\lambda}$ of $F_{n}$ is compatible 
	with the $G$-action through the Artin representation $\alpha$.
	Namely, the diagram 
	\[
		\begin{tikzcd}
			I_{F_{n}}\arrow[r,"\alpha(g)"]\arrow[d,"x\circ_{\lambda}"]
			&I_{F_{n}}\arrow[d,"\alpha(g)(x)\circ_{\lambda}"]\\
			I_{F_{n}}\arrow[r,"\alpha(g)"]
			&I_{F_{n}}
		\end{tikzcd}	
	\]
	is commutative for every  $g\in G$ and  $x\in F_{n}$.
\end{prp}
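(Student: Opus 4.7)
The plan is to reduce the commutativity of the square to a small set of generator-level identities and verify each directly. First, since $\alpha$ factors as $\theta_{\mathrm{Artin}}\circ\alpha_{B_n}$ with $\alpha_{B_n}\colon G\to B_n$ a group homomorphism, and commutativity of such squares composes functorially in $g$, it suffices to check the diagram when $g$ runs over the Artin generators $\sigma_k$ ($k=1,\ldots,n-1$) of $B_n$; the case $g=\sigma_k^{-1}$ follows formally by inverting the horizontal arrows. Second, assuming $\circ_\lambda$ extends from its prescription on generators to a genuine left $F_n$-action on $I_{F_n}$, so that $x\circ_\lambda$ is multiplicative in $x$, the $x$-variable reduces to generators $x=x_i$. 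Third, because each $\alpha(g)$ is a group automorphism of $F_n$ and hence extends to a $k$-algebra automorphism of $k[F_n]$ preserving $I_{F_n}$ and commuting with right multiplication, the $\xi$-variable reduces to the right $k[F_n]$-module generators $\xi=x_j-1$.

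It then remains to verify
\[
\theta_{\mathrm{Artin}}(\sigma_k)\bigl(x_i\circ_\lambda(x_j-1)\bigr)=\theta_{\mathrm{Artin}}(\sigma_k)(x_i)\circ_\lambda\theta_{\mathrm{Artin}}(\sigma_k)(x_j-1)
\]
for each triple $(i,j,k)$. The case analysis splits according to the positions of $i$ and $j$ relative to $\{k,k+1\}$. When both $i$ and $j$ lie outside $\{k,k+1\}$, the Artin formulas fix $x_i$ and $x_j-1$ and the coefficients $\lambda_{\ge i}(j),\lambda_{\ge(i-1)}(j)$ are unchanged, so both sides agree at once. The subcases where $i=k$ or $j=k$ are handled by direct substitution $x_k\mapsto x_{k+1}$, after which one checks that the affected values of $\lambda_{\ge(\cdot)}(\cdot)$ conspire to match on both sides.

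The main obstacle lies in the subcases with $i=k+1$ or $j=k+1$, since $\theta_{\mathrm{Artin}}(\sigma_k)(x_{k+1})=x_{k+1}^{-1}x_kx_{k+1}$ is not a generator of $F_n$ and $\theta_{\mathrm{Artin}}(\sigma_k)(x_{k+1}-1)=x_{k+1}^{-1}x_kx_{k+1}-1$ is not a basis element of $I_{F_n}$. On the left-hand side one evaluates $x_i\circ_\lambda(x_j-1)$ via the defining formula and applies $\theta_{\mathrm{Artin}}(\sigma_k)$ term by term as a ring automorphism. On the right-hand side one expands the non-generator element via the postulated $F_n$-action property, $(x_{k+1}^{-1}x_kx_{k+1})\circ_\lambda=x_{k+1}^{-1}\circ_\lambda(x_k\circ_\lambda(x_{k+1}\circ_\lambda-))$, and decomposes $x_{k+1}^{-1}x_kx_{k+1}-1$ in the right $k[F_n]$-basis of $I_{F_n}$ via the standard identity $uv-1=(u-1)v+(v-1)$ (iterated). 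The crux is that the coefficient $\lambda_{\ge i}(j)-\lambda_{\ge(i-1)}(j)$ is non-zero only at $j=i-1$, and this selection rule aligns precisely with the permutation of indices performed by $\sigma_k$, so that all $\lambda$-dependent discrepancies cancel after reorganization. The bookkeeping is mechanical but this is where the technical substance of the proof resides.
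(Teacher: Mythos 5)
Your strategy — reduce to $(G,\alpha)=(B_n,\theta_{\mathrm{Artin}})$, then to the Artin generators $\sigma_k$, the free generators $x_i$, and the right-module basis $\{x_j-1\}$, then do a case analysis on $(i,j,k)$ — mirrors the paper's own one-line proof (``it suffices to consider $(B_n,\theta_{\mathrm{Artin}})$; direct computation''), and the reductions you list are all valid. The only imprecision is the phrase ``commuting with right multiplication'': $\alpha(g)$ is semilinear, not linear, over the right $k[F_n]$-action, since $\alpha(g)(\xi r)=\alpha(g)(\xi)\alpha(g)(r)$; the reduction to the basis still works because both sides of the desired identity acquire the same factor $\alpha(g)(r)$.

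You do, however, walk into a trap set by the paper itself. You assert the selection rule ``$\lambda_{\geq i}(j)-\lambda_{\geq(i-1)}(j)$ is non-zero only at $j=i-1$,'' which is indeed what the displayed definition of $\lambda_{\geq i}$ yields if read literally (value $\lambda$ when $j\geq i$). But with that literal reading the proposition is actually false: for instance $\theta_{\mathrm{Artin}}(\sigma_k)\bigl(x_k\circ_\lambda(x_{k+1}-1)\bigr)$ and $x_{k+1}\circ_\lambda\theta_{\mathrm{Artin}}(\sigma_k)(x_{k+1}-1)$ disagree for generic $\lambda$. The displayed definition contains a typo; equation (\ref{eq:shift}), the computation in the proof of Lemma \ref{lmm:associative}, and the explicit matrix in Remark \ref{rem:DR} are consistent only with the reverse convention $\lambda_{\geq i}(j)=\lambda\Leftrightarrow i\geq j$, under which the selection rule is $j=i$, not $j=i-1$. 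With that correction the case analysis you sketch does close — I verified the hard subcases $(i,j)=(k,k+1)$ and $(i,j)=(k+1,k)$, which are not entirely routine even after the fix, since they require expanding $x_{k+1}^{-1}x_kx_{k+1}-1$ in the right-module basis and inverting $x_{k+1}\circ_\lambda$ — but had you actually carried out the ``mechanical bookkeeping'' from the literal definition, the cancellation you predict would not have materialized and you would have needed to locate and fix the off-by-one yourself. Note also that the paper offers an alternative homological argument via Proposition \ref{prop:aughom}, which sidesteps this bookkeeping entirely.
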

\begin{proof}
	It suffices to consider the case $(G,\alpha)=(B_{n},\theta_{\mathrm{Artin}})$
	which follows from a direct computation.
	Alternatively, we can use the 
	homological interpretation of $I_{F_{n}}$ 
	given in Proposition \ref{prop:aughom} which
	appears later.
\end{proof}
This proposition enable us to
regard $I_{F_{n}}$ as a left $k[F_{n}\rtimes_{\alpha} G]$-module.
Emphasizing that this module structure depends on the choice of $\lambda\in k^{\times}$, 
we write this  $k[F_{n}\rtimes_{\alpha}G]$-module especially by 
$I_{F_{n},\,\lambda}$.
Then for a left $k[F_{n}\rtimes_{\alpha}G]$-module $V$,
we can regard $I_{F_{n},\lambda}\otimes_{k[F_{n}]}V$
as a left $k[F_{n}\rtimes G]$-module.
\begin{rem}\label{rem:DR}\normalfont
Let $\rho\colon F_{n}\rtimes_{\alpha}G\rightarrow \mathrm{Aut}_{k}(V)$ and 
$\rho_{\lambda}^{\mathrm{LM}}\colon F_{n}\rtimes_{\alpha}G
\rightarrow \mathrm{Aut}_{k}(I_{F_{n},\lambda}\otimes_{k[F_{n}]}V)$
the associated homomorphisms with the left $k[F_{n}\rtimes_{\alpha}G]$-modules
$V$ and 
$I_{F_{n},\lambda}\otimes_{k[F_{n}]}V$.
Then the identification $I_{F_{n},\lambda}\otimes_{k[F_{n}]} V\cong V^{\oplus n}$ under the isomorphism $(\ref{eq:indentfy})$
gives the following matrix representations
\[
	\rho_{\lambda}^{\mathrm{LM}}(x_{i})=
	\begin{pmatrix}
		1&&&&&&\\
		&\ddots&&&&&\\
		&&1&&&&\\
		\lambda (\rho(x_{1})-1)&\cdots&
			\lambda (\rho(x_{i-1})-1)
		&\lambda \rho(x_{i})&
		\rho(x_{i+1})-1&
		\cdots&\rho(x_{n})-1\\
		&&&&1&&\\
		&&&&&\ddots&\\
		&&&&&&1
	\end{pmatrix}
\]
for $i=1,\ldots,n$. 
These matrices in the right hand side are essentially the same 
ones appearing in the paper \cite{DR} by Dettweiler-Reiter.
We can also check that 
\begin{align*}
	\rho_{\lambda}^{\mathrm{LM}}(x_{1}\cdots x_{n})-\lambda\cdot \mathrm{id}_{V^{\oplus n}}=&
	\begin{pmatrix}
		\rho(x_{2}\cdots x_{n})&&&\\
		&\rho(x_{3}\cdots x_{n})&&\\
		&&\ddots&\\
		&&&\mathrm{id}_{V}
	\end{pmatrix}\\
	&\times\begin{pmatrix}
		\lambda(\rho(x_{1})-1)&\lambda(\rho(x_{2})-1)&\cdots&\lambda(\rho(x_{n})-1)\\
		\lambda(\rho(x_{1})-1)&\lambda(\rho(x_{2})-1)&\cdots&\lambda(\rho(x_{n})-1)\\
		\vdots&\vdots&&\vdots\\
		\lambda(\rho(x_{1})-1)&\lambda(\rho(x_{2})-1)&\cdots&\lambda(\rho(x_{n})-1)
	\end{pmatrix}.
\end{align*}
	
\end{rem}

\begin{dfn}[Twisted Long-Moody functor]\normalfont
	For a $\lambda\in k^{\times}$,
	the functor \\$\mathcal{LM}_{\lambda}\colon \mathbf{Mod}_{k[F_{n}\rtimes_{\alpha}G]}\rightarrow \mathbf{Mod}_{k[F_{n}\rtimes_{\alpha} G]}$
	defined by 
	\begin{enumerate}
		\item $$\mathcal{LM}_{\lambda}(V):=I_{F_{n},\,\lambda}\otimes_{k[F_{n}]}V$$ for $V\in \mathscr{O}(\mathbf{Mod}_{k[F_{n}\rtimes_{\alpha} G]})$,
		\item $$\mathcal{LM}_{\lambda}(\phi):=\mathrm{id}_{I_{F_{n},\,\lambda}}\otimes \phi
		\in \mathrm{Hom}_{k[F_{n}\rtimes_{\alpha}G]}(\mathcal{LM}_{\lambda}(V),\mathcal{LM}_{\lambda}(W)),$$
		for $V,W\in \mathscr{O}(\mathbf{Mod}_{k[F_{n}\rtimes_{\alpha} G]})$ and $\phi\in \mathrm{Hom}_{k[F_{n}\rtimes_{\alpha}G]}(V,W),$
	\end{enumerate}
	is called the \emph{twisted Long-Moody functor} with respect to $\lambda$ or \emph{$\lambda$-twisted Long-Moody functor}.
\end{dfn}
Here we note that the twisted Long-Moody functor is an exact functor
as well as the usual Long-Moody functor.
\subsubsection{Augmentation ideal and multiplication map}
As previously we saw, the multiplication map $\nabla$ is a $k[G]$-module homomorphism.
When we consider the twisted action of $F_{n}$ on 
the augmentation ideal, 
the multiplication map
$\nabla$ becomes moreover a $k[F_{n}\rtimes_{\alpha}G]$-module homomorphism as below.
\begin{lmm}\label{lmm:associative}
	Let us take $x\in F_{n}$ and $g,h\in I_{F_{n}}$.
	Then the following twisted associativity holds,
	\[
		(x\circ_{\lambda}g)\circ_{\mu} h=x\circ_{\lambda\mu}(g\circ_{\mu} h).
	\]
\end{lmm}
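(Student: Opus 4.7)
The plan is to verify the identity by reducing to generators and then doing a direct case-by-case computation.

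First, since $\circ_{\lambda}$ is by construction a left $F_{n}$-action on $I_{F_{n}}$, it extends $k$-linearly to a left $k[F_{n}]$-module structure on $I_{F_{n}}$; under this extension $(x\circ_{\lambda}g)\circ_{\mu}h$ makes sense because $x\circ_{\lambda}g\in I_{F_{n}}\subset k[F_{n}]$. Both sides of the claimed identity are $k$-linear in $g$ and $h$ separately. Moreover, using $(xy)\circ_{\bullet}g=x\circ_{\bullet}(y\circ_{\bullet}g)$ for either $\bullet=\lambda,\mu,\lambda\mu$, one checks that if the identity holds for $x$ and $y$ with arbitrary $g,h$, then it holds for $xy$ and for $x^{-1}$ as well. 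Hence it suffices to verify the identity when $x=x_{i}$ is a free generator of $F_{n}$ and $g=x_{j}-1$, $h=x_{k}-1$ for $i,j,k\in\{1,\ldots,n\}$.

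Second, I would expand both sides by the defining formula. On the left, I first write
\[
x_{i}\circ_{\lambda}(x_{j}-1)=\lambda_{\ge i}(j)\,x_{i}(x_{j}-1)-\bigl(\lambda_{\ge(i-1)}(j)-1\bigr)(x_{j}-1),
\]
rewrite this as a $k$-linear combination of the four group elements $x_{i}x_{j},x_{i},x_{j},1$, and then apply $\circ_{\mu}(x_{k}-1)$ termwise, using the group-action property of $\circ_{\mu}$ to reduce each group-element action to a composition of generator actions. On the right, I first compute $(x_{j}-1)\circ_{\mu}(x_{k}-1)=x_{j}\circ_{\mu}(x_{k}-1)-(x_{k}-1)$ via the generator formula, then apply $x_{i}\circ_{\lambda\mu}$ and expand. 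The core combinatorial identity to verify is that products of step functions such as $\lambda_{\ge i}(j)\,\mu_{\ge j}(k)$, together with their correction terms from the subtracted summands, collapse to $(\lambda\mu)_{\ge i}(k)$ and the analogous corrections in which $\lambda\mu$ replaces $\lambda$.

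The main obstacle is the bookkeeping: there are several sub-cases depending on the orderings of $i,j,k$ and on whether equalities such as $j=i-1$ or $k=j-1$ hold, and the $\lambda$, $\mu$, $\lambda\mu$ factors must be tracked carefully in each case. A cleaner alternative, which I would pursue in parallel, is to invoke the homological description of $I_{F_{n}}$ in Proposition~\ref{prop:aughom}: there $\circ_{\lambda}$ should correspond to a natural twisted action on a chain complex built from a Kummer-type local system on a configuration space, and the twisted associativity should follow from the tensor-compatibility of Kummer coefficient systems under composition of monodromy, avoiding the generator-level case analysis entirely.
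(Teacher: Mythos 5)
Your proposal follows essentially the same route as the paper: reduce to $x=x_{i}$, $g=x_{j}-1$, $h=x_{k}-1$, then expand both sides by the defining formula and compare step-function coefficients via identities like $\lambda_{\ge i}(j)\mu_{\ge i}(j)=(\lambda\mu)_{\ge i}(j)$. The one thing worth noting is that the paper sidesteps the case analysis you are worried about: it first observes that $(x_{j}-1)\circ_{\mu}(x_{k}-1)$ can always be written as $(x_{j}-1)r$ for a single $r\in k[F_{n}]$ (splitting only into $j\ne k$ versus $j=k$), and then applies $x_{i}\circ_{\bullet}$ to $(x_{j}-1)r$ using compatibility with right multiplication by $r$; after this the whole computation reduces to the two scalar identities and never touches $x_{k}$ again, so no ordering of $i,j,k$ ever needs to be considered.
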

\begin{proof}
	We may suppose that $x=x_{i}$, $g=x_{j}-1$, $h=x_{k}-1$ for some $i,j,k\in \{1,\ldots,n\}$.
	First we note that
	\begin{align*}
		(x_{j}-1)\circ_{\mu}(x_{k}-1)&=\mu_{\ge j}(k)x_{j}(x_{k}-1)-\mu_{\ge (j-1)}(k)(x_{k}-1)\\
		&=\mu_{\ge j}(k)((x_{j}-1)(x_{k}-1)+(x_{k}-1))-\mu_{\ge (j-1)}(k)(x_{k}-1)\\
		&=\mu_{\ge j}(k)(x_{j}-1)(x_{k}-1)+(\mu_{\ge j}(k)-\mu_{\ge (j-1)}(k))(x_{k}-1)\\
		&=\begin{cases}
			\mu_{\ge j}(k)(x_{j}-1)(x_{k}-1)&\text{if }j\neq k,\\
			\mu(x_{j}-1)x_{j}-(x_{j}-1)&\text{if }j=k.
		\end{cases}
	\end{align*}
	Here the final equation follows from the fact 
	\begin{equation}\label{eq:shift}
		\nu_{\ge i}(j)-\nu_{\ge (i-1)}(j)=\begin{cases}
			0&\text{if }j\neq i,\\
			\nu-1&\text{if }j=i,
		\end{cases}\quad \text{for }\nu\in k^{\times}.
	\end{equation}
	Thus in any case, we can write $(x_{j}-1)\circ_{\mu}(x_{k}-1)=(x_{j}-1)r$ with some $r\in k[F_{n}]$.
	Therefore
	we have 
	\begin{equation}\label{eq:lhs}
	\begin{aligned}
		x_{i}\circ_{\lambda\mu}((x_{j}-1)\circ_{\mu} (x_{k}-1))&=x_{i}\circ_{\lambda\mu}(x_{j}-1)r\\
		&=(\lambda\mu)_{\ge i}(j)x_{i}(x_{j}-1)r-((\lambda\mu)_{\ge (i-1)}(j)-1)(x_{j}-1)r\\
		&=(\lambda\mu)_{\ge i}(j)x_{i}((x_{j}-1)\circ_{\mu} (x_{k}-1))\\
		&\quad\quad\quad-((\lambda\mu)_{\ge (i-1)}(j)-1)((x_{j}-1)\circ_{\mu} (x_{k}-1)).
	\end{aligned}
	\end{equation}
	On the other hand, we have
	\begin{equation}\label{eq:rhs}
	\begin{aligned}
		(x_{i}\circ_{\lambda}&(x_{j}-1))\circ_{\mu} (x_{k}-1)\\
		&=(\lambda_{\ge i}(j)x_{i}(x_{j}-1)-(\lambda_{\ge (i-1)}(j)-1)(x_{j}-1))\circ_{\mu} (x_{k}-1)\\
		&=\lambda_{\ge i}(j)x_{i}\circ_{\mu}((x_{j}-1)\circ_{\mu}(x_{k}-1))
		-(\lambda_{\ge (i-1)}(j)-1)(x_{j}-1)\circ_{\mu} (x_{k}-1)\\
		&=\lambda_{\ge i}(j)\mu_{\ge i}(j)x_{i}((x_{j}-1)\circ_{\mu} (x_{k}-1))\\
		&\qquad -(\lambda_{\ge i}(j)\mu_{\ge (i-1)}(j)-\lambda_{i}(j)+\lambda_{\ge (i-1)}(j)-1)
		((x_{j}-1)\circ_{\mu} (x_{k}-1)).
	\end{aligned}
	\end{equation}
	Now let us note that 
	\begin{align}
		&\lambda_{\ge i}(j)\mu_{\ge i}(j)=(\lambda\mu)_{\ge i}(j) \label{eq:int1}\\
		&\lambda_{\ge i}(j)\mu_{\ge (i-1)}(j)-\lambda_{i}(j)+\lambda_{\ge (i-1)}(j)-1 \label{eq:int2}
		=(\lambda\mu)_{\ge (i-1)}(j)-1.
	\end{align}
	Here the second equation follows from 
	equation $(\ref{eq:shift})$.

	Comparing the equations from $(\ref{eq:lhs})$ to $(\ref{eq:int2})$, we obtain the desired equation
	\[
		x_{i}\circ_{\lambda\mu}((x_{j}-1)\circ_{\mu} (x_{k}-1))=(x_{i}\circ_{\lambda}(x_{j}-1))\circ_{\mu} (x_{k}-1).
	\]
\end{proof}
This lemma shows that the multiplication map
$\nabla$ defines the following left $k[F_{n}\rtimes_{\alpha} G]$-modules homomorphism.
\begin{prp}\label{prp:lmtrans}
	For $\lambda,\mu\in k^{\times}$, the multiplication map 
	$$\nabla\colon I_{F_{n},\lambda}\otimes_{k[F_{n}]}I_{F_{n},\mu}
	\longrightarrow I_{F_{n},\lambda\mu}$$
	is a left $k[F_{n}\rtimes_{\alpha} G]$-module homomorphism.
\end{prp}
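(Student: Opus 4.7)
The plan is to decompose the verification into three checks: (a) that $\nabla$ descends to a well-defined map on the tensor product over $k[F_n]$; (b) that the descended map is $F_n$-equivariant, sending the $\circ_\lambda$-action on the left factor to the $\circ_{\lambda\mu}$-action on the target; and (c) that it is $G$-equivariant. Once these are in place, $k[F_n\rtimes_\alpha G]$-linearity follows by extending bilinearly.

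For (a), I would observe that $\nabla$ is simply the specialization of the general multiplication map $I_{F_n}\otimes_{k[F_n]}V\to V$, $f\otimes v\mapsto f\cdot v$, at $V=I_{F_n,\mu}$. For any left $k[F_n]$-module $V$ this map is automatically $k[F_n]$-balanced by the associativity axiom of the module action, so nothing needs to be checked by hand. Taking the left $k[F_n]$-structure on $I_{F_n,\mu}$ to be $\circ_\mu$ gives $\nabla(f\otimes h)=f\circ_\mu h$.

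For (b), the left $F_n$-action on $I_{F_n,\lambda}\otimes_{k[F_n]}I_{F_n,\mu}$ is by $\circ_\lambda$ on the first factor, so I need the identity
\[
(x\circ_\lambda f)\circ_\mu h \;=\; x\circ_{\lambda\mu}(f\circ_\mu h)
\]
for every $x\in F_n$ and every $f,h\in I_{F_n}$. This is the twisted associativity established in Lemma \ref{lmm:associative} (with $g=f$), which is precisely the piece of work the lemma was set up to supply. For (c), the diagonal $G$-action on both sides sends $f\otimes h$ to $\alpha(g)(f)\otimes\alpha(g)(h)$, so $G$-equivariance reduces to
\[
\alpha(g)(f\circ_\mu h)=\alpha(g)(f)\circ_\mu\alpha(g)(h),
\]
which is exactly the compatibility of $\circ_\mu$ with $\alpha$ proved in the proposition immediately after the definition of $\circ_\lambda$.

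Since the twisted associativity has already been carried out in Lemma \ref{lmm:associative}, no genuinely new obstacle remains; the main technical content was concentrated in that lemma, and the present proposition reads it off as exactly the statement that $\nabla$ intertwines the two twisted $F_n$-actions. The only remaining ingredient is the routine linear extension of the group-level identities to $k[F_n\rtimes_\alpha G]$-linearity, which is immediate from the module axioms.
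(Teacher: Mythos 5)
Your proof is correct and follows the paper's own route: the paper likewise reduces the statement to the twisted associativity of Lemma \ref{lmm:associative} for the $F_n$-action and to the earlier verification that $\nabla$ is $k[G]$-equivariant (which in turn rests on the compatibility proposition you cite in part (c)). The only cosmetic difference is that you rephrase the $G$-equivariance as the identity $\alpha(g)(f\circ_\mu h)=\alpha(g)(f)\circ_\mu\alpha(g)(h)$ and extend it from $F_n$ to $I_{F_n}$ by linearity, whereas the paper just re-invokes the multiplication-map computation from Section 1.2.2; both amount to the same thing.
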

\begin{proof}
	As previously we saw, $\nabla$ is a $k[G]$-module homomorphism.
	Thus we need to check that $\nabla$ is compatible with 
	left actions of $k[F_{n}]$ on $I_{F_{n},\lambda}\otimes_{k[F_{n}]}I_{F_{n},\mu}$
	and $I_{F_{n},\lambda\mu}$,
	which follows from Lemma \ref{lmm:associative}.
\end{proof}

\begin{rem}\normalfont
It follows from Proposition \ref{prp:lmtrans}
that 
the multiplication map  defines the 
natural transformation between the functors 
$$\nabla\colon \mathcal{LM}_{\lambda}\circ \mathcal{LM}_{\mu}
\longrightarrow 
\mathcal{LM}_{\lambda\mu}.$$
\end{rem}

We present some properties of the multiplication map which will be used later.
\begin{prp}\label{prop:multiplication}
	For a left $k[F_{n}\rtimes_{\alpha}G]$-module $V$,
	let $\rho_{V}\colon F_{n}\rightarrow \mathrm{Aut}_{k}(V)$ be the associated
	group homomorphism.
	\begin{enumerate}
		\item The image of the multiplication map $\nabla\colon I_{F_{n}}
		\otimes_{k[F_{n}]}V\rightarrow V$ is 
		\[
			\mathrm{Im\,}(\rho_{V}(x_{1})-1)+\cdots +\mathrm{Im\,}(\rho_{V}(x_{1})-1).	
		\]
		Also the kernel is 
		\[
			\left\{(v_{1},\ldots,v_{n})\in V^{\oplus n}\,\middle|\, \sum_{i=1}^{n}(x_{i}-1)v_{i}=0\right\},
		\]
		under the identification $I_{F_{n}}
		\otimes_{k[F_{n}]}V\cong V^{\oplus n}$ by the isomorphism $(\ref{eq:indentfy})$.
		\item Let us take $\lambda,\mu\in k^{\times}$.
		Then the kernel of the multiplication map 
		$\nabla\colon \mathcal{LM}_{\lambda}\circ \mathcal{LM}_{\mu}(V)
		\rightarrow \mathcal{LM}_{\lambda\mu}(V)$
		is 
		\[
			\bigoplus_{i=1}^{n}\left((x_{i}-1)\otimes \mathrm{Ker\,}(\rho_{V}^{\mu}(x_{i})-1)\right).
		\]
	\end{enumerate}
	Here $\rho_{V}^{\mu}\colon F_{n}\rightarrow \mathrm{Aut}_{k}(\mathcal{LM}_{\mu}(V))$
	is the induced group homomorphism by the left $F_{n}$-action on $\mathcal{LM}_{\tau}(V)$,
	and 
	for $x\in I_{F_{n}}$ and a subspace $W\subset V$, we set 
	$x\otimes W:=\{x\otimes w\mid w\in W\}$.
\end{prp}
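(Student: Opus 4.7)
For part 1, the plan is to transport the multiplication map through the isomorphism $(\ref{eq:indentfy})$. Under $I_{F_{n}}\otimes_{k[F_{n}]}V\cong V^{\oplus n}$, the map $\nabla$ becomes
\[
(v_{1},\ldots,v_{n})\longmapsto \sum_{i=1}^{n}(x_{i}-1)\cdot v_{i}=\sum_{i=1}^{n}(\rho_{V}(x_{i})-1)v_{i},
\]
from which both the image and the kernel formulas follow by inspection. (I note what appears to be a typo in the statement: the last summand of the image should read $\mathrm{Im\,}(\rho_{V}(x_{n})-1)$.)

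For part 2, I would apply part 1 with $V$ replaced by the left $k[F_{n}\rtimes_{\alpha}G]$-module $W:=\mathcal{LM}_{\mu}(V)$. The key set-theoretic observation is that the underlying $k$-linear map of $\nabla\colon \mathcal{LM}_{\lambda}\circ\mathcal{LM}_{\mu}(V)\to \mathcal{LM}_{\lambda\mu}(V)$ coincides with that of the untwisted multiplication map $\mathcal{LM}_{\lambda}(W)\to W$: on a pure tensor both send $f\otimes g\otimes v\mapsto (f\circ_{\mu}g)\otimes v$, and this yields the same vector in either target because $\mathcal{LM}_{\mu}(V)$ and $\mathcal{LM}_{\lambda\mu}(V)$ agree as sets, differing only in the left $F_{n}$-action. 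Part 1 applied to $W$ therefore identifies $\ker\nabla$ with
\[
\left\{(w_{1},\ldots,w_{n})\in W^{\oplus n}\,\middle|\,\sum_{i=1}^{n}(\rho_{V}^{\mu}(x_{i})-1)w_{i}=0\right\}.
\]

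The final step exploits the block-row form of $\rho_{V}^{\mu}(x_{i})$ recalled in Remark \ref{rem:DR}: that matrix is the identity outside its $i$-th block-row, so $\rho_{V}^{\mu}(x_{i})-1$ sends all of $W$ into the $i$-th direct summand $(x_{i}-1)\otimes V$ of $W\cong V^{\oplus n}$. Since the $n$ summands are linearly independent, the vanishing $\sum_{i}(\rho_{V}^{\mu}(x_{i})-1)w_{i}=0$ decouples into $(\rho_{V}^{\mu}(x_{i})-1)w_{i}=0$ for each $i$, giving $w_{i}\in\ker(\rho_{V}^{\mu}(x_{i})-1)$, and the claimed direct sum decomposition follows.

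The main potential obstacle is justifying the transfer from the twisted target $\mathcal{LM}_{\lambda\mu}(V)$ to the untwisted one $W$ in order to invoke part 1; once this set-theoretic identification is cleanly stated, the remaining argument is a routine block-matrix calculation combined with the observation that the $i$-th block-rows are pairwise supported on disjoint summands.
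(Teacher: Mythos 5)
Your proof is correct and takes essentially the same route as the paper, which simply observes that part 1 is immediate from the definition of $\nabla$ and that part 2 follows from the inclusion $\mathrm{Im}(\rho_{V}^{\mu}(x_{i})-1)\subset (x_{i}-1)\otimes V$, read off from the block-matrix form in Remark \ref{rem:DR}. You have merely spelled out the two steps the paper leaves implicit — invoking part 1 for $W=\mathcal{LM}_{\mu}(V)$ (after the set-theoretic identification of the twisted and untwisted multiplication maps, which you justify carefully) and using the disjoint block-row supports to decouple the vanishing condition — and you correctly flag the typo $\rho_{V}(x_{1})$ in place of $\rho_{V}(x_{n})$.
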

\begin{proof}
	The first assertion is obvious from the definition of $\nabla$.
	The second one follows from the fact 
	\[
		\mathrm{Im\,}(\rho_{V}^{\mu}(x_{i})-1)\subset  (x_{i}-1)\otimes V
	\]
	which can be easily seen by the matrix representation of $\rho_{V}^{\mu}(x_{i})$ appearing in Remark \ref{rem:DR}.
\end{proof}

\section{Euler transform and twisted Long-Moody functor}\label{sec:ET}
The Euler transform is a classical tool 
in analysis, which is 
defined by the following integral transformation,
\[
	\phi(z)\longmapsto \int_{C}\phi(t)(t-z)^{\lambda}\,dt	
\]
for $\lambda \in \mathbb{C}$ and a suitable contour $C$.
One of the most famous integration of this form may be the integral representation of the 
Gauss hypergeometric function $F(\alpha,\beta,\gamma;z)$, 
\[
	F(\alpha,\beta,\gamma;z)=\frac{\Gamma(\gamma)}{\Gamma(\alpha)\Gamma(\gamma-\alpha)}\int_{1}^{\infty}
	t^{\beta-\gamma}(t-1)^{\gamma-\alpha-1}(t-z)^{-\beta}\,dt,
\]	
which is valid for some suitable complex numbers $\alpha,\beta,\gamma$ and 
choice of a branch of the multi-valued function $t^{\beta-\gamma}(t-1)^{\gamma-\alpha-1}(t-z)^{-\beta}$. 

In this section, we provide a homological interpretation of this Euler transform
and explain that the twisted Long-Moody functor can be obtained from 
this homological Euler transform.

\subsection{Homological Euler transform}
In this paper, 
locally constant sheaves of $k$-vector spaces on a topological space $X$
are just called $k$-local systems on $X$.
If 
$X$ is a connected topological manifold, 
it is well-known that 
the stalk $\mathcal{L}_{d}$ at $d\in X$ 
of 
a $k$-local system $\mathcal{L}$ on $X$
has the structure of left $k[\pi_{1}(X,d)]$-module,
and conversely, 
any left $k[\pi_{1}(X,d)]$-module
$L_{d}$ can be seen as the stalk
at $d$ of some $k$-local system. 
Therefore 
we frequently  do not 
distinguish between them.

\begin{dfn}[Kummer local system]\normalfont
		Let $\omega$ be the closed path in $\mathbb{C}^{\times}$
		encircling the origin counterclockwise along the 
		unit circle with the base point $1$. 
	For a 1-dimensional representation 
	$\rho \colon \pi_{1}(\mathbb{C}^{\times},1)\rightarrow \mathrm{GL}_{1}(k)$,
	the corresponding local system over $\mathbb{C}^{\times}$ is called \emph{Kummer local system} which is denoted by
	$K_{\lambda}$ with the index $\lambda:=\rho(\omega)\in k^{\times}$.
\end{dfn}

Let us consider $\mathcal{F}_{2}(\mathbb{C}\backslash Q_{n})=\{(t,z)\in \mathbb{C}^{2}
\mid t\neq z\text{ and }t,z\notin Q_{n}\}$, the configuration space of 
$2$ points in $\mathbb{C}\backslash Q_{n}$,
and $\pr_{t}$ and $\pr_{z}$ respectively denote 
the first and second projections of $\mathcal{F}_{2}(\mathbb{C}\backslash Q_{n})$.
Also we consider the map
\[
	\pr_{t-z}\colon \mathcal{F}_{2}(\mathbb{C}\backslash Q_{n})\ni (t,z)\longmapsto 
	t-z\in \mathbb{C}^{\times}.	
\]

Let $L$ be a $k$-local system on $\mathbb{C}\backslash Q_{n}$ and $K_{\lambda}$
a Kummer local system, 
and then we consider the $k$-local system on $\mathcal{F}_{2}(\mathbb{C}\backslash Q_{n})$ defined
by 
\[
	C_{\lambda}(L):=\pr_{t}^{*}(L)\otimes_{k}\pr_{t-z}^{*}(K_{\lambda}).
\]
Let us set $Q_{n}(z):=Q_{n}\cup \{z\}$ for
$z\in \mathbb{C}\backslash Q_{n}$.
Then 
we can naturally identify the fiber $\pr_{z}^{-1}(z)$ 
with $\mathbb{C}\backslash Q_{n}(z)$ and obtain
the embedding map $\iota_{z}\colon \mathbb{C}\backslash Q_{n}(z)
\hookrightarrow \mathcal{F}_{2}(\mathbb{C}\backslash Q_{n})$ through this identification.
\begin{dfn}[Homological Euler transform of a local system]\normalfont
For every open disk $D_{<r}(z)\subset \mathbb{C}\backslash Q_{n}$, 
we define the embedding 
$\iota_{r,z}\colon (\mathbb{C}\backslash Q_{n})\backslash D_{<r}(z)
\hookrightarrow \mathcal{F}_{2}(\mathbb{C}\backslash Q_{n})$
as  the composite 
of $(\mathbb{C}\backslash Q_{n})\backslash D_{<r}(z)\hookrightarrow 
\mathbb{C}\backslash Q_{n}(z)$ and $\iota_{z}\colon \mathbb{C}\backslash Q_{n}(z)
\hookrightarrow \mathcal{F}_{2}(\mathbb{C}\backslash Q_{n})$.
Then
we associate this open set with the singular homology group with local coefficient
\[
	H_{1}((\mathbb{C}\backslash Q_{n})\backslash D_{<r}(z);\iota_{r,z}^{*}(C_{\lambda}(L)))	
\]
to every open disk $D_{<r}(z)\subset \mathbb{C}\backslash Q_{n}$.
Since open disks $D_{<r}(z)$ constitute a base of open sets 
in $\mathbb{C}\backslash Q_{n}(z)$,
we can sheafify this correspondence and   
then $\mathcal{E}_{\lambda}(L)$ denotes the resulting sheaf 
on $\mathbb{C}\backslash Q_{n}$ which is a $k$-local system 
whose stalks are   
isomorphic to $H_{1}(\mathbb{C}\backslash Q_{n}(z);\iota_{z}^{*}(C_{\lambda}(L)))$
 at $z\in \mathbb{C}\backslash Q_{n}$.
We call this $k$-local system $\mathcal{E}_{\lambda}(L)$
the \emph{homological Euler transform} of the local system $L$
with respect to the Kummer local system $K_{\lambda}$.
\end{dfn}

There are many equivalent 
definitions for homology groups of local coefficients
and we adopt the following definition among them in this paper.
For a connected topological 
manifold $X$ and a $k$-local system $L$,
we regard $L$ as a   
left $k[\pi_{1}(X,x)]$-module 
by taking the stalk at $x$.
Then the $k$-vector space of singular $n$-chains with coefficients in $L$
is 
\[
	C_{n}(X;L):=C_{n}(\widetilde{X};k)\otimes_{\pi_{1}(X,x)} L
\]
where $C_{n}(\widetilde{X};k)$ is the $k$-vector space of
usual singular $n$-chains in the universal covering 
$\pi_{X}\colon \widetilde{X}\rightarrow X$ of $X$.
Then $H_{*}(X;L)$
 are defined as homology groups 
 of the chain complex $C_{*}(X;L)$.

In the definition of $C_{n}(X;L)$,
 $C_{n}(\widetilde{X};k)$ is regarded as a right $k[\pi_{1}(X,x)]$-module 
as follows.
Let us consider the natural isomorphism
\[
	\delta\colon \pi_{1}(X,x)
	\longrightarrow 	
	\mathrm{Deck}(\pi_{X})
\]
where 
$\mathrm{Deck}(\pi_{X})$ is 
the group of deck transformations of  
$\pi_{X}
\colon \widetilde{X}
\rightarrow X$.
Since the definition of $\delta$ depends on the 
choice of $\bar{x}\in \pi_{X}^{-1}(x)$,
we give the precise definition here for the accuracy.
We associate $\delta(\gamma)\in \mathrm{Deck}(\pi_{X})$
with each $\gamma\in \pi_{1}(X,x)$
as below.
For each $c\in \widetilde{X}$ we consider 
the path $\sigma_c\colon [0,1]\rightarrow \widetilde{X}$ 
from $\bar{x}$ to $c$
which is unique up to homotopy.
Then $\delta(\gamma)(c)\in \widetilde{X}$
is the end point of 
the lift of 
$\gamma\cdot (\pi_{X})_{*}(\sigma_{c})$ to $\widetilde{X}$
starting at $\bar{x}$.

Now we define the right action of $\pi_{1}(X,x)$ on 
the set of singular $n$-simplices $S_{n}(\widetilde{X})$
by 
\[
	S_{n}(\widetilde{X})\times \pi_{1}(X,x)\ni (\sigma,\gamma)\longmapsto 
	\delta(\gamma^{-1})_{*}(\sigma) \in S_{n}(\widetilde{X}) 	
\]
which extends to the right action of $\pi_{1}(X,x)$ on $C_{n}(\widetilde{X};k)$.
Here for a continuous map $\phi\colon X_{1}\rightarrow X_{2}$
of topological spaces, 
we write the induced map of singular simplices 
by $\phi_{*}\colon S_{n}(X_{1})\ni \sigma\mapsto \phi\circ \sigma\in S_{n}(X_{2})$
which linearly extends 
to the $k$-linear map $\phi_{*}\colon C_{n}(X_{1};k)\rightarrow C_{n}(X_{2};k)$
of chains.

\subsection{Euler transform functor of $\mathbf{Mod}_{k[F_{n}\rtimes_{\alpha}G]}$}\label{sec:braidhomol}
The homological Euler transform $\mathcal{E}_{\lambda}$
was introduced in the previous section 
 as an endfunctor of the category 
of $k$-local systems on $\mathbb{C}\backslash Q_{n}$.
We explain that this functor naturally extends 
to the endfunctor of $\mathbf{Mod}_{k[F_{n}\rtimes_{\alpha}G]}$.

Let $(G,\alpha)$ be an Artin representation on $F_{n}$ and 
$V$ a left $k[F_{n}\rtimes_{\alpha}G]$-module.
Then under the identifications 
$\pi_{1}(\mathbb{C}\backslash Q_{n},d)\cong \pi_{1}(D\backslash Q_{n},d)\cong F_{n}$,
$V$ is seen as a left  $k[\pi_{1}(\mathbb{C}\backslash Q_{n},d)]$-module.
Thus we obtain the local system $\mathcal{E}_{\lambda}(V)$ on $\mathbb{C}\backslash Q_{n}$,
and as the stalk at $z\in \mathbb{C}\backslash Q_{n}$
the homology group
\[
	\mathcal{E}_{\lambda}(V)_{z}=H_{1}(\mathbb{C}\backslash Q_{n}(z);\iota_{z}^{*}(C_{\lambda}(V)))
\]
appears. We shall see that this homology group is naturally  equipped with an action of $F_{n}\rtimes_{\alpha}G$.

To provide an accurate explanation, we take $z\in \mathbb{C}\backslash Q_{n}$
as the origin $0$ and write it by $a_{0}:=z=0$.
We take the closed disk $D$ as large as it contains $Q_{n}(a_{0})=\{a_{0}=0,
a_{1}=1,\ldots,a_{n}=n\}$ in its interior.
We write the half-twist 
interchanging $a_{i}$ and $a_{i+1}$
by $\tau_{i}$ for each $i=0,1,\ldots,n-1$
as in Section \ref{sec:tlm}.
Then $\mathfrak{M}(D,Q_{n}(a_{0}))$
is generated by $\tau_{0},\tau_{1},\ldots,\tau_{n-1}$
and contains $\mathfrak{M}(D,Q_{n})$
as the subgroup generated by $\tau_{1},\ldots,\tau_{n-1}$. 
Through the isomorphism $\mathfrak{M}(D,Q_{n}(a_{0}))
\cong B_{n+1}$, the generators of $B_{n+1}$
corresponding to the half-twists $\tau_{0},\tau_{1},\ldots,\tau_{n-1}$
are denoted
by $\sigma_{0},\sigma_{1},\ldots,\sigma_{n-1}$ respectively.
Since elements in $\mathfrak{M}(D,Q_{n}(a_{0}))$
are identity maps on $\partial D$, we can regard them
as self-homeomorphisms of $\mathbb{C}\backslash Q_{n}(a_{0})$ by
extend them as identity outside $D$.
As a set of generators of $\pi_{1}(\mathbb{C}\backslash Q_{n}(a_{0}),d)$,
we take closed path 
$\gamma_{i}$ encircling $a_{i}$, $i=0,1,\ldots,n$
as well as in Section \ref{sec:tlm}.

\subsubsection{$F_{n}\rtimes_{\alpha}G$-actions on $\pi_{1}(\mathbb{C}\backslash Q_{n}(a_{0}),d)$ and 
$C_{n}(\widetilde{\mathbb{C}\backslash Q_{n}(a_{0})};k)$}\label{sec:pi1action}
Let us consider the subgroup of $B_{n+1}$
generated by $\sigma_{0}^{2},\sigma_{1},\ldots,\sigma_{n-1}$
and denote it by $B_{1,n}$.
Then we can see that $x_{1}:=\sigma_{0}^{2},\,x_{2}:=\sigma_{1}x_{1}\sigma_{1}^{-1},\,
\ldots\,,x_{n}:=\sigma_{n-1}x_{n-1}\sigma_{n-1}^{-1}$
generate the free group $F_{n}$,
and $B_{1,n}$ decomposes as the inner semidirect product  
$B_{1,n}=F_{n}\rtimes B_{n}$
where $B_{n}$ acts on $F_{n}$ through the Artin representation. 
Then it follows that 
$F_{n}\rtimes_{\alpha}G$ acts on $\mathbb{C}\backslash Q_{n}(a_{0})$,
i.e,
we obtain the group homomorphism 
\[
	\eta\colon 	F_{n}\rtimes_{\alpha}G\longrightarrow \mathrm{Aut}_{\mathbf{Top}}(\mathbb{C}\backslash Q_{n}(a_{0}))
\]
which is the composition of the following maps; the first one is 
the map 
\[
	F_{n}\rtimes_{\alpha}G\ni (x,g)\mapsto (x,\alpha(g))\in F_{n}\rtimes B_{n},
\]
the second one is 
the inclusion $F_{n}\rtimes B_{n}\cong B_{1,n}\subset B_{n+1}
\cong \mathfrak{M}(D,Q_{n}(a_{0}))$, and 
the final one is 
the inclusion 
$\mathfrak{M}(D,Q_{n}(a_{0}))\hookrightarrow  
 \mathrm{Aut}_{\mathbf{Top}}(\mathbb{C}\backslash Q_{n}(a_{0}))$. 
Here $\mathrm{Aut}_{\mathbf{Top}}(X)$
denotes the group of self-homeomorphisms of a topological space $X$.
Then this $\eta$ allows us to define
the following left action of $F_{n}\rtimes_{\alpha}G$ on $\pi_{1}(\mathbb{C}\backslash Q_{n}(a_{0}),d)$,
\[
	(F_{n}\rtimes_{\alpha}G)\times \pi_{1}(\mathbb{C}\backslash Q_{n}(a_{0}),d)
	\ni(h,\gamma)\longmapsto \eta(h)_{*}(\gamma)\in \pi_{1}(\mathbb{C}\backslash Q_{n}(a_{0}),d).
\]

Also we define an action of $F_{n}\rtimes_{\alpha}G$ on  
$C_{n}(\widetilde{\mathbb{C}\backslash Q_{n}(a_{0})};k)$ as follows.
Fix an element $\bar{d}\in \pi_{\mathbb{C}\backslash Q_{n}(a_{0})}^{-1}(d)$.
Then for each $h\in F_{n}\rtimes_{\alpha}G$, let  
$\overline{\eta(h)}\colon \widetilde{\mathbb{C}\backslash Q_{n}(a_{0})}
\rightarrow \widetilde{\mathbb{C}\backslash Q_{n}(a_{0})}$
be the unique lift of $\pi_{\mathbb{C}\backslash Q_{n}(a_{0})}\circ \eta(h)
\colon \widetilde{\mathbb{C}\backslash Q_{n}(a_{0})}
\rightarrow \mathbb{C}\backslash Q_{n}(a_{0})$
satisfying $\overline{\eta(h)}(\bar{d})=\bar{d}$.
\[
	\begin{tikzcd}
		\widetilde{\mathbb{C}\backslash Q_{n}(a_{0})}\arrow[r,"\overline{\eta(h)}"]
		\arrow[d,"\pi_{\mathbb{C}\backslash Q_{n}(a_{0})}"]&\widetilde{\mathbb{C}\backslash Q_{n}(a_{0})}
		\arrow[d,"\pi_{\mathbb{C}\backslash Q_{n}(a_{0})}"]\\
		\mathbb{C}\backslash Q_{n}(a_{0})\arrow[r,"\eta(h)"]&
		\mathbb{C}\backslash Q_{n}(a_{0})
	\end{tikzcd}
\]
The uniqueness of the lift induces  
$(\overline{\eta(h)})^{-1}=\overline{\eta(h^{-1})}$ and   
$\overline{\eta(h_{1})}\circ \overline{\eta(h_{1})}=\overline{\eta(h_{1}h_{2})}$
for $h,h_{1},h_{2}\in F_{n}\rtimes_{\alpha}G$. 
Thus 
$
\bar{\eta}\colon F_{n}\rtimes_{\alpha}G\rightarrow \mathrm{Aut}_{\mathbf{Top}}(\widetilde{\mathbb{C}\backslash Q_{n}(a_{0})})	
$
becomes a group homomorphism and defines
the left action of $F_{n}\rtimes_{\alpha}G$
on $S_{n}(\widetilde{\mathbb{C}\backslash Q_{n}(a_{0})})$ by
\[
	(F_{n}\rtimes_{\alpha}G)\times S_{n}(\widetilde{\mathbb{C}\backslash Q_{n}(a_{0})})
	\ni (h,\sigma)\longmapsto \bar{\eta}(h)_{*}(\sigma) \in S_{n}(\widetilde{\mathbb{C}\backslash Q_{n}(a_{0})}),
\]
which
extends linearly to the action on $C_{n}(\widetilde{\mathbb{C}\backslash Q_{n}(a_{0})};k)$.
  
Now we recall that 
this $\bar{\eta}$ is compatible with the isomorphism 
$\delta\colon \pi_{1}(\mathbb{C}\backslash Q_{n}(a_{0}),d)
\rightarrow 	
\mathrm{Deck}(\pi_{\mathbb{C}\backslash Q_{n}(a_{0})})$,
i.e., we have the following.
\begin{lmm}\label{lem:compa}
The diagram
\[
	\begin{tikzcd}
		\widetilde{\mathbb{C}\backslash Q_{n}(a_{0})}
		\arrow[r,"\bar{\eta}(h)"]
		\arrow[d,"\delta(\gamma)"]
		&\widetilde{\mathbb{C}\backslash Q_{n}(a_{0})}
		\arrow[d,"\delta( \eta(h))_{*}(\gamma))"]\\
		\widetilde{\mathbb{C}\backslash Q_{n}(a_{0})}\arrow[r,"\bar{\eta}(h)"]
		&\widetilde{\mathbb{C}\backslash Q_{n}(a_{0})}
	\end{tikzcd}	
\]
is commutative
for each $h\in F_{n}\rtimes_{\alpha}G$ and $\gamma \in \pi_{1}(
	\mathbb{C}\backslash Q_{n}(a_{0}),d)$.
\end{lmm}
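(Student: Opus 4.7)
The plan is to exploit the uniqueness of continuous lifts to the universal cover. Write $X := \mathbb{C}\backslash Q_{n}(a_{0})$ and $\pi := \pi_{X}\colon \widetilde{X}\to X$. Both sides of the asserted equality are continuous self-maps of the connected space $\widetilde{X}$, and by the unique lifting property, if two such maps project to the same map $\widetilde{X}\to X$ under $\pi$ and agree at one point, then they coincide everywhere. I would therefore reduce the lemma to checking (i) that both composites cover $\eta(h)\circ \pi$ and (ii) that they take the same value at the distinguished lift $\bar{d}$.

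For step (i), $\delta(\gamma)$ and $\delta(\eta(h)_{*}(\gamma))$ are deck transformations, hence each satisfies $\pi\circ\delta(\cdot)=\pi$, while by construction $\pi\circ \bar{\eta}(h)=\eta(h)\circ\pi$. Composing, both $\pi\circ\bar{\eta}(h)\circ\delta(\gamma)$ and $\pi\circ\delta(\eta(h)_{*}(\gamma))\circ\bar{\eta}(h)$ equal $\eta(h)\circ\pi$, so both sides of the diagram are lifts of the same downstairs map.

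For step (ii), $\bar{\eta}(h)(\bar{d})=\bar{d}$ by definition, so the right-hand side evaluated at $\bar{d}$ is $\delta(\eta(h)_{*}(\gamma))(\bar{d})$, which, by the definition of $\delta$ applied with the constant path $\sigma_{\bar{d}}$, equals the endpoint of the lift of $\eta(h)_{*}(\gamma)=\eta(h)\circ\gamma$ starting at $\bar{d}$. For the left-hand side, I would let $\tilde{\gamma}\colon [0,1]\to \widetilde{X}$ be the lift of $\gamma$ starting at $\bar{d}$, so that its endpoint is $\delta(\gamma)(\bar{d})$; then $\bar{\eta}(h)\circ\tilde{\gamma}$ is a path in $\widetilde{X}$ starting at $\bar{\eta}(h)(\bar{d})=\bar{d}$ whose projection is $\eta(h)\circ\pi\circ\tilde{\gamma}=\eta(h)\circ\gamma$, and hence it is the unique lift of $\eta(h)_{*}(\gamma)$ starting at $\bar{d}$. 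Its endpoint is $\bar{\eta}(h)(\delta(\gamma)(\bar{d}))$, which matches the right-hand side at $\bar{d}$.

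Combining (i) and (ii), the unique lifting property forces the two self-maps to agree everywhere, giving the commutativity of the diagram. I do not anticipate a serious obstacle; the only delicate point will be keeping base-point conventions consistent in the definition of $\delta$, so that the two "endpoint of lift" computations align correctly with the left action of $\bar{\eta}$ and the right-action convention implicit in the deck transformation isomorphism.
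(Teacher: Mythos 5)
Your argument is correct and is exactly the standard proof of this equivariance fact, which the paper simply declares ``well-known'' without elaboration. Both reductions — (i) that $\pi\circ\bar\eta(h)\circ\delta(\gamma)$ and $\pi\circ\delta(\eta(h)_*(\gamma))\circ\bar\eta(h)$ both equal $\eta(h)\circ\pi$, and (ii) that the two sides send $\bar d$ to the same point (namely the endpoint of the lift of $\eta(h)\circ\gamma$ starting at $\bar d$) — are carried out correctly and mesh cleanly with the paper's definition of $\delta$, where $\delta(\gamma)(\bar d)$ is the endpoint of the lift of $\gamma\cdot(\pi_X)_*(\sigma_{\bar d})$ starting at $\bar d$ and $\sigma_{\bar d}$ may be taken constant. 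The appeal to the unique lifting criterion on the simply connected space $\widetilde{\mathbb{C}\backslash Q_n(a_0)}$ then finishes the proof. No gaps.
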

\begin{proof}
	This is well-known.
\end{proof}

\subsubsection{$F_{n}\rtimes_{\alpha}G$-action on $\iota_{a_{0}}^{*}(C_{\lambda}(V))$}
Let us define a group homomorphism 
$\tilde{\eta}\colon F_{n}\rtimes_{\alpha}G\rightarrow \mathrm{Aut}_{k}(V\otimes K_{\lambda})$
by
\begin{align*}
	\tilde{\eta}(x)(v\otimes \kappa)&:=v\otimes \kappa&\text{for }x\in F_{n},\text{ and }v\otimes \kappa\in V\otimes_{k}K_{\lambda},\\
	\tilde{\eta}(g)(v\otimes \kappa)&:=gv\otimes \kappa&\text{for }g\in G,\text{ and }v\otimes \kappa\in V\otimes_{k}K_{\lambda}.
\end{align*}
Then we check that $\tilde{\eta}$ is compatible with 
the $\pi_{1}(\mathbb{C}\backslash Q_{n}(a_{0}),d)$-action
on $\iota_{a_{0}}^{*}(C_{\lambda}(V))=V\otimes_{k}K_{\lambda}$.
Namely, 
the diagram 
\[
	\begin{tikzcd}
		\iota_{a_{0}}^{*}(C_{\lambda}(V))\arrow[r,"\tilde{\eta}(h)"]\arrow[d,"\gamma\cdot"]
		&\iota_{a_{0}}^{*}(C_{\lambda}(V))\arrow[d,"\eta(h)_{*}(\gamma)\cdot"]\\
		\iota_{a_{0}}^{*}(C_{\lambda}(V))\arrow[r,"\tilde{\eta}(h)"]
		&\iota_{a_{0}}^{*}(C_{\lambda}(V))
	\end{tikzcd}	
\]
is commutative
for every $h\in F_{n}\rtimes_{\alpha}G$ and $\gamma\in \pi_{1}(\mathbb{C}\backslash Q_{n}(a_{0}),d)$.

Recall that $\iota_{a_{0}}^{*}(C_{\lambda}(V))$
is the $k[\pi_{1}(\mathbb{C}\backslash Q_{n}(a_{0}),d)]$-module
which equals to 
$V\otimes_{k}K_{\lambda}$
as a $k$-vector space, and the
$\pi_{1}(\mathbb{C}\backslash Q_{n}(a_{0}),d)$-action 
on $\iota_{a_{0}}^{*}(C_{\lambda}(V))$
is given by 
\[
	\gamma\cdot(v\otimes \kappa)=
	(\mathrm{pr}_{t}\circ\iota_{a_{0}})_{*}(\gamma)\cdot v\otimes 
	(\mathrm{pr}_{t-z}\circ\iota_{a_{0}})_{*}(\gamma)\cdot \kappa	
\]
for $\gamma\in \pi_{1}(\mathbb{C}\backslash Q_{n}(a_{0},d))$, $v\in V$
and $\kappa\in K_{\lambda}$.

First we look at  the homomorphism 
$(\mathrm{pr}_{t}\circ\iota_{a_{0}})_{*}\colon 
\pi_{1}(\mathbb{C}\backslash Q_{n}(a_{0}),d)
\rightarrow \pi_{1}(\mathbb{C}\backslash Q_{n},d)$.
Since $\mathrm{pr}_{t}\circ\iota_{a_{0}}$
is the  
inclusion map $\mathbb{C}\backslash Q_{n}(a_{0})
\hookrightarrow \mathbb{C}\backslash Q_{n}$
and $\gamma_{0}$ in $\mathbb{C}\backslash Q_{n}(a_{0})$
is homotopic to the constant path in $\mathbb{C}\backslash Q_{n}$,
we can see that the group homomorphism 
$(\mathrm{pr}_{t}\circ\iota_{a_{0}})_{*}$
coincides with the projection map 
\[
	\begin{array}{cccc}
	\mathrm{pr}_{0}\colon& F_{n+1}=\langle x_{0},x_{1},\ldots,x_{n} \rangle&\longrightarrow &F_{n}=\langle x_{1},\ldots,x_{n}\rangle\\
	&x_{i}&\longmapsto &\begin{cases} 1&\text{if }i=0\\
		x_{i}&\text{otherwise}
	\end{cases}	
	\end{array},
\]
for which the inclusion map
$\mathrm{inc}_{0}\colon F_{n}=\langle x_{1},\ldots,x_{n}\rangle \ni x_{i}\mapsto x_{i}\in F_{n+1}
=\langle x_{0},x_{1},\ldots,x_{n}\rangle$
is a retraction homomorphism,
i.e., $\mathrm{pr}_{0}\circ \mathrm{inc}_{0}=\mathrm{id}_{F_{n}}$.

Let us regard elements in $\mathfrak{M}(D,Q_{n}(a_{0}))$
as self-homeomorphisms of the complex plane $\mathbb{C}$.
Since the subgroup $B_{1,n}$ of 
$\mathfrak{M}(D,Q_{n}(a_{0}))$
preserves $Q_{n}$, it acts not only on $D\backslash Q_{n}(a_{0})$ but also on
 $D\backslash Q_{n}$.
Thus we have the commutative diagram 
\[
	\begin{tikzcd}
		\mathbb{C}\backslash Q_{n}(a_{0})
		\arrow[r,"\mathrm{pr}_{t}\circ\iota_{a_{0}}"]
		\arrow[d,"\sigma"]&
		\mathbb{C}\backslash Q_{n}
		\arrow[d,"\sigma"]\\
		\mathbb{C}\backslash Q_{n}(a_{0})
		\arrow[r,"\mathrm{pr}_{t}\circ\iota_{a_{0}}"]
		&
		\mathbb{C}\backslash Q_{n}
	\end{tikzcd}	
\]
for each $\sigma \in B_{1,n}$.
 \begin{lmm}\label{lem:trivact1}
	The subgroup $B_{n}$ of $B_{1,n}$
	acts on $\pi_{1}(\mathbb{C}\backslash Q_{n},d)$
	by the Artin representation.
	The subgroup $F_{n}$ of $B_{1,n}$
	generated by $x_{1}=\sigma_{0}^{2},\,x_{2}=\sigma_{1}x_{1}\sigma_{1}^{-1},\,
	\ldots\,,x_{n}=\sigma_{n-1}x_{n-1}\sigma_{n-1}^{-1}$
	acts trivially on $\pi_{1}(\mathbb{C}\backslash Q_{n},d)$.
\end{lmm}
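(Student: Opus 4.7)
The plan is to treat the two assertions separately. For the first, I would note that each generator $\sigma_i$ of $B_n \subset B_{1,n}$ with $i \geq 1$ is represented by the half-twist $\tau_i$ supported in a small neighborhood of $[a_i, a_{i+1}]$, hence disjoint from $a_0$. Restricting this homeomorphism from $\mathbb{C}\backslash Q_n(a_0)$ to $\mathbb{C}\backslash Q_n$ recovers exactly the half-twist that was used in Section \ref{sec:tlm} to define the Artin representation on $\pi_1(\mathbb{C}\backslash Q_n, d) \cong F_n$, so the first assertion follows directly from the very construction of $\theta_{\mathrm{Artin}}$.

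For the second assertion, my plan is to factor the $B_{1,n}$-action on $\pi_1(\mathbb{C}\backslash Q_n, d)$ through a ``forget the point $a_0$'' homomorphism $\Phi\colon B_{1,n} \to \mathfrak{M}(D, Q_n)$. Since $B_{1,n}$ is, by construction, precisely the subgroup of $\mathfrak{M}(D, Q_n(a_0))$ whose elements preserve $Q_n$ setwise, one defines $\Phi$ by simply forgetting that the representing homeomorphisms also preserve the auxiliary point $a_0$. By construction $\Phi(\sigma_i) = \tau_i$ for $i \geq 1$, and the $B_{1,n}$-action on $\pi_1(\mathbb{C}\backslash Q_n,d)$ factors through $\Phi$ via the commutative diagram of inclusions and projections already used in the preceding discussion. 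Since each $x_i$ is a conjugate of $x_1 = \sigma_0^2$ inside $B_{1,n}$ and $\ker \Phi$ is normal, triviality of the $F_n$-action reduces to the single assertion $\Phi(\sigma_0^2) = 1$.

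The hard part is exactly this last geometric triviality. The element $\tau_0^2$ is the Dehn twist around a simple closed curve $c$ bounding a small disk $D' \subset D$ with $\{a_0, a_1\} \subset D'$. When regarded inside the pair $(D, Q_n)$, the disk $D'$ contains only the single marked point $a_1$, and the plan is to write down an explicit isotopy to the identity by first shrinking $c$ to a tiny circle centered at $a_1$ (the Dehn twists around the nested curves are pairwise conjugate by an ambient isotopy in $\mathfrak{M}(D,Q_n)$) and then realising the Dehn twist around the tiny circle as the full $2\pi$-rotation of a small round disk around $a_1$, which is isotoped to the identity through the one-parameter family of rotations by $2\pi s$, $s \in [0,1]$, fixing $a_1$ throughout. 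An alternative would be to invoke the Birman exact sequence for the forget-strand map $\mathfrak{M}(D,Q_n(a_0)) \to \mathfrak{M}(D, Q_n)$ and recognise $\sigma_0^2$ as the point-pushing image of the meridian around $a_1$. Either way one obtains $\Phi(\sigma_0^2) = 1$, and the conclusion $\Phi(x_i) = 1$, hence the triviality of the $F_n$-action on $\pi_1(\mathbb{C}\backslash Q_n, d)$, is then formal.
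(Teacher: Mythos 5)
Your proposal is correct, and for the second (harder) assertion it takes a genuinely different route from the paper. The paper argues algebraically: it uses the explicit Artin-representation formula $(\sigma_0^2)_*(\gamma_0)=\gamma_1^{-1}\gamma_0\gamma_1$, $(\sigma_0^2)_*(\gamma_1)=(\gamma_0\gamma_1)^{-1}\gamma_1(\gamma_0\gamma_1)$, $(\sigma_0^2)_*(\gamma_i)=\gamma_i$ for $i\geq 2$, and then applies $(\mathrm{pr}_t\circ\iota_{a_0})_*$, which sends $\gamma_0\mapsto 1$, to conclude that $(x_1)_*$ is the identity on $\pi_1(\mathbb{C}\backslash Q_n,d)$; the conclusion for $x_2,\ldots,x_n$ then follows by conjugation, exactly as you note. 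Your proof instead identifies $\tau_0^2$ geometrically as the Dehn twist around a simple closed curve bounding a disk that, after forgetting $a_0$, contains only the single marked point $a_1$, and then appeals to the standard fact that such a Dehn twist is isotopically trivial in $\mathfrak{M}(D,Q_n)$ (equivalently, that $\sigma_0^2$ is point-pushing for the Birman exact sequence). Both proofs hinge on the same structural observation that the $B_{1,n}$-action on $\pi_1(\mathbb{C}\backslash Q_n,d)$ factors through the forget-strand map and that $\ker\Phi$ is normal; what differs is how the single vanishing $\Phi(\sigma_0^2)=1$ is verified. The paper's route is more self-contained, needing nothing beyond the already-displayed Artin formulas; yours is conceptually cleaner and pinpoints the geometric reason (a once-punctured disk has trivial mapping class group), at the cost of invoking a standard but external fact about Dehn twists or the Birman sequence.
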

\begin{proof} 
	The above commutative diagram induces
	the commutative diagram 
	 \[
	\begin{tikzcd}
		\pi_{1}(\mathbb{C}\backslash Q_{n}(a_{0}),d)
		\arrow[r,"(\mathrm{pr}_{t}\circ\iota_{a_{0}})_{*}"]
		\arrow[d,"\sigma^{*}"]&
		\pi_{1}(\mathbb{C}\backslash Q_{n},d)
		\arrow[d,"\sigma^{*}"]\\
		\pi_{1}(\mathbb{C}\backslash Q_{n}(a_{0}),d)
		\arrow[r,"(\mathrm{pr}_{t}\circ\iota_{a_{0}})_{*}"]
		&
		\pi_{1}(\mathbb{C}\backslash Q_{n},d)
	\end{tikzcd}.
	\]
	Then this diagram tells us that 
	the map
	$\sigma_{*}\colon \pi_{1}(\mathbb{C}\backslash Q_{n},d)\rightarrow \pi_{1}(\mathbb{C}\backslash Q_{n},d)$
	decomposes as
	\[
		\pi_{1}(\mathbb{C}\backslash Q_{n},d)
		\xrightarrow{\mathrm{inc}_{0}}
		\pi_{1}(\mathbb{C}\backslash Q_{n}(a_{0}))
		\xrightarrow{\sigma_{*}}
		\pi_{1}(\mathbb{C}\backslash Q_{n}(a_{0}))
		\xrightarrow{(\mathrm{pr}_{t}\circ\iota_{a_{0}})_{*}}
		\pi_{1}(\mathbb{C}\backslash Q_{n},d)
	\]
	for every $\sigma\in B_{1,n}$.
	This immediately shows the first assertion. 

	On the other hand, since we have
	\[
		(\sigma_{0}^{2})_{*}(\gamma_{i})=
		\begin{cases}
			\gamma_{1}^{-1}\gamma_{0}\gamma_{1}&\text{if $i=0$},\\
			(\gamma_{0}\gamma_{1})^{-1}\gamma_{1}(\gamma_{0}\gamma_{1})&\text{if }i=1,\\
			\gamma_{i}&\text{otherwise},
		\end{cases}	
		\text{for }\gamma_{i}\in \pi_{1}(\mathbb{C}\backslash Q_{n}(a_{0}),d),
	\]
	the equation $(\mathrm{pr}_{t}\circ\iota_{a_{0}})_{*}(\gamma_{0})=1$
	shows that
	$(x_{1})_{*}=(\sigma_{0}^{2})_{*}\colon \pi_{1}(\mathbb{C}\backslash Q_{n},d)
	\rightarrow \pi_{1}(\mathbb{C}\backslash Q_{n},d)$
	is the identity map. 
	Then it inductively follows that all
	$(x_{i})_{*}\colon \pi_{1}(\mathbb{C}\backslash Q_{n},d)
	\rightarrow \pi_{1}(\mathbb{C}\backslash Q_{n},d)$
	are also the identity maps for $i=1,\ldots,n$.
\end{proof}
The following is a direct consequence of this lemma.
\begin{crl}\label{cor:trivact1}
	 Through the isomorphism $\pi_{1}(\mathbb{C}\backslash Q_{n},d)\ni \gamma_{i}\mapsto 
	 x_{i}\in F_{n}$, we regard $\alpha\colon G\rightarrow \mathrm{Aut}_{\mathbf{Grp}}(F_{n})$ as the 
	 group homomorphism $\alpha\colon G\rightarrow \mathrm{Aut}_{\mathbf{Grp}}(\pi_{1}(\mathbb{C}\backslash Q_{n},d))$.
	 Then the equations
	 \[
		(\mathrm{pr}_{t}\circ\iota_{a_{0}})_{*}\circ \eta(g)_{*}(\gamma)
		=\alpha(g)((\mathrm{pr}_{t}\circ\iota_{a_{0}})_{*}(\gamma))
	 \]
	 hold for all $g\in G$ and $\gamma\in \pi_{1}(\mathbb{C}\backslash Q_{n}(a_{0}),d)$, i.e.,
	 the diagrams 
	 \[
		\begin{tikzcd}
			\pi_{1}(\mathbb{C}\backslash Q_{n}(a_{0}),d)\arrow[r,"\eta(g)_{*}"]
			\arrow[d,"(\mathrm{pr}_{t}\circ\iota_{a_{0}})_{*}"]
			&\pi_{1}(\mathbb{C}\backslash Q_{n}(a_{0}),d)
			\arrow[d,"(\mathrm{pr}_{t}\circ\iota_{a_{0}})_{*}"]\\
			\pi_{1}(\mathbb{C}\backslash Q_{n},d)
			\arrow[r,"\alpha(g)"]
			&\pi_{1}(\mathbb{C}\backslash Q_{n},d)
		\end{tikzcd}
	 \]
	 are commutative.
	 Also for $x\in F_{n}$, we have
	 \[
		(\mathrm{pr}_{t}\circ\iota_{a_{0}})_{*}\circ \eta(x)_{*}(\gamma)
		=(\mathrm{pr}_{t}\circ\iota_{a_{0}})_{*}(\gamma).
	 \]
\end{crl}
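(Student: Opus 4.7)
The corollary is stated as a direct consequence, so the plan is to derive both equations by combining Lemma~\ref{lem:trivact1} with functoriality of $\pi_{1}$ applied to the commutative square of spaces already exploited in the proof of that lemma.

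First, I would recall the commutative square
\[
\begin{tikzcd}
\mathbb{C}\backslash Q_{n}(a_{0}) \arrow[r,"\mathrm{pr}_{t}\circ \iota_{a_{0}}"] \arrow[d,"\sigma"] & \mathbb{C}\backslash Q_{n} \arrow[d,"\sigma"] \\
\mathbb{C}\backslash Q_{n}(a_{0}) \arrow[r,"\mathrm{pr}_{t}\circ \iota_{a_{0}}"] & \mathbb{C}\backslash Q_{n}
\end{tikzcd}
\]
valid for every $\sigma \in B_{1,n}$, where the right vertical arrow is the self-homeomorphism obtained by extending $\sigma$ across $a_{0}$ (which makes sense precisely because $B_{1,n}$ preserves $Q_{n}$). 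Applying $\pi_{1}$ with base point $d$ gives the identity
\[
(\mathrm{pr}_{t}\circ \iota_{a_{0}})_{*}\circ \sigma_{*}= \sigma_{*}\circ (\mathrm{pr}_{t}\circ \iota_{a_{0}})_{*}
\]
as maps $\pi_{1}(\mathbb{C}\backslash Q_{n}(a_{0}),d)\to \pi_{1}(\mathbb{C}\backslash Q_{n},d)$.

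Next, for $g\in G$, by definition $\eta(g)$ is the image in $\mathrm{Aut}_{\mathbf{Top}}(\mathbb{C}\backslash Q_{n}(a_{0}))$ of $\alpha_{B_{n}}(g)\in B_{n}\subset B_{1,n}$, so I can apply the square with $\sigma=\eta(g)$. The first assertion of Lemma~\ref{lem:trivact1} identifies the induced map $\eta(g)_{*}$ on $\pi_{1}(\mathbb{C}\backslash Q_{n},d)$ with $\theta_{\mathrm{Artin}}(\alpha_{B_{n}}(g))$, which is precisely $\alpha(g)$ under the identification $\pi_{1}(\mathbb{C}\backslash Q_{n},d)\cong F_{n}$. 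Substituting this into the commutativity above yields
\[
(\mathrm{pr}_{t}\circ \iota_{a_{0}})_{*}\circ \eta(g)_{*}(\gamma)= \alpha(g)\bigl((\mathrm{pr}_{t}\circ \iota_{a_{0}})_{*}(\gamma)\bigr),
\]
which is the first claimed identity.

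For the second identity, I take $\sigma = \eta(x)$ with $x\in F_{n}\subset B_{1,n}$. Applying the same square, the induced map on $\pi_{1}(\mathbb{C}\backslash Q_{n},d)$ is $\eta(x)_{*}$, and the second assertion of Lemma~\ref{lem:trivact1} tells us that this map is the identity. Commutativity then gives $(\mathrm{pr}_{t}\circ \iota_{a_{0}})_{*}\circ \eta(x)_{*}(\gamma) = (\mathrm{pr}_{t}\circ \iota_{a_{0}})_{*}(\gamma)$, as required. There is no real obstacle here beyond careful bookkeeping between the four groups $G$, $B_{n}$, $F_{n}$, and $B_{1,n}$ and their actions; all the topological content has already been extracted in Lemma~\ref{lem:trivact1}, and the corollary is obtained by simply running the naturality square against its two conclusions.
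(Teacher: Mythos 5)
Your proof is correct and takes essentially the same route as the paper: the paper labels the corollary as a "direct consequence" of Lemma \ref{lem:trivact1}, and your argument makes that explicit by applying $\pi_1$ to the commutative square of spaces already established in the lemma's proof and then substituting the lemma's two conclusions (Artin action for $B_n$, trivial action for $F_n$) into the resulting naturality identity. The bookkeeping between $G$, $B_n$, $F_n$, and $B_{1,n}$ is handled correctly, so there is nothing to add.
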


Next we see 
the homomorphism 
$(\mathrm{pr}_{t-z}\circ\iota_{a_{0}})_{*}\colon 
\pi_{1}(\mathbb{C}\backslash Q_{n}(a_{0}),d)
\rightarrow \pi_{1}(\mathbb{C}^{\times},d-a_{0})$.
Similarly as above, we can see that 
the group homomorphism 
$(\mathrm{pr}_{t-z}\circ\iota_{a_{0}})_{*}$
coincides with the projection map 
\[
	\begin{array}{cccc}
	\mathrm{pr}^{0}\colon& F_{n+1}=\langle x_{0},x_{1},\ldots,x_{n} \rangle&\longrightarrow &F_{1}=\langle x\rangle\\
	&x_{i}&\longmapsto &\begin{cases} x&\text{if }i=0\\
		1&\text{otherwise}
	\end{cases}	
	\end{array},
\]
for 
which the inclusion 
map $\mathrm{inc}^{0}\colon F_{1}=\langle x\rangle \ni x\mapsto x_{0}\in F_{n+1}=\langle x_{0},x_{1},\ldots,x_{n}\rangle$
is a retraction homomorphism.

The homeomorphism $\mathbb{C}\ni z\to z-a_{0}\in \mathbb{C}$,
allows us to define 
the action of the subgroup $B_{1,n}$ of $\mathfrak{M}(D,Q_{n}(a_{0}))$
on $\mathbb{C}^{\times}$
which is compatible with 
that on $\mathbb{C}\backslash Q_{n}(a_{0})$
through  
$\mathrm{pr}_{t-z}\circ\iota_{a_{0}}
\colon \mathbb{C}\backslash Q_{n}(a_{0})\hookrightarrow \mathbb{C}^{\times}$.
Thus similarly as Lemma \ref{lem:trivact1} and Corollary \ref{cor:trivact1},
we obtain the following lemma and its corollary.
\begin{lmm}\label{lem:trivact2}
	$B_{1,n}$
	acts trivially on $\pi_{1}(\mathbb{C}^{\times},d-a_{0})$.
\end{lmm}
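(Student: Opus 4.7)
The plan is to mirror the proof of Lemma \ref{lem:trivact1} verbatim, substituting $\mathrm{pr}_{t-z}\circ\iota_{a_{0}}$, $\mathrm{inc}^{0}$, and $\mathrm{pr}^{0}$ for their counterparts from the first projection. The first step is to confirm that each generator $\sigma_{0}^{2}, \sigma_{1}, \ldots, \sigma_{n-1}$ of $B_{1,n}$ fixes $a_{0}$ pointwise as a self-homeomorphism of $D$, so that the $B_{1,n}$-action on $\mathbb{C}\backslash Q_{n}(a_{0})$ descends to an equivariant action on $\mathbb{C}^{\times}=\mathbb{C}\backslash\{a_{0}\}$ compatible with $\mathrm{pr}_{t-z}\circ\iota_{a_{0}}$. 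This holds because $\sigma_{0}^{2}$ is the \emph{full} twist about $\{a_{0},a_{1}\}$ (returning both points to themselves), while the $\sigma_{i}$ with $i\geq 1$ are supported in neighborhoods disjoint from $a_{0}$. Once this is in place, the analogue of the commutative square appearing in Lemma \ref{lem:trivact1}, combined with the retraction identity $\mathrm{pr}^{0}\circ\mathrm{inc}^{0}=\mathrm{id}_{F_{1}}$ noted just before the lemma, factorizes $\sigma_{*}$ on $\pi_{1}(\mathbb{C}^{\times},d-a_{0})$ as $\mathrm{pr}^{0}\circ\sigma_{*}\circ\mathrm{inc}^{0}$.

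The second step is to reduce to a finite check on the generators, using that the set of $\sigma\in B_{1,n}$ for which this composite is the identity is a subgroup of $B_{1,n}$. For $\sigma_{i}$ with $i\geq 1$, the Artin action of $B_{n+1}$ on $F_{n+1}$ fixes $\gamma_{0}$ outright, so $\mathrm{pr}^{0}\circ(\sigma_{i})_{*}\circ\mathrm{inc}^{0}(x)=\mathrm{pr}^{0}(\gamma_{0})=x$. For $\sigma_{0}^{2}$, I would reuse the computation already recorded in the proof of Lemma \ref{lem:trivact1}, namely $(\sigma_{0}^{2})_{*}(\gamma_{0})=\gamma_{1}^{-1}\gamma_{0}\gamma_{1}$; since $\mathrm{pr}^{0}$ sends $\gamma_{1}\mapsto 1$ and $\gamma_{0}\mapsto x$, the image is again $x$.

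There is no genuine obstacle: the argument is formally parallel to that of Lemma \ref{lem:trivact1}, and the only non-tautological ingredient beyond what is already recorded there is verifying that the $B_{1,n}$-action extends across the puncture $a_{0}$. Since every element of $B_{1,n}$ is a product of generators each of which acts trivially on the cyclic group $\pi_{1}(\mathbb{C}^{\times},d-a_{0})\cong\mathbb{Z}$, the entire group $B_{1,n}$ does as well, which is the claim.
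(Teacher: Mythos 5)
Your proof is correct and follows precisely the route the paper intends: the paper gives no separate argument here, only the remark that the claim follows ``similarly as Lemma~\ref{lem:trivact1} and Corollary~\ref{cor:trivact1},'' and your proposal carries out that parallel argument explicitly. The two ingredients you supply — (i) the generators $\sigma_0^2,\sigma_1,\ldots,\sigma_{n-1}$ of $B_{1,n}$ all fix the point $a_0$, so the action descends to $\mathbb{C}^\times$ compatibly with $\mathrm{pr}_{t-z}\circ\iota_{a_0}$, making the decomposition $\sigma_*=\mathrm{pr}^0\circ\sigma_*\circ\mathrm{inc}^0$ valid, and (ii) the generator-by-generator check $(\sigma_i)_*(\gamma_0)=\gamma_0$ for $i\ge 1$ and $\mathrm{pr}^0\big((\sigma_0^2)_*(\gamma_0)\big)=\mathrm{pr}^0(\gamma_1^{-1}\gamma_0\gamma_1)=x$ — are exactly what the paper's ``similarly'' elides, and both are verified correctly.
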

\begin{crl}\label{cor:trivact2}
	For all $h\in F_{n}\rtimes_{\alpha}G$, we have
	\[
	   (\mathrm{pr}_{t-z}\circ\iota_{a_{0}})_{*}\circ \eta(h)_{*}(\gamma)
	   =(\mathrm{pr}_{t-z}\circ\iota_{a_{0}})_{*}(\gamma).
	\]
\end{crl}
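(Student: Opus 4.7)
The plan is to deduce this corollary from Lemma \ref{lem:trivact2} in exactly the same manner that Corollary \ref{cor:trivact1} was deduced from Lemma \ref{lem:trivact1}. The key input is that the composition $\mathrm{pr}_{t-z}\circ \iota_{a_{0}}\colon \mathbb{C}\backslash Q_{n}(a_{0})\hookrightarrow \mathbb{C}^{\times}$ is $B_{1,n}$-equivariant, where $B_{1,n}$ acts on $\mathbb{C}^{\times}$ via the identification $\mathbb{C}\ni z\mapsto z-a_{0}\in \mathbb{C}$ explained in the paragraph preceding Lemma \ref{lem:trivact2}. Applying $\pi_{1}(-,d)$ therefore yields a commutative square
\[
\begin{tikzcd}
\pi_{1}(\mathbb{C}\backslash Q_{n}(a_{0}),d)\arrow[r,"\sigma_{*}"]\arrow[d,"(\mathrm{pr}_{t-z}\circ\iota_{a_{0}})_{*}"']
&\pi_{1}(\mathbb{C}\backslash Q_{n}(a_{0}),d)\arrow[d,"(\mathrm{pr}_{t-z}\circ\iota_{a_{0}})_{*}"]\\
\pi_{1}(\mathbb{C}^{\times},d-a_{0})\arrow[r,"\sigma_{*}"']
&\pi_{1}(\mathbb{C}^{\times},d-a_{0})
\end{tikzcd}
\]
for every $\sigma\in B_{1,n}$.

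Next, I would recall that by construction of $\eta$, for any $h\in F_{n}\rtimes_{\alpha}G$ the self-homeomorphism $\eta(h)$ of $\mathbb{C}\backslash Q_{n}(a_{0})$ factors through $B_{1,n}\subset \mathfrak{M}(D,Q_{n}(a_{0}))$; explicitly $\eta(h)=\eta(x,g)$ is the image of $(x,\alpha(g))\in F_{n}\rtimes B_{n}\cong B_{1,n}$. Hence specializing the commutative square above to $\sigma=\eta(h)$ gives
\[
(\mathrm{pr}_{t-z}\circ\iota_{a_{0}})_{*}\circ \eta(h)_{*}(\gamma)
=\eta(h)_{*}\bigl((\mathrm{pr}_{t-z}\circ\iota_{a_{0}})_{*}(\gamma)\bigr).
\]
Now Lemma \ref{lem:trivact2} asserts that $B_{1,n}$ acts trivially on $\pi_{1}(\mathbb{C}^{\times},d-a_{0})$, so the right-hand side equals $(\mathrm{pr}_{t-z}\circ\iota_{a_{0}})_{*}(\gamma)$, which is exactly the claimed identity.

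The only step that requires any care is checking the $B_{1,n}$-equivariance of $\mathrm{pr}_{t-z}\circ\iota_{a_{0}}$, but this is immediate: elements of $\mathfrak{M}(D,Q_{n}(a_{0}))$ are supported inside $D$ and fix $\partial D$ pointwise, while the translation $z\mapsto z-a_{0}$ intertwines the action on $\mathbb{C}\backslash Q_{n}(a_{0})$ with the corresponding action on $\mathbb{C}^{\times}$ by construction. No nontrivial obstruction appears; the corollary is purely a translation of the previous lemma through the equivariant map $\mathrm{pr}_{t-z}\circ\iota_{a_{0}}$.
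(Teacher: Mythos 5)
Your proof is correct and is exactly the argument the paper implicitly intends. The paper proves Corollary \ref{cor:trivact2} only by the remark ``similarly as Lemma \ref{lem:trivact1} and Corollary \ref{cor:trivact1},'' and what you have done is spell out that analogy: the $B_{1,n}$-equivariance of $\mathrm{pr}_{t-z}\circ\iota_{a_{0}}$ (set up in the paper in the paragraph preceding Lemma \ref{lem:trivact2}) gives the commutative square of fundamental groups, and Lemma \ref{lem:trivact2} then kills the right-hand vertical factor. No gaps, same route.
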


Combining Corollaries \ref{cor:trivact1} and \ref{cor:trivact2},
we obtain the following desired result.
\begin{prp}
	For each $h\in F_{n}\rtimes_{\alpha}G$ and $\gamma\in \pi_{1}(\mathbb{C}\backslash Q_{n}(a_{0}),d)$,
	the diagram 
\[
	\begin{tikzcd}
		\iota_{a_{0}}^{*}(C_{\lambda}(V))\arrow[r,"\tilde{\eta}(h)"]\arrow[d,"\gamma\cdot"]
		&\iota_{a_{0}}^{*}(C_{\lambda}(V))\arrow[d,"\eta(h)_{*}(\gamma)\cdot"]\\
		\iota_{a_{0}}^{*}(C_{\lambda}(V))\arrow[r,"\tilde{\eta}(h)"]
		&\iota_{a_{0}}^{*}(C_{\lambda}(V))
	\end{tikzcd}	
\]
is commutative.
\end{prp}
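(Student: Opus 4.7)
The plan is to reduce the verification to the two generating subgroups $F_n$ and $G$ of the semidirect product, and then appeal to the intertwining properties established in Corollaries \ref{cor:trivact1} and \ref{cor:trivact2}. Since both $\tilde\eta$ and the assignment $h \mapsto (\gamma \mapsto \eta(h)_*(\gamma))$ are group homomorphisms and the commutativity of the square is closed under composition in $h$, it suffices to check the statement separately for $h = x \in F_n$ and for $h = g \in G$.

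First I would handle the case $h = x \in F_n$. Here $\tilde\eta(x) = \mathrm{id}$ on $V\otimes_k K_\lambda$, so the claim reduces to showing that $\gamma$ and $\eta(x)_*(\gamma)$ act identically on $\iota_{a_0}^*(C_\lambda(V))$. By the explicit formula for the $\pi_1(\mathbb{C}\setminus Q_n(a_0),d)$-action, this in turn reduces to checking that the elements $(\mathrm{pr}_t \circ \iota_{a_0})_*(\gamma)$ and $(\mathrm{pr}_{t-z} \circ \iota_{a_0})_*(\gamma)$ are preserved by $\eta(x)_*$. These are exactly the second assertion of Corollary \ref{cor:trivact1} and the $x \in F_n$ case of Corollary \ref{cor:trivact2}.

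Next I would handle the case $h = g \in G$. Expanding the left-hand side $\tilde\eta(g)\bigl(\gamma \cdot (v \otimes \kappa)\bigr)$ via the definitions gives
\[
g \cdot \bigl((\mathrm{pr}_t \circ \iota_{a_0})_*(\gamma) \cdot v\bigr) \otimes (\mathrm{pr}_{t-z} \circ \iota_{a_0})_*(\gamma) \cdot \kappa.
\]
Using the semidirect product relation $g \cdot x = \alpha(g)(x) \cdot g$ valid in $F_n \rtimes_\alpha G$ acting on the $k[F_n\rtimes_\alpha G]$-module $V$, the first tensor factor becomes $\alpha(g)\bigl((\mathrm{pr}_t \circ \iota_{a_0})_*(\gamma)\bigr) \cdot (g \cdot v)$. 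By Corollary \ref{cor:trivact1}, $\alpha(g)\bigl((\mathrm{pr}_t \circ \iota_{a_0})_*(\gamma)\bigr) = (\mathrm{pr}_t \circ \iota_{a_0})_*(\eta(g)_*(\gamma))$, and by Corollary \ref{cor:trivact2}, the second tensor factor equals $(\mathrm{pr}_{t-z} \circ \iota_{a_0})_*(\eta(g)_*(\gamma)) \cdot \kappa$. Assembling, the result is precisely $\eta(g)_*(\gamma) \cdot (g\cdot v \otimes \kappa) = \eta(g)_*(\gamma) \cdot \tilde\eta(g)(v\otimes\kappa)$, matching the right-hand side.

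The main obstacle is essentially a bookkeeping matter: one has to be careful that the $F_n$-action on $V$ coming from the $k[F_n\rtimes_\alpha G]$-module structure interacts correctly with the $G$-action via the semidirect product relation, and to correctly identify which path projects to which free-group generator under $\mathrm{pr}_t \circ \iota_{a_0}$ and $\mathrm{pr}_{t-z} \circ \iota_{a_0}$. Both issues are already settled by the preceding lemmas, so no new computation beyond the substitution above is needed.
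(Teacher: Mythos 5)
Your proposal is correct and uses essentially the same ingredients as the paper's proof: the explicit form of the $\pi_{1}$-action on $\iota_{a_{0}}^{*}(C_{\lambda}(V))$, the semidirect-product relation $g\cdot x=\alpha(g)(x)\cdot g$, and Corollaries \ref{cor:trivact1} and \ref{cor:trivact2}. The only cosmetic difference is that you verify the square separately on the two generating subgroups $F_n$ and $G$ and then invoke closure under composition, whereas the paper writes $h=xg$ once and carries out the same computation in a single chain of equalities; both are valid and amount to the same argument.
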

\begin{proof}
	Let us take $h\in F_{n}\rtimes_{\alpha}G$ arbitrarily
	and write $h=xg$ by $x\in F_{n}$ and $g\in G$.
	Then 
	we have 
	\begin{align*}
		\tilde{\eta}(h)(\gamma\cdot (v\otimes \kappa))&=
		\tilde{\eta}(x)\circ \tilde{\eta}(g)((\mathrm{pr}_{t}\circ\iota_{a_{0}})_{*}(\gamma)\cdot v\otimes (\mathrm{pr}_{t-z}\circ\iota_{a_{0}})_{*}(\gamma)\kappa)\\
		&=g\cdot((\mathrm{pr}_{t}\circ\iota_{a_{0}})_{*}(\gamma)\cdot v)\otimes (\mathrm{pr}_{t-z}\circ\iota_{a_{0}})_{*}(\gamma)\kappa\\
		&=\alpha(g)((\mathrm{pr}_{t}\circ\iota_{a_{0}})_{*}(\gamma))g\cdot v\otimes (\mathrm{pr}_{t-z}\circ\iota_{a_{0}})_{*}(\gamma)\kappa\\
		&=(\mathrm{pr}_{t}\circ\iota_{a_{0}})_{*}\circ \eta(g)_{*}(\gamma)\cdot g\cdot v\otimes (\mathrm{pr}_{t-z}\circ\iota_{a_{0}})_{*}(\gamma)\kappa\\
		&=(\mathrm{pr}_{t}\circ\iota_{a_{0}})_{*}\circ \eta(x)_{*}\circ \eta(g)_{*}(\gamma)\cdot g\cdot v\otimes (\mathrm{pr}_{t-z}\circ\iota_{a_{0}})_{*}(\gamma)\kappa\\
		&=(\mathrm{pr}_{t}\circ\iota_{a_{0}})_{*}(\eta(h)_{*}(\gamma))\cdot g\cdot v\otimes (\mathrm{pr}_{t-z}\circ\iota_{a_{0}})_{*}(\eta(h)_{*}(\gamma))\kappa\\
		&=\eta(h)_{*}(\gamma)\cdot (g\cdot v\otimes \kappa)\\
		&=\eta(h)_{*}(\gamma)\cdot \tilde{\eta}(h)(v\otimes \kappa)
	\end{align*}
	for all $\gamma\in \pi_{1}(\mathbb{C}\backslash Q_{n}(a_{0}),d)$ and
	$v\otimes \kappa\in V\otimes K_{\lambda}=\iota_{a_{0}}^{*}(C_{\lambda}(V))$.
	Here we use Corollary \ref{cor:trivact1} in 4th and 5th equations
	and use Corollary \ref{cor:trivact2}  
	in 6th equation. 
\end{proof}
\subsubsection{$F_{n}\rtimes_{\alpha}G$-action on $C_{n}(\mathbb{C}\backslash Q_{n}(a_{0});\iota_{a_{0}}^{*}(C_{\lambda}(V)))$}\label{sec:faction}
Let us define a left action of $F_{n}\rtimes_{\alpha}G$
on $C_{n}(\widetilde{\mathbb{C}\backslash Q_{n}(a_{0})})\otimes_{k}(V\otimes_{k}K_{\lambda})$
by
\[
	h\cdot (\sigma\otimes w):=\bar{\eta}(h)\circ \sigma\otimes \tilde{\eta}(h)(w)\qquad \text{for }
	\sigma\in C_{n}(\widetilde{\mathbb{C}\backslash Q_{n}(a_{0})}),\, w\in V\otimes_{k}K_{\lambda},
	\text{ and }h\in F_{n}\rtimes_{\alpha}G. 
\]
Then the following proposition assures that 
this induces  the left $F_{n}\rtimes_{\alpha}G$ action on 
 $C_{n}(\mathbb{C}\backslash Q_{n}(a_{0});\iota_{a_{0}}^{*}(C_{\lambda}(V)))$.
\begin{prp}
	The kernel of the projection map 
	\[
		\pi\colon C_{n}(\widetilde{\mathbb{C}\backslash Q_{n}(a_{0})})\otimes_{k}(V\otimes_{k}K_{\lambda})
		\rightarrow
		C_{n}(\widetilde{\mathbb{C}\backslash Q_{n}(a_{0})})\otimes_{k[\pi_{1}(\mathbb{C}\backslash Q_{n}(a_{0}),d)]}\iota_{a_{0}}^{*}(C_{\lambda}(V))		 	
	\]
	is preserved by the left $F_{n}\rtimes G$-action.
\end{prp}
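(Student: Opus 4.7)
The plan is to identify the kernel of $\pi$ explicitly and verify directly that the $F_n\rtimes_\alpha G$-action sends generators of the kernel to elements of the same form. Since we are tensoring a right $k[\pi_1]$-module with a left $k[\pi_1]$-module, the kernel of $\pi$ is the $k$-linear span of elements
\[
\sigma\cdot \gamma\otimes w-\sigma\otimes \gamma\cdot w\qquad (\sigma\in C_{n}(\widetilde{\mathbb{C}\backslash Q_{n}(a_{0})}),\ w\in V\otimes_{k}K_{\lambda},\ \gamma\in \pi_{1}(\mathbb{C}\backslash Q_{n}(a_{0}),d)),
\]
where the right action on chains is given by $\sigma\cdot\gamma=\delta(\gamma^{-1})_{*}(\sigma)$. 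Thus it suffices to show that for each such generator and each $h\in F_{n}\rtimes_{\alpha}G$, the element $h\cdot(\sigma\cdot\gamma\otimes w-\sigma\otimes\gamma\cdot w)$ again lies in the span of such expressions.

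First I would handle the chain side. By Lemma~\ref{lem:compa}, the diagram relating $\bar{\eta}(h)$ with deck transformations gives the identity $\bar{\eta}(h)\circ\delta(\gamma^{-1})=\delta(\eta(h)_{*}(\gamma^{-1}))\circ\bar{\eta}(h)$, i.e.,
\[
\bar{\eta}(h)\circ(\sigma\cdot\gamma)=(\bar{\eta}(h)\circ\sigma)\cdot\eta(h)_{*}(\gamma).
\]
Applying this to the first summand yields
\[
h\cdot(\sigma\cdot\gamma\otimes w)=(\bar{\eta}(h)\circ\sigma)\cdot\eta(h)_{*}(\gamma)\otimes \tilde{\eta}(h)(w).
\]

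Next I would handle the coefficient side using the previous proposition, which asserts the compatibility $\tilde{\eta}(h)(\gamma\cdot w)=\eta(h)_{*}(\gamma)\cdot\tilde{\eta}(h)(w)$. This immediately gives
\[
h\cdot(\sigma\otimes\gamma\cdot w)=\bar{\eta}(h)\circ\sigma\otimes \eta(h)_{*}(\gamma)\cdot\tilde{\eta}(h)(w).
\]
Setting $\sigma':=\bar{\eta}(h)\circ\sigma$, $\gamma':=\eta(h)_{*}(\gamma)\in\pi_{1}(\mathbb{C}\backslash Q_{n}(a_{0}),d)$, and $w':=\tilde{\eta}(h)(w)$, the difference becomes
\[
h\cdot(\sigma\cdot\gamma\otimes w-\sigma\otimes\gamma\cdot w)=\sigma'\cdot\gamma'\otimes w'-\sigma'\otimes\gamma'\cdot w',
\]
which is manifestly in $\ker\pi$. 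Linearly extending finishes the proof.

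The only subtlety, and hence the main obstacle, is making sure the identity from Lemma~\ref{lem:compa} is applied with the correct variance: $\bar{\eta}(h)$ acts from the left on $\widetilde{\mathbb{C}\backslash Q_{n}(a_{0})}$ whereas $\gamma$ acts from the right on chains via $\delta(\gamma^{-1})$, so the inversion of $\gamma$ must be tracked carefully. Once this bookkeeping is done, the argument reduces to combining Lemma~\ref{lem:compa} with the preceding proposition in a single line. No further structural input is needed, and the same argument will apply verbatim in any relevant variant (e.g., Borel--Moore chains) since it uses only the module-theoretic compatibility of the actions.
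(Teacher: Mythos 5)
Your proof is correct and follows essentially the same route as the paper: identify the kernel as the span of $\sigma\cdot\gamma\otimes w-\sigma\otimes\gamma\cdot w$, use Lemma~\ref{lem:compa} to commute $\bar{\eta}(h)$ past $\delta(\gamma^{-1})$ on the chain side, and use the immediately preceding proposition for the coefficient side, arriving at an element of the same form. The bookkeeping with the inverse of $\gamma$ that you flag as the main subtlety is indeed the only point requiring care, and you handle it correctly.
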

\begin{proof}
	Let us take $\sigma\in C_{n}(\widetilde{\mathbb{C}\backslash Q_{n}(a_{0})})$,
	$w\in V\otimes_{k}K_{\lambda}$, and 
	$\gamma\in \pi_{1}(\mathbb{C}\backslash Q_{n}(a_{0}),d)$ arbitrarily.
	
	Then for $h\in F_{n}\rtimes_{\alpha}G$,
	we have 
	\begin{align*}
		&h\cdot (\sigma\cdot\gamma \otimes w-\sigma\otimes \gamma\cdot w)\\
		&\quad=
		h\cdot (\delta(\gamma^{-1})\circ\sigma \otimes w-\sigma\otimes \gamma\cdot w)\\
		&\quad=\bar{\eta}(h)\circ \delta(\gamma^{-1})\circ\sigma \otimes \tilde{\eta}(h)(w)-\bar{\eta}(x)\circ \sigma\otimes \tilde{\eta}(h)(\gamma\cdot w)\\
		&\quad=\delta(\eta(x)_{*}(\gamma^{-1}))\circ\bar{\eta}(x)\circ \sigma \otimes \tilde{\eta}(h)(w)-\bar{\eta}(x)\circ\sigma\otimes \eta(x)_{*}(\gamma)\cdot \tilde{\eta}(h)(w)\\
		&\quad=(\bar{\eta}(x)\circ \sigma)\cdot\eta(x)_{*}(\gamma)
		\otimes \tilde{\eta}(h)(w)- (\bar{\eta}(x)\circ \sigma)\otimes \eta(x)_{*}(\gamma)\cdot \tilde{\eta}(h)(w)
		\in \mathrm{Ker\,}\pi
	\end{align*}
	as desired.
\end{proof}
\subsubsection{Euler transform functor of $\mathbf{Mod}_{k[F_{n}\rtimes_{\alpha}G]}$}
The left $F_{n}\rtimes_{\alpha}G$-action on $C_{n}(\widetilde{\mathbb{C}\backslash Q_{n}(a_{0})};
\iota_{a_{0}}^{*}(C_{\lambda}(V)))$ is compatible with the boundary map, and thus 
it induces
the left $F_{n}\rtimes_{\alpha}G$-action on
$H_{n}(\widetilde{\mathbb{C}\backslash Q_{n}(a_{0})};
\iota_{a_{0}}^{*}(C_{\lambda}(V)))$.
\begin{dfn}[Euler transform functor of $\mathbf{Mod}_{k[F_{n}\rtimes_{\alpha}G]}$]\normalfont
	For $\lambda\in k^{\times}$,
	we define the functor 
	$\mathcal{E}_{\lambda}\colon \mathbf{Mod}_{k[F_{n}\rtimes_{\alpha}G]}
	\rightarrow \mathbf{Mod}_{k[F_{n}\rtimes_{\alpha}G]}$
	by 
	\[
		\mathcal{E}_{\lambda}(V):=H_{1}(\mathbb{C}\backslash Q_{n}(a_{0});
		\iota_{a_{0}}^{*}(C_{\lambda}(V)))\quad 
		\text{ for }V\in \mathscr{O}( \mathbf{Mod}_{k[F_{n}\rtimes_{\alpha}G]}).
	\]
	Also for $\phi\in \mathrm{Hom}_{k[F_{n}\rtimes_{\alpha}G]}(V,W)$,
	we define $\mathcal{E}_{\lambda}(V)\in 
	\mathrm{Hom}_{k[F_{n}\rtimes_{\alpha}G]}(\mathcal{E}_{\lambda}(V),\mathcal{E}_{\lambda}(W))$
	by 
	the homomorphism 
	$H_{1}(\mathbb{C}\backslash Q_{n}(a_{0});
	\iota_{a_{0}}^{*}(C_{\lambda}(V)))\rightarrow H_{1}(\mathbb{C}\backslash Q_{n}(a_{0});
	\iota_{a_{0}}^{*}(C_{\lambda}(W)))$
	induced from $\phi\colon V\rightarrow W$.
\end{dfn}

\subsection{Euler transform and twisted Long-Moody functor}
The Pochhammer contour integral is a classical tool to describe
the analytic continuation of 
the integral representation of the Gauss hypergeometric function
$F(\alpha,\beta,\gamma;z)=\frac{\Gamma(\gamma)}{\Gamma(\alpha)\Gamma(\gamma-\alpha)}\int_{1}^{\infty}
	t^{\beta-\gamma}(t-1)^{\gamma-\alpha-1}(t-z)^{-\beta}\,dt$
with respect to the parameters $\alpha,\beta$, and $\gamma$.
We shall explain that the Pochhammer contour integral
gives a standard decomposition of the homology group
$H_{1}(\mathbb{C}\backslash Q_{n}(a_{0});\iota_{a_{0}}^{*}(C_{\lambda}(V)))$.

\subsubsection{Open covering of $\mathbb{C}\backslash Q_{n}(a_{0})$}
Take an open cover $\mathbb{C}=X_{1}\cup Y_{1}$
so that  
\begin{enumerate}
	\item $X_{1}$ and $Y_{1}$ are connected,
	\item $X_{1}\cap Q_{n}(a_{0})=\{a_{0},a_{1}\}$, $Y_{1}\cap Q_{n}(a_{0})=\{a_{0},a_2,\ldots,a_n\}$ and $d\in X_{1}\cap Y_{1}$,
	\item $X_{1}\backslash\{a_{0},a_{1}\}$, $Y_{1}\backslash\{a_{0},a_{2},\ldots,a_{n}\}$,
	and $X_{1}\cap Y_{1}\backslash\{a_{0}\}$ respectively contain   
	$\gamma_{0}\cup \gamma_{1}$,\\
	\noindent $\gamma_{0}\cup \gamma_{2}\cup 
	\ldots\cup\gamma_{n}$, and $\gamma_{0}$ as deformation retracts.
\end{enumerate}
Put $X_{1}^{*}:=X_{1}\backslash\{a_{0},a_{1}\}$ and 
$Y_{1}^{*}:=Y_{1}\backslash\{a_{0},a_{2},\ldots,a_{n}\}$.
Let us consider the 
Mayer-Vietoris exact sequence 
\begin{multline*}
H_{1}(X_{1}^{*}\cap Y_{1}^{*};\iota_{a_{0}}^{*}(C_{\lambda}(V))|_{X_{1}^{*}\cap Y_{1}^{*}})
\rightarrow H_{1}(X_{1}^{*};\iota_{a_{0}}^{*}(C_{\lambda}(V))|_{X_{1}^{*}})\oplus 
H_{1}(Y_{1}^{*};\iota_{a_{0}}^{*}(C_{\lambda}(V))|_{Y_{1}^{*}})\\
\rightarrow H_{1}(\mathbb{C}\backslash Q_{n}(a_{0});\iota_{a_{0}}^{*}(C_{\lambda}(V)))
\rightarrow H_{0}(X_{1}^{*}\cap Y_{1}^{*};\iota_{a_{0}}^{*}(C_{\lambda}(V))|_{X_{1}^{*}\cap Y_{1}^{*}}),
\end{multline*}
where $\iota_{a_{0}}^{*}(C_{\lambda}(V))|_{X}$ denotes
 the pull-back of $\iota_{a_{0}}^{*}(C_{\lambda}(V))$
by the inclusion map $\iota_{X}\colon X\hookrightarrow \mathbb{C}\backslash Q_{n}(a_{0})$
for a subspace $X\subset \mathbb{C}\backslash Q_{n}(a_{0}).$
Then under the assumption $\lambda\neq 1$ we can show the following.
\begin{prp}\label{prop:decomp}
	Suppose that $\lambda\neq 1$.
	Then the above Mayer-Vietoris exact sequence gives 
the $k$-linear isomorphism 
\begin{equation*}
	H_{1}(X_{1}^{*};\iota_{a_{0}}^{*}(C_{\lambda}(V))|_{X_{1}^{*}})\oplus 
H_{1}(Y_{1}^{*};\iota_{a_{0}}^{*}(C_{\lambda}(V))|_{Y_{1}^{*}})\cong H_{1}(\mathbb{C}\backslash Q_{n}(a_{0});\iota_{a_{0}}^{*}(C_{\lambda}(V))).
\end{equation*}
\end{prp}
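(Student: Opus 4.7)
The plan is to show that both the left-hand and right-hand terms flanking $H_{1}(X_{1}^{*};L)\oplus H_{1}(Y_{1}^{*};L)\to H_{1}(\mathbb{C}\backslash Q_{n}(a_{0});L)$ in the Mayer-Vietoris sequence vanish, where I abbreviate $L:=\iota_{a_{0}}^{*}(C_{\lambda}(V))$. That is, I would prove
\[
H_{1}(X_{1}^{*}\cap Y_{1}^{*};L|_{X_{1}^{*}\cap Y_{1}^{*}})=0 \quad\text{and}\quad H_{0}(X_{1}^{*}\cap Y_{1}^{*};L|_{X_{1}^{*}\cap Y_{1}^{*}})=0.
\]
Granted these vanishings, the relevant segment of the Mayer-Vietoris sequence collapses to the desired short exact sequence, which automatically splits as a $k$-linear isomorphism.

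To establish the vanishings, I would first use condition (3) on the open cover: $X_{1}^{*}\cap Y_{1}^{*}$ deformation retracts onto the loop $\gamma_{0}$, so it is homotopy equivalent to a circle and the restricted local system $L|_{X_{1}^{*}\cap Y_{1}^{*}}$ is determined by a single monodromy operator $M\in\mathrm{Aut}_{k}(V\otimes_{k}K_{\lambda})$, namely the action of $\gamma_{0}$. By the discussion preceding the proposition, this monodromy is computed as the tensor product of the action of $(\mathrm{pr}_{t}\circ\iota_{a_{0}})_{*}(\gamma_{0})$ on $V$ and the action of $(\mathrm{pr}_{t-z}\circ\iota_{a_{0}})_{*}(\gamma_{0})$ on $K_{\lambda}$. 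The first factor is trivial since $(\mathrm{pr}_{t}\circ\iota_{a_{0}})_{*}=\mathrm{pr}_{0}$ and $\mathrm{pr}_{0}(\gamma_{0})=1$, while the second is multiplication by $\lambda$ because $(\mathrm{pr}_{t-z}\circ\iota_{a_{0}})_{*}=\mathrm{pr}^{0}$ sends $\gamma_{0}$ to the generator of $\pi_{1}(\mathbb{C}^{\times})$. Hence $M=\lambda\cdot\mathrm{id}_{V\otimes K_{\lambda}}$.

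Next, I would invoke the standard description of the homology of a local system on $S^{1}$ with monodromy $M$: namely $H_{1}(S^{1};L)\cong\ker(M-\mathrm{id})$ (the invariants) and $H_{0}(S^{1};L)\cong\mathrm{coker}(M-\mathrm{id})$ (the coinvariants). With $M=\lambda\cdot\mathrm{id}$ and the hypothesis $\lambda\neq 1$, the operator $M-\mathrm{id}=(\lambda-1)\cdot\mathrm{id}$ is invertible, so both the kernel and cokernel vanish. This yields the two vanishing statements simultaneously.

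Finally, plugging these into the Mayer-Vietoris sequence, the map $H_{1}(X_{1}^{*};L|_{X_{1}^{*}})\oplus H_{1}(Y_{1}^{*};L|_{Y_{1}^{*}})\to H_{1}(\mathbb{C}\backslash Q_{n}(a_{0});L)$ is both injective (as its kernel lies in the image of the preceding zero group) and surjective (as the cokernel injects into the succeeding zero group), giving the required $k$-linear isomorphism. I expect the only delicate step to be the monodromy computation in the second paragraph, since one must carefully track the conventions under which $\iota_{a_{0}}^{*}(C_{\lambda}(V))=V\otimes_{k}K_{\lambda}$ is regarded as a $\pi_{1}$-module; but this is exactly what has been set up just before the statement of the proposition, so no new input beyond the explicit formulas for $(\mathrm{pr}_{t}\circ\iota_{a_{0}})_{*}$ and $(\mathrm{pr}_{t-z}\circ\iota_{a_{0}})_{*}$ established there is needed.
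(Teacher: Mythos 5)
Your proof is correct and follows essentially the same route as the paper's: both arguments kill the flanking Mayer--Vietoris terms $H_{i}(X_{1}^{*}\cap Y_{1}^{*};\iota_{a_{0}}^{*}(C_{\lambda}(V)))$ for $i=0,1$ by observing that $X_{1}^{*}\cap Y_{1}^{*}$ retracts onto $\gamma_{0}$, which acts on the coefficient module as multiplication by $\lambda\neq 1$, so that the invariants (for $H_{1}$, via Lemma~\ref{lem:Hat}, your $\ker(M-\mathrm{id})$) and coinvariants (for $H_{0}$, your $\mathrm{coker}(M-\mathrm{id})$) both vanish. Your monodromy computation via $\mathrm{pr}_{0}$ and $\mathrm{pr}^{0}$ is exactly the justification the paper leaves implicit.
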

\begin{proof}
It suffices to show $H_{i}(X_{1}^{*}\cap Y_{1}^{*};\iota_{a_{0}}^{*}(C_{\lambda}(V))|_{X_{1}^{*}\cap Y_{1}^{*}})=\{0\}$,
$i=0,1$.

First we recall that 
the 0-th homology 
group $H_{0}(X;W)$ with a $k[\pi_{1}(X)]$-module $W$
is isomorphic to $W/W^{0}$ as vector spaces, where 
$W^{0}$ is the subspace of $W$ generated by  
$(x-1)\cdot w$ for $x\in \pi_{1}(X)$ and $w\in V$.
Since the generator $\gamma_{0}$ of $\pi_{1}(X_{1}^{*}\cap Y_{1}^{*},d)$ 
acts on $\iota_{a_{0}}^{*}(C_{\lambda}(V))|_{X_{1}^{*}\cap Y_{1}^{*}}$
as the multiplication of the scalar $\lambda$,
the assumption $\lambda\neq 1$
deduce the equation 
$\iota_{a_{0}}^{*}(C_{\lambda}(V))=\iota_{a_{0}}^{*}(C_{\lambda}(V))^{0}$,
which
shows 
$H_{0}(X_{1}^{*}\cap Y_{1}^{*};\iota_{a_{0}}^{*}(C_{\lambda}(L))|_{X_{1}^{*}\cap Y_{1}^{*}})
=\{0\}$.

For the 1st homology group, 
the following well-known lemma and the assumption $\lambda\neq 1$ deduce 
$H_{1}(X_{1}^{*}\cap Y_{1}^{*};\iota_{a_{0}}^{*}(C_{\lambda}(V)))\cong 
\iota_{a_{0}}^{*}(C_{\lambda}(V))^{\pi_{1}(X_{1}^{*}\cap Y_{1}^{*},d)}=\{0\}$.
Here, $W^{H}:=\{w\in W
\mid h\cdot w=w\text{ for all }h\in H\}$ is the $H$-invariant subspace  of $k[H]$-module $W$
for a group $H$. 
\end{proof}
\begin{lmm}\label{lem:Hat}
	Let us take the point $s_{0}:=(1,0)\in \mathbb{R}^{2}$
	and closed path $\omega \colon [0,1]\ni t\mapsto 
	(\cos 2\pi t,\sin 2\pi t)\in \mathbb{R}^{2}$ in $S^{1}$.
	Let $\pi_{S^{1}}\colon \widetilde{S_{1}}\rightarrow S^{1}$
	be the universal cover.
	We fix a point $\tilde{s}_{0}\in \pi_{S^{1}}^{-1}(s_{0})$
	and write the lift of $\omega$ to $\widetilde{S^{1}}$
	starting at $\tilde{s}_{0}$ by $\tilde{\omega}$.
	Then, for a left $k[\pi_{1}(S^{1},s_{0})]$-module $W$, the map 
	\[
		W^{\pi_{1}(S^{1},s_{0})}\ni v\longmapsto 
			\widetilde{\omega}\otimes v\in H_{1}(S^{1};W)
	\]
	is a $k$-linear isomorphism.
\end{lmm}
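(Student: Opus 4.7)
The plan is to reduce the statement to cellular homology of the universal cover $\widetilde{S^{1}}\cong\R$, regarded as a complex of right $k[\pi_{1}(S^{1},s_{0})]$-modules. Write $T:=\omega\in\pi_{1}(S^{1},s_{0})$ for the standard generator, so that $\pi_{1}(S^{1},s_{0})\cong\Z$ is generated by $T$ and the deck group $\mathrm{Deck}(\pi_{S^{1}})$ is generated by $\delta(T)$.

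First I would equip $\widetilde{S^{1}}$ with the CW structure whose $0$-cells form the orbit $\{\delta(T^{n})(\tilde{s}_{0})\mid n\in\Z\}$ and whose $1$-cells are the translates $\delta(T^{n})_{*}(\tilde{\omega})$, $n\in\Z$. Because the deck group acts freely and transitively on each set of cells, the cellular chain complex is isomorphic, as a complex of right $k[\pi_{1}(S^{1},s_{0})]$-modules, to
\[
0\longrightarrow k[\pi_{1}(S^{1},s_{0})]\xrightarrow{\,\cdot(T^{-1}-1)\,}k[\pi_{1}(S^{1},s_{0})]\longrightarrow 0,
\]
via $\tilde{\omega}\leftrightarrow 1$ in degree $1$ and $\tilde{s}_{0}\leftrightarrow 1$ in degree $0$. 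The boundary formula is dictated by $\partial\tilde{\omega}=\delta(T)(\tilde{s}_{0})-\tilde{s}_{0}=\tilde{s}_{0}\cdot T^{-1}-\tilde{s}_{0}$, using the paper's convention $\sigma\cdot\gamma:=\delta(\gamma^{-1})_{*}(\sigma)$ for the right $\pi_{1}$-action on chains.

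Next I would invoke the standard equivalence between the singular and cellular local-coefficient chain complexes (both are free resolutions of the trivial right $k[\pi_{1}]$-module $k$, hence chain equivalent as complexes of right $k[\pi_{1}]$-modules). Tensoring with $W$ on the right collapses the complex to
\[
C_{*}(S^{1};W)\;\simeq\;\bigl(\,W\xrightarrow{\,T^{-1}-1\,}W\,\bigr),
\]
so that
\[
H_{1}(S^{1};W)=\Ker\bigl(T^{-1}-1\colon W\to W\bigr)=W^{\pi_{1}(S^{1},s_{0})}.
\]
Tracing the isomorphism back to the chain level shows that an invariant vector $v\in W^{\pi_{1}(S^{1},s_{0})}$ corresponds to $1\otimes v\in k[\pi_{1}]\otimes_{k[\pi_{1}]}W$ in degree $1$, which under $\tilde{\omega}\leftrightarrow 1$ is precisely the cycle $\tilde{\omega}\otimes v$; cycle-ness is the one-line check
\[
\partial(\tilde{\omega}\otimes v)=(\tilde{s}_{0}\cdot T^{-1}-\tilde{s}_{0})\otimes v=\tilde{s}_{0}\otimes(T^{-1}v-v)=0.
\]

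The only real obstacle is bookkeeping around the inversion in the paper's convention $\sigma\cdot\gamma=\delta(\gamma^{-1})_{*}(\sigma)$: one must track this carefully so that the differential appears as $T^{-1}-1$ rather than $T-1$, and so that the resulting isomorphism is literally $v\mapsto\tilde{\omega}\otimes v$, with no twist by $T^{\pm 1}$. Once the conventions are aligned, the rest of the argument is purely formal.
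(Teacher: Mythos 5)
Your argument is correct: the paper does not prove this lemma itself but simply cites Hatcher's textbook as a standard reference, and your cellular computation on $\widetilde{S^{1}}\cong\R$, carried through with the paper's right-action convention $\sigma\cdot\gamma=\delta(\gamma^{-1})_{*}(\sigma)$ to give the differential $T^{-1}-1$, is precisely the standard argument being invoked. The sign bookkeeping and the chain-level identification $v\mapsto\tilde{\omega}\otimes v$ are both handled correctly.
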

\begin{proof}
		This is a standard fact in algebraic topology, see \cite{Hat} for example.
\end{proof}

Let us look at 
$H_{1}(Y_{1}^{*};\iota_{a_{0}}^{*}(C_{\lambda}(V))|_{Y_{1}^{*}})$ 
and take an open cover 
$Y_{1}=X_{2}\cup Y_{2}$
so that 
\begin{enumerate}
	\item $X_{2}$ and $Y_{2}$ are connected,
	\item $X_{2}\cap Q_{n}(a_{0})=\{a_{0},a_{2}\}$, $Y_{2}\cap Q_{n}(a_{0})=\{a_{0},a_3,\ldots,a_n\}$ and $d\in X_{2}\cap Y_{2}$,
	\item $X_{2}\backslash\{a_{0},a_{2}\}$, $Y_{2}\backslash\{a_{0},a_{3},\ldots,a_{n}\}$,
	and $X_{2}\cap Y_{2}\backslash\{a_{0}\}$ respectively contain   \\
	\noindent$\gamma_{0}\cup \gamma_{2}$, 
	 $\gamma_{0}\cup \gamma_{3}\cup 
	\ldots\cup\gamma_{n}$, and $\gamma_{0}$ as deformation retracts.
\end{enumerate}
Put $X_{2}^{*}:=X_{2}\backslash\{a_{0},a_{2}\}$ and 
$Y_{2}^{*}:=Y_{2}\backslash\{a_{0},a_{3},\ldots,a_{n}\}$.
Then the same argument as above 
gives us the decomposition 
$$H_{1}(X_{2}^{*};\iota_{a_{0}}^{*}(C_{\lambda}(V))|_{X_{2}^{*}})\oplus 
H_{1}(Y_{2}^{*};\iota_{a_{0}}^{*}(C_{\lambda}(V))|_{Y_{2}^{*}})\cong H_{1}(Y_{1}^{*};\iota_{a_{0}}^{*}(C_{\lambda}(V))).$$
Iterating this procedure, we obtain the decomposition 
\begin{equation}\label{eq:pochdecomp}
	H_{1}(\mathbb{C}\backslash Q_{n}(a_{0});\iota_{a_{0}}^{*}(C_{\lambda}(V)))\cong
	\bigoplus_{i=1}^{n}	H_{1}(X_{i}^{*};\iota_{a_{0}}^{*}(C_{\lambda}(V))|_{X_{i}^{*}})
\end{equation}
under the assumption $\lambda\neq 1$,
where $X_{1}^{*},X_{2}^{*}\ldots,X_{n}^{*}$
are open covering of $\mathbb{C}\backslash Q_{n}(a_{0})$ defined as above.
\subsubsection{Pochhammer cycles}
For a path connected subspace $U\subset \mathbb{C}\backslash Q_{n}(a_{0})$ containing $d$,
we write the inclusion map by $\iota_U\colon U\hookrightarrow \mathbb{C}\backslash Q_{n}(a_{0})$
and consider the universal cover $\pi_{U}\colon \widetilde{U}\rightarrow U$.
Let us fix a point $\bar{d}_{U}\in \pi_{U}^{-1}(d)$ and 
write the unique lift of $\iota_{U}\circ \pi_{U}$ to the universal cover 
$\widetilde{\mathbb{C}\backslash Q_{n}(a_{0})}$
sending $\bar{d}_{U}$ to $\bar{d}$
by $\widetilde{\iota_{U}}$.
\[
	\begin{tikzcd}
		\widetilde{U}\arrow[r,"\widetilde{\iota_{U}}"]\arrow[d,"\pi_{U}"]&\widetilde{\mathbb{C}\backslash Q_{n}(a_{0})}
		\arrow[d,"\pi_{\mathbb{C}\backslash Q_{n}(a_{0})}"]\\
		U\arrow[r,"\iota_{U}"]&\mathbb{C}\backslash Q_{n}(a_{0})
	\end{tikzcd}
\]
For a path $\gamma$ in $U$ starting at $d$, $\widetilde{\gamma}$ denotes the unique lift 
of $\gamma$ to $\widetilde{U}$ starting at $\bar{d}_{U}$.
Then $\widetilde{\iota_{U}}_{*}(\widetilde{\gamma})$
is the lift of $\gamma$ to $\widetilde{\mathbb{C}\backslash Q_{n}(a_{0})}$
starting at $\bar{d}$.

For two elements $a,b$ in a group $H$, we write the commutator of them by $[a,b]:=a^{-1}b^{-1}ab$.
\begin{dfn}[Pochhammer cycle]\normalfont
	 $1$-chains 
	$\widetilde{[\gamma_{0},\gamma_{i}]}\otimes c\in C_{1}(X_{i}^{*};\iota_{a_{0}}^{*}(C_{\sigma}(V))|_{X_{i}^{*}})$
	for $c\in \iota_{a_{0}}^{*}(C_{\sigma}(V))$
	are called 
	\emph{Pochhammer cycles} in $X_{i}^{*}$.
	Also 1-chains 
	 $\widetilde{\iota_{X_{i}^{*}}}_{*}(\widetilde{[\gamma_{0},\gamma_{i}]})\otimes c\in C_{1}(\mathbb{C}\backslash Q_{n}(a_{0});\iota_{a_{0}}^{*}(C_{\sigma}(V)))$
	are called Pochhammer cycles in $\mathbb{C}\backslash Q_{n}(a_{0})$.
\end{dfn}
As we see below, a Pochhammer cycles is not just a chain but literally a cycle.
\begin{lmm}\label{lem:pochhammer}
For $c\in \iota_{a_{0}}^{*}(C_{\sigma}(V))$ we 
consider $\widetilde{[\gamma_{0},\gamma_{i}]}\otimes c\in C_{1}(X_{i}^{*};\iota_{a_{0}}^{*}(C_{\lambda}(V))|_{X_{i}^{*}})$.
Then $\widetilde{[\gamma_{0},\gamma_{i}]}\otimes c$ is a cycle,
i.e., it is contained in the kernel of the boundary map 
$\partial_{1}\colon C_{1}(X_{i}^{*};\iota_{a_{0}}^{*}(C_{\lambda}(V))|_{X_{i}^{*}})\rightarrow C_{0}(X_{i}^{*};\iota_{a_{0}}^{*}(C_{\lambda}(V))|_{X_{i}^{*}})$.

Similarly, the Pochhammer cycle $\widetilde{\iota_{X_{i}^{*}}}_{*}(\widetilde{[\gamma_{0},\gamma_{i}]})\otimes c$
in $\mathbb{C}\backslash Q_{n}(a_{0})$ is also a cycle.
\end{lmm}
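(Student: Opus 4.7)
My plan is to reduce the claim that a Pochhammer $1$-chain is closed to the statement that the loop $[\gamma_{0},\gamma_{i}]$ acts trivially on the coefficient local system $\iota_{a_{0}}^{*}(C_{\lambda}(V))$, and then verify this triviality using the explicit description of the action through the two projections $\mathrm{pr}_{t}$ and $\mathrm{pr}_{t-z}$.

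First, I would compute the boundary of a general lifted loop. For any loop $\gamma$ in $X_{i}^{*}$ based at $d$, the lift $\widetilde{\gamma}$ starts at $\bar{d}_{X_{i}^{*}}$ and ends at $\delta(\gamma)(\bar{d}_{X_{i}^{*}})$. By the definition of the right $\pi_{1}$-action on $C_{*}(\widetilde{X_{i}^{*}};k)$ recalled before Lemma \ref{lem:compa}, one has $\delta(\gamma)(\bar{d}_{X_{i}^{*}})=\bar{d}_{X_{i}^{*}}\cdot\gamma^{-1}$. Hence inside $C_{0}(X_{i}^{*};\iota_{a_{0}}^{*}(C_{\lambda}(V))|_{X_{i}^{*}})$,
\[
\partial_{1}(\widetilde{\gamma}\otimes c) = \bigl(\delta(\gamma)(\bar{d}_{X_{i}^{*}})-\bar{d}_{X_{i}^{*}}\bigr)\otimes c = \bar{d}_{X_{i}^{*}}\otimes\bigl(\gamma^{-1}\cdot c-c\bigr),
\]
so $\widetilde{\gamma}\otimes c$ is a cycle if and only if $\gamma$ fixes $c$.

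Next I would specialise to $\gamma=[\gamma_{0},\gamma_{i}]$ and verify the required invariance. By the formula
\[
\gamma\cdot(v\otimes\kappa)=(\mathrm{pr}_{t}\circ\iota_{a_{0}})_{*}(\gamma)\cdot v\otimes(\mathrm{pr}_{t-z}\circ\iota_{a_{0}})_{*}(\gamma)\cdot\kappa,
\]
it suffices to show that both push-forwards kill $[\gamma_{0},\gamma_{i}]$. The computations of Section~\ref{sec:faction} identify $(\mathrm{pr}_{t}\circ\iota_{a_{0}})_{*}$ with the retraction $\mathrm{pr}_{0}\colon F_{n+1}\to F_{n}$, which sends $\gamma_{0}\mapsto 1$ and $\gamma_{j}\mapsto x_{j}$ for $j\geq 1$, and $(\mathrm{pr}_{t-z}\circ\iota_{a_{0}})_{*}$ with $\mathrm{pr}^{0}\colon F_{n+1}\to F_{1}$, which sends $\gamma_{0}\mapsto x$ and $\gamma_{j}\mapsto 1$ for $j\geq 1$. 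Since any commutator involving the identity element is trivial, we obtain $\mathrm{pr}_{0}([\gamma_{0},\gamma_{i}])=[1,x_{i}]=1$ and $\mathrm{pr}^{0}([\gamma_{0},\gamma_{i}])=[x,1]=1$, which gives the desired triviality of the action on $V\otimes_{k}K_{\lambda}$.

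For the Pochhammer cycle in $\mathbb{C}\backslash Q_{n}(a_{0})$, I would observe that $\widetilde{\iota_{X_{i}^{*}}}$ sends $\bar{d}_{X_{i}^{*}}$ to $\bar{d}$ and, being a lift of the inclusion, commutes with lifts of paths; the endpoint of $\widetilde{\iota_{X_{i}^{*}}}_{*}(\widetilde{[\gamma_{0},\gamma_{i}]})$ is therefore $\delta_{\mathbb{C}\backslash Q_{n}(a_{0})}([\gamma_{0},\gamma_{i}])(\bar{d})$, and the same computation yields $\partial_{1}\bigl(\widetilde{\iota_{X_{i}^{*}}}_{*}(\widetilde{[\gamma_{0},\gamma_{i}]})\otimes c\bigr)=0$. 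The main obstacle, as often in arguments of this shape, is keeping the conventions for left/right actions and the direction of the isomorphism $\delta$ consistent; once this bookkeeping is fixed, the proof collapses to the one-line observation that both projections kill the commutator.
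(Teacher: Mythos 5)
Your proposal is correct and shares the same template as the paper's proof: express the boundary of the lifted loop as a difference of basepoints translated by deck transformations, push the translation through the tensor product to become a difference of coefficients, and reduce the claim to the triviality of the action of $[\gamma_{0},\gamma_{i}]$. Where the two part ways is in how that triviality is verified. The paper invokes the single fact that $\gamma_{0}$ acts on $\iota_{a_{0}}^{*}(C_{\lambda}(V))$ by the scalar $\lambda$, which is central, so $[\gamma_{0},\gamma_{i}]^{-1}\cdot c = \lambda^{-1}\lambda(\gamma_{i}^{-1}\gamma_{i})c=c$ in one line. You instead unpack the coefficient module as $V\otimes_{k}K_{\lambda}$ and observe that each of the two pushforwards $\mathrm{pr}_{0}$ and $\mathrm{pr}^{0}$ already annihilates the commutator---one because it sends $\gamma_{0}\mapsto 1$, the other because it sends $\gamma_{i}\mapsto 1$. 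These are equivalent in content (the paper's scalar observation is exactly the combination of your two pushforward facts), with yours being a bit more explicit about the structure of the local system and the paper's being a bit more compact. One small stylistic remark: for the second assertion the paper disposes of the check in a single sentence by noting that $\widetilde{\iota_{X_{i}^{*}}}$ induces a chain map, so it sends cycles to cycles; your re-derivation of the boundary in $\mathbb{C}\backslash Q_{n}(a_{0})$ reaches the same conclusion but is longer than necessary.
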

\begin{proof}
	The second assertion follows from the first one since 
	$\widetilde{\iota_{X_{i}^{*}}}$ induces a chain map.
	Let us show the first assertion.
	Since $\gamma_{0}$ acts on $\iota_{a_{0}}^{*}(C_{\lambda}(V))$
	as the multiplication of scalar $\lambda$,
	we have the following as desired, 
	\begin{align*}
	\partial_{1}(\widetilde{[\gamma_{0},\gamma_{i}]}\otimes c)
	&=\widetilde{[\gamma_{0},\gamma_{i}]}(0)\otimes c-\widetilde{[\gamma_{0},\gamma_{i}]}(1)\otimes c\\
	&=\bar{d}_{X_{i}^{*}}\otimes c-[\gamma_{0},\gamma_{i}]\cdot \bar{d}_{X_{i}^{*}}\otimes c\\
	&=\bar{d}_{X_{i}^{*}}\otimes c-\bar{d}_{X_{i}^{*}}\otimes [\gamma_{0},\gamma_{i}]^{-1}\cdot c\\
	&=\bar{d}_{X_{i}^{*}}\otimes c-\bar{d}_{X_{i}^{*}}\otimes (\gamma_{i}^{-1}\gamma_{0}^{-1}\gamma_{i}\gamma_{0})\cdot c\\
	&=\bar{d}_{X_{i}^{*}}\otimes c-\bar{d}_{X_{i}^{*}}\otimes \lambda^{-1}\cdot\lambda(\gamma_{i}^{-1}\cdot \gamma_{i})c\\
	&=\bar{d}_{X_{i}^{*}}\otimes c-\bar{d}_{X_{i}^{*}}\otimes c=0.
	\end{align*}
\end{proof}
It follows that $H_{1}(X_{i}^{*};\iota_{a_{0}}^{*}(C_{\lambda}(V))|_{X_{i}^{*}})$
is generated by Pochhammer cycles, as below.
\begin{prp}\label{prop:poch}
	Under the assumption $\lambda\neq 1$, the map
	\[
		\mathrm{Poch}_{i}\colon \iota_{a_{0}}^{*}(C_{\lambda}(V))\ni c\longmapsto \widetilde{[\gamma_{0},\gamma_{i}]}\otimes c
		\in H_{1}(X_{i}^{*};\iota_{a_{0}}^{*}(C_{\lambda}(V))|_{X_{i}^{*}})
	\]
	is a linear isomorphism.
\end{prp}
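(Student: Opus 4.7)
The plan is to compute $H_{1}(X_{i}^{*}; \iota_{a_{0}}^{*}(C_{\lambda}(V))|_{X_{i}^{*}})$ directly from a CW model of $X_{i}^{*}\simeq S^{1}\vee S^{1}$ and then identify the Pochhammer cycle with an explicit generator. Write $L := \iota_{a_{0}}^{*}(C_{\lambda}(V))$; as a $k[\pi_{1}(X_{i}^{*},d)]$-module, with $\pi_{1}(X_{i}^{*},d) = F_{2} = \langle \gamma_{0},\gamma_{i}\rangle$, it has $\gamma_{0}$ acting as the scalar $\lambda$ and $\gamma_{i}$ acting as $\rho_{V}(x_{i})$.

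First, I would take the CW structure on $X_{i}^{*}$ with one $0$-cell at $d$ and two $1$-cells $\tilde{a}$, $\tilde{b}$ realizing the loops $\gamma_{0}$, $\gamma_{i}$. The cellular chain complex of $\widetilde{X_{i}^{*}}$ is concentrated in degrees $0$ and $1$. Using the right $k[F_{2}]$-module convention on $C_{*}(\widetilde{X_{i}^{*}};k)$ from Section \ref{sec:faction}, tensoring over $k[F_{2}]$ with $L$ yields
\[
0\longrightarrow L\oplus L\xrightarrow{\partial_{L}} L\longrightarrow 0,\qquad \partial_{L}(u,v) = (\gamma_{0}^{-1}-1)u + (\gamma_{i}^{-1}-1)v.
\]
Because $\gamma_{0}$ acts on $L$ as the scalar $\lambda\neq 1$, the coefficient $\gamma_{0}^{-1}-1 = \lambda^{-1}-1$ is invertible. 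Hence $\partial_{L}$ is surjective, so $H_{0}(X_{i}^{*};L) = 0$, and the projection $\ker\partial_{L}\to L$, $(u,v)\mapsto v$, is a $k$-linear isomorphism.

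Next, I would compute the Pochhammer chain in these coordinates. Lifting the commutator path $\gamma_{0}^{-1}\gamma_{i}^{-1}\gamma_{0}\gamma_{i}$ from $\bar{d}$ edge by edge and collecting terms, one obtains
\[
\widetilde{[\gamma_{0},\gamma_{i}]} \;=\; \tilde{a}\cdot (\gamma_{i}-1)\gamma_{0} \;+\; \tilde{b}\cdot (\gamma_{0}^{-1}-1)\gamma_{i}\gamma_{0} \;\in\; C_{1}(\widetilde{X_{i}^{*}};k).
\]
Tensoring with $c\in L$ and substituting the scalar action of $\gamma_{0}$ yields
\[
\widetilde{[\gamma_{0},\gamma_{i}]}\otimes c \;=\; \bigl(\lambda(\rho_{V}(x_{i})-1)c,\;(1-\lambda)\rho_{V}(x_{i})c\bigr)\;\in\; L\oplus L,
\]
which one verifies lies in $\ker\partial_{L}$ by direct substitution.

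Under the isomorphism $\ker\partial_{L}\cong L$, $\mathrm{Poch}_{i}$ becomes the composition $c\mapsto (1-\lambda)\rho_{V}(x_{i})c$. Since $1-\lambda\in k^{\times}$ and $\rho_{V}(x_{i})$ is invertible, this is a $k$-linear automorphism of $L$, so $\mathrm{Poch}_{i}$ is an isomorphism. The main technical obstacle is the explicit identification of the Pochhammer chain: one must carefully track the left versus right $k[F_{2}]$-module conventions when passing from chains in the universal cover $\widetilde{X_{i}^{*}}$ to chains with coefficients in $L$, and it is precisely the assumption $\lambda\neq 1$ that makes $\lambda^{-1}-1$ invertible, ensuring both the splitting that yields $\ker\partial_{L}\cong L$ via the second coordinate and the non-vanishing of $\mathrm{Poch}_{i}$ on $L$.
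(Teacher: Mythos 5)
Your argument is correct and takes a genuinely different route from the paper's. The paper proves the proposition in two stages: first, Lemma~\ref{lem:dim} uses a Mayer--Vietoris decomposition $X_i^* = U_{a_i}\cup U_{a_0}$ together with the fact that kernel and cokernel of an endomorphism of a vector space are abstractly isomorphic to conclude $H_1(X_i^*;\bar V)\cong \bar V$ \emph{as vector spaces}, without producing the isomorphism via $\mathrm{Poch}_i$; then the proof of Proposition~\ref{prop:poch} separately establishes injectivity of $\mathrm{Poch}_i$ by a chain-level diagram chase through the Mayer--Vietoris short exact sequences (Lemma~\ref{lmm:mv}), decomposing the Pochhammer cycle into pieces supported in $U_{a_i}$ and $U_{a_0}$. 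You instead replace $X_i^*$ by a CW model $S^1\vee S^1$ and compute the cellular twisted chain complex $L\oplus L\xrightarrow{\partial_L}L$ directly. This buys you two things: (i) the identification $\ker\partial_L\xrightarrow{\sim}L$ via the second coordinate is an \emph{explicit} linear isomorphism rather than the abstract dimension bookkeeping of Lemma~\ref{lem:dim}, and (ii) after expressing $\widetilde{[\gamma_0,\gamma_i]}$ in the cellular basis, $\mathrm{Poch}_i$ is literally multiplication by $(1-\lambda)\rho_V(x_i)$, which is manifestly invertible. In particular your route sidesteps the step in the paper that deduces bijectivity from injectivity plus an abstract isomorphism of vector spaces (a deduction that requires some care when $V$ is infinite-dimensional, since an injective self-map need not be surjective), whereas your computation is self-contained. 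I verified your lift $\widetilde{[\gamma_0,\gamma_i]} = \tilde a\cdot(\gamma_i-1)\gamma_0 + \tilde b\cdot(\gamma_0^{-1}-1)\gamma_i\gamma_0$ against the paper's right-action convention $\sigma\cdot\gamma = \delta(\gamma^{-1})_*\sigma$, and the resulting pair $\bigl(\lambda(\rho_V(x_i)-1)c,\,(1-\lambda)\rho_V(x_i)c\bigr)$ does lie in $\ker\partial_L$; the computation checks out.
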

Before showing this, we prepare the following lemmas.
\begin{lmm}\label{lem:dim}
	Let us suppose $\lambda\neq 1$.
	Let us take an open cover $X^{*}_{i}=U_{a_{i}}\cup U_{a_{0}}$ 
	where $U_{a_{i}}$ (resp. $U_{a_{0}}$) is the intersection of 
	$X^{*}_{i}$ and an open neighborhood of $a_{i}$
	(resp. $a_{0}$) in $\mathbb{C}$.
	We assume that 
	$U_{a_{i}}$ and $U_{a_{0}}$
	contain respectively $\gamma_{i}$ and $\gamma_{0}$ as deformation retracts and    
	moreover assume that the intersection $U_{a_{i}}\cap U_{a_{0}}$
	is simply connected.
	Then there exists an isomorphism
	\[
		H_{1}(X^{*}_{i};\iota_{a_{0}}^{*}(C_{\lambda}(V))|_{X^{*}_{i}})\xrightarrow{\sim}
		H_{0}(U_{a_{i}}\cap U_{a_{0}};\iota_{a_{0}}^{*}(C_{\lambda}(V))|_{U_{a_{i}}\cap U_{a_{0}}}).
	\]
	In particular, we have the isomorphism $H_{1}(X^{*}_{i};\iota_{a_{0}}^{*}(C_{\lambda}(V))|_{X^{*}_{i}})
	\cong \iota_{a_{0}}^{*}(C_{\lambda}(V))$ as vector spaces.
\end{lmm}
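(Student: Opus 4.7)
The plan is to apply the Mayer--Vietoris long exact sequence with local coefficients to the open cover $X_i^* = U_{a_i} \cup U_{a_0}$ and use the hypothesis $\lambda \neq 1$ to collapse the $U_{a_0}$-terms. First I would record the monodromies on $L := \iota_{a_0}^*(C_\lambda(V))|_{X_i^*}$: by the projection formulas for $(\mathrm{pr}_{t}\circ \iota_{a_0})_*$ and $(\mathrm{pr}_{t-z}\circ \iota_{a_0})_*$ established in Section~\ref{sec:braidhomol}, the loop $\gamma_0$ acts by the scalar $\lambda$, while $\gamma_i$ acts by $\rho_V(x_i) \otimes \id$.

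The key input is to compute each piece in Mayer--Vietoris. Since $\gamma_0$ acts on $L$ as the scalar $\lambda \neq 1$, the operator $\lambda - 1$ is invertible, so Lemma~\ref{lem:Hat} together with the coinvariant description of $H_0$ yields $H_1(U_{a_0}; L) = 0 = H_0(U_{a_0}; L)$. Simple connectedness of $U_{a_i}\cap U_{a_0}$ gives $H_0(U_{a_i}\cap U_{a_0}; L) = L$ and $H_1 = 0$. For $U_{a_i}$, Lemma~\ref{lem:Hat} gives $H_1(U_{a_i}; L) = \ker(\rho_V(x_i) - 1)$ and $H_0(U_{a_i}; L) = L/(\rho_V(x_i) - 1)L$. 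Substituting these into Mayer--Vietoris collapses the long exact sequence to
\[
0 \to \ker(\rho_V(x_i) - 1) \to H_1(X_i^*; L) \xrightarrow{\delta} L \xrightarrow{q} L/(\rho_V(x_i) - 1)L \to H_0(X_i^*; L) \to 0,
\]
where $q$ is the quotient map induced by the inclusion $U_{a_i} \cap U_{a_0} \hookrightarrow U_{a_i}$.

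Surjectivity of $q$ forces $H_0(X_i^*; L) = 0$ and isolates the short exact sequence
\[
0 \to \ker(\rho_V(x_i) - 1) \to H_1(X_i^*; L) \to (\rho_V(x_i) - 1)L \to 0.
\]
Choosing a vector space splitting gives $H_1(X_i^*; L) \cong \ker(\rho_V(x_i) - 1) \oplus (\rho_V(x_i) - 1)L$, while the rank--nullity sequence $0 \to \ker(\rho_V(x_i) - 1) \to L \xrightarrow{\rho_V(x_i) - 1} (\rho_V(x_i) - 1)L \to 0$ splits to give $L \cong \ker(\rho_V(x_i) - 1) \oplus (\rho_V(x_i) - 1)L$. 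Composing these splittings produces an isomorphism $H_1(X_i^*; L) \xrightarrow{\sim} L = H_0(U_{a_i}\cap U_{a_0}; L)$, from which the ``in particular'' clause is immediate.

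The main obstacle is conceptual rather than computational: the Mayer--Vietoris connecting map $\delta$ is only injective with image $(\rho_V(x_i) - 1)L$, so it is not itself the asserted isomorphism. The isomorphism of the lemma must therefore be produced non-canonically via splittings of the two short exact sequences, and the dimension coincidence supplied by rank--nullity on $\rho_V(x_i) - 1$ is doing the real work. The more geometric realization of this isomorphism, through the Pochhammer cycle $\widetilde{[\gamma_0, \gamma_i]} \otimes c$, appears later in Proposition~\ref{prop:poch} and relies on the present lemma, so it cannot be invoked here.
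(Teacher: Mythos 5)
Your proof is correct and takes essentially the same route as the paper: the same Mayer--Vietoris decomposition $X_i^*=U_{a_i}\cup U_{a_0}$, the same vanishing $H_j(U_{a_0};L)=0$ for $j=0,1$ from the scalar action of $\gamma_0$ by $\lambda\neq 1$, the same identification of $H_1(U_{a_i};L)$ and $H_0(U_{a_i};L)$ with $\Ker(\rho_V(x_i)-1)$ and $\mathrm{Coker}(\rho_V(x_i)-1)$, and a final bookkeeping argument via splittings. Where you differ is in the last step. The paper collapses the five-term sequence and then invokes the general claim that ``kernel and cokernel of an endomorphism of a vector space are isomorphic,'' asserting $\Ker\delta\cong\mathrm{Coker}\,\delta$; strictly speaking that cancellation fails for infinite-dimensional spaces (e.g.\ a one-sided shift), and the paper's own justification (injectivity and projectivity of vector spaces) only yields $\Ker f\oplus\mathrm{Im}\,f\cong\mathrm{Im}\,f\oplus\mathrm{Coker}\,f$, which does not let you cancel $\mathrm{Im}\,f$ without a finiteness hypothesis. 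You sidestep this by noting directly that both short exact sequences $0\to\Ker(\rho_V(x_i)-1)\to H_1(X_i^*;L)\to(\rho_V(x_i)-1)L\to 0$ and $0\to\Ker(\rho_V(x_i)-1)\to L\to(\rho_V(x_i)-1)L\to 0$ split over a field, so both middle terms are isomorphic to the same direct sum $\Ker(\rho_V(x_i)-1)\oplus(\rho_V(x_i)-1)L$. That argument is valid regardless of the dimension of $V$, so it is a genuine (if small) improvement in rigor over the paper's phrasing while using identical topology. One wording slip: in your closing remarks you describe $\delta$ as ``only injective,'' but in fact $\Ker\delta\cong\Ker(\rho_V(x_i)-1)$ is nonzero in general, so $\delta$ is neither injective nor surjective; what you meant (and what your proof actually uses) is simply that $\delta$ is not the isomorphism and must be supplemented by the two splittings.
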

\begin{proof}
	The second assertion immediately follows from the first one
	since we have $H_{0}(U_{a_{i}}\cap U_{a_{0}};\iota_{a_{0}}^{*}(C_{\lambda}(V))|_{U_{a_{i}}\cap U_{a_{0}}})
	\cong \iota_{a_{0}}^{*}(C_{\lambda}(V))$ from the simply connectedness of $U_{a_{i}}\cap U_{a_{0}}$.

	We set $\bar{V}=\iota_{a_{0}}^{*}(C_{\lambda}(V))$ for simplicity.
	Recalling that $\gamma_{0}\cdot v=\lambda v$ for all $v\in \bar{V}$,
	and also $\lambda \neq 1$, we obtain $H_{0}(X^{*}_{i};\bar{V}|_{X^{*}_{i}})=\{0\}$ and 
	 $H_{j}(U_{a_{0}};\bar{V}|_{U_{a_{0}}})=\{0\}$, $j=0,1$, as in Proposition \ref{prop:decomp}.
	 Therefore the Mayer-Vietoris long exact sequence gives 
	 us the following exact sequence. 	
	 \begin{multline*}
		\{0\}\rightarrow H_{1}(U_{a_{i}};\bar{V}|_{U_{a_{i}}})\rightarrow H_{1}(X^{*}_{i};\bar{V}|_{X^{*}_{i}})\\\xrightarrow{\delta}
		H_{0}(U_{a_{i}}\cap U_{a_{0}};\bar{V}|_{U_{a_{i}}\cap U_{a_{0}}})\rightarrow H_{0}(U_{a_{i}};\bar{V}|_{U_{a_{i}}})
		\rightarrow \{0\}
	 \end{multline*}

	Let us consider the multiplication map 
	$(\gamma_{i}-1)\colon \bar{V}\ni v\mapsto (\gamma_{i}-1)v\in \bar{V}$
	and then we obtain  
	that 
	\begin{align*}
	H_{1}(U_{a_{i}};\bar{V}|_{U_{a_{i}}})&\cong 
	\{v\in \bar{V}\mid (\gamma_{i}-1)\cdot v=0\}=\mathrm{Ker\,}(\gamma_{i}-1),\\
		H_{0}(U_{a_{i}};\bar{V}|_{U_{a_{i}}})&\cong 
		\bar{V}/\{(\gamma_{i}-1)v\mid v\in\bar{V}\}
		=\mathrm{Coker\,}(\gamma_{i}-1),
	\end{align*}
	since $U_{a_{i}}$ is homotopy equivalent to $S^{1}$.  
	Thus it follows that  
	\[
		H_{1}(U_{a_{i}};\bar{V}|_{U_{a_{i}}})\cong \mathrm{Ker\,}(\gamma_{i}-1)\cong \mathrm{Coker\,}(\gamma_{i}-1)\cong H_{0}(U_{a_{i}};\bar{V}|_{U_{a_{i}}})	
	\]
	since every vector space is an injective and projective module, and therefore the
	kernel and cokernel of an endmorphism of a vector space are isomorphic in general.
	
	Thus we obtain that $\mathrm{Ker\,}\delta\cong \mathrm{Coker\,}\delta$. Moreover,
	since vector spaces are injective and projective,
	we have isomorphisms
	\begin{align*}
		H_{1}(X^{*}_{i};\iota_{a_{0}}^{*}(C_{\lambda}(V))|_{X^{*}_{i}})&\cong \mathrm{Coim\,}\delta\oplus \mathrm{Ker\,}\delta,\\
		H_{0}(U_{a_{i}}\cap U_{a_{0}};\iota_{a_{0}}^{*}(C_{\lambda}(V))|_{U_{a_{i}}\cap U_{a_{0}}})&\cong \mathrm{Im\,}\delta\oplus \mathrm{Coker\,}\delta. 
	\end{align*}
	Then isomorphisms $\mathrm{Ker\,}\delta\cong \mathrm{Coker\,}\delta$ and $\mathrm{Coim\,}\delta\cong \mathrm{Im\,}\delta$
	induce the desired isomorphism.
\end{proof}

For a subspace $C\subset X$ of a connected manifold $X$, $S_{n}^{C}(\widetilde{X})$ denotes 
the set of all singular $n$-simplices $\sigma$ satisfying 
$\mathrm{Im}(\pi_{X}\circ \sigma)\subset C$ and $C_{n}^{C}(\widetilde{X};k)$
denotes the $k$-vector space freely generated by $S_{n}^{C}(\widetilde{X})$.
For a $k$-local system $L$, we define $C_{n}^{C}(X;L):=C_{n}^{C}(\widetilde{X};k)\otimes_{k[\pi_{1}(X)]}L$.
Then the following is well-known as a key lemma for 
the Mayer-Vietoris long exact sequence
and    
proved by 
the similar argument as for usual singular homology groups
with constant coefficients.
\begin{lmm}\label{lmm:mv}
	Let $X$ be a topological manifold and $X=A\cup B$ an open covering.
	Let $L$ be a $k$-local system on $X$.
	We consider the sum
	 $C_{n}^{A}(X;L)+C_{n}^{B}(X;L)$ in the vector space $C_{n}(X;L)$.
	Then we have the following.
	\begin{enumerate}
		\item The inclusion map $C_{n}^{A}(X;L)+C_{n}^{B}(X;L)\hookrightarrow C_{n}(X;L)$
		induces the isomorphism of homology groups.

		\item 
	We have a short exact sequence 
	\begin{multline*}
		\{0\}\rightarrow C_{n}(A\cap B; L|_{A\cap B})\xrightarrow[]{f}
		C_{n}(A;L|_{A})\oplus C_{n}(B;L|_{B})\\
		\xrightarrow[]{g}
		C_{n}^{A}(X;L)+C_{n}^{B}(X;L)\rightarrow\{0\}.
	\end{multline*}
	\end{enumerate}
	Here $f,\,g$ are defined as follows.
	The map $g$ is the sum of the maps $C_{n}(*;L|_{*})\rightarrow C^{*}_{n}(X;L)$ induced by the 
	inclusions $*\hookrightarrow X$
	for $*=A,B$,
	and $f$ is the difference of the maps $C_{n}(A\cap B; L|_{A\cap B})
	\rightarrow C_{n}(*;V|_{*})$ from inclusions $A\cap B\hookrightarrow *$
	for $*=A,B$.
	
\end{lmm}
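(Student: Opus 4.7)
The plan is to reduce this lemma to the classical constant-coefficient case by systematically exploiting the universal-cover description $C_n(X;L)=C_n(\widetilde{X};k)\otimes_{k[\pi_1(X,x)]}L$. For part (2), I would first observe that for an open subspace $U\subset X$, the inclusion $U\hookrightarrow X$ lifts to inclusions of universal covers (one for each path component of the preimage in $\widetilde{X}$), inducing a natural and injective map $C_n(U;L|_U)\hookrightarrow C_n(X;L)$ whose image is exactly $C_n^U(X;L)$. Once this identification is in place, exactness of the short sequence is essentially combinatorial: injectivity of $f$ follows because distinct simplices in $A\cap B$ remain distinct as simplices in $A$; surjectivity of $g$ is the very definition of $C_n^A(X;L)+C_n^B(X;L)$; and the middle exactness comes from a cancellation argument, since if $\sigma_A+\sigma_B=0$ in $C_n(X;L)$ with $\sigma_A$ (resp.\ $\sigma_B$) supported on simplices in $A$ (resp.\ $B$), then each simplex appearing in $\sigma_A$ must equal one appearing in $-\sigma_B$, forcing its image to lie in $A\cap B$, and the matched local-system coefficients assemble into a preimage in $C_n(A\cap B;L|_{A\cap B})$.

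For part (1), I would lift the classical barycentric subdivision argument to the universal cover. Let $S$ denote the barycentric subdivision operator on $C_*(\widetilde{X};k)$ and $T$ its standard prism chain homotopy satisfying $\partial T+T\partial=S-\mathrm{id}$. Since both are built by post-composing a fixed subdivision chain on the model simplex $\Delta^n$ with the given singular simplex, they are equivariant under every self-homeomorphism of $\widetilde{X}$, in particular under the deck action; hence they descend to operators on $C_n(X;L)$ preserving the chain-homotopy relation. Applying the Lebesgue number lemma to the cover $\{\pi_X^{-1}(A),\pi_X^{-1}(B)\}$ restricted to the image of each lifted simplex, some iterate $S^N$ maps any given chain into $C_n^A(X;L)+C_n^B(X;L)$. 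Combined with the telescoped chain homotopy $T_N:=T\circ(\mathrm{id}+S+\cdots+S^{N-1})$ satisfying $\partial T_N+T_N\partial=S^N-\mathrm{id}$, this shows that the inclusion $C_n^A(X;L)+C_n^B(X;L)\hookrightarrow C_n(X;L)$ is a quasi-isomorphism: every cycle is homologous to one in $C_n^A+C_n^B$, and any relation $z=\partial w$ with $z\in C_n^A+C_n^B$ can, after applying $S^N$ to $w$ and absorbing the correction by $T_N$, be realized within the subcomplex.

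The principal technical obstacle is the careful bookkeeping ensuring that subdivision, prism homotopy, and the inclusions $C_n(U;L|_U)\hookrightarrow C_n(X;L)$ all descend properly through $\otimes_{k[\pi_1(X,x)]}L$: because an open subset $U\subset X$ need not have its universal cover realized as a single connected component of $\pi_X^{-1}(U)$, one must transport local-coefficient data component by component along chosen basepoints, and verify the $\pi_1(X,x)$-equivariance of all operators at the level of the covering space rather than only on the base. Once these compatibilities are established, no further ideas beyond the classical argument are needed, and the proof runs parallel to the standard treatment for constant coefficients.
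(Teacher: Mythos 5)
Your proposal is correct and follows exactly the route the paper alludes to: the paper itself gives no argument, only the remark preceding the lemma that it is ``well-known \ldots and proved by the similar argument as for usual singular homology groups with constant coefficients''; your sketch simply carries out that adaptation. The one point worth tightening is your opening claim in part (2) that the inclusion $U\hookrightarrow X$ ``lifts to inclusions of universal covers'': the universal cover $\widetilde{U}$ of an open subspace does \emph{not} embed in $\widetilde{X}$ in general, since a path component of $\pi_X^{-1}(U)$ is the covering of $U$ associated to $\ker(\pi_1(U)\to\pi_1(X))$, not the universal cover. The identification $C_n(U;L|_U)\cong C_n^U(X;L)$ nonetheless holds precisely because $L|_U$ is the pullback of $L$, so the $\pi_1(U)$-action on the stalk also factors through $\pi_1(X)$, and one has
\[
C_n(\widetilde{U};k)\otimes_{k[\pi_1(U)]}L_u
\;\cong\;
C_n\!\bigl(\widetilde{U}/N;k\bigr)\otimes_{k[\pi_1(U)/N]}L_u
\]
with $N=\ker(\pi_1(U)\to\pi_1(X))$ and $\widetilde{U}/N$ a component of $\pi_X^{-1}(U)$ --- exactly the ``component-by-component'' bookkeeping you flag as the principal obstacle. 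Once that is in place, your barycentric subdivision argument for part (1), including the observation that $S$ and $T$ are deck-equivariant and hence descend, and the cancellation argument for middle exactness in part (2), are both sound and complete the proof. (A marginally slicker alternative, sidestepping universal covers altogether, is to use the canonical description $C_n(X;L)\cong\bigoplus_{\sigma\in S_n(X)}L_{\sigma(e_0)}$; then the constant-coefficient Mayer--Vietoris argument transfers verbatim with the local coefficients carried along by the simplices. This is logically equivalent to what you do, but removes the bookkeeping.)
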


Now we come back to the proof of the proposition.
\begin{proof}[Proof of Proposition \ref{prop:poch}]
	Lemma \ref{lem:dim} tells us that 
	$H_{1}(X_{i}^{*};\iota_{a_{0}}^{*}(C_{\lambda}(V))|_{X_{i}^{*}})$ is isomorphic to $\iota_{a_{0}}^{*}(C_{\lambda}(V))$
	as a vector space.
	Thus we only need to check the injectivity of the 
	map $\mathrm{Poch}_{i}$ from the freeness of vector spaces.
		
	Let us write $\bar{V}=\iota_{a_{0}}^{*}(C_{\lambda}(V))$.
	We assume that
	$v\in \bar{V}$ satisfies $\mathrm{Poch}_{i}(v)=0$ and will show that $v=0$.
	We take the same cover $X_{i}^{*}=U_{a_{i}}\cup U_{a_{0}}$
	as in Lemma \ref{lem:dim}.
	Let us consider the commutative diagram 
	\[
	  \begin{tikzcd}
		C_{2}(U_{a_{i}}\cap U_{a_{0}})\arrow[r,"f_{2}"]\arrow[d,"\partial_{2}"]& C_{2}(U_{a_{i}})\oplus C_{2}(U_{a_{0}})\arrow[r,"g_{2}"]\arrow[d,"\partial_{2}"]&
		C_{2}^{U_{a_{i}}}(X_{i}^{*})+C_{2}^{U_{a_{0}}}(X_{i}^{*})\arrow[d,"\partial_{2}"]\\
		C_{1}(U_{a_{i}}\cap U_{a_{0}})\arrow[r,"f_{1}"]\arrow[d,"\partial_{1}"]& C_{1}(U_{a_{i}})\oplus C_{1}(U_{a_{0}})\arrow[r,"g_{1}"]\arrow[d,"\partial_{1}"]&
		C_{1}^{U_{a_{i}}}(X_{i}^{*})+C_{1}^{U_{a_{0}}}(X_{i}^{*})\arrow[d,"\partial_{1}"]\\
		C_{0}(U_{a_{i}}\cap U_{a_{0}})\arrow[r,"f_{0}"]& C_{0}(U_{a_{i}})\oplus C_{0}(U_{a_{0}})\arrow[r,"g_{0}"]&
		C_{0}^{U_{a_{i}}}(X_{i}^{*})+C_{0}^{U_{a_{0}}}(X_{i}^{*})
	  \end{tikzcd},
	\]
	following from Lemma \ref{lmm:mv}, where we omit the coefficients $\bar{V}$ for simplicity.
	We note that horizontal sequences are exact and also note that 
	$f_{i}$ are injective and $g_{i}$ are surjective.

	By the assumption, $\mathrm{Poch}_{i}(v)=\widetilde{[\gamma_{0},\gamma_{i}]}\otimes v$
	is homologous to $0$. Thus from the surjectivity of $g_{2}$, there exits 
	$\sigma_{2}\in C_{2}(U_{a_{1}};\bar{V}|_{U_{a_{1}}})\oplus C_{2}(U_{a_{0}};\bar{V}|_{U_{a_{0}}})$
	such that $\widetilde{[\gamma_{0},\gamma_{i}]}\otimes v=\partial_{2}\circ g_{2}(\sigma_{2})$.
	On the other hand, we have 
	\begin{align*}
		\widetilde{[\gamma_{0},\gamma_{i}]}\otimes v&=\widetilde{\gamma_{0}^{-1}}\otimes v
		+\widetilde{\gamma_{i}^{-1}}\otimes (\gamma_{0}\cdot v)
		+\widetilde{\gamma_{0}}\otimes (\gamma_{i}\gamma_{0}\cdot v)
		+\widetilde{\gamma_{i}}\otimes (\gamma_{0}^{-1}\gamma_{i}\gamma_{0}\cdot v)\\
		&=\left(
			\widetilde{\gamma_{i}^{-1}}\otimes \lambda c+\widetilde{\gamma_{i}}\otimes (\gamma_{i}\cdot v)
		\right)
		+\left(
			\widetilde{\gamma_{0}^{-1}}\otimes v+\widetilde{\gamma_{0}}\otimes \lambda (\gamma_{i}\cdot v)
		\right),
	\end{align*}
	which shows that 
	\[
		g_{1}\left(
			\left(
			\widetilde{\gamma_{i}^{-1}}\otimes \lambda v+\widetilde{\gamma_{i}}\otimes (\gamma_{i}\cdot v)
		\right)
		+\left(
			\widetilde{\gamma_{0}^{-1}}\otimes v+\widetilde{\gamma_{0}}\otimes \lambda (\gamma_{i}\cdot v)
		\right)
		\right)
		=	\widetilde{[\gamma_{0},\gamma_{i}]}\otimes v.
	\]
	From the commutativity of the diagram, we also have $g_{1}\circ \partial_{2}(\sigma_{2})=\widetilde{[\gamma_{0},\gamma_{i}]}\otimes v$.
	Therefore it follows from the injectivity of $f_{1}$ that 
	there uniquely exists $\sigma_{1}\in C_{1}(U_{a_{i}}\cap U_{a_{0}};\bar{V}|_{U_{a_{i}}\cap U_{a_{0}}})$ such that 
	\[
		f_{1}(\sigma_{1})=	\left(
			\widetilde{\gamma_{i}^{-1}}\otimes \lambda v+\widetilde{\gamma_{i}}\otimes (\gamma_{i}\cdot v)
		\right)
		+\left(
			\widetilde{\gamma_{0}^{-1}}\otimes v+\widetilde{\gamma_{0}}\otimes \lambda (\gamma_{i}\cdot v)
		\right)
		-\partial_{2}(\sigma_{2}).
	\]

	By the way, the image of  $\left(
		\widetilde{\gamma_{i}^{-1}}\otimes \lambda c+\widetilde{\gamma_{i}}\otimes (\gamma_{i}\cdot v)
	\right)
	+\left(
		\widetilde{\gamma_{0}^{-1}}\otimes v+\widetilde{\gamma_{0}}\otimes \lambda (\gamma_{i}\cdot v)
	\right)
	\in
	C_{1}(U_{a_{1}};\bar{V}|_{U_{a_{1}}})\oplus C_{1}(U_{a_{0}};\bar{V}|_{U_{a_{0}}})
	$
	by the boundary map $\partial_{1}$
	is 
	\[
		\left(\bar{d}_{U_{a_{i}}}\otimes
		-(\lambda-1)(\gamma_{i}-1)v
		\right)
		+
		\left(\bar{d}_{U_{a_{0}}}\otimes
		(\lambda-1)(\gamma_{i}-1)v
		\right),
	\]
	which is the image of 
	$\bar{d}_{U_{a_{i}}\cap U_{a_{0}}}\otimes
	-(\lambda-1)(\gamma_{i}-1)v
	\in C_{0}(U_{a_{i}}\cap U_{a_{0}};\bar{V}|_{U_{a_{i}}\cap U_{a_{0}}})$
	by $f_{0}$.
	Then from the injectivity of $f_{0}$,
	we have 
	\[
		\partial_{1}(\sigma_{1})=	\bar{d}_{U_{a_{i}}\cap U_{a_{0}}}\otimes
		-(\lambda-1)(\gamma_{i}-1)v.
	\]
	Thus we obtain $(\lambda-1)(\gamma_{i}-1)v=0$
	from the following two facts.
	One is that we have $C_{0}(U_{a_{i}}\cap U_{a_{0}};\bar{V}|_{U_{a_{i}}\cap U_{a_{0}}})=C_{0}(U_{a_{i}}\cap U_{a_{0}};k)\otimes_{k}\bar{V}$
	since $U_{a_{i}}\cap U_{a_{0}}$ is simply connected.
	Another is that 
	for a $0$-connected topological space $Y$, 
	the image of the boundary map $\partial_{1}\colon C_{1}(Y;k)\rightarrow C_{0}(Y;k)$
	equals the kernel of the augmentation map $\epsilon \colon C_{0}(Y;k)\ni \sum_{\sigma\in S_{0}(Y)}r_{\sigma}\cdot \sigma 
	\mapsto \sum_{\sigma\in S_{0}(Y)}r_{\sigma}\in k$.
	
	Therefore we obtain $(\gamma_{i}-1)v=0$ since $\lambda\neq 1$, and moreover obtain 
	$\partial_{1}\circ f_{1}(\sigma_{1})=f_{0}\circ \partial_{1}(\sigma_{1})=0$.
	This implies that 
	\begin{align*}
		0&=\partial_{1}(\widetilde{\gamma_{i}^{-1}}\otimes \lambda v+\widetilde{\gamma_{i}}\otimes (\gamma_{i}\cdot v))
		=\partial_{1}(\widetilde{\gamma_{i}^{-1}}\otimes \lambda v+\widetilde{\gamma_{i}}\otimes v)\\
		&=\partial_{1}(-\widetilde{\gamma_{i}}\otimes \lambda v+\widetilde{\gamma_{i}}\otimes v)
		=\partial_{1}(\widetilde{\gamma_{i}}\otimes (1-\lambda)v)
	\end{align*}
	which shows $(1-\lambda)v=0$ since we can use Lemma \ref{lem:Hat} by the condition $(\gamma_{i}-1)v=0$.
	Therefore we obtain $v=0$ as desired.
\end{proof}
Combining this proposition with the decomposition $(\ref{eq:pochdecomp})$,
we conclude that 
a base of 
$H_{1}(\mathbb{C}\backslash Q_{n}(a_{0});\iota_{a_{0}}^{*}(C_{\lambda}(V)))$
is given by Pochhammer cycles.
\begin{thm}\label{thm:poch}
	Let $V$ be a left $k[F_{n}\rtimes_{\alpha}G]$-module.
	Let us take $\lambda\in k^{\times}\backslash\{1\}$.
	Since we have $\iota_{a_{0}}(C_{\lambda}(V))=V\otimes_{k}K_{\lambda}\cong V$ as vector spaces, 
	we identify $\iota_{a_{0}}(C_{\lambda}(V))$ with $V$.
	Then 
	\[
		\begin{array}{cccc}
			\mathrm{Poch}\colon &V^{\oplus n}&\longrightarrow &H_{1}(\mathbb{C}\backslash Q_{n}(a_{0});\iota_{a_{0}}(C_{\lambda}(V)))\\
			&(v_{1},\ldots,v_{n})&\longmapsto &\sum_{i=1}^{n}\widetilde{[\gamma_{0},\gamma_{i}]}\otimes v_{i}
		\end{array}
	\]
	is an isomorphism of vector spaces.
\end{thm}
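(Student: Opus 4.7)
The plan is to simply assemble the two main ingredients that have already been proved, namely the Mayer--Vietoris decomposition in equation $(\ref{eq:pochdecomp})$ and the generator description of each summand given by Proposition \ref{prop:poch}. Once we keep track of the inclusion maps carefully, the theorem falls out.

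First I would invoke the decomposition $(\ref{eq:pochdecomp})$, which under the hypothesis $\lambda \neq 1$ gives the direct sum splitting
\[
H_{1}(\mathbb{C}\backslash Q_{n}(a_{0});\iota_{a_{0}}^{*}(C_{\lambda}(V))) \;\cong\; \bigoplus_{i=1}^{n} H_{1}(X_{i}^{*};\iota_{a_{0}}^{*}(C_{\lambda}(V))|_{X_{i}^{*}}),
\]
with the right-to-left map being induced by the inclusions $\iota_{X_{i}^{*}}\colon X_{i}^{*}\hookrightarrow \mathbb{C}\backslash Q_{n}(a_{0})$ on chains (equivalently, by the maps $\widetilde{\iota_{X_{i}^{*}}}_{*}$ at the level of covering spaces).

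Next I would apply Proposition \ref{prop:poch} to each summand. For every $i=1,\ldots,n$, that proposition, still under the assumption $\lambda\neq 1$, provides an isomorphism
\[
\mathrm{Poch}_{i}\colon \iota_{a_{0}}^{*}(C_{\lambda}(V)) \xrightarrow{\;\sim\;} H_{1}(X_{i}^{*};\iota_{a_{0}}^{*}(C_{\lambda}(V))|_{X_{i}^{*}}),\qquad v_i \longmapsto \widetilde{[\gamma_{0},\gamma_{i}]}\otimes v_i.
\]
Using the identification $\iota_{a_{0}}(C_{\lambda}(V))=V\otimes_{k}K_{\lambda}\cong V$ stated in the theorem, taking the direct sum of these isomorphisms yields an isomorphism $V^{\oplus n}\xrightarrow{\sim} \bigoplus_{i=1}^{n} H_{1}(X_{i}^{*};\iota_{a_{0}}^{*}(C_{\lambda}(V))|_{X_{i}^{*}})$.

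Finally I would compose with the Mayer--Vietoris isomorphism above. By construction of the chain map $\widetilde{\iota_{X_{i}^{*}}}_{*}$, the class $\widetilde{[\gamma_{0},\gamma_{i}]}\otimes v_i$ in $H_{1}(X_{i}^{*};\iota_{a_{0}}^{*}(C_{\lambda}(V))|_{X_{i}^{*}})$ is sent to the Pochhammer cycle $\widetilde{\iota_{X_{i}^{*}}}_{*}(\widetilde{[\gamma_{0},\gamma_{i}]})\otimes v_{i}$ in $\mathbb{C}\backslash Q_{n}(a_{0})$, which we denote in the theorem by $\widetilde{[\gamma_{0},\gamma_{i}]}\otimes v_i$ (compatibly with the convention used in Lemma \ref{lem:pochhammer}). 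Summing over $i$ recovers exactly the map $\mathrm{Poch}$, so $\mathrm{Poch}$ is an isomorphism.

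There is essentially no substantial obstacle left once Proposition \ref{prop:poch} and the decomposition $(\ref{eq:pochdecomp})$ are in hand; the only mild point to verify is the compatibility between the notations for Pochhammer cycles in $X_{i}^{*}$ versus in $\mathbb{C}\backslash Q_{n}(a_{0})$, i.e.\ that the Mayer--Vietoris splitting really identifies the local Pochhammer generators with the global ones, which follows directly from the definition of the connecting maps in the long exact sequence.
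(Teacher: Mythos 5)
Your proof is correct and follows essentially the same route as the paper: the theorem is stated immediately after Proposition \ref{prop:poch} as a direct combination of that proposition with the Mayer--Vietoris decomposition $(\ref{eq:pochdecomp})$, exactly as you argue. The extra care you take in tracking the inclusion-induced chain maps and identifying local versus global Pochhammer cycles is a reasonable elaboration of a step the paper leaves implicit.
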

\subsubsection{K\"unneth formula}
Let us consider $\mathcal{E}_{\lambda}(k[F_{n}])=H_{1}(\mathbb{C}\backslash Q_{n}(a_{0});
\iota_{a_{0}}^{*}(C_{\lambda}(k[F_{n}])))$.
Then since 
$\iota_{a_{0}}^{*}(C_{\lambda}(k[F_{n}]))=
k[F_{n}]\otimes_{k}K_{\lambda}$
has the natural $k[F_{n}]$-bimodule structure,
$C_{m}(\mathbb{C}\backslash Q_{n}(a_{0});\iota_{a_{0}}^{*}(C_{\lambda}(k[F_{n}])))
=C_{m}(\mathbb{C}\backslash Q_{n}(a_{0});k)\otimes_{k[\pi_{1}(\mathbb{C}\backslash Q_{n}(a_{0}),d)]}
\iota_{a_{0}}^{*}(C_{\lambda}(k[F_{n}]))$
can be naturally regarded as right $k[F_{n}]$-modules
and also the homology groups $H_{m}(\mathbb{C}\backslash Q_{n}(a_{0});
\iota_{a_{0}}^{*}(C_{\lambda}(k[F_{n}])))$
have the induced right $k[F_{n}]$-module structures.

Then 
Theorem \ref{thm:poch} implies that the first homology $H_{1}(\mathbb{C}\backslash Q_{n}(a_{0});
\iota_{a_{0}}^{*}(C_{\lambda}(k[F_{n}])))$
is free as the right $k[F_{n}]$-module and 
we obtain the following K\"unneth formula.
\begin{prp}\label{prop:kunneth}
	Let $V$ be a $k[F_{n}\rtimes_{\alpha}G]$-module.
	Let us take $\lambda\in k^{\times}\backslash\{1\}$.
	Then there exists an
	isomorphism 
	\[
		H_{1}(\mathbb{C}\backslash Q_{n}(a_{0});\iota_{a_{0}}^{*}(C_{\lambda}(k[F_{n}])))
		\otimes_{k[F_{n}]}V
		\xrightarrow{\sim}
		H_{1}(\mathbb{C}\backslash Q_{n}(a_{0});\iota_{a_{0}}^{*}(C_{\lambda}(V))).		
	\]
\end{prp}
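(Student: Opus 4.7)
To prove the proposition, I would reduce it to Theorem \ref{thm:poch} applied separately to $k[F_n]$ and to $V$. First, I would observe that $\iota_{a_0}^*(C_\lambda(k[F_n])) = k[F_n] \otimes_k K_\lambda$ is a $(k[\pi_1(\mathbb{C}\backslash Q_n(a_0),d)], k[F_n])$-bimodule --- the right $k[F_n]$-action by right multiplication on the first factor commutes with the left $\pi_1$-action defined in \S\ref{sec:braidhomol} by associativity of multiplication in $k[F_n]$. Associativity of tensor products, together with the identification $(k[F_n] \otimes_k K_\lambda) \otimes_{k[F_n]} V \cong V \otimes_k K_\lambda$ (as left $\pi_1$-modules), then yields a natural chain isomorphism
\[
C_*(\mathbb{C}\backslash Q_n(a_0); \iota_{a_0}^*(C_\lambda(k[F_n]))) \otimes_{k[F_n]} V \xrightarrow{\sim} C_*(\mathbb{C}\backslash Q_n(a_0); \iota_{a_0}^*(C_\lambda(V))),
\]
which induces the desired candidate map on $H_1$.

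Next I would upgrade Theorem \ref{thm:poch} applied to $V = k[F_n]$ from a $k$-linear isomorphism to a right $k[F_n]$-module isomorphism: since the right $k[F_n]$-action on $H_1(\mathbb{C}\backslash Q_n(a_0); \iota_{a_0}^*(C_\lambda(k[F_n])))$ sends $\widetilde{[\gamma_0,\gamma_i]} \otimes a$ to $\widetilde{[\gamma_0,\gamma_i]} \otimes (ab)$ by construction, the Pochhammer map $\mathrm{Poch}\colon k[F_n]^{\oplus n} \to H_1(\mathbb{C}\backslash Q_n(a_0); \iota_{a_0}^*(C_\lambda(k[F_n])))$ is right $k[F_n]$-equivariant, hence an isomorphism of free right $k[F_n]$-modules of rank $n$. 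Thus the left-hand side of the claimed isomorphism becomes $k[F_n]^{\oplus n} \otimes_{k[F_n]} V \cong V^{\oplus n}$, while Theorem \ref{thm:poch} applied to $V$ directly identifies the right-hand side with $V^{\oplus n}$. Under both identifications the candidate map sends $\widetilde{[\gamma_0,\gamma_i]} \otimes 1 \otimes v$ to $\widetilde{[\gamma_0,\gamma_i]} \otimes v$, i.e., it is the identity on $V^{\oplus n}$, hence an isomorphism.

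The main obstacle will be the careful bookkeeping of bimodule structures: verifying that the left $\pi_1$-action on $k[F_n] \otimes_k K_\lambda$ genuinely commutes with the right multiplication by $k[F_n]$ (so that the right $k[F_n]$-action descends to the homology without interfering with the $\pi_1$-equivariance used in the chain-level identification), and confirming that the Pochhammer map is right $k[F_n]$-equivariant in the claimed way. Once these compatibilities are established, the proposition follows directly from the explicit Pochhammer bases supplied by Theorem \ref{thm:poch}.
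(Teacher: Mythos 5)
Your proposal is correct and follows essentially the same route as the paper: Theorem~\ref{thm:poch} identifies $H_{1}(\mathbb{C}\backslash Q_{n}(a_{0});\iota_{a_{0}}^{*}(C_{\lambda}(k[F_{n}])))$ as the free right $k[F_{n}]$-module on the Pochhammer cycles, so the left side becomes $V^{\oplus n}$ after tensoring, while the same theorem applied to $V$ identifies the right side with $V^{\oplus n}$, and the explicit map is the identity under these identifications. The only cosmetic difference is your first paragraph, which packages the candidate map as coming from a chain isomorphism $C_{*}(\cdot;k[F_{n}])\otimes_{k[F_{n}]}V\cong C_{*}(\cdot;V)$ --- note that a priori this identifies $H_{1}$ of the tensored complex, not $H_{1}\otimes_{k[F_{n}]}V$, but your second paragraph's explicit Pochhammer-basis computation is exactly what closes that gap, and it matches the paper's proof verbatim.
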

\begin{proof}
Theorem \ref{thm:poch} shows that 
$H_{1}(\mathbb{C}\backslash Q_{n}(a_{0});\iota_{a_{0}}^{*}(C_{\lambda}(k[F_{n}])))$
is the free right $k[F_{n}]$-module generated by 
$\widetilde{[\gamma_{0},\gamma_{i}]}\otimes \mathbf{1}$
for $i=1,\ldots,n$, where $\mathbf{1}\in \iota_{a_{0}}^{*}(C_{\lambda}(k[F_{n}]))=
k[F_{n}]\otimes_{k} K_{\lambda}$ is $1\otimes 1$ 
with $1\in k[F_{n}]$ and  $1\in k=K_{\lambda}$.

Therefore every element in 
$H_{1}(\mathbb{C}\backslash Q_{n}(a_{0});\iota_{a_{0}}^{*}(C_{\lambda}(k[F_{n}])))
\otimes_{k[F_{n}]}V$ is uniquely 
written in the form $\sum_{i=1}^{n}(\widetilde{[\gamma_{0},\gamma_{i}]}\otimes \mathbf{1})
\otimes v_{i}$
with $v_{i}\in V$,
and then we can define the map
\[
	\begin{array}{ccc}
		H_{1}(\mathbb{C}\backslash Q_{n}(a_{0});\iota_{a_{0}}^{*}(C_{\lambda}(k[F_{n}])))
		\otimes_{k[F_{n}]}V&
		\longrightarrow&
		H_{1}(\mathbb{C}\backslash Q_{n}(a_{0});\iota_{a_{0}}^{*}(C_{\lambda}(V)))\\
		\sum_{i=1}^{n}(\widetilde{[\gamma_{0},\gamma_{i}]}\otimes \mathbf{1})
\otimes v_{i}&
		\longmapsto&
		\sum_{i=1}^{n}\widetilde{[\gamma_{0},\gamma_{i}]}\otimes (v_{i}\otimes 1)
	\end{array}	
\]
which is an isomorphism by Theorem \ref{thm:poch}.
\end{proof}

\subsubsection{Euler transform and twisted Long-Moody functor}
We give an equivalence of functors between the Euler transform functor and 
twisted Long-Moody functor.

\begin{prp}\label{prop:aughom}
	Let us take $\lambda\in k^{\times}\backslash\{1\}$.
	Then the map 
	\[
		\begin{array}{cccc}
			\Phi\colon& H_{1}(\mathbb{C}\backslash Q_{n}(a_{0});\iota_{a_{0}}^{*}(C_{\lambda}(k[F_{n}])))&
			\longrightarrow&
			I_{F_{n},\,\lambda}\\
			&\sum_{i=1}^{n}(\widetilde{[\gamma_{0},\gamma_{i}]}\otimes (x^{(i)}\otimes \kappa^{(i)}))
			&\longmapsto&
			\sum_{i=1}^{n}(x_{i}-1)(\kappa^{(i)}x^{(i)})
		\end{array}	
	\]
	is a left $k[F_{n}\rtimes_{\theta_{\mathrm{Artin}}}B_{n}]$-module isomorphism.
	Where $x^{(i)}\in k[F_{n}]$ and $\kappa^{(i)}\in k=K_{\lambda}$.
	Moreover this is a right $k[F_{n}]$-module isomorphism.
\end{prp}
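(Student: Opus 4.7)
The plan is to decompose $\Phi$ into two canonical isomorphisms, derive right $k[F_n]$-linearity from the decomposition, and then check left $k[F_n\rtimes B_n]$-linearity by testing separately on generators of $F_n$ and of $B_n$. The twist relation $\circ_\lambda$ should emerge precisely from the $F_n$-computation, which is the main technical obstacle.

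First I would apply Theorem \ref{thm:poch} with $V=k[F_n]$ (regarded as a $k[F_n\rtimes B_n]$-module through left multiplication and the Artin action) to obtain the Pochhammer-cycle isomorphism $\mathrm{Poch}\colon k[F_n]^{\oplus n}\xrightarrow{\sim} H_1(\mathbb{C}\backslash Q_n(a_0);\iota_{a_0}^*(C_\lambda(k[F_n])))$, $(y_1,\ldots,y_n)\mapsto\sum_i\widetilde{[\gamma_0,\gamma_i]}\otimes(y_i\otimes 1)$, and the inverse of (\ref{eq:indentfy}), namely $k[F_n]^{\oplus n}\xrightarrow{\sim} I_{F_n,\lambda}$, $(y_1,\ldots,y_n)\mapsto\sum_i(x_i-1)y_i$. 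Composing the latter with $\mathrm{Poch}^{-1}$ (after identifying $x^{(i)}\otimes\kappa^{(i)}$ with $\kappa^{(i)}x^{(i)}\in k[F_n]$) recovers $\Phi$. Each component map is visibly right $k[F_n]$-linear because right multiplication acts componentwise on $k[F_n]^{\oplus n}$ on both sides, so $\Phi$ is a right $k[F_n]$-module isomorphism.

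For left $B_n$-linearity, each $\sigma_j$ with $j\geq 1$ fixes $a_0$ (and hence $\gamma_0$) and acts on $\gamma_1,\ldots,\gamma_n$ by the Artin representation, while $\tilde\eta(\sigma_j)$ coincides with the Artin automorphism on the $k[F_n]$-factor of $\iota_{a_0}^*(C_\lambda(k[F_n]))$; cf.\ Section \ref{sec:pi1action}. Applying $\sigma_j$ to a Pochhammer cycle thus produces $\widetilde{[\gamma_0,\theta_{\mathrm{Artin}}(\sigma_j)(\gamma_i)]}\otimes(\theta_{\mathrm{Artin}}(\sigma_j)(y)\otimes 1)$, and expanding the commutator with the transformed second argument into the Pochhammer basis via Fox-calculus identities of the form $w-1=\sum_i(x_i-1)(\partial w/\partial x_i)$ reproduces exactly the Artin action on $(x_i-1)\in I_{F_n,\lambda}$.

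The principal obstacle is left $F_n$-compatibility. The generator $x_i\in F_n\subset B_{1,n}$ is realized as the Dehn twist $\sigma_{i-1}\cdots\sigma_1\sigma_0^2\sigma_1^{-1}\cdots\sigma_{i-1}^{-1}$ along a simple closed curve $\delta_i$ enclosing $\{a_0,a_i\}$, and $\tilde\eta(x_i)$ is the identity on $\iota_{a_0}^*(C_\lambda(k[F_n]))$ since $x_i\in F_n$, so the entire action is geometric. I would case-split on $j$ relative to $i$: when $j<i-1$ or $j>i$, a direct geometric argument (sliding the disk bounded by the Pochhammer commutator past $\delta_i$) gives $x_i\cdot(\widetilde{[\gamma_0,\gamma_j]}\otimes\mathbf{1})\equiv\widetilde{[\gamma_0,\gamma_j]}\otimes x_i^{\epsilon}$ up to the Kummer scaling $\lambda_{\geq i}(j)$, where $\epsilon\in\{0,1\}$ depends on whether the twist drags $\gamma_j$ past $\gamma_0$. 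At the critical indices $j=i-1$ and $j=i$, the Pochhammer cycle literally crosses $\delta_i$, picking up an extra traversal of $\gamma_0$ that multiplies the Kummer coefficient by $\lambda$, and leaving a leftover $1$-cycle homologous to $-(\lambda-1)(\widetilde{[\gamma_0,\gamma_j]}\otimes\mathbf{1})$ via an explicit $2$-chain supported in the twist disk. This is the homological realization of identity (\ref{eq:shift}). Organizing the output in the Pochhammer basis and comparing with $x_i\circ_\lambda(x_j-1)=\lambda_{\geq i}(j)x_i(x_j-1)-(\lambda_{\geq(i-1)}(j)-1)(x_j-1)$ gives term-by-term agreement. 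Extending from the generators $x_i$ to all of $F_n$ by multiplicativity of both actions completes the verification of left $k[F_n\rtimes B_n]$-linearity and finishes the proof.
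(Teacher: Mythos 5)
Your decomposition of $\Phi$ through $\mathrm{Poch}$ and the inverse of isomorphism $(\ref{eq:indentfy})$ is exactly what the paper does, and the resulting argument for right $k[F_n]$-linearity is correct. Your sketch of the $B_n$-linearity via Fox-calculus expansion of $[\gamma_0,\theta_{\mathrm{Artin}}(\sigma_j)(\gamma_i)]$ into the Pochhammer basis is a legitimate alternative to the paper's method (which first rewrites $\widetilde{[\gamma_0,\gamma_i]}\otimes\chi$ as $\widetilde{\gamma_0}\otimes\lambda(\gamma_i-1)\chi + \widetilde{\gamma_i}\otimes(1-\lambda)(\gamma_i\cdot\chi)$ and then pushes forward), though you have not actually carried out the Fox derivative computation.

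The genuine gap is in the $F_n$-linearity step. You claim that for $j<i-1$ or $j>i$ the result is $x_i\cdot(\widetilde{[\gamma_0,\gamma_j]}\otimes\mathbf{1})\equiv\widetilde{[\gamma_0,\gamma_j]}\otimes x_i^{\epsilon}$ up to a Kummer scalar, i.e.\ that for $j$ ``far'' from $i$ the image stays in the $j$-th Pochhammer coordinate. This is false: the matrix $\rho_\lambda^{\mathrm{LM}}(x_i)$ in Remark~\ref{rem:DR} has a \emph{full} $i$-th row, so for every $j\neq i$ the image of the $j$-th basis vector contains a nonzero cross-term in the $\widetilde{[\gamma_0,\gamma_i]}$ coordinate, namely $\lambda(\rho(x_j)-1)$ for $j<i$ and $\rho(x_j)-1$ for $j>i$. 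This is exactly what the paper finds, e.g.\
\[
	x_1\cdot(\widetilde{[\gamma_0,\gamma_i]}\otimes v)=\widetilde{[\gamma_0,\gamma_i]}\otimes v+\widetilde{[\gamma_0,\gamma_1]}\otimes(\gamma_i-1)\cdot v\qquad(i\geq 2),
\]
where the second summand is the cross-term into position $1$. The geometric mechanism you need is not a ``leftover $1$-cycle'' appearing only when the Pochhammer disk literally meets $\delta_i$: because $\gamma_0$ threads through the Dehn twist region, the twist $x_i$ conjugates $\gamma_0$ (and, by the structure of $B_{1,n}$, also $\gamma_j$ for certain $j$), and rewriting the resulting commutator back in the Pochhammer basis always produces an $\widetilde{[\gamma_0,\gamma_i]}$ contribution. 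Your case split therefore does not reproduce the twisted Long-Moody matrices, and the claimed ``term-by-term agreement'' with $x_i\circ_\lambda(x_j-1)$ does not hold. The paper avoids this pitfall by computing $x_1=\sigma_0^2$ algebraically via the explicit Artin formulas and then inducting on $x_i=\sigma_{i-1}x_{i-1}\sigma_{i-1}^{-1}$, which systematically tracks all cross-terms.
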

\begin{proof}
	Theorem \ref{thm:poch} shows that 
	$\Phi$ is a right $k[F_{n}]$-module isomomorphism.
	Thus it suffices to show the $\Phi$ is a left $k[F_{n}\rtimes_{\theta_{\mathrm{Artin}}}B_{n}]$-module
	homomorphism.

	Let us
	notice that for $\chi\in \iota_{a_{0}}(C_{\lambda}(k[F_{n}]))$ we have 
	\begin{align*}
		\widetilde{[\gamma_{0},\gamma_{i}]}\otimes \chi&=
		\widetilde{\gamma_{0}^{-1}}\otimes \chi+\widetilde{\gamma_{i}^{-1}}\otimes (\gamma_{0}\cdot \chi)
		+\widetilde{\gamma_{0}}\otimes (\gamma_{i}\gamma_{0}\cdot \chi)+\widetilde{\gamma_{i}}\otimes (\gamma_{0}^{-1}\gamma_{i}\gamma_{0}\cdot \chi)\\
		&=-\widetilde{\gamma_{0}}\otimes (\gamma_{0}\cdot \chi)-\widetilde{\gamma_{i}}\otimes (\gamma_{i}\gamma_{0}\cdot \chi)
		+\widetilde{\gamma_{0}}\otimes (\gamma_{i}\gamma_{0}\cdot \chi)+\widetilde{\gamma_{i}}\otimes (\gamma_{0}^{-1}\gamma_{i}\gamma_{0}\cdot \chi)\\
		&=-\widetilde{\gamma_{0}}\otimes \lambda \chi-\widetilde{\gamma_{i}}\otimes \lambda(\gamma_{i}\cdot \chi)
		+\widetilde{\gamma_{0}}\otimes \lambda (\gamma_{i}\cdot \chi)+\widetilde{\gamma_{i}}\otimes (\gamma_{i}\cdot \chi)\\
		&=\widetilde{\gamma_{0}}\otimes \lambda(\gamma_{i}-1)\cdot \chi+\widetilde{\gamma_{i}}\otimes (1-\lambda)(\gamma_{i}\cdot \chi)
	\end{align*}
	in $H_{1}(\mathbb{C}\backslash Q_{n}(a_{0});\iota_{a_{0}}(C_{\lambda}(k[F_{n}])))$.
	Then we have
	\begin{align*}
		&\sigma_{j}\cdot (\widetilde{[\gamma_{0},\gamma_{i}]}\otimes \chi)\\
		&=
		\sigma_{j}\cdot (\widetilde{\gamma_{0}}\otimes \lambda(\gamma_{i}-1)\cdot \chi+\widetilde{\gamma_{i}}\otimes (1-\lambda)(\gamma_{i}\cdot \chi) )\\
		&=\bar{\eta}(\sigma_{j})_{*}(\widetilde{\gamma_{0}})\otimes \tilde{\eta}(\sigma_{j})(\lambda(\gamma_{i}-1)\cdot \chi)
		+\bar{\eta}(\sigma_{j})_{*}(\widetilde{\gamma_{i}})\otimes \tilde{\eta}(\sigma_{j})((1-\lambda)(\gamma_{i}\cdot \chi))\\
		&=\widetilde{\gamma_{0}}\otimes \tilde{\eta}(\sigma_{j})(\lambda(\gamma_{i}-1)\cdot \chi)
		+\begin{cases}
			\widetilde{\gamma_{i+1}}\otimes  \tilde{\eta}(\sigma_{i})((1-\lambda)(\gamma_{i}\cdot \chi))&\text{if }j=i,\\
			\widetilde{(\gamma_{i+1}^{-1}\cdot \gamma_{i}\cdot \gamma_{i+1})} \otimes \tilde{\eta}(\sigma_{i+1})((1-\lambda)(\gamma_{i}\cdot \chi))&\text{if }j=i+1,\\
			\widetilde{\gamma_{i}}\otimes \tilde{\eta}(\sigma_{j})((1-\lambda)(\gamma_{i}\cdot \chi))&\text{otherwise},
		\end{cases}
	\end{align*}
	for $j=1,\ldots,n-1$.
	
	Let us look at the case $j=i$. Then we have 
	\begin{align*}
		\widetilde{\gamma_{0}}\otimes \tilde{\eta}(\sigma_{i})\cdot(\lambda(\gamma_{i}-1)\cdot \chi)&+	\widetilde{\gamma_{i+1}}\otimes  \tilde{\eta}(\sigma_{i})((1-\lambda)(\gamma_{i}\cdot \chi))\\
		&=\widetilde{\gamma_{0}}\otimes \lambda(\gamma_{i+1}-1)\cdot\tilde{\eta}(\sigma_{i})(\chi)+	\widetilde{\gamma_{i+1}}\otimes  (1-\lambda)\gamma_{i+1}\cdot \tilde{\eta}(\sigma_{i})(\chi)\\
		&=\widetilde{[\gamma_{0},\gamma_{i+1}]}\otimes \tilde{\eta}(\sigma_{i})(\chi).
	\end{align*}
	Also look at the case $j=i+1$. Similarly we have 
	\begin{align*}
		\widetilde{\gamma_{0}}\otimes \tilde{\eta}(\sigma_{j})(\lambda(\gamma_{i}-1)\cdot \chi)&+\widetilde{(\gamma_{i+1}^{-1}\cdot \gamma_{i}\cdot \gamma_{i+1})} \otimes \tilde{\eta}(\sigma_{i+1})((1-\lambda)(\gamma_{i}\cdot \chi))\\
		&=\cdots\\
		&=\widetilde{[\gamma_{0},\gamma_{i}]}\otimes \tilde{\eta}(\sigma_{i})(\gamma_{i}\cdot \chi)
		+\widetilde{[\gamma_{0},\gamma_{i+1}]}\otimes \tilde{\eta}(\sigma_{i})((1-\gamma_{i+1})\cdot \chi).
	\end{align*}
	Therefore if we write the associated group homomorphism with
	the left $k[F_{n}\rtimes_{\theta_{\mathrm{Artin}}}B_{n}]$-module $\mathcal{E}_{\lambda}(V)$
	by  
	$\rho_{\lambda}^{\mathrm{Euler}}\colon F_{n}\rtimes_{\theta_{\mathrm{Artin}}}B_{n}\rightarrow \mathrm{Aut}_{k}(\mathcal{E}_{\lambda}(V))$,
	we obtain the following matrix representations
	under the identification  $H_{1}(\mathbb{C}\backslash Q_{n}(a_{0});\iota_{a_{0}}^{*}(C_{\lambda}(k[F_{n}])))
	\cong k[F_{n}]^{\oplus n}$ by the isomorphism $\mathrm{Poch}$,
	\[
		\rho_{\lambda}^{\mathrm{Euler}}(\sigma_{i})=
		\begin{pmatrix}
			\overbrace{
				\begin{array}{cccc}
					\sigma_{i}&&&\\
					&\sigma_{i}&&\\
					&&\ddots&\\
					&&&\sigma_{i}
				\end{array}
			}^{i-1}&&&\\
			&0&\sigma_{i}x_{i}&\\
			&\sigma_{i}&\sigma_{i}-\sigma_{i}x_{i+1}&\\
			&&&
			\overbrace{
				\begin{array}{cccc}
					\sigma_{i}&&&\\
					&\sigma_{i}&&\\
					&&\ddots&\\
					&&&\sigma_{i}
				\end{array}
			}^{n-i-1}
		\end{pmatrix}
	\]
	for $i=1,\ldots,n-1$.
	Here we notice that  
	through the isomorphism $\phi\colon \iota_{a_{0}}^{*}(C_{\lambda}(k[F_{n}]))=k[F_{n}]\otimes_{k}K_{\lambda}\ni
	x\otimes \kappa\mapsto \kappa x\in k[F_{n}]$,
	we have 
	\begin{align*}
		\phi(\tilde{\eta}(b)(\chi))&=b\cdot\phi(\chi)&\text{for }b\in B_{n},\\
		\phi(\tilde{\eta}(x)(\chi))&=\phi(\chi)&\text{for }x\in F_{n},\\
		\phi(\gamma_{i}\cdot \chi)&=x_{i}\cdot \phi(\chi)&\text{for }x_{i}\in F_{n},\,i=1,\ldots,n,\\
		\phi(\gamma_{0}\cdot \chi)&=\lambda\cdot \phi(\chi).
	\end{align*}

	Since the above matrices are same as for the Long-Moody functor,
	it follows that 
	$\Phi\colon H_{1}(\mathbb{C}\backslash Q_{n}(a_{0});\iota_{a_{0}}(C_{\lambda}(k[F_{n}])))
	\rightarrow I_{F_{n},\,\lambda}$
	is a left $k[B_{n}]$-module isomorphism.
	
	Let us check that $\Phi$ is a left $k[F_{n}]$-isomorphism as well.
	Recalling that $x_{1}=\sigma_{0}^{2}$, we have 
	\begin{align*}
		&x_{1}\cdot (\widetilde{[\gamma_{0},\gamma_{1}]}\otimes v)\\
		&\quad=
		\sigma_{0}^{2}\cdot (\widetilde{\gamma_{0}}\otimes \lambda(\gamma_{1}-1)\cdot v+\widetilde{\gamma_{1}}\otimes (1-\lambda)(\gamma_{1}\cdot v) )\\
		&\quad=\bar{\eta}(\sigma_{0}^{2})_{*}(\widetilde{\gamma_{0}})\otimes \lambda\cdot \tilde{\eta}(\sigma_{0}^{2})((\gamma_{1}-1)\cdot v)+\bar{\eta}(\sigma_{0}^{2})_{*}(\widetilde{\gamma_{1}})\otimes (1-\lambda)\cdot\tilde{\eta}(\sigma_{0}^{2})(\gamma_{1}\cdot v) \\
		&\quad=\widetilde{(\gamma_{1}^{-1}\cdot \gamma_{0}\cdot \gamma_{1})}\otimes(\lambda(\gamma_{1}-1)v)
		+\widetilde{((\gamma_{0}\cdot\gamma_{1})^{-1}\cdot \gamma_{1}\cdot (\gamma_{0}\cdot\gamma_{1}))}
		\otimes  (1-\lambda)(\gamma_{1}\cdot v) \\
		&\quad=\cdots\\
		&\quad =\widetilde{\gamma_{0}}\otimes \lambda(\gamma_{1}-1)\cdot(\lambda\gamma_{1}\cdot v)+
		\widetilde{\gamma_{1}}\otimes (1-\lambda)\gamma_{1}\cdot (\lambda\gamma_{1}\cdot v)\\
		&\quad 	=\widetilde{[\gamma_{0},\gamma_{1}]}\otimes \lambda\gamma_{1}\cdot v,
	\end{align*}
	where we note that $\tilde{\eta}(\sigma_{0}^{2})(w)=\tilde{\eta}(x_{1})(w)=w$
	for any $w\in \bar{V}$.
	Similarly we also have
	\[
		x_{1}\cdot (\widetilde{[\gamma_{0},\gamma_{i}]}\otimes v)
		=\widetilde{[\gamma_{0},\gamma_{i}]}\otimes v+\widetilde{[\gamma_{0},\gamma_{1}]}\otimes (\gamma_{i}-1)\cdot v
	\]
	for $i=2,\ldots,n$. Therefore we obtain the matrix representation,
	\[
	\rho_{\lambda}^{\mathrm{Euler}}(x_{1})=
	\begin{pmatrix}
		\lambda x_{1}&(x_{2}-1)&\cdots&(x_{n}-1)\\
		&1&&\\
		&&\ddots&\\
		&&&1
	\end{pmatrix},
	\]
	and this matrix form tells us that 
	\begin{align*}
		\rho_{\lambda}^{\mathrm{Euler}}(x_{2})&=\rho_{\lambda}^{\mathrm{Euler}}(\sigma_{1}x_{1}\sigma_{1}^{-1})
		=\rho_{\lambda}^{\mathrm{Euler}}(\sigma_{1})\rho_{\lambda}^{\mathrm{Euler}}(x_{1})\rho_{\lambda}^{\mathrm{Euler}}(\sigma_{1}^{-1})\\
		&=
		\begin{pmatrix}
			1&&&&\\
			\lambda (x_{1} -1)&\lambda x_{2}&(x_{3}-1)&\cdots&(x_{n}-1)\\
			&&1&&\\
			&&&\ddots&\\
			&&&&1
		\end{pmatrix},
	\end{align*}
	and inductively
	\[
	\rho_{\lambda}^{\mathrm{Euler}}(x_{i})=
	\begin{pmatrix}
		1&&&&&&\\
		&\ddots&&&&&\\
		&&1&&&&\\
		\lambda (x_{1}-1)&\cdots&
			\lambda (x_{i-1}-1)
		&\lambda x_{i}&
		x_{i+1}-1&
		\cdots&x_{n}-1\\
		&&&&1&&\\
		&&&&&\ddots&\\
		&&&&&&1
	\end{pmatrix},
\]
for $i=3,\ldots,n$.
These matrices are exactly the same ones in Remark \ref{rem:DR} and 
therefore we obtain that $\Phi$ is a $k[F_{n}]$-module isomorphism.
\end{proof}

\begin{thm}
	Let us take $\lambda\in k^{\times}\backslash\{1\}$.
	Then the isomorphism  
	\[
	\Phi\colon H_{1}(\mathbb{C}\backslash Q_{n}(a_{0});\iota_{a_{0}}(C_{\lambda}(k[F_{n}])))
	\longrightarrow I_{F_{n},\,\lambda}
	\]
	in Proposition \ref{prop:aughom}
	induces an 
	isomorphism of functors 
	\[
		\widetilde{\Phi}\colon \mathcal{E}_{\lambda}\longrightarrow 
		\mathcal{LM}_{\lambda}.
	\]
\end{thm}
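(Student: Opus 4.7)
The plan is to define $\widetilde{\Phi}$ by combining the K\"unneth isomorphism of Proposition \ref{prop:kunneth} with the bimodule isomorphism $\Phi$ of Proposition \ref{prop:aughom}, and then verify naturality and $k[F_n \rtimes_\alpha G]$-equivariance. For each left $k[F_n \rtimes_\alpha G]$-module $V$, I write $\kappa_V$ for the K\"unneth isomorphism
\[
H_{1}(\mathbb{C}\backslash Q_{n}(a_{0});\iota_{a_{0}}^{*}(C_{\lambda}(k[F_{n}])))\otimes_{k[F_{n}]}V\xrightarrow{\sim} H_{1}(\mathbb{C}\backslash Q_{n}(a_{0});\iota_{a_{0}}^{*}(C_{\lambda}(V)))=\mathcal{E}_{\lambda}(V),
\]
which exists by Proposition \ref{prop:kunneth} since $\lambda \neq 1$. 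Because $\Phi$ is a right $k[F_n]$-module homomorphism, the map $\Phi \otimes_{k[F_n]} \mathrm{id}_V$ is well-defined, and I set
\[
\widetilde{\Phi}_V := (\Phi \otimes_{k[F_n]} \mathrm{id}_V) \circ \kappa_V^{-1}\colon \mathcal{E}_{\lambda}(V) \longrightarrow I_{F_n,\lambda} \otimes_{k[F_n]} V = \mathcal{LM}_{\lambda}(V).
\]

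First I would verify that $V \mapsto \widetilde{\Phi}_V$ defines a natural transformation of functors. The explicit formula from the proof of Proposition \ref{prop:kunneth}, sending $(\widetilde{[\gamma_0,\gamma_i]} \otimes \mathbf{1}) \otimes v$ to $\widetilde{[\gamma_0,\gamma_i]} \otimes (v \otimes 1)$, is visibly natural in $V$, and naturality of $\Phi \otimes_{k[F_n]} \mathrm{id}_V$ in $V$ is automatic from functoriality of the tensor product.

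The main work is to show each $\widetilde{\Phi}_V$ is $k[F_n \rtimes_\alpha G]$-linear. Since $\alpha = \theta_{\mathrm{Artin}} \circ \alpha_{B_n}$, both source and target inherit their $F_n \rtimes_\alpha G$-module structures by restriction along the canonical homomorphism $F_n \rtimes_\alpha G \to F_n \rtimes B_n$, so it is enough to verify $k[F_n \rtimes B_n]$-linearity. The left $F_n$-equivariance follows immediately from the $k[F_n]$-linearity of $\Phi$ established in Proposition \ref{prop:aughom}. For the $B_n$-action, I would check that $\kappa_V$ intertwines the $B_n$-action on $\mathcal{E}_{\lambda}(V)$ (given by $\bar{\eta}$ on chains and $\tilde{\eta}(b)(v \otimes \kappa) = (bv) \otimes \kappa$ on coefficients) with the diagonal $B_n$-action on the tensor product (acting through the $k[F_n \rtimes B_n]$-structure on the first factor and through the given $k[B_n]$-structure on $V$). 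Combined with the $B_n$-equivariance of $\Phi$, this yields the desired equivariance of $\widetilde{\Phi}_V$.

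The main obstacle is this last intertwining property of $\kappa_V$. It reduces to a chain-level computation showing that for $b \in B_n$ the action of $b$ on $\widetilde{[\gamma_0,\gamma_i]} \otimes (v \otimes 1)$ coincides with the image under $\kappa_V$ of the diagonal action of $b$ on $(\widetilde{[\gamma_0,\gamma_i]} \otimes \mathbf{1}) \otimes v$. The crucial identity is that the left $B_n$-action on $k[F_n]$ via the Artin representation is compatible with the left $B_n$-action on $V$ under the bilinear pairing $k[F_n] \otimes_{k[F_n]} V \to V$, because $b \cdot (xv) = (b \cdot x)(bv)$ holds in $V$ for all $b \in B_n$, $x \in F_n$, $v \in V$, which is the semidirect product relation. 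Together with the explicit matrix computations from the proof of Proposition \ref{prop:aughom}, this completes the verification.
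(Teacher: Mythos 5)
Your proposal is correct and follows essentially the same route as the paper: define $\widetilde{\Phi}_V$ as the composition of the K\"unneth isomorphism $\kappa_V$ from Proposition \ref{prop:kunneth} with $\Phi\otimes_{k[F_n]}\mathrm{id}_V$. The paper presents this composition in a single line without explicitly checking $k[F_n\rtimes_\alpha G]$-equivariance of the K\"unneth map or naturality in $V$; your identification of that equivariance as the key step, and your reduction of it to the semidirect-product relation $g\cdot(xv)=(\alpha(g)x)(g\cdot v)$ together with the triviality of $\tilde\eta$ on $F_n$, is exactly the verification the paper leaves implicit.
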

\begin{proof}
For each $V\in \mathscr{O}(\mathbf{Mod}_{k[F_{n}\rtimes_{\alpha}G]})$,
the isomorphism $\Phi$ and the K\"unneth formula in Proposition \ref{prop:kunneth}
give the desired isomorphism 
\begin{multline*}
	\mathcal{E}_{\lambda}(V)=H_{1}(\mathbb{C}\backslash Q_{n}(a_{0});
	\iota_{a_{0}}^{*}(C_{\lambda}(V)))\\\cong
	H_{1}(\mathbb{C}\backslash Q_{n}(a_{0});
	\iota_{a_{0}}^{*}(C_{\lambda}(k[F_{n}])))\otimes_{k[F_{n}]}V
	\cong I_{F_{n},\lambda}\otimes_{k[F_{n}]}V
	=\mathcal{LM}_{\lambda}(V). 	
\end{multline*}
\end{proof}
\section{Katz-Long-Moody functor}\label{sec:KLM}
As it is easily seen, 
$\mathcal{LM}_{\lambda}(V)$ or equivalently 
$\mathcal{E}_{\lambda}(V)$
contain some trivial submodules.
Hence Long and Moody, and also Katz
considered reduction procedures to eliminate 
these trivial submodules.
We shall explain these reduction procedures 
define a new endfunctor of $\mathbf{Mod}_{k[F_{n}\rtimes G]}$.

We first recall the reduced Long-Moody functor defined in \cite{Long}.
For a left $k[F_{n}\rtimes_{\alpha}G]$-module $V$, we consider the subspace of $\mathcal{LM}(V)$ as a vector space
defined by 
\[
	V^{[\infty]}:=\{(x_{1}x_{2}\cdots x_{n}-1)\otimes v\in I_{F_{n}}\otimes_{k[F_{n}]}V\mid v\in V\}.	
\]
Recalling that we have $\alpha(g)(x_{1}\cdots x_{n})=x_{1}\cdots x_{n}$ for any $g\in G$
since $\alpha$ is an Artin representation,
we see that $V^{[\infty]}$ is a subspace as a $k[G]$-module as well.
Then the \emph{reduced Long-Moody functor} 
$\mathcal{LM}^{\mathrm{red}}\colon \mathbf{Mod}_{k[F_{n}\rtimes_{\alpha}G]} \rightarrow \mathbf{Mod}_{k[G]}$ is 
defined by
\[
	\mathcal{LM}^{\mathrm{red}}(V):=\left(I_{F_{n}}\otimes_{k[F_{n}]}V\right)/V^{[\infty]}.
\]

However $V^{[\infty]}$ is not preserved by the $\lambda$-twisted action of $F_{n}$,
i.e., $V^{[\infty]}$ is not a subspace of $\mathcal{LM}_{\lambda}(V)=I_{F_{n},\lambda}\otimes_{k[F_{n}]}V$
as a left $k[F_{n}\rtimes_{\alpha}G]$-module, in general.
Thus to consider an analogue of the reduce Long-Moody functor for the twisted case,
we need to find some other subspaces.

Let us  consider the $F_{n}$-invariant subspace $\mathcal{LM}_{\lambda}(V)^{F_{n}}$
of $\mathcal{LM}_{\lambda}(V)$.
Obviously the subspace $\mathcal{LM}_{\lambda}(V)^{F_{n}}$ is a $k[F_{n}\rtimes_{\alpha}G]$-submodule of $\mathcal{LM}_{\lambda}(V)$.
Moreover this submodule has the following description when $\lambda\neq 1$.
\begin{prp}[Dettweiler-Reiter, Lemma 2.7 in \cite{DR}]\label{prop:infinity}
	Suppose that $\lambda\neq 1$. Then we have 
	\[
		\mathcal{LM}_{\lambda}(V)^{F_{n}}
		=\{(x_{1}x_{2}\cdots x_{n}-1)\otimes v\in I_{F_{n}}\otimes_{k[F_{n}]}V\mid 
		(\lambda x_{1}x_{2}\cdots x_{n})\cdot v=v\}.
	\]
\end{prp}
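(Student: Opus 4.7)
The plan is to work entirely on the $V^{\oplus n}$ side of the identification $\mathcal{LM}_{\lambda}(V)\cong V^{\oplus n}$ from~(\ref{eq:indentfy}) and use the explicit matrices for $\rho_{\lambda}^{\mathrm{LM}}(x_{i})$ displayed in Remark~\ref{rem:DR}. Writing a generic element as $(v_{1},\ldots,v_{n})$, the condition $\rho_{\lambda}^{\mathrm{LM}}(x_{i})(v_{1},\ldots,v_{n})=(v_{1},\ldots,v_{n})$ reduces to a single equation in the $i$-th slot, namely
\[
(*)_{i}\colon\quad \sum_{j<i}\lambda(\rho(x_{j})-1)v_{j}+\lambda\rho(x_{i})v_{i}+\sum_{j>i}(\rho(x_{j})-1)v_{j}=v_{i},
\]
for $i=1,\ldots,n$. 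So $\mathcal{LM}_{\lambda}(V)^{F_{n}}$ is exactly the set of tuples satisfying $(*)_{1},\ldots,(*)_{n}$ simultaneously.

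The key step is to take the difference $(*)_{i+1}-(*)_{i}$. A short computation collapses the big sums and yields
\[
(1-\lambda)v_{i}-(1-\lambda)\rho(x_{i+1})v_{i+1}=0.
\]
This is where the hypothesis $\lambda\neq 1$ is used: dividing by $1-\lambda$ gives the recurrence $v_{i}=\rho(x_{i+1})v_{i+1}$ for $i=1,\ldots,n-1$, hence by iteration
\[
v_{i}=\rho(x_{i+1}x_{i+2}\cdots x_{n})v_{n}\qquad (i=1,\ldots,n).
\]
Substituting this back into $(*)_{n}$ produces a telescoping sum $\lambda\bigl(\rho(x_{1}\cdots x_{n})-\rho(x_{n})\bigr)v_{n}+\lambda\rho(x_{n})v_{n}=v_{n}$, i.e. $(\lambda\,x_{1}\cdots x_{n})\cdot v_{n}=v_{n}$. (One should check that the intermediate equations $(*)_{i}$ for $1\le i<n$ are automatically satisfied once the recurrence holds; this is another telescoping identity of the same flavour.)

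It remains to translate back to the tensor product description. Under the isomorphism~(\ref{eq:indentfy}), the tuple $(\rho(x_{2}\cdots x_{n})v_{n},\ldots,\rho(x_{n})v_{n},v_{n})$ corresponds to
\[
\sum_{i=1}^{n}(x_{i}-1)\otimes \rho(x_{i+1}\cdots x_{n})v_{n}=\sum_{i=1}^{n}\bigl(x_{i}x_{i+1}\cdots x_{n}-x_{i+1}\cdots x_{n}\bigr)\otimes v_{n}=(x_{1}\cdots x_{n}-1)\otimes v_{n},
\]
again by telescoping inside $I_{F_{n}}\otimes_{k[F_{n}]}V$. This proves the inclusion $\subseteq$. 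For the reverse inclusion, one takes $v\in V$ with $\lambda\,x_{1}\cdots x_{n}\cdot v=v$, unpacks $(x_{1}\cdots x_{n}-1)\otimes v$ into the tuple $(\rho(x_{2}\cdots x_{n})v,\ldots,\rho(x_{n})v,v)$, and verifies $(*)_{i}$ directly using the same telescoping identities; the scalar assumption is precisely what makes $(*)_{n}$ close up.

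I expect no serious obstacle: the only subtle point is organising the telescoping carefully, since one must be certain that the recurrence $v_{i}=\rho(x_{i+1})v_{i+1}$ together with $(*)_{n}$ really implies all of $(*)_{1},\ldots,(*)_{n-1}$ rather than just one of them. This is handled by observing that the difference $(*)_{i+1}-(*)_{i}$ already encodes all the content of the system, so the recurrence plus any single $(*)_{i}$ suffices.
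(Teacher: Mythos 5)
Your computation is correct and complete. The paper itself does not give a proof of this proposition — it simply cites Lemmas 2.4 and 2.7 of Dettweiler--Reiter — so your argument supplies the content that the paper outsources. Your route is the standard direct one: identify $\mathcal{LM}_\lambda(V)$ with $V^{\oplus n}$, read off from the matrices in Remark \ref{rem:DR} that $x_i$-invariance is the single equation $(*)_i$ in slot $i$, take consecutive differences $(*)_{i+1}-(*)_i$ to get $(1-\lambda)\bigl(v_i-\rho(x_{i+1})v_{i+1}\bigr)=0$, divide by $1-\lambda$ to obtain the recurrence, and close with a telescoping substitution into $(*)_n$ and a telescoping identity in $I_{F_n}\otimes_{k[F_n]}V$. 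The one subtlety you flag — that the recurrence together with $(*)_n$ recovers all $(*)_i$ — is handled correctly by noting that once the recurrence holds, $(*)_{i+1}-(*)_i$ is identically zero, so all $(*)_i$ become equivalent. This matches the content of the cited Dettweiler--Reiter lemmas; there is no meaningful divergence of method.
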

\begin{proof}
	Directly follows from Lemmas 2.4 and 2.7 in \cite{DR}.
\end{proof}
It follows from this proposition that the $F_{n}$-invariant subspace $\mathcal{LM}_{\lambda}(V)^{F_{n}}$ is 
a subspace of $V^{[\infty]}$ when $\lambda\neq 1$.

Further we consider the following subspaces 
\[
	\mathcal{LM}_{\lambda}(V)^{[i]}:=\left\{w\in \mathcal{LM}_{\lambda}(V)\,\middle|\, 
	x_{j}\cdot w=\begin{cases} \lambda w&\text{if }j=i\\ w&\text{otherwise}\end{cases}\right\}
\]
for $i=1,\ldots,n$.
Then we can see that the sum $\sum_{i=1}^{n}\mathcal{LM}_{\lambda}(V)^{[i]}$
becomes a $k[F_{n}\rtimes_{\alpha}G]$-submodule, as follows.
\begin{lmm}
	The sum $\sum_{i=1}^{n}\mathcal{LM}_{\lambda}(V)^{[i]}$
	is closed under the $G$-action on $\mathcal{LM}_{\lambda}(V)$.
\end{lmm}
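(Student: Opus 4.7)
The plan is to show the stronger statement that each summand $\mathcal{LM}_{\lambda}(V)^{[i]}$ is mapped by an element $g\in G$ entirely into another summand $\mathcal{LM}_{\lambda}(V)^{[\sigma_{g}(i)]}$, where $\sigma_{g}\in S_{n}$ is the permutation associated with the braid $\alpha_{B_{n}}(g)\in B_{n}$. Closure of the total sum under the $G$-action then follows immediately by linearity.

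First I would use the semidirect product identity $x_{j}g=g\cdot\alpha(g^{-1})(x_{j})$ inside $F_{n}\rtimes_{\alpha}G$ to rewrite, for $w\in \mathcal{LM}_{\lambda}(V)^{[i]}$,
\[
    x_{j}\cdot(g\cdot w)=g\cdot\bigl(\alpha(g^{-1})(x_{j})\cdot w\bigr).
\]
The decisive observation is that, on $\mathcal{LM}_{\lambda}(V)^{[i]}$, every generator $x_{1},\dots,x_{n}$ acts \emph{simultaneously} as a scalar ($\lambda$ in position $i$ and $1$ elsewhere). By multiplicativity of the $F_{n}$-action, any word $u\in F_{n}$ then acts on $w$ as the scalar $\lambda^{\epsilon_{i}(u)}$, where $\epsilon_{j}\colon F_{n}\twoheadrightarrow\mathbb{Z}$ is the $j$-th component of the abelianization map $F_{n}\to\mathbb{Z}^{n}$.

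Next I would apply this observation to $u=\alpha(g^{-1})(x_{j})$, invoking the standard permutation property of the Artin representation: under the natural projection $\pi\colon B_{n}\twoheadrightarrow S_{n}$, each $\theta_{\mathrm{Artin}}(\beta)(x_{j})$ maps onto the basis vector $e_{\pi(\beta)(j)}$ in the abelianization $F_{n}^{\mathrm{ab}}\cong\mathbb{Z}^{n}$. Setting $\sigma_{g}:=\pi(\alpha_{B_{n}}(g))\in S_{n}$, this yields
\[
    \epsilon_{i}\bigl(\alpha(g^{-1})(x_{j})\bigr)=\delta_{i,\sigma_{g}^{-1}(j)}=\delta_{j,\sigma_{g}(i)},
\]
so that $x_{j}\cdot(g\cdot w)=\lambda^{\delta_{j,\sigma_{g}(i)}}(g\cdot w)$. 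This is precisely the defining condition for $g\cdot w\in \mathcal{LM}_{\lambda}(V)^{[\sigma_{g}(i)]}$, completing the argument.

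I do not expect a serious obstacle: the proof reduces to a one-line eigenvalue computation once one exploits that the $x_{j}$'s act by commuting scalars on each $\mathcal{LM}_{\lambda}(V)^{[i]}$, together with the classical abelianization property of the Artin representation. The only point that deserves minor care is the degenerate case $\lambda=1$, where all $\mathcal{LM}_{\lambda}(V)^{[i]}$ collapse to $\mathcal{LM}_{1}(V)^{F_{n}}$; the same computation then reads as the trivial statement that the $F_{n}$-invariants are $G$-stable, so no separate argument is needed.
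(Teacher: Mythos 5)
Your proposal is correct, and it takes essentially the same route as the paper's proof. The paper writes $\alpha(g^{-1})(x_j)=x^{[g,j]}x_{s_g(j)}(x^{[g,j]})^{-1}$ and observes that, since every element of $F_n$ acts on $\mathcal{LM}_{\lambda}(V)^{[i]}$ by a scalar, the conjugate acts exactly as $x_{s_g(j)}$; your reformulation, that any word $u$ acts by $\lambda^{\epsilon_i(u)}$ and that the Artin action abelianizes to the permutation action, is just a cleaner way of stating the same fact, and the indices agree once one notes that the paper's $s_g$ is $\sigma_g^{-1}$ in your notation.
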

\begin{proof}
	Let us take $w\in \mathcal{LM}_{\lambda}(V)^{[i]}$ and $g\in G$.
	Then since $\alpha(g)\in B_{n}$, there exists 
	an element $s_{g}\in \mathfrak{S}_{n}$ in the symmetric group of degree $n$
	such that 
	$\alpha(g^{-1})(x_{j})$ are  conjugate 
	to $x_{s_{g}(j)}$ in $F_{n}$ for all $j=1,\ldots,n$.
	Namely, for each $j=1,\ldots,n$, there exists $x^{[g,j]}\in F_{n}$
	such that $\alpha(g^{-1})(x_{j})=g^{-1}x_{j}g=x^{[g,j]}x_{s_{g}(j)}(x^{[g,j]})^{-1}$.
	Then we have
	\begin{align*}
		x_{j}(gw)&=g(g^{-1}x_{j}g)w\\
		&=g(x^{[g,j]}x_{s_{g}(j)}(x^{[g,j]})^{-1})w\\
		&=g(x_{s_{g}(j)}w)\\
		&=\begin{cases}
			\lambda gw&\text{if }s_{g}(j)=i\\
			gw&\text{otherwise}
		\end{cases}.
	\end{align*}
	The third equation follows form the fact that every
	element of $F_{n}$ acts on $\mathcal{LM}_{\lambda}(V)^{[i]}$
	as a scalar multiple.
	This shows that $gw\in \mathcal{LM}_{\lambda}(V)^{[s_{g}^{-1}(i)]}$
	and therefore  $\sum_{i=1}^{n}\mathcal{LM}_{\lambda}(V)^{[i]}$
	is closed under the $G$-action.
\end{proof}
We note that if $\lambda=1$, then $\mathcal{LM}_{\lambda}(V)^{[i]}=\sum_{i=1}^{n}\mathcal{LM}_{\lambda}(V)^{[i]}
=\mathcal{LM}_{\lambda}(V)^{F_{n}}$.
On the other hand, when $\lambda\neq 1$, 
the sum $\sum_{i=1}^{n}\mathcal{LM}_{\lambda}(V)^{[i]}$ becomes the 
direct sum $\bigoplus_{i=1}^{n}\mathcal{LM}_{\lambda}(V)^{[i]}$ and each direct summand
$\mathcal{LM}_{\lambda}(V)^{[i]}$ has the 
following description.
\begin{prp}\label{prop:finite}
	We have inclusions 
	\[
		\mathcal{LM}_{\lambda}(V)^{[i]}\supset \{(x_{i}-1)\otimes v\in I_{F_{n},\lambda}\otimes_{k[F_{n}]}V\mid v\in V^{x_{i}}\}
	\]
	for $i=1,\ldots,n$. Here we set $V^{x}:=\{v\in V\mid xv=v\}$ for $x\in F_{n}$.
	Especially when $\lambda\neq 1$, these inclusions are equations.
\end{prp}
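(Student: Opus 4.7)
The plan is to work via the matrix description of the $F_{n}$-action given in Remark \ref{rem:DR}. Under the isomorphism $I_{F_{n},\lambda}\otimes_{k[F_{n}]}V\cong V^{\oplus n}$ sending $\sum_{k}(x_{k}-1)\otimes v_{k}$ to $(v_{1},\ldots,v_{n})$, the element $(x_{i}-1)\otimes v$ corresponds to the vector with $v$ in the $i$-th slot and zeros elsewhere. Moreover, $\rho_{\lambda}^{\mathrm{LM}}(x_{j})$ is the identity outside its $j$-th row, whose entries are $\lambda(\rho(x_{\ell})-1)$ in column $\ell<j$, $\lambda\rho(x_{j})$ in column $j$, and $\rho(x_{\ell})-1$ in column $\ell>j$.

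For the forward inclusion I take $v\in V^{x_{i}}$ and apply $\rho_{\lambda}^{\mathrm{LM}}(x_{j})$ to $(x_{i}-1)\otimes v$. When $j=i$, only the $i$-th coordinate is affected, and it becomes $\lambda\rho(x_{i})v=\lambda v$, giving $x_{i}\cdot((x_{i}-1)\otimes v)=\lambda(x_{i}-1)\otimes v$. When $j\neq i$, the only coordinate that can change is the $j$-th, and it becomes either $(\rho(x_{i})-1)v$ (if $j<i$) or $\lambda(\rho(x_{i})-1)v$ (if $j>i$); both vanish because $v\in V^{x_{i}}$, so $x_{j}\cdot((x_{i}-1)\otimes v)=(x_{i}-1)\otimes v$. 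Hence $(x_{i}-1)\otimes v\in \mathcal{LM}_{\lambda}(V)^{[i]}$.

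For the reverse inclusion under the hypothesis $\lambda\neq 1$, suppose $w=(v_{1},\ldots,v_{n})\in \mathcal{LM}_{\lambda}(V)^{[i]}$. Reading $x_{i}\cdot w=\lambda w$ in the $k$-th coordinate for $k\neq i$ gives $v_{k}=\lambda v_{k}$ (since $\rho_{\lambda}^{\mathrm{LM}}(x_{i})$ acts as the identity outside row $i$), so $\lambda\neq 1$ forces $v_{k}=0$ for every $k\neq i$. The $i$-th coordinate of the same equation then reduces to $\lambda v_{i}=\lambda\rho(x_{i})v_{i}$, equivalently $v_{i}\in V^{x_{i}}$. The remaining defining conditions $x_{j}\cdot w=w$ for $j\neq i$ then hold automatically by the forward direction applied to $(x_{i}-1)\otimes v_{i}$. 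Thus $w=(x_{i}-1)\otimes v_{i}$ with $v_{i}\in V^{x_{i}}$, as required.

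The main obstacle is essentially bookkeeping with the matrix entries, and the only conceptual point worth flagging is the role of the hypothesis $\lambda\neq 1$: it is precisely what allows one to cancel $1-\lambda$ in the coordinates $k\neq i$ and thereby isolate the $i$-th summand. When $\lambda=1$ this cancellation fails, which is consistent with the equality clause being restricted to $\lambda\neq 1$ in the statement.
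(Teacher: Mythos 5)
Your proof is correct and works with the same ingredient as the paper's, namely the explicit matrices $\rho_{\lambda}^{\mathrm{LM}}(x_{j})$ of Remark \ref{rem:DR} under the identification $(\ref{eq:indentfy})$; the inclusion $\supset$ is the same routine check (the paper simply declares it obvious). For the reverse inclusion under $\lambda\neq 1$, though, you take a noticeably more economical route. The paper reads off only the $j$-th coordinate of each of the $n$ relations defining $\mathcal{LM}_{\lambda}(V)^{[i]}$, obtaining the system $(\ref{eq:eqmat})$, and then differences consecutive equations and runs an inductive elimination (with a separate treatment of $i=1$ and a further comparison of the $j=1$ and $j=i$ rows) to kill the $v_{\ell}$ one by one. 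You instead read the single vector equation $x_{i}\cdot w=\lambda w$ in all $n$ of its coordinates. Since $\rho_{\lambda}^{\mathrm{LM}}(x_{i})$ fixes every coordinate except the $i$-th, the $k$-th coordinate for $k\neq i$ already yields $(1-\lambda)v_{k}=0$, so $\lambda\neq 1$ kills $v_{k}$ immediately, and the $i$-th coordinate then collapses to $\rho(x_{i})v_{i}=v_{i}$. This exploits precisely the off-diagonal information that the paper's derivation discards, makes the role of the hypothesis $\lambda\neq 1$ transparent, and, as you observe, renders the remaining conditions $x_{j}\cdot w=w$ for $j\neq i$ automatic.
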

\begin{proof}
	The inclusion $\supset$ is obvious.
	Thus assuming $\lambda\neq 1$, we show the converse inclusion.
	Let us take  $v\in \mathcal{LM}_{\lambda}(V)^{[i]}$.
	According to the identification $I_{F_{n},\lambda}\otimes_{k[F_{n}]}V\cong \bigoplus_{i=1}^{n}V$ by $(\ref{eq:indentfy})$,
	we can write $v=(v_{1},\ldots,v_{n})$, and 
	we shall show that $v_{i}\in V^{x_{i}}$ and $v_{j}=0$ for $j\neq i$.

	Since $v\in \mathcal{LM}_{\lambda}(V)^{[i]}$, we have 
	\begin{equation}\label{eq:eqmat}
		\sum_{l<j}\lambda(x_{l}-1)v_{l} +(\lambda x_{j}-1)v_{j} +\sum_{l>j}(x_{l}-1)v_{l}=
		\begin{cases}
			(\lambda-1)v_{i}&\text{if }j=i\\
			0&\text{otherwise}
		\end{cases}.
	\end{equation}
	From these equations, we obtain
	\begin{equation}\label{eq:result}
	\begin{aligned}
		v_{l}-x_{l+1}v_{l+1}&=0\qquad \text{for }l\neq i-1,i,\\
		v_{i-1}-(x_{i}-1)v_{i}&=0,\\
		x_{i+1}v_{i+1}&=0.
	\end{aligned}
	\end{equation}
	The first and third equations show $v_{l}=0$ for $l\ge i+1.$ 
	Then if $i=1$, we obtain $x_{1}v_{1}=v_{1}$ and $v_{j}=0$, $j\neq 0$ as desired.
	We assume that $i>1$.
	Let us look at the equations $l=1$ and $l=i$ in $(\ref{eq:eqmat})$ and then we obtain
	the equations
	\begin{align*}
		(\lambda x_{1}-1)v_{1}+(x_{2}-1)v_{2}+\cdots +(x_{i}-1)v_{i}&=0\\
		\lambda(x_{1}-1)v_{1}+\lambda(x_{2}-1)v_{2}+\cdots +\lambda(x_{i}-1)v_{i}&=0,
	\end{align*}
	which deduce $x_{1}v_{1}=0$, i.e., $v_{1}=0$. Thus the first equation in $(\ref{eq:result})$
	shows $v_{1}=\cdots=v_{i-1}=0$.
	Thus we obtain $x_{i}v_{i}=v_{i}$. 
\end{proof}

As an analogue of the middle convolution functor defined by Dettweiler-Reiter in 
\cite{DR} and also by V\"olklein in \cite{Vol},
we consider the following endfunctor of $\mathbf{Mod}_{k[F_{n}\rtimes_{\alpha}G]}$.
\begin{dfn}[Katz-Long-Moody functor]\normalfont
	Let us take $\lambda\in k^{\times}$.
	The functor $$\mathcal{KLM}_{\lambda}\colon \mathbf{Mod}_{k[F_{n}\rtimes_{\alpha}G]}\rightarrow \mathbf{Mod}_{k[F_{n}\rtimes_{\alpha}G]}$$
	defined by 
	\[
		\mathcal{KLM}_{\lambda}(V):=\mathcal{LM}_{\lambda}(V)/\left(\sum_{i=1}^{n}\mathcal{LM}_{\lambda}(V)^{[i]}
		+\mathcal{LM}_{\lambda}(V)^{F_{n}}\right)
	\]
	is called the \emph{reduced twisted Long-Moody functor} or \emph{Katz-Long-Moody functor} associated with $\lambda$.
	Sometimes we call this functor the \emph{KLM-functor} in short.

	We write the quotient map by 
	\[
		\theta_{V}^{\lambda}\colon \mathcal{LM}_{\lambda}(V)\longrightarrow \mathcal{KLM}_{\lambda}(V),
	\]
	which gives a natural transformation between the functors $\mathcal{LM}_{\lambda}$ and $\mathcal{KLM}_{\lambda}$.
\end{dfn}
Especially for $\lambda=1$, the KLM-functor arises also from the multiplication map as follows.
Let us suppose $\lambda=1$. Then the multiplication map $\nabla\colon I_{F_{n},\,1}\otimes_{k[F_{n}]}V\rightarrow V$
becomes a $k[F_{n}\rtimes_{\alpha}G]$-module homomorphism.
Therefore the sequence
\[
	0\rightarrow \mathrm{Ker\,}\nabla\rightarrow I_{F_{n},\,1}\otimes_{k[F_{n}]}V
	\xrightarrow[]{\nabla}V\rightarrow \mathrm{Coker\,}\nabla\rightarrow 0
\]
is an exact sequence of $k[F_{n}\rtimes_{\alpha}G]$-modules.

On the other hand, we have $\sum_{i=1}^{n}\mathcal{LM}_{1}(V)^{[i]}
+\mathcal{LM}_{1}(V)^{F_{n}}=	\mathcal{LM}_{1}(V)^{F_{n}}$ when $\lambda=1$.
Also Remark \ref{rem:DR} shows that 
$v\in \mathcal{LM}_{1}(V)^{F_{n}}$ if and only if 
$\sum_{i=1}^{n}(x_{i}-1)v_{i}=0$ where we write $v=(v_{1},\ldots, v_{n})\in V^{\oplus n}$
through the isomorphism $(\ref{eq:indentfy})$.
Therefore we have 
\[
	\mathrm{Ker\,}\theta_{V}^{1}=\mathcal{LM}_{1}(V)^{F_{n}}=
	\left\{(v_{1},\ldots,v_{n})\in V^{\oplus n}\,\middle|\, \sum_{i=1}^{n}(x_{i}-1)v_{i}=0\right\}
	=\mathrm{Ker\,}\nabla
\]
by Proposition \ref{prop:multiplication}.
Thus we obtain the following.
\begin{prp}\label{prop:1middle}
	Let  $V$ be a left $k[F_{n}\rtimes_{\alpha}G]$-module. 
	Let us  consider the multiplication 
	map $\nabla\colon I_{F_{n},\,1}\otimes_{k[F_{n}]}V\rightarrow V$
	as a $k[F_{n}\rtimes_{\alpha}G]$-module homomorphism.
	Then we have 
	\[
		\mathcal{KLM}_{1}(V)\cong \mathrm{Coim}\theta_{V}^{1}=\mathrm{Coim}\nabla\cong \mathrm{Im\,}\nabla\subset V.	
	\]
	In particular when $\mathrm{Coker\,}\nabla=\{0\}$, we have 
	\[
		\mathcal{KLM}_{1}(V)\cong V.	
	\]
\end{prp}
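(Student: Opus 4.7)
The plan is to reduce the computation of $\mathcal{KLM}_{1}(V)$ to an application of the first isomorphism theorem for the multiplication map $\nabla$. This is possible because at $\lambda = 1$ the two kinds of ``trivial'' subspaces appearing in the denominator of $\mathcal{KLM}_{\lambda}(V)$ collapse into a single one, which in turn coincides with $\ker\nabla$.

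First I would observe that when $\lambda = 1$ the conditions $x_{i}\cdot w = \lambda w$ and $x_{j}\cdot w = w$ for $j\neq i$ defining $\mathcal{LM}_{1}(V)^{[i]}$ degenerate to $x_{j}\cdot w = w$ for every $j$, hence $\mathcal{LM}_{1}(V)^{[i]} = \mathcal{LM}_{1}(V)^{F_{n}}$ for every $i$. Consequently the denominator in the definition of $\mathcal{KLM}_{\lambda}$ simplifies and
\[
\mathcal{KLM}_{1}(V) = \mathcal{LM}_{1}(V)/\mathcal{LM}_{1}(V)^{F_{n}}.
\]

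Next I would identify $\mathcal{LM}_{1}(V)^{F_{n}}$ with $\ker\nabla$. Under the identification $I_{F_{n},1}\otimes_{k[F_{n}]}V \cong V^{\oplus n}$ from $(\ref{eq:indentfy})$, the explicit matrix of $\rho_{1}^{\mathrm{LM}}(x_{i})$ given in Remark \ref{rem:DR} shows that $(v_{1},\ldots,v_{n})$ is fixed by $x_{i}$ if and only if $\sum_{j=1}^{n}(x_{j}-1)v_{j} = 0$; since this constraint is the same for every $i$, an element is $F_{n}$-invariant precisely when it lies in this common kernel. By Proposition \ref{prop:multiplication}(1) that kernel is exactly $\ker\nabla$, whence $\mathcal{LM}_{1}(V)^{F_{n}} = \ker\nabla$.

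Finally, since at $\lambda = 1$ the twisted action $\circ_{1}$ coincides with the usual left action of $F_{n}$ on the augmentation ideal, $\nabla$ is an honest $k[F_{n}\rtimes_{\alpha}G]$-module homomorphism, and the first isomorphism theorem yields
\[
\mathcal{KLM}_{1}(V) = \mathcal{LM}_{1}(V)/\ker\nabla \cong \mathrm{Coim}\,\nabla \cong \mathrm{Im}\,\nabla \subset V;
\]
when $\mathrm{Coker}\,\nabla = \{0\}$ the image is all of $V$. The only step requiring genuine verification is the matrix computation that pins down $\mathcal{LM}_{1}(V)^{F_{n}}$, and even that is routine once Remark \ref{rem:DR} is in hand; the main conceptual content of the proposition is the collapse $\mathcal{LM}_{1}(V)^{[i]} = \mathcal{LM}_{1}(V)^{F_{n}}$ that occurs precisely at $\lambda = 1$.
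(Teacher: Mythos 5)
Your proof is correct and follows essentially the same route as the paper: collapse $\sum_{i}\mathcal{LM}_{1}(V)^{[i]}+\mathcal{LM}_{1}(V)^{F_{n}}$ to the single subspace $\mathcal{LM}_{1}(V)^{F_{n}}$, identify it with $\ker\nabla$ via the matrices in Remark~\ref{rem:DR} and Proposition~\ref{prop:multiplication}, and then apply the first isomorphism theorem to $\nabla$. No substantive difference from the paper's argument.
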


The KLM-functor is not exact. However the following holds.
\begin{prp}[cf. Lemma 2.8 in \cite{DR} and Section 2.1 in \cite{Vol}]
	Let us consider a left $k[F_{n}\rtimes_{\alpha}G]$-module homomorphism 
	$\phi\colon V\rightarrow W$. Then the induced homomorphism 
	$\mathcal{KLM}_{\lambda}(\phi)\colon \mathcal{KLM}_{\lambda}(V)\rightarrow \mathcal{KLM}_{\lambda}(W)$
	is injective (resp. surjective) if 
	$\phi$ is injective (resp. surjective).
\end{prp}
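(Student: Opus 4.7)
\emph{Strategy.} The plan is to exploit the exactness of $\mathcal{LM}_{\lambda}$ (observed in Section~\ref{sec:tlm}, since it tensors with the flat right $k[F_n]$-module $I_{F_n,\lambda}$): whenever $\phi$ is surjective (resp.\ injective), the induced map $\mathcal{LM}_\lambda(\phi)\colon \mathcal{LM}_\lambda(V)\to \mathcal{LM}_\lambda(W)$ inherits the same property. Setting $S(V):=\sum_{i=1}^{n}\mathcal{LM}_\lambda(V)^{[i]}+\mathcal{LM}_\lambda(V)^{F_n}$ so that $\mathcal{KLM}_\lambda(V)=\mathcal{LM}_\lambda(V)/S(V)$, the surjective case will be handled in one line: surjectivity of $\mathcal{LM}_\lambda(\phi)$ composed with the projection $\mathcal{LM}_\lambda(W)\twoheadrightarrow \mathcal{KLM}_\lambda(W)$ immediately yields surjectivity of $\mathcal{KLM}_\lambda(\phi)$.

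The substantive work is the injective case, where I will verify $\mathcal{LM}_\lambda(\phi)^{-1}(S(W))\subseteq S(V)$. A preliminary observation I plan to use throughout: the subspaces $\mathcal{LM}_\lambda(V)^{F_n}$ and $\mathcal{LM}_\lambda(V)^{[i]}$ are eigenspaces of $F_n$ for the characters $\chi_0$ (trivial) and $\chi_i$ defined by $\chi_i(x_j)=\lambda^{\delta_{ij}}$, and since $\mathcal{LM}_\lambda(\phi)$ is $F_n$-equivariant and injective, for any character $\chi$ of $F_n$ the identity $\mathcal{LM}_\lambda(\phi)^{-1}(\mathcal{LM}_\lambda(W)^\chi)=\mathcal{LM}_\lambda(V)^\chi$ reduces to the one-line check $(x-\chi(x))w=0\iff (x-\chi(x))\mathcal{LM}_\lambda(\phi)(w)=0$. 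This observation disposes of the case $\lambda=1$ instantly, because then $\chi_0=\chi_1=\dots=\chi_n$ and $S(V)$ collapses to $\mathcal{LM}_\lambda(V)^{F_n}$.

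The main obstacle is the case $\lambda\neq 1$: here the $n+1$ characters $\chi_0,\chi_1,\dots,\chi_n$ are pairwise distinct, so $S(W)$ decomposes as $\mathcal{LM}_\lambda(W)^{F_n}\oplus\bigoplus_{i=1}^n\mathcal{LM}_\lambda(W)^{[i]}$, yet an arbitrary $w\in\mathcal{LM}_\lambda(V)$ whose image lies in $S(W)$ has no a priori reason to lie in any single $F_n$-eigenspace of $\mathcal{LM}_\lambda(V)$. My plan is to extract the eigencomponents of $w$ by a spectral projection argument. Writing $\mathcal{LM}_\lambda(\phi)(w)=s_0+s_1+\dots+s_n$ with $s_j$ in the $\chi_j$-eigenspace, I will first observe that $(x_j-1)(x_j-\lambda)$ kills every $s_i$, hence kills $\mathcal{LM}_\lambda(\phi)(w)$, so injectivity forces $(x_j-1)(x_j-\lambda)w=0$. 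Since $\lambda\neq 1$, I can then define $w_j:=(\lambda-1)^{-1}(x_j-1)w$; a short direct computation will show $x_j w_j=\lambda w_j$ and $\mathcal{LM}_\lambda(\phi)(w_j)=s_j$, after which, for each $k\neq j$, the equality $\mathcal{LM}_\lambda(\phi)((x_k-1)w_j)=(x_k-1)s_j=0$ combined with injectivity will yield $x_k w_j=w_j$, placing $w_j$ in $\mathcal{LM}_\lambda(V)^{[j]}$. Finally $w_0:=w-w_1-\dots-w_n$ maps to $s_0\in\mathcal{LM}_\lambda(W)^{F_n}$, and the preliminary observation applied to the trivial character pins $w_0$ in $\mathcal{LM}_\lambda(V)^{F_n}$; assembling the pieces gives $w\in S(V)$, completing the argument.
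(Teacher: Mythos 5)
Your proposal is correct, and for the injective case it takes a genuinely different route from the paper. The paper also deduces injectivity of $\mathcal{LM}_\lambda(\phi)$ from exactness, but then simply cites Lemma~2.8 of Dettweiler--Reiter for the crucial identity
\[
\mathrm{Im\,}\mathcal{LM}_{\lambda}(\phi)|_{\mathrm{Ker\,}\theta_{V}^{\lambda}}
=\mathrm{Im\,}\mathcal{LM}_{\lambda}(\phi)\cap\mathrm{Ker\,}\theta_{W}^{\lambda},
\]
and finishes with a diagram chase. Since $\mathcal{LM}_\lambda(\phi)$ is injective, that identity is exactly your inclusion $\mathcal{LM}_\lambda(\phi)^{-1}(S(W))\subseteq S(V)$, so the two approaches are proving the same thing; the difference is that you supply a self-contained proof rather than outsourcing it. Your eigenspace argument is sound: the $\lambda=1$ case collapses via the trivial-character observation, and for $\lambda\neq 1$ the computation that $(x_j-1)(x_j-\lambda)$ annihilates $S(W)$, the definition $w_j=(\lambda-1)^{-1}(x_j-1)w$, the verifications $x_jw_j=\lambda w_j$ and $x_kw_j=w_j$ ($k\neq j$) via injectivity and equivariance, and the final use of the trivial-character observation on $w_0=w-\sum_jw_j$ all go through. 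What your version buys is transparency and independence from \cite{DR}; what the paper's version buys is brevity. Both handle the surjective case identically by exactness plus composition with the projection.
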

\begin{proof}
	Let us note that we have the following commutative 
	diagram,
	\[
	\begin{tikzcd}
		0\arrow[r]&\mathrm{Ker\,}\theta_{V}^{\lambda}\arrow[r]
		\arrow[d,"\mathcal{LM}_{\lambda}(\phi)|_{\mathrm{Ker\,}\theta_{V}^{\lambda}}"]
		&\mathcal{LM}_{\lambda}(V) \arrow[r]
		\arrow[d,"\mathcal{LM}_{\lambda}(\phi)"]
		&\mathcal{KLM}_{\lambda}(V) \arrow[d,"\mathcal{KLM}_{\lambda}(\phi)"]\arrow[r]&0\\
		0\arrow[r]& \mathrm{Ker\,}\theta_{W}^{\lambda}\arrow[r]&\mathcal{LM}_{\lambda}(W) \arrow[r]
		&\mathcal{KLM}_{\lambda}(W) \arrow[r]&0
	\end{tikzcd},
	\]
	where the horizontal sequences are exact.
	
	Suppose that $\phi$ is injective.
	Since $\mathcal{LM}_{\lambda}$ is an exact functor,
	$\mathcal{LM}_{\lambda}(\phi)$
	is injective.
	Moreover we have
	\[
		\mathrm{Im\,}\mathcal{LM}_{\lambda}(\phi)|_{\mathrm{Ker\,}\theta_{V}^{\lambda}}
		=\mathrm{Im\,}\mathcal{LM}_{\lambda}(\phi)
		\cap 
		\mathrm{Ker\,}\theta_{W}^{\lambda}
	\]
	by Lemma 2.8 in \cite{DR}. Thus a diagram chasing shows that $\mathcal{KLM}_{\lambda}(\phi)$ is injective.

	Next we suppose that $\phi$ is surjective. Then  $\mathcal{LM}_{\lambda}(\phi)$ is surjective
	from the exactness of $\mathcal{LM}_{\lambda}$. Therefore the above diagram 
	directly shows $\mathcal{KLM}_{\lambda}(\phi)$ is surjective.
\end{proof}
\section{Middle convolution and Katz-Long-Moody functor}\label{sec:MCKLM}
In the book \cite{Katz},
Katz introduced the notion of middle convolution functor 
by means of the middle extension operation for perverse sheaves,
and this middle convolution functor is 
acknowledged widely as a sophisticated generalization of 
the Euler transform.
In this section we shall give an interpretation of Katz' middle 
convolution by the twisted homology theory 
and show that this homological middle convolution 
is equivalent to the KLM-functor.  
\subsection{Homological middle convolution}
We use the same notations as in Section \ref{sec:ET}.
Let us write the Riemann sphere by $\mathbb{P}^{1}:=\mathbb{C}\sqcup\{\infty\}$.
According to the identity 
$\mathbb{C}\backslash Q_{n}(a_{0})=\mathbb{P}^{1}\backslash Q_{n}^{\infty}(a_{0})$
by setting $Q_{n}^{\infty}:=Q_{n}\sqcup \{\infty\}$ and $Q_{n}^{\infty}(a_{0}):=Q_{n}^{\infty}
\sqcup \{a_{0}\}$,
we rewrite $H_{1}(\mathbb{C}\backslash Q_{n}(a_{0});\iota_{a_{0}}^{*}(C_{\lambda}(V)))$ as
the homology group on $\mathbb{P}^{1}\backslash Q_{n}^{\infty}(a_{0})$,
$H_{1}(\mathbb{P}^{1}\backslash Q_{n}^{\infty}(a_{0});\iota_{a_{0}}^{*}(C_{\lambda}(V))).$

Let us also consider the Borel-Moore homology groups 
$H_{*}^{\mathrm{BM}}(\mathbb{P}^{1}\backslash Q_{n}^{\infty}(a_{0});\iota_{a_{0}}^{*}(C_{\lambda}(V)))$
which are homology groups of the chain complex defined by the projective limit of
relative chains 
\[
	C_{*}^{\mathrm{BM}}(\mathbb{P}^{1}\backslash Q_{n}^{\infty}(a_{0})
	;\iota_{a_{0}}^{*}(C_{\lambda}(V)))
	:=\varprojlim
	C_{*}(\mathbb{P}^{1}\backslash Q_{n}^{\infty}(a_{0}),
	(\mathbb{P}^{1}\backslash Q_{n}^{\infty}(a_{0}))\backslash K
	;\iota_{a_{0}}^{*}(C_{\lambda}(V)))
\]
as $K$ ranges over the directed set of compact subsets of $\mathbb{P}^{1}\backslash Q_{n}^{\infty}(a_{0})$.
Then the canonical projection maps
\[
C_{*}(\mathbb{P}^{1}\backslash Q_{n}^{\infty}(a_{0})
;\iota_{a_{0}}^{*}(C_{\lambda}(V)))
\rightarrow 
C_{*}(\mathbb{P}^{1}\backslash Q_{n}^{\infty}(a_{0}),
	(\mathbb{P}^{1}\backslash Q_{n}^{\infty}(a_{0}))\backslash K
	;\iota_{a_{0}}^{*}(C_{\lambda}(V)))
\]	
induce the natural maps of homology groups 
\[
	H_{*}(\mathbb{P}^{1}\backslash Q_{n}^{\infty}(a_{0});\iota_{a_{0}}^{*}(C_{\lambda}(V)))
	\rightarrow
	H_{*}^{\mathrm{BM}}(\mathbb{P}^{1}\backslash Q_{n}^{\infty}(a_{0});\iota_{a_{0}}^{*}(C_{\lambda}(V))).
\]

Let us recall that the group homomorphism 
$\eta\colon F_{n}\rtimes_{\alpha}G\rightarrow \mathrm{Aut}_{\mathbf{Top}}(\mathbb{P}^{1}\backslash Q_{n}^{\infty}(a_{0}))$
given in Section \ref{sec:pi1action}.
Then as well as in Section \ref{sec:faction}, this $\eta$
induces the  
maps 
\begin{multline*}
	C_{*}(\mathbb{P}^{1}\backslash Q_{n}^{\infty}(a_{0}),
	(\mathbb{P}^{1}\backslash Q_{n}^{\infty}(a_{0}))\backslash K
	;\iota_{a_{0}}^{*}(C_{\lambda}(V))) \\
	\longrightarrow C_{*}(\mathbb{P}^{1}\backslash Q_{n}^{\infty}(a_{0}),
	(\mathbb{P}^{1}\backslash Q_{n}^{\infty}(a_{0}))\backslash \eta(h)(K)
	;\iota_{a_{0}}^{*}(C_{\lambda}(V))) 
\end{multline*}
for $h\in F_{n}\rtimes_{\alpha}G$.
Since $\eta(h)(K)$ are still compact in the above maps,
these maps defines
the action of $F_{n}\rtimes_{\alpha}G$
on this projective system.
Therefore we can regard the projective limit  
$H_{1}^{\mathrm{BM}}(\mathbb{P}^{1}\backslash Q_{n}^{\infty}(a_{0});\iota_{a_{0}}^{*}(C_{\lambda}(V)))$
as a $k[F_{n}\rtimes_{\alpha}G]$-module and 
the canonical map $H_{1}(\mathbb{P}^{1}\backslash Q_{n}^{\infty}(a_{0});\iota_{a_{0}}^{*}(C_{\lambda}(V)))
\longrightarrow H_{1}^{\mathrm{BM}}(\mathbb{P}^{1}\backslash Q_{n}^{\infty}(a_{0});\iota_{a_{0}}^{*}(C_{\lambda}(V)))$
as a $k[F_{n}\rtimes_{\alpha}G]$-module homomorphism.
\begin{dfn}[Homological middle convolution functor]\label{dfn:MC}\normalfont
	For $\lambda\in k^{\times}\backslash\{1\}$, we define the functor
	$\mathcal{MC}_{\lambda}\colon 
\mathbf{Mod}_{k[F_{n}\rtimes_{\alpha}G]}\rightarrow \mathbf{Mod}_{k[F_{n}\rtimes_{\alpha}G]}$
by 
\[
	\mathcal{MC}_{\lambda}(V):=\mathrm{Im\,}(H_{1}(\mathbb{P}^{1}\backslash Q_{n}^{\infty}(a_{0});\iota_{a_{0}}^{*}(C_{\lambda}(V)))
	\longrightarrow H_{1}^{\mathrm{BM}}(\mathbb{P}^{1}\backslash Q_{n}^{\infty}(a_{0});\iota_{a_{0}}^{*}(C_{\lambda}(V))))
\]
for $V\in \mathscr{O}(\mathbf{Mod}_{k[F_{n}\rtimes_{\alpha}G]})$. Also for $\phi\in \mathrm{Hom}_{k[F_{n}\rtimes_{\alpha}G]}(V,W)$,
we define \\$\mathcal{MC}_{\lambda}(\phi)\in \mathrm{Hom}_{k[F_{n}\rtimes_{\alpha}G]}(\mathcal{MC}_{\lambda}(V),\mathcal{MC}_{\lambda}(W))$ by 
the homomorphism induced by the commutative diagram
\[
	\begin{tikzcd}
		H_{1}(\mathbb{P}^{1}\backslash Q_{n}^{\infty}(a_{0});\iota_{a_{0}}^{*}(C_{\lambda}(V)))\arrow[r]
		\arrow[d,"H_{1}(\phi)"]&
	H^{\mathrm{BM}}_{1}(\mathbb{P}^{1}\backslash Q_{n}^{\infty}(a_{0});\iota_{a_{0}}^{*}(C_{\lambda}(V)))
	\arrow[d,"H^{\mathrm{BM}}_{1}(\phi)"]\\
	H_{1}(\mathbb{P}^{1}\backslash Q_{n}^{\infty}(a_{0});\iota_{a_{0}}^{*}(C_{\lambda}(W)))\arrow[r]
	&H_{1}^{\mathrm{BM}}(\mathbb{P}^{1}\backslash Q_{n}^{\infty}(a_{0});\iota_{a_{0}}^{*}(C_{\lambda}(W)))
	\end{tikzcd},	
\]
where the horizontal maps are the canonical maps and vertical ones are induced map by $\phi$. 
We call this functor the \emph{homological middle convolution}.
\end{dfn}
\begin{rem}\normalfont
	The integral representation 
	$$F(\alpha,\beta,\gamma;z)=\frac{\Gamma(\gamma)}{\Gamma(\alpha)\Gamma(\gamma-\alpha)}\int_{1}^{\infty}
	t^{\beta-\gamma}(t-1)^{\gamma-\alpha-1}(t-z)^{-\beta}\,dt$$
	of the Gauss hypergeometric function 
	is valid only for $\mathrm{Re\,}(\gamma-\alpha),\,\mathrm{Re\,}\alpha>0$
	which assures the convergence of the integral.
	However, the hypergeometric series $$\displaystyle F(\alpha,\beta,\gamma;z)
	=\sum_{n=0}^{\infty}\frac{\alpha(\alpha+1)\cdots (\alpha+(n-1))\beta(\beta+1)\cdots (\beta+(n-1))}
	{\gamma(\gamma+1)\cdots (\gamma+(n-1)) n!}z^{n}$$ 
	is defined for $\alpha,\beta\in \mathbb{C}$ and $\gamma\in \mathbb{C}\backslash \mathbb{Z}_{\le 0}$.
	Therefore the integral in the right hand side in the above equation
	can be analytically continued and it is well-known that 
	this analytic continuation is given by the formula
	\begin{multline}\label{eq:poch}
		\int_{[\gamma_{1},\gamma_{\infty}]}
	t^{\beta-\gamma}(t-1)^{\gamma-\alpha-1}(t-z)^{-\beta}\,dt\\
	=(1-e^{2\pi i\alpha})(1-e^{2\pi i (\gamma-\alpha)})
	\int_{1}^{\infty}
	t^{\beta-\gamma}(t-1)^{\gamma-\alpha-1}(t-z)^{-\beta}\,dt
	\end{multline}
	where $[\gamma_{1},\gamma_{\infty}]$ is the Pochhammer 
	contour consisting of 
	closed paths $\gamma_{1}$ encircling $1$ clockwise and 
	$\gamma_{\infty}$ encircling $\infty$ as well.

	As we saw previously,
	the homology group 
	$H_{1}(\mathbb{P}^{1}\backslash Q_{n}^{\infty}(a_{0});\iota_{a_{0}}^{*}(C_{\lambda}(V)))$
	can be seen as an analogue of 
	integrations along Pochhammer contours.
	On the other hand, 
	as it is well-known,
	the Borel-Moore homology groups
	are isomorphic to 
	the homology groups of locally finite chains 
	which correspond to usual path integrals connecting 
	points in $Q_{n}^{\infty}(a_{0})$
	under our setting.
	Namely, we can roughly say that 
	the left hand side in the equation $(\ref{eq:poch})$
	corresponds to $H_{1}(\mathbb{P}^{1}\backslash Q_{n}^{\infty}(a_{0});\iota_{a_{0}}^{*}(C_{\lambda}(V)))$, 
	the domain of the map defining the homological middle convolution,
	and the right hand side corresponds to $H_{1}^{\mathrm{BM}}(\mathbb{P}^{1}\backslash Q_{n}^{\infty}(a_{0});\iota_{a_{0}}^{*}(C_{\lambda}(V)))$,
	the target of the map for the middle convolution.
	Therefore 
	our definition of the middle convolution 
	gives a homological interpretation of the classical formula $(\ref{eq:poch})$.
\end{rem}
\subsection{Euler transform as Borel-Moore homology}
Let us write a punctured 
disk of radius $r$ and centered at $a$
by $D^{*}_{<r}(a):=\{z\in \mathbb{C}\mid 0<|z-a|<r\}$
if $a\in \mathbb{C}$, and by 
$D^{*}_{<r}(\infty):=\{z\in \mathbb{C}\mid 1/r < |z|\}$
if $a=\infty$.
Let us take $\varepsilon>0$ and 
define 
\[
	D^{*}_{<\varepsilon}(Q_{n}^{\infty}(a_{0})):=\bigsqcup_{i=0}^{n}D^{*}_{<\varepsilon}(a_{i})
	\sqcup D^{*}_{<\varepsilon}(\infty).
\]
Also define 
$K_{m}:=(\mathbb{P}^{1}\backslash Q_{n}^{\infty}(a_{0}))\backslash D^{*}_{<1/m}(Q_{n}^{\infty}(a_{0}))$
for $m\in \mathbb{Z}_{\ge 1}$ which 
are compact subsets of $\mathbb{P}^{1}\backslash Q_{n}^{\infty}(a_{0})$.
Then we obtain the increasing sequence of compact subsets 
$K_{1}\subset K_{2}\subset \cdots$ whose union is $\mathbb{P}^{1}\backslash Q_{n}^{\infty}(a_{0})$
and the projective limit 
\[
	\varprojlim_{K_{m}}H_{*}(\mathbb{P}^{1}\backslash Q_{n}^{\infty}(a_{0}),
	(\mathbb{P}^{1}\backslash Q_{n}^{\infty}(a_{0}))\backslash K_{m};\iota_{a_{0}}^{*}(C_{\lambda}(V))).
\]
There is the natural map 
\[
	H^{\mathrm{BM}}_{*}(\mathbb{P}^{1}\backslash Q_{n}^{\infty}(a_{0});\iota_{a_{0}}^{*}(C_{\lambda}(V)))
\longrightarrow 	
\varprojlim_{K_{m}}H_{*}(\mathbb{P}^{1}\backslash Q_{n}^{\infty}(a_{0}),
	(\mathbb{P}^{1}\backslash Q_{n}^{\infty}(a_{0}))\backslash K_{m};\iota_{a_{0}}^{*}(C_{\lambda}(V)))
\]
by the universality of the projective limit.
Since the projective limit functor is not compatible with the 
homology functor, this map may not be isomorphism in general setting. 
However we can show the following.
\begin{prp}
The natural map 
\[
	H^{\mathrm{BM}}_{*}(\mathbb{P}^{1}\backslash Q_{n}^{\infty}(a_{0});\iota_{a_{0}}^{*}(C_{\lambda}(V)))
\longrightarrow 	
\varprojlim_{K_{m}}H_{*}(\mathbb{P}^{1}\backslash Q_{n}^{\infty}(a_{0}),
	(\mathbb{P}^{1}\backslash Q_{n}^{\infty}(a_{0}))\backslash K_{m};\iota_{a_{0}}^{*}(C_{\lambda}(V)))
\]
is an isomorphism.
\end{prp}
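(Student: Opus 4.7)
The plan is to identify the map in question as the edge map in Milnor's short exact sequence for an inverse system of chain complexes, and then verify that the corresponding $\varprojlim^{1}$ term vanishes. Set $X := \mathbb{P}^{1}\setminus Q_{n}^{\infty}(a_{0})$ and $L := \iota_{a_{0}}^{*}(C_{\lambda}(V))$ for brevity. First observe that the sequence $(K_{m})_{m\geq 1}$ is cofinal in the directed set of all compact subsets of $X$: any compact $K\subset X$ is closed in $\mathbb{P}^{1}$ and disjoint from the compact set $Q_{n}^{\infty}(a_{0})$, hence bounded away from the punctures and therefore contained in some $K_{m}$. Consequently, the defining inverse system for $C_{*}^{\mathrm{BM}}(X;L)$ agrees with the cofinal subsystem indexed by $m$, giving
\[
C_{*}^{\mathrm{BM}}(X;L)=\varprojlim_{m}C_{*}(X,X\setminus K_{m};L).
\]

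Next, note that for $K_{m}\subset K_{m+1}$ we have $X\setminus K_{m+1}\subset X\setminus K_{m}$, and the transition map
\[
C_{*}(X,X\setminus K_{m+1};L)=C_{*}(X;L)/C_{*}(X\setminus K_{m+1};L)\longrightarrow C_{*}(X;L)/C_{*}(X\setminus K_{m};L)
\]
is a surjection in each degree. Surjectivity of the transition maps allows the Milnor short exact sequence
\[
0\longrightarrow{\varprojlim_{m}}\!{}^{1}H_{*+1}(X,X\setminus K_{m};L)\longrightarrow H_{*}^{\mathrm{BM}}(X;L)\longrightarrow \varprojlim_{m}H_{*}(X,X\setminus K_{m};L)\longrightarrow 0
\]
to be invoked, reducing the proposition to the vanishing of the $\varprojlim^{1}$ term. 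For this I will show the stronger statement that the inverse system $\{H_{*}(X,X\setminus K_{m};L)\}_{m}$ is essentially constant. Since $X\setminus K_{m}=\bigsqcup_{p\in Q_{n}^{\infty}(a_{0})}D^{*}_{<1/m}(p)$, the inclusion $X\setminus K_{m+1}\hookrightarrow X\setminus K_{m}$ is a deformation retract on each component and hence a homotopy equivalence; it therefore induces isomorphisms $H_{*}(X\setminus K_{m+1};L)\xrightarrow{\sim}H_{*}(X\setminus K_{m};L)$. Feeding this into the map between the long exact sequences of the pairs $(X,X\setminus K_{m+1})$ and $(X,X\setminus K_{m})$ (in which the vertical maps over the absolute term $X$ are identities), the five lemma yields that the transition $H_{*}(X,X\setminus K_{m+1};L)\to H_{*}(X,X\setminus K_{m};L)$ is an isomorphism.

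Essentially constant inverse systems have vanishing $\varprojlim^{1}$, so the Milnor sequence degenerates into the desired isomorphism. The main delicate point is making sure the Milnor sequence is available in this local-coefficient setting: one needs the projective system of chain complexes to be termwise surjective, which is where the quotient description $C_{*}(X,X\setminus K_{m};L)=C_{*}(X;L)/C_{*}(X\setminus K_{m};L)$ (valid because $L$ is the pullback of a single local system on $X$ and the subspaces are open) is essential. Granting that verification, the rest of the argument is formal.
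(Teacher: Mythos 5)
Your proof is correct and follows essentially the same route as the paper: both reduce the claim to vanishing of $\varprojlim^{1}$ via the Milnor exact sequence (the paper cites Theorem 7.3 of \cite{Spa} for this), and both deduce that vanishing from the fact that the transition maps $H_{*}(X,X\setminus K_{m})\to H_{*}(X,X\setminus K_{m'})$ are eventually isomorphisms because the inclusions of the punctured-disk complements are deformation retracts. You spell out a couple of points the paper leaves implicit — the cofinality of $(K_{m})$ in the directed set of all compacts, the termwise surjectivity of the chain-level transition maps needed to invoke Milnor's sequence, and the five-lemma step passing from the homotopy equivalence of $X\setminus K_{m+1}\hookrightarrow X\setminus K_{m}$ to the isomorphism on relative homology — but these are refinements of the same argument, not a different one.
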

\begin{proof}
	We omit the coefficients of homology groups for simplicity.
	Let us take  $0<\varepsilon <\varepsilon'<1/2$ and 
	recall that 
	$D^{*}_{<\varepsilon'}(Q_{n}^{\infty}(a_{0}))$ is a 
	deformation retract of $D^{*}_{<\varepsilon}(Q_{n}^{\infty}(a_{0}))$.
	Then
	the map 
	\begin{equation*}
	H_{*}(\mathbb{P}^{1}\backslash Q_{n}^{\infty}(a_{0}),
	(\mathbb{P}^{1}\backslash Q_{n}^{\infty}(a_{0}))\backslash K_{m})\\
	\rightarrow H_{*}(\mathbb{P}^{1}\backslash Q_{n}^{\infty}(a_{0}),
	(\mathbb{P}^{1}\backslash Q_{n}^{\infty}(a_{0}))\backslash K_{m'})
	\end{equation*}
	induced by 
	the map $(\mathbb{P}^{1}\backslash Q_{n}^{\infty}(a_{0}), (\mathbb{P}^{1}\backslash Q_{n}^{\infty}(a_{0}))\backslash K_{m})
	\rightarrow (\mathbb{P}^{1}\backslash Q_{n}^{\infty}(a_{0}), (\mathbb{P}^{1}\backslash Q_{n}^{\infty}(a_{0}))\backslash K_{m'})$
	is an isomorphism 
	for every $2<m'\le m\in \mathbb{Z}_{\ge 1}$.
	Therefore we obtain
	\[
		\varprojlim_{K_{m}}\!{}^{1}H_{*}(\mathbb{P}^{1}\backslash Q_{n}^{\infty}(a_{0}),
	(\mathbb{P}^{1}\backslash Q_{n}^{\infty}(a_{0}))\backslash K_{m};\iota_{a_{0}}^{*}(C_{\lambda}(V)))
	=\{0\},
	\]
	where $\varprojlim^{1}$ is the first derived functor 
	$R^{1}\varprojlim$ of the limit functor $\varprojlim$.
	Then the result follows from Theorem 7.3 in \cite{Spa}.
\end{proof}
\begin{crl}\label{crl:projisom}
	For all $m>2$,
	projection maps 
	\[
	H^{\mathrm{BM}}_{*}(\mathbb{P}^{1}\backslash Q_{n}^{\infty}(a_{0});\iota_{a_{0}}^{*}(C_{\lambda}(V)))
\longrightarrow 	
	H_{*}(\mathbb{P}^{1}\backslash Q_{n}^{\infty}(a_{0}),
	(\mathbb{P}^{1}\backslash Q_{n}^{\infty}(a_{0}))\backslash K_{m};\iota_{a_{0}}^{*}(C_{\lambda}(V)))
	\]
	are isomorphisms.
\end{crl}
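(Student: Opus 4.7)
The plan is to reduce the corollary directly to the proposition that immediately precedes it. By that proposition, the natural map
\[
H^{\mathrm{BM}}_{*}(\mathbb{P}^{1}\backslash Q_{n}^{\infty}(a_{0});\iota_{a_{0}}^{*}(C_{\lambda}(V)))
\longrightarrow
\varprojlim_{K_{m}}H_{*}(\mathbb{P}^{1}\backslash Q_{n}^{\infty}(a_{0}),
(\mathbb{P}^{1}\backslash Q_{n}^{\infty}(a_{0}))\backslash K_{m};\iota_{a_{0}}^{*}(C_{\lambda}(V)))
\]
is already an isomorphism, so it suffices to show that for each fixed $m>2$ the canonical projection from the projective limit onto the $m$-th term is an isomorphism.

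First I would recall the key computation performed in the proof of the preceding proposition: for all $2<m'\le m$, the transition map between the relative homology groups indexed by $K_{m'}$ and $K_{m}$ is an isomorphism. This follows from the fact that $D^{*}_{<1/m}(Q_{n}^{\infty}(a_{0}))$ deformation retracts onto $D^{*}_{<1/m'}(Q_{n}^{\infty}(a_{0}))$, so the pair maps inducing the transitions are homotopy equivalences of pairs.

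Next I would invoke the general fact from the theory of projective limits: if an inverse system $(A_{m},\varphi_{m',m})_{m\ge m_0}$ in an abelian category has all transition maps $\varphi_{m',m}$ isomorphisms (for indices $\ge m_{0}$), then for every fixed $m\ge m_{0}$ the canonical projection $\varprojlim A_{m}\to A_{m}$ is an isomorphism. Applying this with $m_0 = 3$ gives that for every $m>2$ the projection
\[
\varprojlim_{K_{m}}H_{*}(\mathbb{P}^{1}\backslash Q_{n}^{\infty}(a_{0}),
(\mathbb{P}^{1}\backslash Q_{n}^{\infty}(a_{0}))\backslash K_{m};\iota_{a_{0}}^{*}(C_{\lambda}(V)))
\longrightarrow
H_{*}(\mathbb{P}^{1}\backslash Q_{n}^{\infty}(a_{0}),
(\mathbb{P}^{1}\backslash Q_{n}^{\infty}(a_{0}))\backslash K_{m};\iota_{a_{0}}^{*}(C_{\lambda}(V)))
\]
is an isomorphism. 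Composing with the isomorphism from the preceding proposition yields the statement. There is no substantial obstacle here; the only delicate point, namely the vanishing of $\varprojlim^{1}$ used in the previous proposition, is precisely the eventual isomorphism of transition maps that we are re-using.
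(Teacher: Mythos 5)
Your proposal is correct and follows essentially the same route as the paper: recall from the preceding proposition's proof that the transition maps of the inverse system are isomorphisms for indices $>2$, deduce that each projection from the projective limit to a fixed term is an isomorphism, and compose with the isomorphism of the proposition. No difference in substance.
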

\begin{proof}
	As we saw in the proof of the above proposition,
	the map 
	\begin{equation*}
		H_{*}(\mathbb{P}^{1}\backslash Q_{n}^{\infty}(a_{0}),
		(\mathbb{P}^{1}\backslash Q_{n}^{\infty}(a_{0}))\backslash K_{m})\\
		\rightarrow H_{*}(\mathbb{P}^{1}\backslash Q_{n}^{\infty}(a_{0}),
		(\mathbb{P}^{1}\backslash Q_{n}^{\infty}(a_{0}))\backslash K_{m'})
	\end{equation*}
	is an isomorphism 
	for every $2<m'\le m$.
	This shows that 
	projection maps 
	\begin{multline*}
		\varprojlim_{K_{m}}H_{*}(\mathbb{P}^{1}\backslash Q_{n}^{\infty}(a_{0}),
	(\mathbb{P}^{1}\backslash Q_{n}^{\infty}(a_{0}))\backslash K_{m};\iota_{a_{0}}^{*}(C_{\lambda}(V)))\\
	\rightarrow 
	H_{*}(\mathbb{P}^{1}\backslash Q_{n}^{\infty}(a_{0}),
	(\mathbb{P}^{1}\backslash Q_{n}^{\infty}(a_{0}))\backslash K_{m'};\iota_{a_{0}}^{*}(C_{\lambda}(V)))
	\end{multline*}
	are isomorphisms for $m'>2$.
	Then the above proposition shows the result.
\end{proof}
\subsection{Equivalence between homological middle convolution and Katz-Long-Moody functor}
Now we recall that 
\begin{multline*}
	H_{*}(\mathbb{P}^{1}\backslash Q_{n}^{\infty}(a_{0}),
	(\mathbb{P}^{1}\backslash Q_{n}^{\infty}(a_{0}))\backslash K_{m};\iota_{a_{0}}^{*}(C_{\lambda}(V)))\\
	=
	H_{*}(\mathbb{P}^{1}\backslash Q_{n}^{\infty}(a_{0}),D^{*}_{<1/m}(Q_{n}^{\infty}(a_{0}))
	;\iota_{a_{0}}^{*}(C_{\lambda}(V))).
\end{multline*}
Then the kernel of the map 
\[
	H_{1}(\mathbb{P}^{1}\backslash Q_{n}^{\infty}(a_{0});\iota_{a_{0}}^{*}(C_{\lambda}(V)))
	\longrightarrow H_{1}^{\mathrm{BM}}(\mathbb{P}^{1}\backslash Q_{n}^{\infty}(a_{0});\iota_{a_{0}}^{*}(C_{\lambda}(V)))	
\]
in Definition \ref{dfn:MC} is computable by the long exact sequence 
of the relative homology $H_{*}(\mathbb{P}^{1}\backslash Q_{n}^{\infty}(a_{0}),D^{*}_{<1/m}(Q_{n}^{\infty}(a_{0}))
;\iota_{a_{0}}^{*}(C_{\lambda}(V)))$ as follows.
\begin{prp}\label{cor:mc}
	Let us take $m\in \mathbb{Z}_{>2}$.
	Then we have the equation
	\begin{multline*}
		\mathrm{Ker\,}(H_{1}(\mathbb{P}^{1}\backslash Q_{n}^{\infty}(a_{0});\iota_{a_{0}}^{*}(C_{\lambda}(V)))
		\longrightarrow H_{1}^{\mathrm{BM}}(\mathbb{P}^{1}\backslash Q_{n}^{\infty}(a_{0});\iota_{a_{0}}^{*}(C_{\lambda}(V)))	
	)=\\
	\mathrm{Im\,}(H_{1}(D^{*}_{<1/m}(Q_{n}^{\infty}(a_{0}));\iota_{a_{0}}^{*}(C_{\lambda}(V))|_{D^{*}_{<1/m}(Q_{n}^{\infty}(a_{0}))})
	\rightarrow H_{1}(\mathbb{P}^{1}\backslash Q_{n}^{\infty}(a_{0});\iota_{a_{0}}^{*}(C_{\lambda}(V)))).
	\end{multline*}
\end{prp}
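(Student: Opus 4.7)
The strategy is to translate the Borel--Moore homology into an ordinary relative homology group via Corollary \ref{crl:projisom}, and then read off the kernel of the natural map $H_1 \to H_1^{\mathrm{BM}}$ from the long exact sequence of a pair. Concretely, since $(\mathbb{P}^{1}\backslash Q_{n}^{\infty}(a_{0}))\backslash K_{m}=D^{*}_{<1/m}(Q_{n}^{\infty}(a_{0}))$, Corollary \ref{crl:projisom} identifies $H_{1}^{\mathrm{BM}}(\mathbb{P}^{1}\backslash Q_{n}^{\infty}(a_{0});\iota_{a_{0}}^{*}(C_{\lambda}(V)))$ with the relative homology group $H_{1}(\mathbb{P}^{1}\backslash Q_{n}^{\infty}(a_{0}),D^{*}_{<1/m}(Q_{n}^{\infty}(a_{0}));\iota_{a_{0}}^{*}(C_{\lambda}(V)))$ for every $m>2$.

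The first step I would carry out is to verify that, under this identification, the canonical map $H_{1}\to H_{1}^{\mathrm{BM}}$ appearing in Definition \ref{dfn:MC} agrees with the natural map from absolute to relative homology. This is essentially a diagram chase at the chain level: the projection $C_{*}(X;L)\to C_{*}(X,X\backslash K_{m};L)$ factors through the projective limit defining $C_{*}^{\mathrm{BM}}(X;L)$, and Corollary \ref{crl:projisom} tells us that the further projection to the $m$-th term is an isomorphism on $H_{1}$, so the two maps on $H_{1}$ coincide.

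Once this compatibility is in place, I would invoke the long exact sequence of the topological pair $(\mathbb{P}^{1}\backslash Q_{n}^{\infty}(a_{0}),\,D^{*}_{<1/m}(Q_{n}^{\infty}(a_{0})))$ with coefficients in $\iota_{a_{0}}^{*}(C_{\lambda}(V))$:
\[
\cdots\to H_{1}(D^{*}_{<1/m}(Q_{n}^{\infty}(a_{0}));L|)\to H_{1}(\mathbb{P}^{1}\backslash Q_{n}^{\infty}(a_{0});L)\to H_{1}(\mathbb{P}^{1}\backslash Q_{n}^{\infty}(a_{0}),D^{*}_{<1/m}(Q_{n}^{\infty}(a_{0}));L)\to\cdots
\]
Exactness at the middle term gives immediately that the kernel of the map to the relative group equals the image coming from $H_{1}(D^{*}_{<1/m}(Q_{n}^{\infty}(a_{0}));L|)$, which, combined with the first step, is exactly the claimed equation.

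The only point requiring any real care is the chain-level compatibility in the first step, because the target of the Borel--Moore map is a projective limit while the relative homology for fixed $m$ is only one term of the defining system; but since Corollary \ref{crl:projisom} ensures that all the transition maps (and hence the projections from the limit) are isomorphisms for $m>2$, this compatibility reduces to the universal property of the inverse limit and the functoriality of the natural transformation $C_{*}(X;L)\to C_{*}(X,X\backslash K;L)$. No further input beyond Corollary \ref{crl:projisom} and the long exact sequence of a pair with local coefficients is needed.
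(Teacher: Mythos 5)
Your proposal is correct and follows essentially the same route as the paper's proof: identify the Borel--Moore group with the relative homology group via Corollary~\ref{crl:projisom}, observe that the canonical map $H_1\to H_1^{\mathrm{BM}}$ is compatible with the map to relative homology (the paper records this as a commutative triangle, which you justify at the chain level), and then read off the kernel from exactness of the long exact sequence of the pair.
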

\begin{proof}
	Let us recall the long exact sequence of the relative homology groups 
	\begin{multline*} 
		\cdots \rightarrow H_{1}(D^{*}_{<1/m}(Q_{n}^{\infty}(a_{0}));\iota_{a_{0}}^{*}(C_{\lambda}(V))|_{D^{*}_{<1/m}(Q_{n}^{\infty}(a_{0}))})	\rightarrow
	H_{1}(\mathbb{P}^{1}\backslash Q_{n}^{\infty}(a_{0});\iota_{a_{0}}^{*}(C_{\lambda}(V)))\\
	\rightarrow H_{1}(\mathbb{P}^{1}\backslash Q_{n}^{\infty}(a_{0}),D^{*}_{<1/m}(Q_{n}^{\infty}(a_{0}));\iota_{a_{0}}^{*}(C_{\lambda}(V)))
	\rightarrow \cdots.
	\end{multline*}
	We also recall the  commutative diagram 
	\[
		\begin{tikzcd}
			H_{1}(\mathbb{P}^{1}\backslash Q_{n}^{\infty}(a_{0});\iota_{a_{0}}^{*}(C_{\lambda}(V)))
			\arrow[r]\arrow[rd]
			&H_{1}^{\mathrm{BM}}(\mathbb{P}^{1}\backslash Q_{n}^{\infty}(a_{0});\iota_{a_{0}}^{*}(C_{\lambda}(V)))
			\arrow[d]
			\\
			&H_{1}(\mathbb{P}^{1}\backslash Q_{n}^{\infty}(a_{0}),D^{*}_{<1/m}(Q_{n}^{\infty}(a_{0}));\iota_{a_{0}}^{*}(C_{\lambda}(V)))
		\end{tikzcd}
	\]
	whose vertical map is isomorphism by Corollary \ref{crl:projisom}.
	Then we obtain the result.
\end{proof}
\begin{thm}
	Let us take $\lambda\in k^{\times}\backslash\{1\}$.
	The isomorphism $\widetilde{\Phi}\colon \mathcal{E}_{\lambda}\rightarrow \mathcal{LM}_{\lambda}$
	induces an isomorphism 
	\[
		\widehat{\Phi}\colon \mathcal{MC}_{\lambda}\rightarrow \mathcal{KLM}_{\lambda}.
	\]
\end{thm}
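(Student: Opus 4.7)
The plan is to combine Proposition \ref{cor:mc} with the isomorphism $\widetilde{\Phi}\colon \mathcal{E}_{\lambda}\to \mathcal{LM}_{\lambda}$ to reduce the theorem to a local computation at each puncture of $\mathbb{P}^{1}\backslash Q_{n}^{\infty}(a_{0})$. By Proposition \ref{cor:mc}, the kernel of $H_{1}\to H_{1}^{\mathrm{BM}}$ equals the image of $H_{1}(D^{*}_{<1/m}(Q_{n}^{\infty}(a_{0}));\iota_{a_{0}}^{*}(C_{\lambda}(V))|_{\cdot})$ for $m\gg 0$, and because $D^{*}_{<1/m}(Q_{n}^{\infty}(a_{0}))$ is a disjoint union of punctured disks, this decomposes as a direct sum of local contributions indexed by $p\in Q_{n}^{\infty}(a_{0})$. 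Lemma \ref{lem:Hat} identifies each summand with the subspace of $\iota_{a_{0}}^{*}(C_{\lambda}(V))$ fixed by the local monodromy. Thus it suffices to show that the image of each local contribution in $\mathcal{LM}_{\lambda}(V)$ under $\widetilde{\Phi}$ is exactly one of the submodules $\mathcal{LM}_{\lambda}(V)^{[i]}$ or $\mathcal{LM}_{\lambda}(V)^{F_{n}}$ whose sum defines $\mathcal{KLM}_{\lambda}(V)$.

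The local monodromies of $\iota_{a_{0}}^{*}(C_{\lambda}(V))$ are computed from the behaviors of $\mathrm{pr}_{t}^{*}(V)$ and $\mathrm{pr}_{t-z}^{*}(K_{\lambda})$: around $\gamma_{0}$ it is $\lambda\cdot\mathrm{id}$, around $\gamma_{i}$ ($1\le i\le n$) it is $x_{i}$, and around $\gamma_{\infty}$ it is $(\lambda x_{1}\cdots x_{n})^{-1}$ via the relation $\gamma_{0}\gamma_{1}\cdots\gamma_{n}\gamma_{\infty}=1$ in $\pi_{1}(\mathbb{P}^{1}\backslash Q_{n}^{\infty}(a_{0}),d)$. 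Since $\lambda\neq 1$, the contribution at $a_{0}$ vanishes. For $p=a_{i}$ with $i\ge 1$, the local cycles are $\widetilde{\gamma_{i}}\otimes v$ with $v\in V^{x_{i}}$; specializing the identity proved inside Proposition \ref{prop:aughom} to $\chi\in V^{x_{i}}$ gives
\[
\widetilde{[\gamma_{0},\gamma_{i}]}\otimes v=(1-\lambda)\,\widetilde{\gamma_{i}}\otimes v,
\]
so $\widetilde{\gamma_{i}}\otimes v=(1-\lambda)^{-1}\,\widetilde{[\gamma_{0},\gamma_{i}]}\otimes v$, and under $\widetilde{\Phi}$ this summand maps bijectively onto $\{(x_{i}-1)\otimes v\mid v\in V^{x_{i}}\}=\mathcal{LM}_{\lambda}(V)^{[i]}$ by Proposition \ref{prop:finite}.

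For $p=\infty$, the contribution consists of cycles $\widetilde{\gamma_{\infty}}\otimes v$ with $\lambda x_{1}\cdots x_{n}v=v$. Using that $\gamma_{\infty}\cdot\gamma_{0}\gamma_{1}\cdots\gamma_{n}$ is null-homotopic together with the standard composition rule $\widetilde{\alpha\cdot\beta}\otimes w=\widetilde{\alpha}\otimes w+\widetilde{\beta}\otimes\alpha^{-1}w$, one obtains
\[
\widetilde{\gamma_{\infty}}\otimes v=-\widetilde{\gamma_{0}}\otimes v-\lambda^{-1}\sum_{i=1}^{n}\widetilde{\gamma_{i}}\otimes (x_{1}\cdots x_{i-1})^{-1}v \quad \text{in } H_{1},
\]
which one rewrites in the Pochhammer basis via the identity above and simplifies using the invariance condition. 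Comparing with the Fox-derivative expansion $(x_{1}\cdots x_{n}-1)=\sum_{i=1}^{n}(x_{i}-1)\cdot(x_{i+1}\cdots x_{n})$, the image under $\widetilde{\Phi}$ is a nonzero scalar multiple of $(x_{1}\cdots x_{n}-1)\otimes v$, which by Proposition \ref{prop:infinity} traces out $\mathcal{LM}_{\lambda}(V)^{F_{n}}$ as $v$ varies. Assembling the three local contributions, $\widetilde{\Phi}$ descends to the desired isomorphism $\widehat{\Phi}\colon\mathcal{MC}_{\lambda}\to\mathcal{KLM}_{\lambda}$, with naturality inherited from that of $\widetilde{\Phi}$. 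The main technical obstacle is the $\infty$ computation: since $\gamma_{\infty}$ is not among the chosen generators of $\pi_{1}$, extracting its Pochhammer expansion requires careful bookkeeping of the deck transformation action along $\gamma_{0}\gamma_{1}\cdots\gamma_{n}$ and matching the resulting combination with the Fox-derivative expansion of $x_{1}\cdots x_{n}-1$.
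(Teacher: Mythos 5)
Your proposal is correct and follows essentially the same route as the paper. Both reductions start from Proposition \ref{cor:mc}, decompose the kernel of $\mathcal{E}_{\lambda}(V)\to\mathcal{MC}_{\lambda}(V)$ as a direct sum of local contributions at the punctures of $\mathbb{P}^{1}\backslash Q_{n}^{\infty}(a_{0})$, use Lemma \ref{lem:Hat} to identify each summand with the local-monodromy-invariant subspace, apply the Pochhammer-cycle identity from the proof of Proposition \ref{prop:aughom} to rewrite $\widetilde{\gamma_{i}}\otimes v$ as $(1-\lambda)^{-1}\widetilde{[\gamma_{0},\gamma_{i}]}\otimes v$ when $v$ is $\gamma_{i}$-invariant, and conclude by Propositions \ref{prop:finite} and \ref{prop:infinity} that $\widetilde{\Phi}$ maps the kernel bijectively onto $\bigoplus_{i}\mathcal{LM}_{\lambda}(V)^{[i]}\oplus\mathcal{LM}_{\lambda}(V)^{F_{n}}$, so that $\widehat{\Phi}$ is induced on quotients. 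The one place where you and the paper diverge is at the puncture $\infty$: the paper applies the same Pochhammer identity directly to $\widetilde{[\gamma_{0},\gamma_{\infty}]}\otimes v$ and invokes the identification of $(\iota_{a_{0}}^{*}(C_{\lambda}(V)))^{\gamma_{\infty}}$ with $V^{\lambda x_{1}\cdots x_{n}}$, leaving the translation into the $(x_{1}\cdots x_{n}-1)\otimes v$ form tacit, whereas you attempt to expand $\widetilde{\gamma_{\infty}}$ explicitly by the composition rule and match against the Fox-derivative expansion of $x_{1}\cdots x_{n}-1$. Your explicit route is sound (and the stated composition rule and Fox expansion are correct under the paper's conventions), though you acknowledge rather than complete the final bookkeeping; this does not change the structure of the argument, which agrees with the paper's.
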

\begin{proof}
	Let us take a left $k[F_{n}\rtimes_{\alpha}G]$-module $V$
	and a fixed $m\in \mathbb{Z}_{>2}$.
	Proposition \ref{cor:mc} tells us that 
	the kernel of the projection map $\Theta_{V}^{\lambda}\colon \mathcal{E}_{\lambda}(V)\rightarrow \mathcal{MC}_{\lambda}(V)$
	equals to 
	\[
		\mathrm{Im\,}(H_{1}(\mathbb{P}^{1}\backslash Q_{n}^{\infty}(a_{0}),D^{*}_{<1/m}(Q_{n}^{\infty}(a_{0}));\iota_{a_{0}}^{*}(C_{\lambda}(V)))
		\rightarrow H_{1}(\mathbb{P}^{1}\backslash Q_{n}^{\infty}(a_{0});\iota_{a_{0}}^{*}(C_{\lambda}(V)))).
	\]
	Now we claim that 
	\begin{multline*}
		\mathrm{Im\,}(H_{1}(D^{*}_{<1/m}(a_{i});\iota_{a_{0}}^{*}(C_{\lambda}(V))|_{D^{*}_{<1/m}(a_{i})})
		\rightarrow H_{1}(\mathbb{P}^{1}\backslash Q_{n}^{\infty}(a_{0});\iota_{a_{0}}^{*}(C_{\lambda}(V))))\\
		=\{\widetilde{[\gamma_{0},\gamma_{i}]}\otimes v\in H_{1}(\mathbb{P}^{1}\backslash Q_{n}^{\infty}(a_{0});\iota_{a_{0}}^{*}(C_{\lambda}(V)))\mid 
		v\in (\iota_{a_{0}}^{*}(C_{\lambda}(V)))^{\gamma_{i}}\}
	\end{multline*}
	for $i=1,\ldots,n$, and 
	\begin{multline*}
		\mathrm{Im\,}(H_{1}(D^{*}_{<1/m}(\infty);\iota_{a_{0}}^{*}(C_{\lambda}(V))|_{D^{*}_{<\varepsilon}(\infty)})
		\rightarrow H_{1}(\mathbb{P}^{1}\backslash Q_{n}^{\infty}(a_{0});\iota_{a_{0}}^{*}(C_{\lambda}(V))))\\
		=\{\widetilde{[\gamma_{0},\gamma_{\infty}]}\otimes v\in H_{1}(\mathbb{P}^{1}\backslash Q_{n}^{\infty}(a_{0});\iota_{a_{0}}^{*}(C_{\lambda}(V)))\mid 
		v\in (\iota_{a_{0}}^{*}(C_{\lambda}(V)))^{\gamma_{\infty}}\}
	\end{multline*}
	where $\gamma_{\infty}$ is the homotopy class of the path homotopic to 
	$(\gamma_{0}\cdots \gamma_{n})^{-1}$.

	Let us suppose the claim is true. 
	Since $D^{*}_{<1/m}(Q_{n}^{\infty}(a_{0}))$ is the disjoint union of 
	$D^{*}_{<1/m}(a_{i})$, $i=0,\ldots,n$, and $D^{*}_{<1/m}(\infty)$,
	we have 
	\begin{multline*}
	H_{1}(D^{*}_{<1/m}(Q_{n}^{\infty}(a_{0}));\iota_{a_{0}}^{*}(C_{\lambda}(V))|_{D^{*}_{<1/m}(Q_{n}^{\infty}(a_{0}))})\\=
	\bigoplus_{i=0}^{n}
	H_{1}(D^{*}_{<1/m}(a_{i});\iota_{a_{0}}^{*}(C_{\lambda}(V))|_{D^{*}_{<1/m}(a_{i})})
	\oplus H_{1}(D^{*}_{<1/m}(\infty);\iota_{a_{0}}^{*}(C_{\lambda}(V))|_{D^{*}_{<1/m}(\infty)}).
	\end{multline*}
	Let us note that $H_{1}(D^{*}_{<1/m}(a_{0});\iota_{a_{0}}^{*}(C_{\lambda}(V))|_{D^{*}_{<1/m}(a_{0})})=\{0\}$
	because of $\lambda\neq 1$.
	Also note that the isomorphism 
	$\iota_{a_{0}}^{*}(C_{\lambda}(V))=V\otimes_{k}K_{\lambda}\cong V$
	sends $ (\iota_{a_{0}}^{*}(C_{\lambda}(V)))^{\gamma_{i}}$ to 
	$V^{x_{i}}$ for $i=1,\ldots,n$,
	and $ (\iota_{a_{0}}^{*}(C_{\lambda}(V)))^{\gamma_{\infty}}$
	to $V^{\lambda x_{1}\cdots x_{n}}$
	since $\gamma_{0}$ acts on $\iota_{a_{0}}^{*}(C_{\lambda}(V))$
	as the multiplication of the scalar $\lambda$.
	Thus 
	$\widetilde{\Phi}$ sends 
	$\mathrm{Im\,}(H_{1}(D^{*}_{<1/m}(Q_{n}^{\infty}(a_{0}));\iota_{a_{0}}^{*}(C_{\lambda}(V))|_{D^{*}_{<1/m}(Q_{n}^{\infty}(a_{0}))})
	\rightarrow H_{1}(\mathbb{P}^{1}\backslash Q_{n}^{\infty}(a_{0});\iota_{a_{0}}^{*}(C_{\lambda}(V))))$
	to $\bigoplus_{i=1}^{n}\mathcal{LM}_{\lambda}(V)^{[i]}\oplus \mathcal{LM}_{\lambda}(V)^{F_{n}}$
	bijectively by the claim.
	
	Therefore there uniquely exists the homomorphism 
	$\widehat{\Phi}_V\colon \mathcal{MC}_{\lambda}(V)\rightarrow \mathcal{KLM}_{\lambda}(V)$
	such that the diagram 
	\[
		\begin{tikzcd}
			0\arrow[r]&\mathrm{Ker\,}\Theta_{V}^{\lambda}\arrow[r]\arrow[d,"\widetilde{\Phi}_{V}|_{\Theta_{V}^{\lambda}}"]
			&\mathcal{E}_{\lambda}(V)\arrow[r,"\Theta_{V}^{\lambda}"]\arrow[d,"\widetilde{\Phi}_{V}"]&\mathcal{MC}_{\lambda}(V)\arrow[r]\arrow[d,"\widehat{\Phi}_V"]
			&0\\
			0\arrow[r]&\mathrm{Ker\,}\theta_{V}^{\lambda}\arrow[r]&\mathcal{LM}_{\lambda}(V)\arrow[r,"\theta_{V}^{\lambda}"]&
			\mathcal{KLM}_{\lambda}(V)\arrow[r]&0
		\end{tikzcd}	
	\]
	is commutative, and in particular 
	this $\widehat{\Phi}_V$ is isomorphism 
	since the left and middle vertical morphisms of the diagram are isomorphisms.

	Then it remains to prove the claim.
	Since $D_{<1/m}^{*}(a_{i})$, $i=1,\ldots,n$, and $D_{<1/m}^{*}(\infty)$
	are homotopy equivalent to $S^{1}$,
	Lemma \ref{lem:Hat} shows that 
	\begin{multline*}
		\mathrm{Im\,}(H_{1}(D^{*}_{<1/m}(a_{i});\iota_{a_{0}}^{*}(C_{\lambda}(V))|_{D^{*}_{<1/m}(a_{i})})
		\rightarrow H_{1}(\mathbb{P}^{1}\backslash Q_{n}^{\infty}(a_{0});\iota_{a_{0}}^{*}(C_{\lambda}(V))))\\
		=\{\widetilde{\gamma_{i}}\otimes v\in H_{1}(\mathbb{P}^{1}\backslash Q_{n}^{\infty}(a_{0});\iota_{a_{0}}^{*}(C_{\lambda}(V)))\mid 
		v\in (\iota_{a_{0}}^{*}(C_{\lambda}(V)))^{\gamma_{i}}\}
	\end{multline*}
	for $i=1,\ldots,n$, and 
	\begin{multline*}
		\mathrm{Im\,}(H_{1}(D^{*}_{<1/m}(\infty);\iota_{a_{0}}^{*}(C_{\lambda}(V))|_{D^{*}_{<1/m}(\infty)})
		\rightarrow H_{1}(\mathbb{P}^{1}\backslash Q_{n}^{\infty}(a_{0});\iota_{a_{0}}^{*}(C_{\lambda}(V))))\\
		=\{\widetilde{\gamma_{\infty}}\otimes v\in H_{1}(\mathbb{P}^{1}\backslash Q_{n}^{\infty}(a_{0});\iota_{a_{0}}^{*}(C_{\lambda}(V)))\mid 
		v\in (\iota_{a_{0}}^{*}(C_{\lambda}(V)))^{\gamma_{\infty}}\}.
	\end{multline*}
	As in the first part in the proof of Proposition \ref{prop:aughom},
	we have 
	\begin{align*}
		\widetilde{[\gamma_{0},\gamma_{i}]}\otimes v&=
		\widetilde{\gamma_{0}}\otimes \lambda(\gamma_{i}-1)\cdot v+
		\widetilde{\gamma_{i}}\otimes (1-\lambda)(\gamma_{i}\cdot v)\\
		&=\widetilde{\gamma_{i}}\otimes (1-\lambda) v
	\end{align*}
	for $v\in (\iota_{a_{0}}^{*}(C_{\lambda}(V)))^{\gamma_{i}}$ and $i=1,\ldots,n$,
	and similarly
	\[
		\widetilde{[\gamma_{0},\gamma_{\infty}]}\otimes v=\widetilde{\gamma_{\infty}}\otimes (1-\lambda) v
	\]
	for $v\in (\iota_{a_{0}}^{*}(C_{\lambda}(V)))^{\gamma_{\infty}}$.
	Then since $\lambda\neq 1$, the claim follows.
\end{proof}

\section{Subcategory $\mathbf{Mod}_{k[F_{n}\rtimes_{\alpha}G]}^{\mathrm{NT}}$}
In \cite{Katz}, Katz introduced 
certain conditions called the property $\wp$
which assures that the middle convolution functor behaves 
well. 
As an analogue of these conditions, 
Dettweiler-Reiter \cite{DR} and V\"olklein \cite{Vol}
defined a full subcategory of $\mathbf{Mod}_{k[F_{n}]}$,
and showed many good properties of the middle convolution 
functor inside this subcategory.
In this section, we shall extend this subcategory 
to a full subcategory of 
$\mathbf{Mod}_{k[F_{n}\rtimes_{\alpha}G]}$,
and 
show fundamental properties of the Katz-Long-Moody functor
on this subcategory.
\subsection{Definition of $\mathbf{Mod}_{k[F_{n}\rtimes_{\alpha}G]}^{\mathrm{NT}}$}
Let us consider the projection map from $F_{n}$ to the infinite cyclic group $F_{1}=\langle x\rangle$ defined by
\[
	\begin{array}{cccc}	
		p_{i}\colon &F_{n}&\longrightarrow &F_{1}\\
		&x_{j}&\longmapsto & p_{i}(x_{j}):=
		\begin{cases}
			x&\text{if }j= i\\
			1&\text{otherwise}
		\end{cases}
	\end{array},
\]
which extends to the group homomorphism.
\begin{dfn}[almost trival module, V\"olklein \cite{Vol}]\normalfont
	We say that a left $k[F_{n}]$-module $V$ is 
	\emph{almost trivial} if there exists 
	a 1-dimensional $k[F_{1}]$-module $K$
	and $V$ is isomorphic to $p_{i}^{*}(K)$ for some $i=1,\ldots,n$.
\end{dfn}
For 
	a left $R$-module $M$ and ring homomorphism $\phi\colon P\rightarrow R$,
	 $\phi^{*}(M)$ 
	denotes the pull-back module of $M$.
	Namely, $\phi^{*}(M)=M$ as Abel groups and the $P$-module structure is 
	define by 
	$p\cdot m:=\phi(p)\cdot m$ for $p\in P$ and $m\in M$.

\begin{dfn}[cf. Katz, Lemma 2.6.14 and Corollay 2.6.15 in \cite{Katz};
	V\"olklein, Section 2.2 in \cite{Vol}; Dettweiler-Reiter, Section 3 in \cite{DR}]\label{df:NT}
	\normalfont
	We say that  a left $k[F_{n}]$-module $V$ satisfies the property $(P1)$
	if 
	\[
		\mathrm{Hom}_{k[F_{n}]}(T,V)=\{0\}	
	\]
	for all almost trivial modules $T$.
	Similarly 
	we say that  a left $k[F_{n}]$-module $V$ satisfies the property $(P2)$
	if 
	\[
		\mathrm{Hom}_{k[F_{n}]}(V,T)=\{0\}	
	\]
	for all almost trivial modules $T$.

	Let $\mathbf{Mod}_{k[F_{n}\rtimes_{\alpha}G]}^{\mathrm{NT}}$ be 
	the full subcategory of $\mathbf{Mod}_{k[F_{n}\rtimes_{\alpha}G]}$ 
	whose object consists 
	of 
	$k[F_{n}\rtimes_{\alpha}G]$-module $V$ satisfying both of $(P1)$ and $(P2)$
	when we regard $V$ as a $k[F_{n}]$-module through the inclusion 
	$k[F_{n}]\hookrightarrow k[F_{n}\rtimes_{\alpha}G]$.
\end{dfn}

\begin{rem}\label{rem:nontriv}\normalfont
	Let $V$ be a left $k[F_{n}]$-module and 
	$\rho_{V}\colon F_{n}\rightarrow \mathrm{Aut}_{k}(V)$
	the induced group homomorphism.
	Then the property $(P1)$
	is equivalent to that
	\begin{equation}\label{eq:nontriv1}
		(\mathrm{Ker\,}(\rho_{V}(x_{i})-\tau))\cap\bigcap_{j\neq i}\mathrm{Ker\,}(\rho_{V}(x_{j})-1)=\{0\}
	\end{equation}
	for all $i=1,\ldots,n$ and $\tau\in k^{\times}$.
	Similarly,
	the property $(P2)$
	is equivalent to that
	\begin{equation}\label{eq:nontriv2}
		(\mathrm{Im\,}(\rho_{V}(x_{i})-\tau))+\sum_{j\neq i}\mathrm{Im\,}(\rho_{V}(x_{j})-1)=V
	\end{equation}
	for all $i=1,\ldots,n$ and $\tau\in k^{\times}$.
\end{rem}
\subsection{Katz-Long-Moody functor in $\mathbf{Mod}_{k[F_{n}\rtimes_{\alpha}G]}^{\mathrm{NT}}$}
The following proposition
assures that 
the full subcategory $\mathbf{Mod}_{k[F_{n}\rtimes_{\alpha}G]}^{\mathrm{NT}}$
is preserved by the Katz-Long-Moody functor.
\begin{prp}[cf. Corollary 6.2.7 in \cite{Katz}, Lemma 2.2 in \cite{Vol}, and Proposition 3.4 in \cite{DR}]\label{prop:preserve}
	Let $V$ be a left $k[F_{n}\rtimes_{\alpha}G]$-module and $\lambda\in k^{\times}$.
	If $V$ has the property $(P1)$, 
	then so does $\mathcal{KLM}(V)$.
	Similarly, if $V$ has the property $(P2)$,
	then so does $\mathcal{KLM}(V)$.
\end{prp}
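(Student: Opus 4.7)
The plan is to reduce this preservation statement to the already-known fact for the classical Katz middle convolution, i.e.\ the case $G=\{e\}$ treated by Katz \cite{Katz}, Dettweiler-Reiter \cite{DR}, and V\"olklein \cite{Vol}. The key observation is that the conditions $(P1)$ and $(P2)$ in Definition \ref{df:NT} depend only on the underlying $k[F_{n}]$-module structure, not on the $G$-action, so the $G$-action essentially plays no role in the verification.

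First I would verify that the Katz-Long-Moody functor is compatible with the restriction functor $U\colon \mathbf{Mod}_{k[F_{n}\rtimes_{\alpha}G]}\to \mathbf{Mod}_{k[F_{n}]}$. By construction $U(\mathcal{LM}_{\lambda}(V))=I_{F_{n},\lambda}\otimes_{k[F_{n}]}U(V)$, and the submodules $\mathcal{LM}_{\lambda}(V)^{[i]}$ and $\mathcal{LM}_{\lambda}(V)^{F_{n}}$ are described in Propositions \ref{prop:finite} and \ref{prop:infinity} using only the $F_{n}$-action; hence they are exactly the subspaces one would form starting from $U(V)$. It follows that the quotient $\mathcal{KLM}_{\lambda}(V)=\mathcal{LM}_{\lambda}(V)/(\sum_{i}\mathcal{LM}_{\lambda}(V)^{[i]}+\mathcal{LM}_{\lambda}(V)^{F_{n}})$, regarded as a $k[F_{n}]$-module, coincides with $\mathcal{KLM}_{\lambda}(U(V))$, which is in turn the classical Katz middle convolution of $U(V)$ by the $G=\{e\}$ case of Theorem 1.3(1).

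With this compatibility in hand, the proposition is essentially formal: for any almost trivial $k[F_{n}]$-module $T$ we have the natural identifications $\mathrm{Hom}_{k[F_{n}]}(T,\mathcal{KLM}_{\lambda}(V))=\mathrm{Hom}_{k[F_{n}]}(T,\mathcal{KLM}_{\lambda}(U(V)))$ and $\mathrm{Hom}_{k[F_{n}]}(\mathcal{KLM}_{\lambda}(V),T)=\mathrm{Hom}_{k[F_{n}]}(\mathcal{KLM}_{\lambda}(U(V)),T)$, so properties $(P1)$ and $(P2)$ for $\mathcal{KLM}_{\lambda}(V)$ reduce to the same properties for the Dettweiler-Reiter middle convolution applied to $U(V)$, which are precisely Proposition 3.4 of \cite{DR} and Lemma 2.2 of \cite{Vol} (and originally Corollary 2.6.15 of \cite{Katz}). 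The main obstacle, and the only step requiring care, is the clean description of $\mathcal{LM}_{\lambda}(V)^{F_{n}}$ in Proposition \ref{prop:infinity} making the $G$-independence of the quotient transparent; once that is granted, the present proposition is a formal consequence of the classical case.
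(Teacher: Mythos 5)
Your reduction is correct, and it is a genuinely different route from the paper's. The paper does not reduce to the classical $G=\{e\}$ case; instead it gives a self-contained direct proof. For $\lambda=1$ the paper uses Proposition~\ref{prop:1middle} to identify $\mathcal{KLM}_{1}(V)$ with a submodule (or, under $(P2)$, with all) of $V$. For $\lambda\neq 1$ the paper reformulates $(P1)$ for $\mathcal{KLM}_{\lambda}(V)$ in terms of a lifted condition on $v\in\mathcal{LM}_{\lambda}(V)$ modulo $\mathrm{Ker\,}\theta_{V}^{\lambda}$, and then proves this via the chain Lemma~\ref{lem:prep} $\Rightarrow$ Lemma~\ref{lem:prep2}, which involves explicit manipulation of the block matrix forms from Remark~\ref{rem:DR}. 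Your approach is shorter: you observe that $(P1)$, $(P2)$, $\mathcal{LM}_{\lambda}(V)^{[i]}$, and $\mathcal{LM}_{\lambda}(V)^{F_{n}}$ are all intrinsic to the underlying $k[F_{n}]$-module, so restriction commutes with $\mathcal{KLM}_{\lambda}$ and the statement becomes exactly the cited result of Dettweiler-Reiter. This is a legitimate and arguably more conceptual argument. What the paper's route buys is independence from external references plus a pair of lemmas (in particular equation~(\ref{eq:incl}) inside Lemma~\ref{lem:prep2}) that are reused downstream in the proof of Theorem~\ref{thm:KLM}, so the computational work is not wasted on this proposition alone.

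Two small points to tighten your version. First, you should say explicitly that the classical result you invoke (Proposition 3.4 of \cite{DR}) covers $\lambda=1$ as well, or else treat $\lambda=1$ separately as the paper does via Proposition~\ref{prop:1middle} — the paper's proof structure suggests the authors did not regard the $\lambda=1$ case as covered by the same argument as $\lambda\neq 1$. Second, the claim that $\mathcal{KLM}_{\lambda}$ restricted to $G=\{e\}$ is the Dettweiler-Reiter middle convolution is stated as part of the main theorem in the introduction but is only established in the body via Remark~\ref{rem:DR} together with Propositions~\ref{prop:infinity} and~\ref{prop:finite}; you should point to those specific comparisons rather than citing the introductory theorem as if already proved at this point in the text.
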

\begin{proof}
	First we suppose $\lambda=1$. If $\mathrm{Hom}_{k[F_{n}]}(T,V)=\{0\}$
	for an almost trivial module $T$, then we have $\mathrm{Hom}_{k[F_{n}]}(T,\mathcal{KLM}_{1}(V))=\{0\}$
	since $\mathcal{KLM}_{1}(V)$ is isomorphic to a submodule of $V$
	by Proposition \ref{prop:1middle}.
	Meanwhile, if $\mathrm{Hom}_{k[F_{n}]}(V,T)=\{0\}$ for  any almost trivial modules $T$,
	then Propositions \ref{prop:multiplication} and \ref{prop:1middle} show that 
	$\mathcal{KLM}_{1}(V)\cong V$. Thus we have 
	$\mathrm{Hom}_{k[F_{n}]}(\mathcal{KLM}_{1}(V),T)=\{0\}$ for  any almost trivial modules $T$

	Next we suppose $\lambda\neq 1$.
	We shall only show that if $V$ has the property $(P1)$,
	then $\mathcal{KLM}_{\lambda}(V)$ has the property $(P1)$.
	The dual argument shows the property $(P2)$
	is preserved by the KLM-functor.

	We identify $\mathcal{LM}_{\lambda}(V)$ with $V^{\oplus n}$ by the isomorphism $(\ref{eq:indentfy})$.
	Let $\rho_{\lambda}^{\mathrm{LM}}\colon F_{n}\rtimes_{\alpha}G\rightarrow \mathrm{Aut}_{k}(V^{\oplus n})$
	be the associated group homomorphism with $\mathcal{LM}_{\lambda}(V)$.
	Then to show the condition $(\ref{eq:nontriv1})$ holds for $\mathcal{KLM}_{\lambda}(V)$,
	it suffices to see that
	the following claim holds for every $i=1,\ldots,n$ and $\tau\in k^{\times}$. Namely,
	if $v\in \mathcal{LM}_{\lambda}(V)$ satisfies 
	\begin{equation*}
		(\rho_{\lambda}^{\mathrm{LM}}(x_{i})-\tau)v\in\mathrm{Ker\,}\theta_{V}^{\lambda},\quad
		(\rho_{\lambda}^{\mathrm{LM}}(x_{j})-1)v\in \mathrm{Ker\,}\theta_{V}^{\lambda},
	\end{equation*}
	for $j\in \{1,\ldots,n\}\backslash\{i\}$, then $v\in \mathrm{Ker\,}\theta_{V}^{\lambda}.$
	This claim follows from the lemmas below.
\end{proof}
Before showing the claim in the above proposition, we prepare a lemma.
\begin{lmm}\label{lem:prep}
	Let $V$ be a left $k[F_{n}\rtimes_{\alpha}G]$-module and $\lambda\in k^{\times}\backslash\{1\}$.
	If $v\in \mathcal{LM}_{\lambda}(V)$ satisfies 
	that $(x-1)v\in \bigoplus_{i=1}^{n}\mathcal{LM}_{\lambda}(V)^{[i]}$
	for all $x\in F_{n}$,
	then $v\in \mathrm{Ker\,}\theta_{V}^{\lambda}$.
\end{lmm}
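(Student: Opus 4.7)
The plan is to identify $\mathcal{LM}_{\lambda}(V)$ with $V^{\oplus n}$ via $(\ref{eq:indentfy})$, write $v = (v_{1}, \ldots, v_{n})$, and construct an explicit decomposition $v = w + (x_{1}\cdots x_{n} - 1) \otimes z$ with $w \in \bigoplus_{i} \mathcal{LM}_{\lambda}(V)^{[i]}$ and $(x_{1}\cdots x_{n} - 1) \otimes z \in \mathcal{LM}_{\lambda}(V)^{F_{n}}$, thereby placing $v$ in $\mathrm{Ker\,}\theta_{V}^{\lambda}$. Set $T_{j} := x_{j} x_{j+1} \cdots x_{n}$ with $T_{n+1} := 1$, and $S := \sum_{k}(x_{k} - 1) v_{k}$. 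Using Propositions \ref{prop:finite} and \ref{prop:infinity} together with the right $k[F_{n}]$-module expansion $x_{1}\cdots x_{n} - 1 = \sum_{i}(x_{i} - 1) T_{i+1}$, the desired decomposition translates into the componentwise equalities $v_{j} = w_{j} + T_{j+1} z$ with $x_{j} w_{j} = w_{j}$ for each $j$ (equivalently, $(x_{j} - 1) v_{j} = (T_{j} - T_{j+1}) z$) and $\lambda T_{1} z = z$. Summing these equalities over $j$ and applying the telescoping $\sum_{j}(T_{j} - T_{j+1}) = T_{1} - 1$ together with $\lambda T_{1} z = z$ forces $z = \tfrac{\lambda}{1 - \lambda} S$ (using $\lambda \neq 1$). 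So $z$ is uniquely determined, and the lemma reduces to verifying (a) $\lambda T_{1} S = S$ and (b) $(1 - \lambda)(x_{j} - 1) v_{j} = \lambda (x_{j} - 1) T_{j+1} S$ for each $j$.

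To exploit the hypothesis, a direct computation from the matrix in Remark \ref{rem:DR} shows that the only nonzero component of $(x_{j} - 1) v$ is the $j$-th, equal to $u_{j} := S + (\lambda - 1) A_{j}$, where $A_{j} := \sum_{k < j}(x_{k} - 1) v_{k} + x_{j} v_{j}$. Since $\bigoplus_{i} \mathcal{LM}_{\lambda}(V)^{[i]}$ is $F_{n}$-stable, the assumption is equivalent to the generator conditions $x_{j} u_{j} = u_{j}$ for $j = 1, \ldots, n$. The key algebraic identity $A_{j} = S + v_{j} - \sum_{k > j}(x_{k} - 1) v_{k}$ (which follows by rewriting $\sum_{k<j}(x_k - 1)v_k = S - (x_j - 1)v_j - \sum_{k>j}(x_k - 1)v_k$) recasts each hypothesis as
\[
\lambda (x_{j} - 1) S = (1 - \lambda) \Bigl[(x_{j} - 1) v_{j} - \sum_{k > j}(x_{j} - 1)(x_{k} - 1) v_{k}\Bigr].
\]

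Condition (b) is then proved by reverse induction on $j$: the case $j = n$ is immediate from the displayed identity with empty sum. For the inductive step, substituting the inductive hypothesis $(1 - \lambda)(x_{k} - 1) v_{k} = \lambda(T_{k} - T_{k+1}) S$ for $k > j$ into the right-hand side and telescoping $\sum_{k > j}(T_{k} - T_{k+1}) = T_{j+1} - 1$ collapses the sum into $\lambda (x_{j} - 1)(T_{j+1} - 1) S$, yielding (b) at $j$. Condition (a) then follows by summing (b) over all $j$ and applying the same telescoping identity once more. The main technical step is locating the alternative formula for $A_{j}$---it is what converts the hypothesis at $j$ into a form where the $k > j$ terms can be absorbed by the inductive hypothesis, after which the two telescoping identities do the remaining work.
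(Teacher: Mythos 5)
Your proof is correct, but it reconstructs by hand what the paper obtains from a single eigenvector observation, and in fact produces exactly the same decomposition. Unwinding both arguments, your $(x_1\cdots x_n - 1)\otimes z$ with $z = \tfrac{\lambda}{1-\lambda}S$ coincides with the paper's $v - (\lambda-1)^{-1}w$, where $w := \sum_i (x_i-1)v$. The paper's route is short because of two facts it exploits directly: $(x_i-1)v$ has only its $i$-th coordinate nonzero (by the matrix form in Remark~\ref{rem:DR}), so the hypothesis forces $w_i := (x_i-1)v \in \mathcal{LM}_\lambda(V)^{[i]}$; and each $x_i$ acts as the scalar $\lambda$ on $\mathcal{LM}_\lambda(V)^{[i]}$ and as $1$ on $\mathcal{LM}_\lambda(V)^{[j]}$ for $j\neq i$, so $(x_i-1)w = (\lambda-1)w_i$ and $v-(\lambda-1)^{-1}w$ is immediately $F_n$-invariant, with no telescoping or induction required. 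Your proposal, rather than using the diagonal action on $\bigoplus_i \mathcal{LM}_\lambda(V)^{[i]}$, parametrizes the two summands of $\mathrm{Ker}\,\theta_V^\lambda$ coordinatewise via Propositions~\ref{prop:finite} and~\ref{prop:infinity}, reduces to the system of equalities (a) and (b), and establishes it by downward induction with the telescoping identities $\sum_j(T_j-T_{j+1})=T_1-1$ and $(x_j-1)T_{j+1}=T_j-T_{j+1}$. This is valid — I checked the base case, the inductive step, the derivation of (a) from (b), and the identity $u_j = S+(\lambda-1)A_j$ with $A_j = S+v_j-\sum_{k>j}(x_k-1)v_k$ — but the bookkeeping is substantially heavier; the cleaner path is to notice that, once one knows $(x_i-1)v$ sits in the $i$-th eigensummand, the whole coordinate-level linear system is unnecessary.
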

\begin{proof}
	By the assumption, we have $(x_{i}-1)v\in  \bigoplus_{j=1}^{n}\mathcal{LM}_{\lambda}(V)^{[j]}$ for all $i=1,\ldots,n$.
	Since $(x_{i}-1)v\in (x_{i}-1)\otimes V$ and $\left( (x_{i}-1)\otimes V\right)\cap \bigoplus_{j=1}^{n}\mathcal{LM}_{\lambda}(V)^{[j]}=
	\mathcal{LM}_{\lambda}(V)^{[i]}$,
	we have $(x_{i}-1)v\in \mathcal{LM}_{\lambda}(V)^{[i]}$.
	Set $w_{i}:=(x_{i}-1)v$, $i=1,\ldots,n$ and $w:=\sum_{i=1}^{n}w_{i}$.
	Then $w\in \bigoplus_{j=1}^{n} \mathcal{LM}_{\lambda}(V)^{[j]}$ and moreover we have 
	$(x_{i}-1)w=\sum_{j=1}^{n}(x_{j}-1)w_{j}=(x_{i}-1)w_{i}=(\lambda-1)w_{i}$ since $w_{j}\in \mathcal{LM}_{\lambda}(V)^{[j]}$.
	Therefore 
	we obtain 
	\[
		(x_{i}-1)(v-(\lambda-1)^{-1}w)=w_{i}-(\lambda-1)^{-1}(\lambda-1)w_{i}=0\quad \text{for all } i=1,\ldots,n,	
	\]
	which shows that $v=(\lambda-1)^{-1}w+(v-(\lambda-1)^{-1}w)\in \bigoplus_{j=1}^{n}\mathcal{LM}_{\lambda}(V)^{[j]}+\mathcal{LM}_{\lambda}(V)^{F_{n}}
	=\mathrm{Ker\,}\theta_{V}^{\lambda}$.
\end{proof}
Now we give a proof of the above claim
\begin{lmm}\label{lem:prep2}
	Let $V$ be a left $k[F_{n}\rtimes_{\alpha}G]$-module satisfying $(P1)$.
	Let us take $\lambda\in k^{\times}\backslash\{1\}$.
	Let us take $v\in \mathcal{LM}_{\lambda}(V)$, $\tau\in k^{\times}$, and 
	$i\in \{1,\ldots,n\}$ arbitrarily.
	Then if 
	it is satisfied that 
	\begin{equation}\label{eq:assump}
		(\rho^{\mathrm{LM}}_{\lambda}(x_{i})-\tau)v\in\mathrm{Ker\,}\theta_{V}^{\lambda},\quad
		(\rho^{\mathrm{LM}}_{\lambda}(x_{j})-1)v\in \mathrm{Ker\,}\theta_{V}^{\lambda},
	\end{equation}
	for $j\in \{1,\ldots,n\}\backslash\{i\}$, then $v\in \mathrm{Ker\,}\theta_{V}^{\lambda}.$
\end{lmm}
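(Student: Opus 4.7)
The plan is to identify $\mathcal{LM}_{\lambda}(V)$ with $V^{\oplus n}$ via $(\ref{eq:indentfy})$, write $v=(v_{1},\dots,v_{n})$, and combine Propositions \ref{prop:finite} and \ref{prop:infinity} into an explicit model of $\mathrm{Ker}\,\theta_{V}^{\lambda}$: its elements are precisely the tuples $(w_{l}+x_{1}\cdots x_{l-1}w)_{l=1}^{n}$ with $w_{l}\in V^{x_{l}}$ and $w\in V$ satisfying $\lambda x_{1}\cdots x_{n}w=w$, and this decomposition is unique because $\mathcal{LM}_{\lambda}(V)^{F_{n}}\cap\bigoplus_{l}\mathcal{LM}_{\lambda}(V)^{[l]}=\{0\}$ when $\lambda\neq 1$. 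With this model in place I split into the subcases $\tau=1$ and $\tau\neq 1$, both of which rest on the following common key step: for each index $j$ with $(\rho_{\lambda}^{\mathrm{LM}}(x_{j})-1)v\in\mathrm{Ker}\,\theta_{V}^{\lambda}$, the element $(\rho_{\lambda}^{\mathrm{LM}}(x_{j})-1)v$ actually lies in the smaller subspace $\mathcal{LM}_{\lambda}(V)^{[j]}$. To prove this I note that $(\rho_{\lambda}^{\mathrm{LM}}(x_{j})-1)v$ has only its $j$-th coordinate $\alpha_{j}$ nonzero; the uniqueness of the decomposition forces the candidate infinity-component $w^{(j)}\in V$ to satisfy $x_{l}x_{1}\cdots x_{l-1}w^{(j)}=x_{1}\cdots x_{l-1}w^{(j)}$ for every $l\neq j$, and telescoping together with $\lambda x_{1}\cdots x_{n}w^{(j)}=w^{(j)}$ yields $x_{l}w^{(j)}=w^{(j)}$ for $l\neq j$ and $\lambda x_{j}w^{(j)}=w^{(j)}$, so property $(P1)$ applied at $\tau'=\lambda^{-1}\neq 1$ forces $w^{(j)}=0$ and hence $\alpha_{j}\in V^{x_{j}}$.

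In the subcase $\tau=1$, the key step applies to every $j\in\{1,\dots,n\}$, giving $(\rho_{\lambda}^{\mathrm{LM}}(x_{j})-1)v\in\mathcal{LM}_{\lambda}(V)^{[j]}$ for each $j$. Since $\bigoplus_{l}\mathcal{LM}_{\lambda}(V)^{[l]}$ is stable under the $F_{n}$-action (each summand being an $F_{n}$-character), a short induction on the word length of $x\in F_{n}$ promotes this to $(x-1)v\in\bigoplus_{l}\mathcal{LM}_{\lambda}(V)^{[l]}$ for every $x\in F_{n}$, so Lemma \ref{lem:prep} concludes $v\in\mathrm{Ker}\,\theta_{V}^{\lambda}$.

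In the subcase $\tau\neq 1$, the stronger hypothesis $(\rho_{\lambda}^{\mathrm{LM}}(x_{i})-\tau)v\in\mathrm{Ker}\,\theta_{V}^{\lambda}$ has $k$-th coordinate $(1-\tau)v_{k}$ for every $k\neq i$, so the uniquely determined infinity-component $w\in V$ with $\lambda x_{1}\cdots x_{n}w=w$ is in general nonzero; subtracting the $F_{n}$-invariant element $c:=(1-\tau)^{-1}(x_{1}\cdots x_{n}-1)\otimes w\in\mathcal{LM}_{\lambda}(V)^{F_{n}}$ from $v$ produces $\tilde v$ with $\tilde v_{k}\in V^{x_{k}}$ for every $k\neq i$, and since $c\in\mathrm{Ker}\,\theta_{V}^{\lambda}$ the hypotheses pass to $\tilde v$. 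Re-running the key step on $\tilde v$ now yields $(\lambda x_{i}-\tau)\tilde v_{i}\in V^{x_{i}}$, while $(\rho_{\lambda}^{\mathrm{LM}}(x_{j})-1)\tilde v\in\mathcal{LM}_{\lambda}(V)^{[j]}$ for $j\neq i$ gives $(x_{i}-1)\tilde v_{i}\in\bigcap_{j\neq i}V^{x_{j}}$; together these force $(\lambda x_{i}-\tau)(x_{i}-1)\tilde v_{i}=0$, and a final appeal to $(P1)$---at the eigenvalue $\tau/\lambda$ in the generic case $\tau\neq\lambda$, and at the eigenvalue $1$ (the assertion $V^{F_{n}}=\{0\}$) in the edge case $\tau=\lambda$, where the identity collapses to $\lambda(x_{i}-1)^{2}\tilde v_{i}=0$ and places $(x_{i}-1)\tilde v_{i}$ directly in $V^{F_{n}}$---forces $(x_{i}-1)\tilde v_{i}=0$. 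Hence $\tilde v\in\bigoplus_{l}\mathcal{LM}_{\lambda}(V)^{[l]}$ and $v\in\mathrm{Ker}\,\theta_{V}^{\lambda}$. The main obstacle will be this $\tau\neq 1$ case: managing the unique decomposition in $\mathrm{Ker}\,\theta_{V}^{\lambda}$ through the substitution $v\mapsto\tilde v$, and handling the edge $\tau=\lambda$ where the eigenvalue argument for $(x_{i}-1)\tilde v_{i}$ degenerates and must be replaced by the $\tau'=1$ form of $(P1)$.
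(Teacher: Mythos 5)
Your proposal is correct and follows the same strategy as the paper's proof: you first establish (via $(P1)$) the paper's key identity $\left((x_j-1)\otimes V\right)\cap\mathrm{Ker}\,\theta_V^{\lambda}=\mathcal{LM}_{\lambda}(V)^{[j]}$, then handle $\tau=1$ with Lemma~\ref{lem:prep}, and for $\tau\neq 1$ subtract the rescaled $F_n$-invariant part $(1-\tau)^{-1}v^{(\infty)}$ and show the remainder lies in $\bigoplus_l\mathcal{LM}_\lambda(V)^{[l]}$, exactly as the paper does. The only slip is the coordinate formula for $(x_1\cdots x_n-1)\otimes w$: under the identification $(\ref{eq:indentfy})$ its $l$-th coordinate is $(x_{l+1}\cdots x_n)w$, not $x_1\cdots x_{l-1}w$, and with the corrected formula your telescoping for the key step runs through as you intend.
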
	
\begin{proof}
	Now we claim that $(\rho^{\mathrm{LM}}_{\lambda}(x_{j})-1)v\in \mathrm{Ker\,}\theta_{V}^{\lambda}$
	is equivalent to $(\rho^{\mathrm{LM}}_{\lambda}(x_{j})-1)v\in \mathcal{LM}_{\lambda}(V)^{[j]}$.
	Since
	$(\rho^{\mathrm{LM}}_{\lambda}(x_{j})-1)v\in (x_{j}-1)\otimes V$,
	this claim follows from the equation
	\begin{equation}\label{eq:incl}
	\left((x_{j}-1)\otimes V\right)\cap \mathrm{Ker\,}\theta_{V}^{\lambda}=
	\mathcal{LM}_{\lambda}(V)^{[j]}
	\end{equation}
	which holds under the condition $(\ref{eq:nontriv1})$.
	The inclusion $\supset$ obviously holds by definitions without the condition $(\ref{eq:nontriv1})$.
	Let us show the inclusion $\subset$.
	Take $(x_{j}-1)\otimes v_{j}\in \left((x_{j}-1)\otimes V\right)\cap \mathrm{Ker\,}\theta_{V}^{\lambda}$.
	Then we can write $(x_{j}-1)\otimes v_{j}=
	\sum_{l=1}^{n}(x_{l}-1)\otimes (k_{l}+(x_{l-1}\cdots x_{n})\cdot u)$
	with some $k_{l}\in V^{x_{l}}$ and $u\in V^{\lambda x_{1}\cdots x_{n}}$
	by Propositions \ref{prop:infinity} and \ref{prop:finite}.
	Then we have $k_{l}+(x_{l-1}\cdots x_{n})\cdot u=0$ for $l\neq j$, i.e.,
	$(x_{l-1}\cdots x_{n})\cdot u\in V^{x_{l}}$ for $l\neq j$. This shows that 
	$u\in V^{x_{l}}=\mathrm{Ker\,}(\rho_{V}(x_{l})-1)$ for $l\neq j$
	and $u\in V^{\lambda x_{j}}=\mathrm{Ker\,}(\rho_{V}(x_{j})-\lambda^{-1})$. Then the condition $(\ref{eq:nontriv1})$
	for $V$ implies that $u=0$, which shows the inclusion 
	$\left((x_{j}-1)\otimes V\right)\cap \mathrm{Ker\,}\theta_{V}^{\lambda}\subset
	\mathcal{LM}_{\lambda}(V)^{[j]}$. Thus we obtain the equation $(\ref{eq:incl})$.

	Let us suppose that the condition $(\ref{eq:assump})$ holds for $\tau=1$,
	i.e., we have $(\rho^{\mathrm{LM}}_{\lambda}(x)-1)v\in \theta_{V}^{\lambda}$ for all $x\in F_{n}$.
	Then the above claim implies that $(\rho^{\mathrm{LM}}_{\lambda}(x)-1)v\in \bigoplus_{i=1}^{n}\mathcal{LM}_{\lambda}(V)^{[i]}$
	and Lemma \ref{lem:prep} shows that $v\in \mathrm{Ker\,}\theta_{V}^{\lambda}$ as desired.
	
	Next we suppose that the condition $(\ref{eq:assump})$ holds for $\tau\neq 1$.
	Then the above claim implies that for $j\neq i$,
	there exists $k_{j}\in \mathcal{LM}_{\lambda}(V)^{[j]}$ such that 
	$(\rho_{\lambda}^{\mathrm{LM}}(x_{j})-1)v=(x_{j}-1)\otimes k_{j}$.
	Let us  write $v=(v_{1},\ldots,v_{n})\in V^{\oplus n}$
	under the identification $\mathcal{LM}_{\lambda}(V)\cong V^{\oplus n}$
	by the isomorphism $(\ref{eq:indentfy})$.
	Also we write $(\rho_{\lambda}^{\mathrm{LM}}(x_{i})-\tau)v=v^{(0)}+v^{(\infty)}$
	by $v^{(0)}=(v^{(0)}_{1},\ldots,v^{(0)}_{n})\in \bigoplus_{l=1}^{n}\mathcal{LM}_{\lambda}(V)^{[l]}$, $v^{(\infty)}=(v^{(\infty)}_{1},\ldots,v^{(\infty)}_{n})\in \mathcal{LM}_{\lambda}(V)^{F_{n}}$.
	Then these equations induces the following equations.
	\begin{align}
		\sum_{l<i}\lambda(x_{l}-1)v_{l}+(\lambda x_{i}-\tau)v_{i} +\sum_{l>i}\lambda(x_{l}-1)v_{l}&=v^{(0)}_{i}+v^{(\infty)}_{i}\label{eq:ue}\\
		(1-\tau)v_{j}&=v^{(0)}_{j}+v^{(\infty)}_{j}&\text{for }j\neq i\label{eq:mid}\\
		\sum_{l<j}\lambda(x_{l}-1)v_{l}+(\lambda x_{j}-1)v_{i} +\sum_{l>k}\lambda(x_{l}-1)v_{l}&=k_{j}&\text{for }j\neq i\label{eq:sita}
	\end{align}
	Substituting the equations $(\ref{eq:mid})$ to $(\ref{eq:ue})$ and $(\ref{eq:sita})$,
	we moreover have the following.
	\begin{equation}\label{eq:saigo}
	\begin{aligned}
		(\lambda x_{i}-\tau)u_{i}+(1-\tau)^{-1}(v^{(\infty)}_{i-1}-\lambda v_{i}^{(\infty)})&=v^{(0)}_{i}+v^{(\infty)}_{i}\\
		\lambda(x_{i}-1)u_{i}+(1-\tau)^{-1}(\lambda v_{i-1}^{(\infty)}-\lambda v_{i}^{(\infty)})&=k_{j}+(1-\lambda)(1-\tau)^{-1}v_{j}^{(0)}&\text{for }j<i\\
		(x_{i}-1)u_{i}+(1-\tau)^{-1}( v_{i-1}^{(\infty)}- v_{i}^{(\infty)})&=k_{j}+(1-\lambda)(1-\tau)^{-1}v_{j}^{(0)}&\text{for }j>i
	\end{aligned}
	\end{equation}
	Here we use the fact that $v_{l}^{(0)}\in V^{x_{l}}$ and $x_{l}\cdot v_{l}^{(\infty)}=v_{l-1}^{(\infty)}$, and 
	we formally put $v_{0}^{(0)}=\lambda^{-1}v_{n}^{(\infty)}$.
	Since $k_{j},v_{j}^{(0)}\in V^{x_{j}}$, the equations $(\ref{eq:saigo})$ induces
	\begin{align}
		(x_{i}-1)u_{i}+(1-\tau)^{-1}( v_{i-1}^{(\infty)}- v_{i}^{(\infty)})&=(\frac{\tau}{\lambda}-1)v_{1}+v_{1}^{(0)}+(1-\frac{\tau}{\lambda})(1-\tau)^{-1}v_{i}^{(\infty)}\label{eq:saiggo}\\
		(x_{i}-1)u_{i}+(1-\tau)^{-1}( v_{i-1}^{(\infty)}- v_{i}^{(\infty)})&\in V^{x_{j}}\quad \text{for }j\neq i
	\end{align}
	and moreover the equation $(\ref{eq:saiggo})$
	shows that $(x_{i}-1)u_{i}+(1-\tau)^{-1}( v_{i-1}^{(\infty)}- v_{i}^{(\infty)})\in V^{\frac{\tau}{\lambda}x_{i}}$.
	Thus the condition $(\ref{eq:nontriv1})$ for $V$ implies that 
	\[
		(x_{i}-1)u_{i}+(1-\tau)^{-1}( v_{i-1}^{(\infty)}- v_{i}^{(\infty)})\in V^{\frac{\tau}{\lambda}x_{i}}\cap \bigcap_{j\neq i}V^{x_{j}}=\{0\},
	\]
	namely, we have 
	\begin{equation}\label{eq:ssaigo}
		(x_{i}-1)(v_{i}-(1-\tau)^{-1}v_{i}^{(\infty)})=(x_{i}-1)u_{i}+(1-\tau)^{-1}( v_{i-1}^{(\infty)}- v_{i}^{(\infty)})=0.
	\end{equation}

	Finally recalling that we have $
	v_{j}-(1-\tau)^{-1}v_{j}^{(\infty)}=(1-\tau)^{-1}v_{j}^{(0)}\in V^{x_{j}}$
	for $j\neq i$  by $(\ref{eq:mid})$ and 
	$v_{i}-(1-\tau)^{-1}v_{i}^{(\infty)}\in V^{x_{i}}$ by $(\ref{eq:ssaigo})$,
	we conclude that 
	\[
		v=(v-(1-\tau)^{-1}v^{(\infty)})+(1-\tau)^{-1}v^{(\infty)}\in \bigoplus_{l=1}^{n}\mathcal{LM}_{\lambda}(V)^{[l]}\oplus \mathcal{LM}_{\lambda}(V)^{F_{n}}=
		\mathrm{Ker\,}\theta_{V}^{\lambda}
	\]
	as desired.
\end{proof}
The following fundamental theorem shows that 
the Katz-Long-Moody functor gives a nontrivial  
auto-equivalence of category $\mathbf{Mod}_{k[F_{n}\rtimes_{\alpha}G]}^{\mathrm{NT}}$.
This is a natural generalization of Theorem 2.9.7 in \cite{Katz},
Theorem 2.4 in \cite{Vol}, and Theorem 3.5 in \cite{DR}.

\begin{thm}\label{thm:KLM}
	Let us take $\lambda,\tau\in k^{\times}$.
	Then we have the following.
	\begin{enumerate}
		\item As endfunctors of $\mathbf{Mod}_{k[F_{n}\rtimes_{\alpha}G]}^{\mathrm{NT}}$,
		 $\mathcal{KLM}_{\lambda}\circ \mathcal{KLM}_{\tau}$
		and $\mathcal{KLM}_{\lambda\tau}$ are isomorphic.
		\item The functor $\mathcal{KLM}_{\lambda}$ is an auto-equivalence of the category 
		$\mathbf{Mod}_{k[F_{n}\rtimes_{\alpha}G]}^{\mathrm{NT}}$.
		In particular, $\mathcal{KLM}_{\lambda^{-1}}$ is an inverse functor.
	\end{enumerate}
\end{thm}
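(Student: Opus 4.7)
Part (2) follows formally from part (1) once one establishes that $\mathcal{KLM}_1$ is naturally isomorphic to the identity functor on $\mathbf{Mod}^{\mathrm{NT}}_{k[F_n\rtimes_\alpha G]}$. This identification is immediate: by Proposition \ref{prop:1middle}, $\mathcal{KLM}_1(V)\cong\mathrm{Im}\,\nabla\subset V$ for the multiplication map $\nabla\colon\mathcal{LM}_1(V)\to V$; by Proposition \ref{prop:multiplication}(1), $\mathrm{Im}\,\nabla=\sum_{i=1}^n\mathrm{Im}(\rho_V(x_i)-1)$; and property $(P2)$ applied to the trivial $F_n$-module (Remark \ref{rem:nontriv} at $\tau=1$) gives $\sum_i\mathrm{Im}(\rho_V(x_i)-1)=V$, the isomorphism being natural in $V$. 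Granting (1), taking $\tau=\lambda^{-1}$ then yields $\mathcal{KLM}_\lambda\circ\mathcal{KLM}_{\lambda^{-1}}\cong\mathcal{KLM}_1\cong\mathrm{id}$ and symmetrically, proving (2).

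For (1), the plan is to show that the multiplication natural transformation $\nabla\colon\mathcal{LM}_\lambda\circ\mathcal{LM}_\tau\to\mathcal{LM}_{\lambda\tau}$ from Proposition \ref{prp:lmtrans} descends to a natural isomorphism $\bar\nabla\colon\mathcal{KLM}_\lambda\circ\mathcal{KLM}_\tau\to\mathcal{KLM}_{\lambda\tau}$. Well-definedness proceeds in two stages, exploiting exactness of $\mathcal{LM}_\lambda$: first, $\theta^{\lambda\tau}_V\circ\nabla$ annihilates $\mathcal{LM}_\lambda(\mathrm{Ker}\,\theta^\tau_V)$, so it descends to a map out of $\mathcal{LM}_\lambda(\mathcal{KLM}_\tau V)$; second, the induced map further annihilates $\mathrm{Ker}\,\theta^\lambda_{\mathcal{KLM}_\tau V}$. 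Both checks reduce via Propositions \ref{prop:infinity} and \ref{prop:finite} to confirming that $\nabla$ sends the generating elements $(x_i-1)\otimes w$ (with the appropriate eigen-condition on $w$) together with the $(x_1\cdots x_n-1)\otimes w$-type generators into $\mathrm{Ker}\,\theta^{\lambda\tau}_V=\sum_i\mathcal{LM}_{\lambda\tau}(V)^{[i]}+\mathcal{LM}_{\lambda\tau}(V)^{F_n}$; this is a direct computation using Lemma \ref{lmm:associative} and the matrix formulas of Remark \ref{rem:DR}.

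For bijectivity of $\bar\nabla$, injectivity uses Proposition \ref{prop:multiplication}(2), which already places $\mathrm{Ker}\,\nabla$ inside $\sum_i\mathcal{LM}_\lambda(\mathcal{LM}_\tau V)^{[i]}\subseteq\mathrm{Ker}\,\theta^\lambda_{\mathcal{LM}_\tau V}$. The remaining content is to show that any $w\in\mathcal{LM}_\lambda(\mathcal{LM}_\tau V)$ with $\nabla(w)\in\mathrm{Ker}\,\theta^{\lambda\tau}_V$ already lies in $\mathcal{LM}_\lambda(\mathrm{Ker}\,\theta^\tau_V)+\mathrm{Ker}\,\theta^\lambda_{\mathcal{LM}_\tau V}$, by an eigenspace analysis paralleling Lemma \ref{lem:prep2}. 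Surjectivity is comparatively direct: each generator $(x_i-1)\otimes v$ of $\mathcal{LM}_{\lambda\tau}(V)$ is exhibited as $\nabla$ of an explicit element of $\mathcal{LM}_\lambda(\mathcal{LM}_\tau V)$, modulo $\mathrm{Ker}\,\theta^{\lambda\tau}_V$.

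The main obstacle will be the injectivity step. Interleaving two twisted actions at scalars $\lambda$ and $\tau$ forces one to track $x_i$-eigenvalues drawn from the subgroup of $k^\times$ generated by $\lambda$ and $\tau$, and to control their interaction with the $F_n$-invariant subspace. Property $(P1)$, which is preserved under $\mathcal{KLM}_\tau$ by Proposition \ref{prop:preserve}, is precisely what rules out unwanted vectors sitting in intersected eigenspaces, thereby forcing any candidate lift of a zero class to lie in the combined kernel. An alternative, potentially more conceptual route is to work homologically via the equivalence $\mathcal{KLM}_\lambda\cong\mathcal{MC}_\lambda$ established in Section \ref{sec:MCKLM}, and to prove multiplicativity using a chain-level convolution morphism on the configuration space $\mathcal{F}_2(\mathbb{C}\backslash Q_n)$ relating the local systems $K_\lambda$, $K_\tau$, and $K_{\lambda\tau}$, replacing algebraic bookkeeping with geometry on the configuration space.
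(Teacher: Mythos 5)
Your overall strategy matches the paper's: descend the multiplication natural transformation $\nabla\colon \mathcal{LM}_\lambda\circ\mathcal{LM}_\tau\to\mathcal{LM}_{\lambda\tau}$ through both quotient maps to obtain $\widehat{\nabla}\colon\mathcal{KLM}_\lambda\circ\mathcal{KLM}_\tau\to\mathcal{KLM}_{\lambda\tau}$, then show bijectivity on $\mathbf{Mod}^{\mathrm{NT}}$. Your reduction of part (2) to part (1) via $\mathcal{KLM}_1\cong\mathrm{id}$ is correct and is exactly what the paper does. The identification of Corollary \ref{cor:inverse} as the key injectivity input, and Lemma \ref{lem:prethm1} (surjectivity of $\nabla$ under $(P2)$) as the surjectivity input, is also on target.

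There is, however, a genuine gap in your treatment of the second stage of well-definedness — the step where the partially descended map $\widetilde{\nabla}\colon\mathcal{LM}_\lambda(\mathcal{KLM}_\tau V)\to\mathcal{KLM}_{\lambda\tau}(V)$ must be shown to annihilate $\mathrm{Ker}\,\theta^\lambda_{\mathcal{KLM}_\tau(V)}$. By Propositions \ref{prop:infinity} and \ref{prop:finite}, this kernel is built from the eigenspaces $\mathcal{KLM}_\tau(V)^{x_i}$ and $\mathcal{KLM}_\tau(V)^{\lambda x_1\cdots x_n}$. You propose to compute $\widetilde{\nabla}$ on these generators directly, but $\widetilde{\nabla}$ is only defined through lifts to $\mathcal{LM}_\lambda(\mathcal{LM}_\tau V)$, and a lift $\bar w\in\mathcal{LM}_\tau(V)$ of an eigenvector $w\in\mathcal{KLM}_\tau(V)^{x_i}$ need only satisfy $(x_i-1)\bar w\in\mathrm{Ker}\,\theta^\tau_V$, not $(x_i-1)\bar w=0$. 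Without a genuine eigenvector lift, the ``direct computation'' you describe using Lemma \ref{lmm:associative} and the matrix formulas does not go through. The paper proves the needed lifting statement as a separate lemma (Lemma \ref{lem:surj}): $\theta^\tau_V$ restricts to surjections $\mathcal{LM}_\tau(V)^{x_i}\twoheadrightarrow\mathcal{KLM}_\tau(V)^{x_i}$ and $\mathcal{LM}_\tau(V)^{\lambda x_1\cdots x_n}\twoheadrightarrow\mathcal{KLM}_\tau(V)^{\lambda x_1\cdots x_n}$. Its proof requires $(P1)$, uses equation \eqref{eq:incl} from Lemma \ref{lem:prep2} and Lemma \ref{lem:prep3}, and splits into the cases $\lambda\tau=1$ and $\lambda\tau\neq 1$. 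This is at least as delicate as the injectivity analysis of Corollary \ref{cor:inverse} that you flag as the main obstacle; in fact the two together give the clean identity $\mathcal{LM}_\lambda(\theta^\tau_V)^{-1}(\mathrm{Ker}\,\theta^\lambda_{\mathcal{KLM}_\tau(V)}) = \nabla^{-1}(\mathrm{Ker}\,\theta^{\lambda\tau}_V)$, which is what simultaneously yields well-definedness and injectivity of $\widehat{\nabla}$. Your sketch accounts for one inclusion of this identity but tacitly assumes the other.
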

The second assertion follows from the first one whose 
proof is given in the next section.

As well as the the Katz algorithm for 
local systems on $\mathbb{C}\backslash\{n\text{-points}\}$,
we also have another 
operation called the \emph{multiplication}, which is defined by
\[
	\mathcal{M}ul_{\chi}\colon \mathbf{Mod}_{k[F_{n}\rtimes_{\alpha}G]}\ni V\longmapsto \chi\otimes_{\mathbb{C}}V\in  	\mathbf{Mod}_{k[F_{n}\rtimes_{\alpha}G]}
\]
for an arbitrarily $1$-dimensional module $\chi\in \mathbf{Mod}_{k[F_{n}\rtimes_{\alpha}G]}$.
Applying these  KLM-functors and multiplication functors repeatedly,
we can construct families of infinitely many non-isomorphic $k[F_{n}\rtimes_{\alpha}G]$-modules.

\subsection{A proof of Theorem \ref{thm:KLM}}
This section is dedicated to a proof of the first assertion of Theorem \ref{thm:KLM}.
\begin{lmm}\label{lem:prep3}
	Let $V$ be a left $k[F_{n}\rtimes_{\alpha}G]$-module satisfying $(P1)$.
	Let us take $\lambda\in k^{\times}\backslash\{1\}$.
	If $v\in V$ satisfies that 
	\[
		(x_{1}\cdots x_{n}-1)\otimes v\in \bigoplus_{i=1}^{n}\mathcal{LM}_{\lambda}(V)^{[i]},
	\]
	then $v=0$.
\end{lmm}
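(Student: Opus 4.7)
The plan is to interpret the hypothesis coordinatewise via the isomorphism $(\ref{eq:indentfy})$, and then conclude by invoking $(P1)$ with the ``trivial'' parameter $\tau=1$. The key algebraic identity in $k[F_n]$ is the telescoping
\[
x_1 x_2 \cdots x_n - 1 = \sum_{i=1}^n (x_i - 1)\, x_{i+1}\cdots x_n
\]
(with the convention that the empty product equals $1$). Since $I_{F_n,\lambda}$ coincides with $I_{F_n}$ as a \emph{right} $k[F_n]$-module — the twist affects only the left action — moving these right factors through the tensor product yields
\[
(x_1\cdots x_n - 1)\otimes v \;=\; \sum_{i=1}^n (x_i-1)\otimes (x_{i+1}\cdots x_n\cdot v)
\]
in $\mathcal{LM}_\lambda(V)$. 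Under the decomposition $(\ref{eq:indentfy})$ and Proposition \ref{prop:finite} (which is where $\lambda\neq 1$ is used), the hypothesis that this element lies in $\bigoplus_{i=1}^n \mathcal{LM}_\lambda(V)^{[i]}$ translates into the system of conditions $x_{i+1}\cdots x_n\cdot v\in V^{x_i}$ for each $i=1,\ldots,n$.

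From here a short reverse induction on $i$ does the job. The case $i=n$ gives $x_n v=v$; substituting this into the case $i=n-1$ gives $x_{n-1} v=v$; iterating, one obtains $x_i v=v$ for every $i=1,\ldots,n$. Thus $v\in\bigcap_{i=1}^n\mathrm{Ker\,}(\rho_V(x_i)-1)$. Applying Remark \ref{rem:nontriv} with the choice $\tau=1\in k^{\times}$, the property $(P1)$ for $V$ forces this intersection to vanish, and hence $v=0$.

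The argument presents no real obstacle: the hypothesis $\lambda\neq 1$ is used only to invoke Proposition \ref{prop:finite}, and $(P1)$ is applied with the trivial choice $\tau=1$, which simply says that $V$ has no $F_n$-fixed vector. In this sense the lemma is a clean compatibility statement, complementary to the finer inductive analysis carried out in Lemma \ref{lem:prep2}.
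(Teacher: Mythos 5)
Your proof is correct and follows essentially the same route as the paper's: identify the coordinates of $(x_1\cdots x_n-1)\otimes v$ under the isomorphism \eqref{eq:indentfy} (you derive this cleanly from the telescoping identity and right-flatness of $I_{F_n,\lambda}$, which the paper states without comment), use Proposition \ref{prop:finite} to get $x_{i+1}\cdots x_n\cdot v\in V^{x_i}$, run the reverse induction to conclude $v\in\bigcap_i\mathrm{Ker}(\rho_V(x_i)-1)$, and finish with $(P1)$ at $\tau=1$. No discrepancies.
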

\begin{proof}
	Under the isomorphism $(\ref{eq:indentfy})$,
	we can write $$(x_{1}\cdots x_{n}-1)\otimes v=((x_{2}\cdots x_{n})\cdot v,(x_{3}\cdots x_{n})\cdot v,\ldots, v)
	\in V^{\oplus n}.$$
	Then since $v\in \bigoplus_{i=1}^{n}\mathcal{LM}_{\lambda}(V)^{[i]}$,
	we obtain that $$x_{n}\cdot v=v,\ x_{n-1}x_{n}\cdot v=x_{n-1}v=v,\ \ldots\ ,(x_{2}\cdots x_{n})\cdot v=v,$$
	i.e., $(x_{1}\cdots x_{n}-1)\otimes v=(v,\ldots,v)$.
	Then it follows that $v\in \bigcap_{i=1}^{n}\mathrm{Ker\,}(x_{i}-1)=\{0\}$, i.e., $v=0.$
\end{proof}
\begin{lmm}\label{lem:surj}
	Let $V$ be a left $k[F_{n}\rtimes_{\alpha}G]$-module satisfying $(P1)$.
	Let us take $\lambda,\tau\in k^{\times}\backslash\{1\}$.
Let us consider the commutative diagram of $k[F_{n}\rtimes_{\alpha}G]$-modules,
\[
	\begin{tikzcd}
		0\arrow[r]&\mathrm{Ker\,}\theta_{\mathcal{LM}_{\lambda}(V)}^{\lambda}\arrow[r]\arrow[d,"\mathcal{LM}_{\lambda}(\theta_{V}^{\tau})"]
		&\mathcal{LM}_{\lambda}\circ\mathcal{LM}_{\tau}(V)\arrow[r,"\theta_{\mathcal{LM}_{\tau}(V)}^{\lambda}"]\arrow[d,"\mathcal{LM}_{\lambda}(\theta_{V}^{\tau})"]
		&\mathcal{KLM}_{\lambda}\circ\mathcal{LM}_{\tau}(V)\arrow[r]\arrow[d,"\mathcal{KLM}_{\lambda}(\theta_{V}^{\tau})"]&0\\
		0\arrow[r]&\mathrm{Ker\,}\theta_{\mathcal{KLM}_{\lambda}(V)}^{\lambda}\arrow[r]
		&\mathcal{LM}_{\lambda}\circ\mathcal{KLM}_{\tau}(V)\arrow[r,"\theta_{\mathcal{KLM}_{\tau}(V)}^{\lambda}"]
		&\mathcal{KLM}_{\lambda}\circ\mathcal{KLM}_{\tau}(V)\arrow[r]&0
	\end{tikzcd}	
\]
whose horizontal sequences are exact. Then 
the left vertical map 
\[
	\mathcal{LM}_{\lambda}(\theta_{V}^{\tau})\colon \mathrm{Ker\,}\theta_{\mathcal{LM}_{\tau}(V)}^{\lambda}\longrightarrow \mathrm{Ker\,}\theta_{\mathcal{KLM}_{\tau}(V)}^{\lambda}
\]
is surjective.
\end{lmm}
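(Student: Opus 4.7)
The plan is to lift kernel elements componentwise, exploiting the direct-sum-type description of $\mathrm{Ker\,}\theta^{\lambda}$ afforded by property $(P1)$. Since $\mathcal{KLM}_{\tau}(V)$ inherits property $(P1)$ from $V$ by Proposition \ref{prop:preserve}, Propositions \ref{prop:infinity} and \ref{prop:finite} express $\mathrm{Ker\,}\theta^{\lambda}_{\mathcal{KLM}_{\tau}(V)}$ as the sum of summands of two types: elements $(x_i-1)\otimes \bar{w}$ with $\bar{w}\in\mathcal{KLM}_{\tau}(V)^{x_i}$, and elements $(x_1\cdots x_n-1)\otimes\bar{u}$ with $\lambda x_1\cdots x_n\bar{u}=\bar{u}$. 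It suffices to lift each such element to $\mathrm{Ker\,}\theta^{\lambda}_{\mathcal{LM}_{\tau}(V)}$.

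For the $[i]$-type summands, the approach is to lift $\bar{w}$ to a genuine $x_i$-invariant element of $\mathcal{LM}_{\tau}(V)$. Choose any preimage $w^{0}\in\mathcal{LM}_{\tau}(V)$. Then $(x_i-1)w^{0}\in\mathrm{Ker\,}\theta_V^{\tau}\cap\bigl((x_i-1)\otimes V\bigr)$, and equation (\ref{eq:incl}) from the proof of Lemma \ref{lem:prep2} (which uses $(P1)$ for $V$ and $\tau\neq 1$) identifies this intersection with $\mathcal{LM}_{\tau}(V)^{[i]}$, a space on which $x_i$ acts as the scalar $\tau$. The correction $w:=w^{0}-(\tau-1)^{-1}(x_i-1)w^{0}$ then satisfies $x_iw=w$ and still lifts $\bar{w}$, so $(x_i-1)\otimes w\in\mathcal{LM}_{\lambda}(\mathcal{LM}_{\tau}(V))^{[i]}$ is the desired preimage.

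For the $F_{n}$-invariant summand, pick any lift $u^{0}\in\mathcal{LM}_{\tau}(V)$ of $\bar{u}$. Using the factorization of $\rho_{\tau}^{\mathrm{LM}}(x_1\cdots x_n)-\tau$ given in Remark \ref{rem:DR}, compute
\[
k:=(\lambda x_1\cdots x_n-1)u^{0}=\lambda\tau\,(x_1\cdots x_n-1)\otimes\nabla(u^{0})+(\lambda\tau-1)u^{0}\in\mathrm{Ker\,}\theta_V^{\tau}.
\]
Decompose $k=\sum_i k^{[i]}+k^{F_n}$ with $k^{[i]}\in\mathcal{LM}_{\tau}(V)^{[i]}$ and $k^{F_n}\in\mathcal{LM}_{\tau}(V)^{F_n}$. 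Since $x_1\cdots x_n$ acts as $\tau$ on $\mathcal{LM}_{\tau}(V)^{[i]}$ and as $1$ on $\mathcal{LM}_{\tau}(V)^{F_n}$, the operator $\lambda x_1\cdots x_n-1$ acts as the scalars $\lambda\tau-1$ and $\lambda-1$ respectively. When $\lambda\tau\neq 1$, both scalars are nonzero and the element $z:=\sum_i(\lambda\tau-1)^{-1}k^{[i]}+(\lambda-1)^{-1}k^{F_n}\in\mathrm{Ker\,}\theta_V^{\tau}$ satisfies $(\lambda x_1\cdots x_n-1)(u^{0}-z)=0$, so $(x_1\cdots x_n-1)\otimes(u^{0}-z)$ provides the desired lift in $\mathcal{LM}_{\lambda}(\mathcal{LM}_{\tau}(V))^{F_n}$.

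The main obstacle I anticipate is the exceptional case $\lambda\tau=1$, where the scaling trick on the $[i]$-summands collapses. Here the formula above collapses to $k=(x_1\cdots x_n-1)\otimes\nabla(u^{0})$, and the hypothesis $k\in\mathrm{Ker\,}\theta_V^{\tau}$ must force this element into $\mathcal{LM}_{\tau}(V)^{F_n}$ with no $[i]$-components. An argument in the spirit of Lemma \ref{lem:prep3}, equating coordinate vectors in $V^{\oplus n}$ after writing $(x_1\cdots x_n-1)\otimes\nabla(u^{0})=\sum_i(x_i-1)\otimes a_i+(x_1\cdots x_n-1)\otimes b$ and iteratively applying $x_j$-invariance conditions, leads to $\nabla(u^{0})-b\in\bigcap_i V^{x_i}$; property $(P1)$ then kills this intersection, yielding $\nabla(u^{0})=b\in V^{\tau x_1\cdots x_n}$ and placing $k$ entirely in $\mathcal{LM}_{\tau}(V)^{F_n}$. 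The scalar $\lambda-1\neq 0$ then suffices, and $z:=(\lambda-1)^{-1}k$ closes the argument.
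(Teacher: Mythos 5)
Your argument is correct and follows essentially the same route as the paper: both reduce the surjectivity of $\mathcal{LM}_{\lambda}(\theta_{V}^{\tau})$ on kernels to showing that $\theta_{V}^{\tau}$ maps $\mathcal{LM}_{\tau}(V)^{x_{i}}$ onto $\mathcal{KLM}_{\tau}(V)^{x_{i}}$ and $\mathcal{LM}_{\tau}(V)^{\lambda x_{1}\cdots x_{n}}$ onto $\mathcal{KLM}_{\tau}(V)^{\lambda x_{1}\cdots x_{n}}$, using the correction-by-scalar trick together with equation $(\ref{eq:incl})$ for the $x_{i}$-invariant part and a case split on $\lambda\tau=1$ versus $\lambda\tau\neq 1$ (invoking the content of Lemma \ref{lem:prep3}) for the $F_{n}$-invariant part. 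Your unfolding of Remark \ref{rem:DR} into the explicit factorization formula for $k$ and your re-derivation of Lemma \ref{lem:prep3} via coordinate vectors are more verbose than the paper's but substantively identical.
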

\begin{proof}
	First we claim that 
	\[
		\theta_{V}^{\tau}\colon \mathcal{LM}_{\tau}(V)^{x_{i}}\longrightarrow 	\mathcal{KLM}_{\tau}(V)^{x_{i}}
	\]
	is surjective for any $i=1,\ldots,n.$
	Since $\theta_{V}^{\tau}\colon \mathcal{LM}_{\tau}(V)\rightarrow 	\mathcal{KLM}_{\tau}(V)$ is surjective,
	there exists 
	$\bar{w}\in \mathcal{LM}_{\tau}(V)$ such that $\theta_{V}^{\tau}(\bar{w})=w$ for any $w\in \mathcal{KLM}_{\tau}(V)^{x_{i}}$.
	Then it follows that 
	$0=(x_{i}-1)w=\theta_{V}^{\tau}((x_{i}-1)\bar{w})$, i.e., $(x_{i}-1)\bar{w}\in \mathrm{Ker\,}\theta_{V}^{\tau}$.
	We  apply the equation $(\ref{eq:incl})$ in Lemma \ref{lem:prep2}
	and obtain $(x_{i}-1)\bar{w}\in \mathcal{LM}_{\tau}(V)^{[i]}$.
	Therefore putting 
	$\tilde{w}:=(x_{i}-1)\bar{w}\in \mathcal{LM}_{\tau}(V)^{[i]}$,
	we obtain $$(x_{i}-1)(\bar{w}-(\tau-1)^{-1}\tilde{w})=\tilde{w}-(\tau-1)^{-1}(\tau-1)\tilde{w}=0.$$
	Namely,
	$\hat{w}:=(\bar{w}-(\tau-1)^{-1}\tilde{w})\in \mathcal{LM}_{\tau}(V)^{x_{i}}$.
	Since $\tilde{w}\in \mathcal{LM}_{\tau}(V)^{[i]}\subset \mathrm{Ker\,}\theta_{V}^{\tau}$,
	we have $\theta_{V}^{\tau}(\hat{w})=\theta_{V}^{\tau}(\bar{w})=w$, which shows the claim.

	Recalling that 
	\begin{align*}
	\mathcal{LM}_{\lambda}(\mathcal{LM}_{\tau}(V))^{[i]}&=\{(x_{i}-1)\otimes w\mid w\in \mathcal{LM}_{\tau}(V)^{x_{i}}\},\\
	\mathcal{LM}_{\lambda}(\mathcal{KLM}_{\tau}(V))^{[i]}&=\{(x_{i}-1)\otimes w\mid w\in \mathcal{KLM}_{\tau}(V)^{x_{i}}\},
	\end{align*}
	$i=1,\ldots,n$,
	and $\mathcal{LM}_{\lambda}(\theta_{V}^{\tau})=\mathrm{id}_{I_{F_{n},\lambda}}\otimes \theta_{V}^{\tau}$,
	we can conclude that 
	\[
		\mathcal{LM}_{\lambda}(\theta_{V}^{\tau})\colon \bigoplus_{i=1}^{n}\mathcal{LM}_{\lambda}(\mathcal{LM}_{\tau}(V))^{[i]}
		\longrightarrow \bigoplus_{i=1}^{n}\mathcal{LM}_{\lambda}(\mathcal{KLM}_{\tau}(V))^{[i]}	
	\]
	is surjective by the above claim.

	Next we claim that 
	\[
		\theta_{V}^{\tau}\colon \mathcal{LM}_{\tau}(V)^{\lambda x_{1}\cdots x_{n}}\longrightarrow 	\mathcal{KLM}_{\tau}(V)^{\lambda x_{1}\cdots x_{n}}
	\]
	is surjective.
	As well as the above claim,
	there exists 
	$\bar{w}\in \mathcal{LM}_{\tau}(V)$ such that $\theta_{V}^{\tau}(\bar{w})=w$ for any $w\in \mathcal{KLM}_{\tau}(V)^{\lambda x_{1}\cdots x_{n}}$.
	Then it follows that 
	$(\lambda x_{1}\cdots x_{n}-1)\bar{w}\in \mathrm{Ker\,}\theta_{V}^{\tau}$.
	Let us consider the case $\lambda\tau= 1$.
	In this case, 
	the matrix representation of $x_{1}\cdots x_{n}-\tau=x_{1}\cdots x_{n}-\lambda^{-1}$
	in Remark \ref{rem:DR} shows that $(\lambda x_{1}\cdots x_{n}-1)\bar{w}\in (x_{1}\cdots x_{n}-1)\otimes V$.
	Thus since $(\lambda x_{1}\cdots x_{n}-1)\bar{w}\in \mathrm{Ker\,}\theta_{V}^{\tau}=
	\bigoplus_{i=1}^{n}\mathcal{LM}_{\tau}(V)^{[i]}\oplus \mathcal{LM}_{\tau}(V)^{F_{n}}$,
	Lemma \ref{lem:prep3} 
	implies that $(\lambda x_{1}\cdots x_{n}-1)\bar{w}\in \mathcal{LM}_{\tau}(V)^{F_{n}}$.
	Put $\tilde{w}:=(\lambda x_{1}\cdots x_{n}-1)\bar{w}$.
	Then we obtain 
	\[
		(\lambda x_{1}\cdots x_{n}-1)(\bar{w}-(\lambda-1)^{-1}\tilde{w})=
		\tilde{w}-(\lambda-1)^{-1}(\lambda-1)\tilde{w}=0.
	\]
	Thus $\bar{w}-(\lambda-1)^{-1}\tilde{w}\in \mathcal{LM}_{\tau}(V)^{\lambda x_{1}\cdots x_{n}}$
	and $\theta_{V}^{\tau}(\bar{w}-(\lambda-1)^{-1}\tilde{w})=\theta_{V}^{\tau}(\bar{w})=w$ as desired.
	Also we consider the case $\lambda\tau\neq 1$.
	In this case, we can write $(\lambda x_{1}\cdots x_{n}-1)\bar{w}=w^{(0)}+w^{(\infty)}$
	by $w^{(0)}\in \bigoplus_{i=1}^{n}\mathcal{LM}_{\tau}(V)^{[i]}$
	and $w^{(\infty)}\in \mathcal{LM}_{\tau}(V)^{F_{n}}$.
	Then we obtain
	\begin{multline*}
		(\lambda x_{1}\cdots x_{n}-1)(\bar{w}-(\lambda\tau-1)^{-1}w^{(0)}-(\lambda-1)^{-1}w^{(\infty)})
		=\\w^{(0)}+w^{(\infty)}-(\lambda\tau-1)^{-1}(\lambda\tau-1)w^{(0)}-(\lambda-1)^{-1}(\lambda-1)w^{(\infty)}=0.
	\end{multline*}
	Thus $\bar{w}-(\lambda\tau-1)^{-1}w^{(0)}-(\lambda-1)^{-1}w^{(\infty)}\in \mathcal{LM}_{\tau}(V)^{\lambda x_{1}\cdots x_{n}}$
	and  $\theta_{V}^{\tau}(\bar{w}-(\lambda\tau-1)^{-1}w^{(0)}-(\lambda-1)^{-1}w^{(\infty)})=\theta_{V}^{\tau}(\bar{w})=w$.
	Therefore in any cases we obtain the claim.

	This claim assures that 
	\[
		\mathcal{LM}_{\lambda}(\theta_{V}^{\tau})\colon \mathcal{LM}_{\lambda}(\mathcal{LM}_{\tau}(V))^{F_{n}}
		\longrightarrow \mathcal{LM}_{\lambda}(\mathcal{KLM}_{\tau}(V))^{F_{n}}	
	\]
	is surjective
	as above.

	Therefore we conclude that 
	\[
	\mathcal{LM}_{\lambda}(\theta_{V}^{\tau})\colon \mathrm{Ker\,}\theta_{\mathcal{LM}_{\tau}(V)}^{\lambda}\longrightarrow \mathrm{Ker\,}\theta_{\mathcal{KLM}_{\tau}(V)}^{\lambda}
\]
is surjective.
\end{proof}

\begin{lmm}\label{lem:prethm1}
	Let $V$ be a left $k[F_{n}\rtimes_{\alpha}G]$-module satisfying $(P2)$.
	Let us take $\lambda,\tau\in k^{\times}$.
	Then the multiplication map 
	\[
		\nabla\colon \mathcal{LM}_{\lambda}(\mathcal{LM}_{\tau}(V))\longrightarrow \mathcal{LM}_{\lambda\tau}(V)
	\]
	is surjective.
	Moreover if $\lambda\neq 1$, we obtain the short exact sequence
	\[
		0\rightarrow \bigoplus_{i=1}^{n}\mathcal{LM}_{\lambda}(\mathcal{LM}_{\tau}(V))^{[i]}
		\rightarrow \mathcal{LM}_{\lambda}(\mathcal{LM}_{\tau}(V))
		\xrightarrow{\nabla}
		\mathcal{LM}_{\lambda\tau}(V)\rightarrow 0.
	\]
\end{lmm}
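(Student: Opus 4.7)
The plan is to deduce both assertions from Proposition~\ref{prop:multiplication}(2), Proposition~\ref{prop:finite}, and the equivalent reformulation of $(P2)$ given in Remark~\ref{rem:nontriv}. The argument splits cleanly into image-computation and kernel-computation, each of which reduces to a single invocation of an earlier result plus a small book-keeping step.

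For surjectivity, I would first unpack $\nabla$ on the generators $(x_i-1)\otimes(x_j-1)\otimes v$ of $\mathcal{LM}_{\lambda}(\mathcal{LM}_{\tau}(V))$. By Proposition~\ref{prp:lmtrans}, $\nabla$ is just multiplication in $k[F_n]$, and since $(x_i-1)(x_j-1)=(x_i-1)\cdot(x_j-1)$ is a right-$k[F_n]$-multiple of $x_i-1$, the identification $(\ref{eq:indentfy})$ sends the image to the $n$-tuple with $(\rho_V(x_j)-1)v$ in slot $i$ and zero elsewhere. Letting $(v_{ij})_{i,j}$ range freely then shows
\[
    \mathrm{Im\,}\nabla \;\cong\; \biggl(\sum_{j=1}^{n}\mathrm{Im\,}(\rho_V(x_j)-1)\biggr)^{\oplus n}
\]
inside $\mathcal{LM}_{\lambda\tau}(V)\cong V^{\oplus n}$. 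Taking $\tau=1$ in Remark~\ref{rem:nontriv} for property $(P2)$ gives $\sum_{j}\mathrm{Im\,}(\rho_V(x_j)-1)=V$, hence $\nabla$ is surjective.

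For the kernel statement under the extra assumption $\lambda\neq 1$, the plan is even more direct: apply Proposition~\ref{prop:multiplication}(2) with $\mu=\tau$ to obtain
\[
    \mathrm{Ker\,}\nabla \;=\; \bigoplus_{i=1}^{n}\bigl((x_i-1)\otimes \mathrm{Ker\,}(\rho^{\tau}_V(x_i)-1)\bigr)
    \;=\;\bigoplus_{i=1}^{n}\bigl((x_i-1)\otimes \mathcal{LM}_\tau(V)^{x_i}\bigr),
\]
and then apply Proposition~\ref{prop:finite} with $W=\mathcal{LM}_\tau(V)$ and outer twist $\lambda\neq 1$, which identifies each summand $(x_i-1)\otimes W^{x_i}$ with $\mathcal{LM}_{\lambda}(W)^{[i]}=\mathcal{LM}_{\lambda}(\mathcal{LM}_\tau(V))^{[i]}$. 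Combined with the surjectivity already established, this gives the desired short exact sequence.

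The only real obstacle is notational, namely to keep straight which of the two twists $\lambda,\tau$ is the outer one and which is the inner one, and to apply Proposition~\ref{prop:multiplication}(2) and Proposition~\ref{prop:finite} with the parameter that is consistent on each side of the equality. All substantive content is already supplied by those two propositions; property $(P2)$ enters in only one place, where it is unavoidable—guaranteeing that the commutator-type ideal $\sum_{j}(x_j-1)V$ exhausts $V$.
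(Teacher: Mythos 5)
Your kernel argument is correct and matches the paper: Proposition~\ref{prop:multiplication}(2) with $\mu=\tau$ gives $\mathrm{Ker\,}\nabla=\bigoplus_{i}(x_i-1)\otimes\mathcal{LM}_\tau(V)^{x_i}$, and Proposition~\ref{prop:finite} applied to $\mathcal{LM}_\tau(V)$ with outer twist $\lambda\neq1$ identifies this with $\bigoplus_i\mathcal{LM}_\lambda(\mathcal{LM}_\tau(V))^{[i]}$.

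The surjectivity argument, however, has a genuine gap. You assert that $\nabla$ is ordinary multiplication in $k[F_n]$, but it is not: $\nabla\bigl(\sum_i(x_i-1)\otimes w_i\bigr)=\sum_i(\rho_\tau^{\mathrm{LM}}(x_i)-1)w_i$, using the $\tau$-twisted $F_n$-action on $\mathcal{LM}_\tau(V)$. This is exactly the reading underlying Proposition~\ref{prop:multiplication}(2), and it is the same reading you yourself adopt in the kernel step when you write $\mathrm{Ker\,}(\rho_V^{\tau}(x_i)-1)$. (Ordinary ideal multiplication would not even be $F_n$-equivariant for $\tau\neq1$; the twisted associativity of Lemma~\ref{lmm:associative} is precisely what repairs this.) Reading off the matrices of Remark~\ref{rem:DR}, the $i$-th slot of $\nabla\bigl((x_i-1)\otimes(x_j-1)\otimes v\bigr)$ is $\tau(\rho_V(x_j)-1)v$ for $j<i$, $(\tau\rho_V(x_i)-1)v$ for $j=i$, and $(\rho_V(x_j)-1)v$ for $j>i$. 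Hence the $i$-th component of $\mathrm{Im\,}\nabla$ is $\mathrm{Im\,}(\rho_V(x_i)-\tau^{-1})+\sum_{j\neq i}\mathrm{Im\,}(\rho_V(x_j)-1)$, not the $\tau$-independent expression you wrote; concluding surjectivity needs $(P2)$ at the parameter $\tau^{-1}$ of Remark~\ref{rem:nontriv}, not merely at the parameter $1$, and this is exactly where the full strength of $(P2)$ (all of $k^\times$) is used. The statement you want is true, but the computation that purports to establish it is incorrect.
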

\begin{proof}
	Let us write $V^{\oplus n}$ by $W$ for simplicity and 
	we regard $\mathcal{LM}_{\lambda}(\mathcal{LM}_{\tau}(V))$ as $W^{\oplus n}$ 
	and also $\mathcal{LM}_{\lambda\tau}(V)$ as $W$
	by the isomorphism $(\ref{eq:indentfy})$.
	Then the multiplication map is written in the following form,
	\[
		\begin{array}{cccc}
			\nabla\colon &W^{\oplus n}&\longrightarrow &W\\
			&(w_{1},\ldots,w_{n})&\longmapsto &\sum_{i=1}^{n}(\rho_{\tau}^{\mathrm{LM}}(x_{i})-1)w_{i}
		\end{array}.
	\] 
	For $(w_{1},\ldots,w_{n})\in W^{\oplus n}$, we write 
	$w_{i}=(v^{(i)}_{1},\ldots,v^{(i)}_{n})\in V^{\oplus n}=W$.
	Then the matrix representation of $\rho_{\tau}^{\mathrm{LM}}(x)$
	in Remark \ref{rem:DR} tells us that 
	\begin{align*}
		\sum_{i=1}^{n}(\rho_{\tau}^{\mathrm{LM}}(x_{i})-1)w_{i}&=
		\big(
			(\tau x_{1}-1)v^{(1)}_{1}+(x_{2}-1)v^{(1)}_{2}+\cdots (x_{n}-1)v^{(1)}_{n},\\
			&\quad\quad  \tau(x_{1}-1)v^{(2)}_{1}+(\tau x_{2}-1)v^{(2)}_{2}+\cdots (x_{n}-1)v^{(2)}_{n},\\
			&\quad\quad  \cdots,\\
			&\quad\quad  \tau(x_{1}-1)v^{(n)}_{1}+\tau (x_{2}-1)v^{(n)}_{2}+\cdots (\tau x_{n}-1)v^{(n)}_{n}
		\big).
	\end{align*}
	Therefore the property $(P2)$ implies that $\nabla$ is surjective.

	The second assertion follows from Propositions \ref{prop:multiplication} and \ref{prop:finite}. 
\end{proof}
\begin{lmm}\label{lem:exactup}
	Let $V$ be a left $k[F_{n}\rtimes_{\alpha}G]$-module.
	Let us take $\lambda,\tau\in k^{\times}$ so that 
	$\lambda\neq 1$.
	We consider the multiplication map 
	$
		\nabla\colon \mathcal{LM}_{\lambda}(\mathcal{LM}_{\tau}(V))\longrightarrow \mathcal{LM}_{\lambda\tau}(V).
	$
	Then we have the inclusion
	\[
		\nabla(\mathrm{Ker\,}\mathcal{LM}_{\lambda}(\theta_{V}^{\tau}))	
		\subset \bigoplus_{i=1}^{n}\mathcal{LM}_{\lambda\tau}(V)^{[i]},
	\]
	which is the equation 
	especially when $\lambda\tau\neq 1$.
\end{lmm}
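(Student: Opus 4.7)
The plan is to exploit the freeness of $I_{F_{n},\lambda}$ as a right $k[F_{n}]$-module to identify $\mathrm{Ker\,}\mathcal{LM}_{\lambda}(\theta_{V}^{\tau})$ with $I_{F_{n},\lambda}\otimes_{k[F_{n}]}\mathrm{Ker\,}\theta_{V}^{\tau}$. By definition $\mathrm{Ker\,}\theta_{V}^{\tau} = \sum_{i=1}^n \mathcal{LM}_{\tau}(V)^{[i]} + \mathcal{LM}_{\tau}(V)^{F_{n}}$, so it suffices to evaluate $\nabla$ on pure tensors $(x_{j}-1)\otimes w$ with $w$ drawn from each summand.

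The central identity is $\nabla((x_{j}-1)\otimes w) = (x_{j}-1)\circ_{\tau} w = x_{j}\cdot w - w$, where $x_{j}\cdot w$ denotes the $\circ_{\tau}$-action in $\mathcal{LM}_{\tau}(V)$; the result is then viewed in $\mathcal{LM}_{\lambda\tau}(V)$ through the coincidence of the underlying abelian group $I_{F_{n}}\otimes_{k[F_{n}]}V$ for the two tensor products. For $w\in \mathcal{LM}_{\tau}(V)^{F_{n}}$ the invariance $x_{j}\cdot w=w$ forces $\nabla((x_{j}-1)\otimes w)=0$. For $w\in \mathcal{LM}_{\tau}(V)^{[i]}$ with $\tau\neq 1$, Proposition \ref{prop:finite} writes $w=(x_{i}-1)\otimes v$ with $v\in V^{x_{i}}$; since $x_{i}\cdot w=\tau w$ and $x_{l}\cdot w=w$ for $l\neq i$, the computation yields $0$ when $j\neq i$ and $(\tau-1)(x_{i}-1)\otimes v$ when $j=i$. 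Both outputs lie in $\mathcal{LM}_{\lambda\tau}(V)^{[i]}$ by the universal inclusion $\{(x_{i}-1)\otimes v : v\in V^{x_{i}}\}\subset \mathcal{LM}_{\lambda\tau}(V)^{[i]}$ of Proposition \ref{prop:finite}. The degenerate case $\tau=1$ is absorbed into the $F_{n}$-invariant case, since then $\mathcal{LM}_{1}(V)^{[i]}=\mathcal{LM}_{1}(V)^{F_{n}}$. This establishes the inclusion.

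For the reverse inclusion when $\lambda\tau\neq 1$, I will invert the $j=i$ computation. By Proposition \ref{prop:finite} applied with parameter $\lambda\tau\neq 1$, every element of $\bigoplus_{i=1}^n \mathcal{LM}_{\lambda\tau}(V)^{[i]}$ has the form $\sum_{i}(x_{i}-1)\otimes v_{i}$ with $v_{i}\in V^{x_{i}}$, and provided also $\tau\neq 1$ this equals $\nabla\bigl(\sum_{i}(\tau-1)^{-1}(x_{i}-1)\otimes(x_{i}-1)\otimes v_{i}\bigr)$, whose preimage lies in $I_{F_{n},\lambda}\otimes_{k[F_{n}]}\bigoplus_{i}\mathcal{LM}_{\tau}(V)^{[i]}\subset \mathrm{Ker\,}\mathcal{LM}_{\lambda}(\theta_{V}^{\tau})$. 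The main delicate point will be keeping the two twistings $\circ_{\tau}$ and $\circ_{\lambda\tau}$ cleanly separated and justifying that a zero relation obtained under $\circ_{\tau}$ survives as a zero in $\mathcal{LM}_{\lambda\tau}(V)$; this is precisely what the coincidence of underlying abelian groups accomplishes.
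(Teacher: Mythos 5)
Your approach is essentially the paper's: use flatness/exactness of $\mathcal{LM}_{\lambda}$ to identify $\mathrm{Ker}\,\mathcal{LM}_{\lambda}(\theta_{V}^{\tau})$ with $\mathcal{LM}_{\lambda}(\mathrm{Ker}\,\theta_{V}^{\tau})$, then evaluate $\nabla$ on pure tensors $(x_{j}-1)\otimes w$ with $w$ drawn from the two types of summands of $\mathrm{Ker}\,\theta_{V}^{\tau}$. You work directly with elements, where the paper passes through the block-matrix identification with $W^{\oplus n}$, $W=V^{\oplus n}$; your version is cleaner and, in fact, more accurate. The factor you obtain is $\tau-1$, coming from $x_{i}\cdot w - w=(\tau-1)w$ under the inner $\circ_{\tau}$-action on $\mathcal{LM}_{\tau}(V)$ --- the module structure by which $\nabla$ multiplies --- whereas the paper's proof writes $\lambda-1$ at the corresponding spot, which does not match that structure. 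More importantly, your caveat ``provided also $\tau\neq 1$'' for the reverse inclusion is not a limitation of your argument but a genuine over-statement in the lemma: when $\tau=1$, $\mathrm{Ker}\,\theta_{V}^{1}=\mathcal{LM}_{1}(V)^{F_{n}}$ is annihilated by every $x_{j}-1$ under $\circ_{1}$, so $\nabla$ vanishes identically on $\mathrm{Ker}\,\mathcal{LM}_{\lambda}(\theta_{V}^{1})$, while $\bigoplus_{i}\mathcal{LM}_{\lambda}(V)^{[i]}$ is nonzero as soon as some $V^{x_{i}}\neq\{0\}$. The paper's downstream applications only invoke the equality with both parameters in $k^{\times}\setminus\{1\}$, so nothing breaks, but you should flag this explicitly rather than in passing.
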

\begin{proof}
	Note that we have 
	\begin{align*}
		\mathrm{Ker\,}\mathcal{LM}_{\lambda}(\theta_{V}^{\tau})&=
		\mathcal{LM}_{\lambda}(\mathrm{Ker\,}\theta_{V}^{\tau})\\
		&=
		\bigoplus_{i=1}^{n}\mathcal{LM}_{\lambda}(\mathcal{LM}_{\tau}(V)^{[i]})
		\oplus 	\mathcal{LM}_{\lambda}(\mathcal{LM}_{\tau}(V)^{F_{n}})
	\end{align*}
	since the twisted Long-Moody functor is additive and exact.
	Put $W:=V^{\oplus n}$ and  regard 
	$\mathcal{LM}_{\lambda}(\mathcal{LM}_{\tau}(V))$ as $W^{\oplus n}$
	and moreover regard $\mathcal{LM}_{\lambda\tau}(V)$ as $W=V^{\oplus n}$
	by the isomorphism $(\ref{eq:indentfy})$.
	Then Proposition \ref{prop:finite} tells us that 
	\begin{align*}
		&\bigoplus_{i=1}^{n}\mathcal{LM}_{\lambda}(\mathcal{LM}_{\tau}(V)^{[i]})=
		\\&\ 
		\{(w_{1},\ldots,w_{n})\in W^{\oplus n}\mid 
		w_{i}=(v_{1}^{(i)},\ldots,v_{n}^{(i)})
		\text{ with }
		v_{j}\in \mathrm{Ker\,}(\rho_{V}(x_{j})-1),
		\,i,j=1,\ldots,n
		\},\\\\
		&\bigoplus_{i=1}^{n}\mathcal{LM}_{\lambda\tau}(V)^{[i]}
		=
		\{(v_{1},\ldots,v_{n})\in V^{\oplus n}\mid 
		v_{i}\in \mathrm{Ker\,}(\rho_{V}(x_{i})-1),\,
		i=1,\ldots,n\},
	\end{align*}
	where $\rho_{V}\colon F_{n}\rtimes_{\alpha}G\rightarrow \mathrm{Aut}_{k}(V)$
	is the induced group homomorphism.
	Then we have that
	\[
		\begin{array}{cccc}
		\nabla\colon &\bigoplus_{i=1}^{n}\mathcal{LM}_{\lambda}(\mathcal{LM}_{\tau}(V)^{[i]})&
		\longrightarrow&
		\bigoplus_{i=1}^{n}\mathcal{LM}_{\lambda\tau}(V)^{[i]}\\
		&(w_{1},\ldots,w_{n})&\longmapsto 
		&(\lambda-1)\sum_{i=1}^{n}w_{i}
		\end{array}	
	\]
	is well-defined, and moreover 
	surjective especially when $\lambda\tau\neq 1$.
	Thus we obtain the result by Proposition \ref{prop:finite}.
\end{proof}
\begin{crl}\label{cor:inverse}
	Let $V$ be a left $k[F_{n}\rtimes_{\alpha}G]$-module.
	Let us take $\lambda,\tau\in k^{\times}$ so that 
	$\lambda\neq 1$.
	We consider the multiplication map 
	\[
		\nabla\colon \mathcal{LM}_{\lambda}(\mathcal{LM}_{\tau}(V))\longrightarrow \mathcal{LM}_{\lambda\tau}(V).
	\]
	Then we have 
	\[
		\nabla^{-1}(\mathrm{Ker\,}\theta_{V}^{\lambda\tau})=
		\mathrm{Ker\,}\theta_{\mathcal{LM}_{\tau}(V)}^{\lambda}+
		\mathrm{Ker\,}\mathcal{LM}_{\lambda}(\theta_{V}^{\tau}).
	\]
\end{crl}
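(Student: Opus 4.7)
My approach is to prove the two inclusions separately. The $\supset$ direction falls out of the preceding lemmas and a short calculation, while $\subset$ requires producing explicit preimages under $\nabla$; pleasingly, the preimage of the $F_{n}$-invariant piece admits a uniform formula valid across all admissible parameter values, so the otherwise delicate edge case $\lambda\tau=1$ does not actually cause additional trouble.

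For $\supset$: The containment $\mathrm{Ker\,}\mathcal{LM}_{\lambda}(\theta_{V}^{\tau})\subset \nabla^{-1}(\mathrm{Ker\,}\theta_{V}^{\lambda\tau})$ is immediate from Lemma \ref{lem:exactup}, which gives $\nabla(\mathrm{Ker\,}\mathcal{LM}_{\lambda}(\theta_{V}^{\tau}))\subset \bigoplus_{i}\mathcal{LM}_{\lambda\tau}(V)^{[i]}$. For the second summand $\mathrm{Ker\,}\theta_{\mathcal{LM}_{\tau}(V)}^{\lambda}=\bigoplus_{i}\mathcal{LM}_{\lambda}(\mathcal{LM}_{\tau}(V))^{[i]}+\mathcal{LM}_{\lambda}(\mathcal{LM}_{\tau}(V))^{F_{n}}$, I would use Proposition \ref{prop:finite} (valid since $\lambda\neq 1$) to write the $[i]$-summands as $(x_{i}-1)\otimes\mathcal{LM}_{\tau}(V)^{x_{i}}$, on which $\nabla$ vanishes identically, and the $F_{n}$-equivariance of $\nabla$ from Proposition \ref{prp:lmtrans} to send the $F_{n}$-invariant summand into $\mathcal{LM}_{\lambda\tau}(V)^{F_{n}}\subset\mathrm{Ker\,}\theta_{V}^{\lambda\tau}$.

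For $\subset$: Take $w\in\nabla^{-1}(\mathrm{Ker\,}\theta_{V}^{\lambda\tau})$ and decompose $\nabla(w)=v^{(0)}+v^{(\infty)}$ with $v^{(0)}\in\bigoplus_{i}\mathcal{LM}_{\lambda\tau}(V)^{[i]}$ and $v^{(\infty)}\in\mathcal{LM}_{\lambda\tau}(V)^{F_{n}}$. If $\lambda\tau\neq 1$, the equation case of Lemma \ref{lem:exactup} supplies $u^{(0)}\in\mathrm{Ker\,}\mathcal{LM}_{\lambda}(\theta_{V}^{\tau})$ with $\nabla(u^{(0)})=v^{(0)}$; if $\lambda\tau=1$, the $[i]$-spaces already lie inside the $F_{n}$-invariants, so $v^{(0)}$ can be absorbed into $v^{(\infty)}$ and I simply set $u^{(0)}:=0$. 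I would then propose the uniform lift $u^{(\infty)}:=(\lambda^{-1}-1)^{-1}\,(x_{1}\cdots x_{n}-1)\otimes v^{(\infty)}\in\mathcal{LM}_{\lambda}(\mathcal{LM}_{\tau}(V))$, viewing $v^{(\infty)}$ as an element of $\mathcal{LM}_{\tau}(V)$ under the common identification with $V^{\oplus n}$. A direct computation using the matrix representation of $\rho_{\lambda\tau}^{\mathrm{LM}}(x_{1}\cdots x_{n})$ from Remark \ref{rem:DR}, together with the $F_{n}$-invariance $\rho_{\lambda\tau}^{\mathrm{LM}}(x_{1}\cdots x_{n})v^{(\infty)}=v^{(\infty)}$, forces the cross-parameter identity $\rho_{\tau}^{\mathrm{LM}}(x_{1}\cdots x_{n})v^{(\infty)}=\lambda^{-1}v^{(\infty)}$, so $\lambda x_{1}\cdots x_{n}$ fixes $v^{(\infty)}$ in $\mathcal{LM}_{\tau}(V)$. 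Proposition \ref{prop:infinity} applied to $\mathcal{LM}_{\tau}(V)$ with parameter $\lambda$ (permissible since $\lambda\neq 1$) then places $(x_{1}\cdots x_{n}-1)\otimes v^{(\infty)}$ in $\mathcal{LM}_{\lambda}(\mathcal{LM}_{\tau}(V))^{F_{n}}\subset\mathrm{Ker\,}\theta_{\mathcal{LM}_{\tau}(V)}^{\lambda}$, and the same calculation gives $\nabla((x_{1}\cdots x_{n}-1)\otimes v^{(\infty)})=(\lambda^{-1}-1)v^{(\infty)}$, confirming $\nabla(u^{(\infty)})=v^{(\infty)}$. Finally $w-u^{(0)}-u^{(\infty)}\in\mathrm{Ker\,}\nabla$, which by Propositions \ref{prop:multiplication} and \ref{prop:finite} equals $\bigoplus_{i}\mathcal{LM}_{\lambda}(\mathcal{LM}_{\tau}(V))^{[i]}\subset\mathrm{Ker\,}\theta_{\mathcal{LM}_{\tau}(V)}^{\lambda}$, so $w$ lies in the required sum.

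The main technical point I expect to spend the bulk of the effort on is the cross-parameter identity $\rho_{\tau}^{\mathrm{LM}}(x_{1}\cdots x_{n})v^{(\infty)}=\lambda^{-1}v^{(\infty)}$: extracting it from the telescoping rank-one structure in Remark \ref{rem:DR} is a short but slightly fiddly calculation that must handle the subcases $\lambda\tau\neq 1$ (where $\nabla'(v^{(\infty)})=((\lambda\tau)^{-1}-1)s$) and $\lambda\tau=1$ (where $\nabla'(v^{(\infty)})=0$) in a single stroke. Once this identity is in hand, every other step of the lift construction falls into place and the decomposition of $w$ follows cleanly.
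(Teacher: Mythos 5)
Your proof is correct, and it takes a genuinely different route from the paper. The paper argues via two intermediate equalities: it shows $\nabla^{-1}\bigl(\mathcal{LM}_{\lambda\tau}(V)^{F_{n}}\bigr)=\mathrm{Ker\,}\theta_{\mathcal{LM}_{\tau}(V)}^{\lambda}$ abstractly (the $\subset$ half via Lemma \ref{lem:prep}, using that $(x-1)w\in\mathrm{Ker\,}\nabla$ for all $x$ forces $w$ into the kernel), and separately, when $\lambda\tau\neq 1$, shows $\nabla^{-1}\bigl(\bigoplus_{i}\mathcal{LM}_{\lambda\tau}(V)^{[i]}\bigr)=\mathrm{Ker\,}\mathcal{LM}_{\lambda}(\theta_{V}^{\tau})+\mathrm{Ker\,}\nabla$ by picking preimages from Lemma \ref{lem:exactup}; it then adds. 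Your argument is more constructive: you exhibit explicit preimages, and the centerpiece is the cross-parameter identity $\rho_{\lambda\tau}^{\mathrm{LM}}(x_{1}\cdots x_{n})=\lambda\,\rho_{\tau}^{\mathrm{LM}}(x_{1}\cdots x_{n})$, which indeed drops straight out of the rank-one formula in Remark \ref{rem:DR} (both sides equal $\mu(\mathrm{id}+DR)$ with $\mu$ factoring out linearly). This identity is not recorded in the paper and is what lets you write the single uniform lift $u^{(\infty)}=(\lambda^{-1}-1)^{-1}(x_{1}\cdots x_{n}-1)\otimes v^{(\infty)}$, valid for all $\tau$ including $\lambda\tau=1$; Proposition \ref{prop:infinity} applied to $\mathcal{LM}_{\tau}(V)$ with parameter $\lambda$ then places it in $\mathcal{LM}_{\lambda}(\mathcal{LM}_{\tau}(V))^{F_{n}}\subset\mathrm{Ker\,}\theta_{\mathcal{LM}_{\tau}(V)}^{\lambda}$, as you say. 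What the paper's route buys is avoiding the matrix-level identity entirely (the $F_{n}$-invariant piece is handled by the conceptual Lemma \ref{lem:prep}); what your route buys is a concrete formula for the lift and a cleaner unification of the $\lambda\tau=1$ edge case. Both are sound. One minor remark: it is worth saying explicitly that $\mathrm{Ker\,}\nabla=\bigoplus_{i}\mathcal{LM}_{\lambda}(\mathcal{LM}_{\tau}(V))^{[i]}$ requires only $\lambda\neq 1$ (Propositions \ref{prop:multiplication} and \ref{prop:finite}), not the property $(P2)$ appearing in the statement of Lemma \ref{lem:prethm1}; you implicitly use this, and it is fine, but a reader might otherwise worry the corollary's lack of hypotheses on $V$ is an issue.
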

\begin{proof}
	First we show that 
	\begin{equation}\label{eq:eqeq1}
		\nabla^{-1}(\mathcal{LM}_{\lambda\tau}(V)^{F_{n}})
		=\mathrm{Ker\,}\theta_{\mathcal{LM}_{\tau}(V)}^{\lambda}.
	\end{equation}
	Let us take  $w\in \mathrm{Ker\,}\theta_{\mathcal{LM}_{\tau}(V)}^{\lambda}$
	and write $w=
	w^{(0)}+w^{(\infty)}$
	according to the decomposition  
	$\mathrm{Ker\,}\theta_{\mathcal{LM}_{\tau}(V)}^{\lambda}=\bigoplus_{i=1}^{n}\mathcal{LM}_{\lambda}(\mathcal{LM}_{\tau}(V))^{[i]}
	\oplus \mathcal{LM}_{\lambda}(\mathcal{LM}_{\tau}(V))^{F_{n}}$.
	Then recalling that $\mathrm{Ker\,}\nabla=
	\bigoplus_{i=1}^{n}\mathcal{LM}_{\lambda}(\mathcal{LM}_{\tau}(V))^{[i]}$
	from Lemma \ref{lem:prethm1},
	we have 
	\[x\nabla(w)=x\nabla(w^{(\infty)})
	=\nabla(xw^{(\infty)})=\nabla(w^{(\infty)})=
	\nabla(w)
	\]
	for any $x\in F_{n}$, i.e.
	$\nabla(w)\in \mathcal{LM}_{\lambda\tau}(V)^{F_{n}}$.
	Thus  
	$\nabla^{-1}(\mathcal{LM}_{\lambda\tau}(V)^{F_{n}})
	\supset \mathrm{Ker\,}\theta_{\mathcal{LM}_{\tau}(V)}^{\lambda}.$
	Conversely take 
	$w\in \nabla^{-1}(\mathcal{LM}_{\lambda\tau}(V)^{F_{n}})$.
	Then we have 
	$$(x-1)w\in \mathrm{Ker\,}\nabla=\bigoplus_{i=1}^{n}\mathcal{LM}_{\lambda}(\mathcal{LM}_{\tau}(V))^{[i]},$$
	for any $x\in F_{n}$.
	Thus Lemma \ref{lem:prep} shows that $w\in \mathrm{Ker\,}\theta_{\mathcal{LM}_{\tau}(V)}^{\lambda}$.
	Therefore we obtain the equation $(\ref{eq:eqeq1})$.

	If $\lambda\tau=1$, then $\mathrm{Ker\,}\theta_{V}^{\lambda\tau}=
	\mathcal{LM}_{\lambda\tau}(V)^{F_{n}}$. 
	Also Lemma \ref{lem:exactup}
	tells that  $$\nabla(\mathrm{Ker\,}\mathcal{LM}_{\lambda}(\theta_{V}^{\tau}))	
	\subset \bigoplus_{i=1}^{n}\mathcal{LM}_{\lambda\tau}(V)^{[i]}
	\subset \mathrm{Ker\,}\theta_{V}^{\lambda\tau}.$$
	Thus the equation $(\ref{eq:eqeq1})$
	gives the desired equation.

	Next we suppose $\lambda\tau\neq 1$ and we show 
	\begin{equation}\label{eq:eqeq2}
		\nabla^{-1}(\bigoplus_{i=1}^{n}\mathcal{LM}_{\lambda\tau}(V)^{[i]})
		=\mathrm{Ker\,}\mathcal{LM}_{\lambda}(\theta_{V}^{\tau})
		+\mathrm{Ker\,}\nabla.	
	\end{equation}
	Since $\mathrm{Ker\,}\mathcal{LM}_{\lambda}(\theta_{V}^{\tau})=
	\bigoplus_{i=1}^{n}\mathcal{LM}_{\lambda}(\mathcal{LM}_{\tau}(V)^{[i]})
	\oplus \mathcal{LM}_{\lambda}(\mathcal{LM}_{\tau}(V)^{F_{n}})$,
	the inclusion $\supset$ follows from Lemma \ref{lem:exactup}.
	Let us take $w\in \nabla^{-1}(\bigoplus_{i=1}^{n}\mathcal{LM}_{\lambda\tau}(V)^{[i]})$.
	Then Lemma \ref{lem:exactup} again
	implies that there exits $\bar{w}\in \mathrm{Ker\,}\mathcal{LM}_{\lambda}(\theta_{V}^{\tau})$
	such that $\nabla(\bar{w})=\nabla(w)$,
	which implies that 
	$w-\bar{w}\in \mathrm{Ker\,}\nabla$,
	i.e., $w\in \mathrm{Ker\,}\mathcal{LM}_{\lambda}(\theta_{V}^{\tau})+\mathrm{Ker\,}\nabla$.
	Thus we obtain the converse inclusion $\subset$ and 
	therefore the equation $(\ref{eq:eqeq2})$. 

	In conclusion, equations $(\ref{eq:eqeq1})$ and $(\ref{eq:eqeq2})$
	give the desired equation.
\end{proof}
Let us consider the unique $k[F_{n}\rtimes_{\alpha}G]$-module homomorphism 
\[
	\widetilde{\nabla}\colon 
	\mathcal{LM}_{\lambda}(\mathcal{KLM}_{\tau}(V))
	\longrightarrow \mathcal{KLM}_{\lambda\tau}(V)	
\]
which makes 
the following diagram commutative.
\[
	\begin{tikzcd}
		0\arrow[r]&\mathcal{LM}_{\lambda}(\mathrm{Ker\,}\theta_{V}^{\tau})
		\arrow[r]\arrow[d,"\nabla"]
		&\mathcal{LM}_{\lambda}(\mathcal{LM}_{\tau}(V))
		\arrow[r,"\mathcal{LM}_{\lambda}(\theta_{V}^{\tau})"]
		\arrow[d,"\nabla"]
		&\mathcal{LM}_{\lambda}(\mathcal{KLM}_{\tau}(V))
		\arrow[r]\arrow[d,"\widetilde{\nabla}"]
		&0\\
		0\arrow[r]&\mathrm{Ker\,}\theta^{\lambda\tau}_{V}
		\arrow[r]
		&\mathcal{LM}_{\lambda\tau}(V)\arrow[r,"\theta_{V}^{\lambda\tau}"]
		&\mathcal{KLM}_{\lambda\tau}(V)\arrow[r]
		&0
	\end{tikzcd}
\]
\begin{crl}\label{cor:injective}
	Let $V$ be a left $k[F_{n}\rtimes_{\alpha}G]$-module satisfying $(P1)$.
	Let us take $\lambda,\tau\in k^{\times}\backslash\{1\}$.
	Then there uniquely exists 
	the $k[F_{n}\rtimes_{\alpha}G]$-module injective homomorphism 
	\[
		\widehat{\nabla}\colon 
		\mathcal{KLM}_{\lambda}(\mathcal{KLM}_{\tau}(V))
		\longrightarrow \mathcal{KLM}_{\lambda\tau}(V)
	\]
	which makes the diagram 
	\[
		\begin{tikzcd}
		\mathcal{LM}_{\lambda}(\mathcal{KLM}_{\tau}(V))
		\arrow[r,"\widetilde{\nabla}"]
		\arrow[d,"\theta_{\mathcal{KLM}_{\tau}(V)}^{\lambda}"']
		&\mathcal{KLM}_{\lambda\mu}(V)\\
		\mathcal{KLM}_{\lambda}(\mathcal{KLM}_{\tau}(V))
		\arrow[ur,"\widehat{\nabla}"']&
		\end{tikzcd}	
	\]
	commutative.
\end{crl}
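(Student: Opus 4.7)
The plan is to construct $\widehat{\nabla}$ as the map induced by $\widetilde{\nabla}$ on the quotient $\mathcal{KLM}_{\lambda}(\mathcal{KLM}_{\tau}(V))=\mathcal{LM}_{\lambda}(\mathcal{KLM}_{\tau}(V))/\mathrm{Ker\,}\theta_{\mathcal{KLM}_{\tau}(V)}^{\lambda}$, and to deduce both the well-definedness and the injectivity from a single computation of $\mathrm{Ker\,}\widetilde{\nabla}$. More precisely, I want to show the equality
\[
\mathrm{Ker\,}\widetilde{\nabla}=\mathrm{Ker\,}\theta_{\mathcal{KLM}_{\tau}(V)}^{\lambda},
\]
from which the existence and uniqueness of $\widehat{\nabla}$ follow by the universal property of the quotient, and the injectivity of $\widehat{\nabla}$ is automatic.

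The first step is to reduce the kernel computation to the multiplication map. Since $\mathcal{LM}_{\lambda}$ is exact and $\theta_{V}^{\tau}$ is surjective, the map $\mathcal{LM}_{\lambda}(\theta_{V}^{\tau})\colon \mathcal{LM}_{\lambda}(\mathcal{LM}_{\tau}(V))\to \mathcal{LM}_{\lambda}(\mathcal{KLM}_{\tau}(V))$ is surjective. Combined with the defining relation $\widetilde{\nabla}\circ\mathcal{LM}_{\lambda}(\theta_{V}^{\tau})=\theta_{V}^{\lambda\tau}\circ\nabla$, this yields
\[
\mathrm{Ker\,}\widetilde{\nabla}=\mathcal{LM}_{\lambda}(\theta_{V}^{\tau})\bigl(\nabla^{-1}(\mathrm{Ker\,}\theta_{V}^{\lambda\tau})\bigr).
\]

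The second step is to unravel the right-hand side using the preceding lemmas. Applying Corollary \ref{cor:inverse} gives
\[
\nabla^{-1}(\mathrm{Ker\,}\theta_{V}^{\lambda\tau})=\mathrm{Ker\,}\theta_{\mathcal{LM}_{\tau}(V)}^{\lambda}+\mathrm{Ker\,}\mathcal{LM}_{\lambda}(\theta_{V}^{\tau}),
\]
and since the second summand is automatically annihilated by $\mathcal{LM}_{\lambda}(\theta_{V}^{\tau})$ we obtain
\[
\mathrm{Ker\,}\widetilde{\nabla}=\mathcal{LM}_{\lambda}(\theta_{V}^{\tau})\bigl(\mathrm{Ker\,}\theta_{\mathcal{LM}_{\tau}(V)}^{\lambda}\bigr).
\]
Now Lemma \ref{lem:surj} — which uses the hypothesis that $V$ satisfies $(P1)$ — asserts precisely that the restriction of $\mathcal{LM}_{\lambda}(\theta_{V}^{\tau})$ to the kernels is surjective onto $\mathrm{Ker\,}\theta_{\mathcal{KLM}_{\tau}(V)}^{\lambda}$, so the displayed identity becomes $\mathrm{Ker\,}\widetilde{\nabla}=\mathrm{Ker\,}\theta_{\mathcal{KLM}_{\tau}(V)}^{\lambda}$.

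The third and final step is formal: the inclusion $\mathrm{Ker\,}\theta_{\mathcal{KLM}_{\tau}(V)}^{\lambda}\subset\mathrm{Ker\,}\widetilde{\nabla}$ ensures, by the universal property of $\theta_{\mathcal{KLM}_{\tau}(V)}^{\lambda}$, that a unique $k[F_{n}\rtimes_{\alpha}G]$-module homomorphism $\widehat{\nabla}\colon\mathcal{KLM}_{\lambda}(\mathcal{KLM}_{\tau}(V))\to \mathcal{KLM}_{\lambda\tau}(V)$ exists making the required diagram commute; the reverse inclusion then identifies $\mathrm{Ker\,}\widehat{\nabla}$ with $\mathrm{Ker\,}\widetilde{\nabla}/\mathrm{Ker\,}\theta_{\mathcal{KLM}_{\tau}(V)}^{\lambda}=\{0\}$, i.e. $\widehat{\nabla}$ is injective. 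The substantive difficulty sits in Step 2, where everything hinges on invoking Corollary \ref{cor:inverse} and Lemma \ref{lem:surj} correctly; these are where the hypotheses $\lambda,\tau\ne 1$ and $(P1)$ are actually consumed, so the main conceptual obstacle is already absorbed into those two preparatory results, and the corollary itself is proved by the above three-step diagram chase.
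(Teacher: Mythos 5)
Your proof is correct and follows essentially the same route as the paper: both reduce the claim to the single kernel identity $\mathrm{Ker\,}\widetilde{\nabla}=\mathrm{Ker\,}\theta_{\mathcal{KLM}_{\tau}(V)}^{\lambda}$, both establish it by combining Corollary~\ref{cor:inverse} (to rewrite $\nabla^{-1}(\mathrm{Ker\,}\theta_{V}^{\lambda\tau})$) with Lemma~\ref{lem:surj} (to match the two kernels under the surjection $\mathcal{LM}_{\lambda}(\theta_{V}^{\tau})$), and both then conclude by the universal property of the quotient. The only cosmetic difference is that you push forward through $\mathcal{LM}_{\lambda}(\theta_{V}^{\tau})$ first and identify the image, whereas the paper states the three-term preimage equality $\mathcal{LM}_{\lambda}(\theta_{V}^{\tau})^{-1}(\mathrm{Ker\,}\theta_{\mathcal{KLM}_{\tau}(V)}^{\lambda})=\mathrm{Ker\,}\theta_{\mathcal{LM}_{\tau}(V)}^{\lambda}+\mathrm{Ker\,}\mathcal{LM}_{\lambda}(\theta_{V}^{\tau})=\nabla^{-1}(\mathrm{Ker\,}\theta_{V}^{\lambda\tau})$ and then applies the surjection; these are two phrasings of the same diagram chase.
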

\begin{proof}
	Let recall the following commutative diagram.
	\[
		\begin{tikzcd}
			0\arrow[d]&0\arrow[d]\\
			\mathcal{LM}_{\lambda}(\mathrm{Ker\,}\theta_{V}^{\tau})
			\arrow[r,"\nabla"]
			\arrow[d]
			&
			\mathrm{Ker\,}\theta^{\lambda\tau}_{V}
			\arrow[d]
			\\		
			\mathcal{LM}_{\lambda}(\mathcal{LM}_{\tau}(V))
			\arrow[r,"\nabla"]
			\arrow[d,"\mathcal{LM}_{\lambda}(\theta_{V}^{\tau})"]	
			&\mathcal{LM}_{\lambda\tau}(V)
			\arrow[d,"\theta_{V}^{\lambda\tau}"]\\
			\mathcal{LM}_{\lambda}(\mathcal{KLM}_{\tau}(V))
			\arrow[r,"\widetilde{\nabla}"]
			\arrow[d]	
			&\mathcal{KLM}_{\lambda\tau}(V)
			\arrow[d]\\
			0&0
		\end{tikzcd}	
	\]
	By Corollary \ref{cor:inverse}, we obtain 
	\[
		\mathcal{LM}_{\lambda}(\theta_{V}^{\tau})^{-1}(\mathrm{Ker\,}\theta_{\mathcal{KLM}_{\tau}(V)}^{\lambda})
		=\mathrm{Ker\,}\theta_{\mathcal{LM}_{\tau}(V)}^{\lambda} +\mathrm{Ker\,}\mathcal{LM}_{\lambda}(\theta_{V}^{\tau})
		=\nabla^{-1}(\mathrm{Ker\,}\theta_{V}^{\lambda\tau}).	
	\]
	This equation,  Lemma \ref{lem:surj}, and the above commutative diagram show 
	that 
	\begin{equation}\label{eq:eq2}
	\mathrm{Ker\,}\theta_{\mathcal{KLM}_{\tau}(V)}^{\lambda}= 
	\mathrm{Ker\,}\widetilde{\nabla}.
	\end{equation}
	Therefore $\widetilde{\nabla}$ factors through 
	the map
	$\theta_{\mathcal{KLM}_{\tau}(V)}^{\lambda}$, namely, 
	there uniquely exits 
	the injective $k[F_{n}\rtimes_{\alpha}G]$-module homomorphism 
	\[
		\widehat{\nabla}\colon 
		\mathcal{KLM}_{\lambda}(\mathcal{KLM}_{\tau}(V))
		\longrightarrow \mathcal{KLM}_{\lambda\tau}(V)
	\]
	and the following  diagram is commutative.
	\[
		\begin{tikzcd}
		\mathcal{LM}_{\lambda}(\mathcal{KLM}_{\tau}(V))
		\arrow[r,"\widetilde{\nabla}"]
		\arrow[d,"\theta_{\mathcal{KLM}_{\tau}(V)}^{\lambda}"']
		&\mathcal{KLM}_{\lambda\mu}(V)\\
		\mathcal{KLM}_{\lambda}(\mathcal{KLM}_{\tau}(V))
		\arrow[ur,"\widehat{\nabla}"']&
		\end{tikzcd}	
	\]
\end{proof}
Now we are ready for finishing the proof 
of Theorem \ref{thm:KLM}.
Let $V$ be an object in $\mathbf{Mod}_{k[F_{n}\rtimes_{\alpha}G]}^{\mathrm{NT}}$.
Let us take $\lambda,\tau\in k^{\times}$.
If $\lambda=1$ or $\tau=1$, Proposition \ref{prop:1middle}
shows that the multiplication map gives an isomorphism 
of functors 
$\mathcal{KLM}_{\lambda}\circ \mathcal{KLM}_{\tau}$ and 
$\mathcal{KLM}_{\lambda\tau}$.
Thus we may assume that $\lambda,\tau\in k^{\times}\backslash\{1\}$.
Then Corollary \ref{cor:injective} 
shows that the multiplication map induces the 
injective $k[F_{n}\rtimes G]$-module homomorphism 
\[
		\widehat{\nabla}\colon 
		\mathcal{KLM}_{\lambda}(\mathcal{KLM}_{\tau}(V))
		\longrightarrow \mathcal{KLM}_{\lambda\tau}(V)
\]
and this is surjective by Lemma \ref{lem:prethm1},
namely $\widehat{\nabla}$ gives
an isomorphism of functors 
$\mathcal{KLM}_{\lambda}\circ \mathcal{KLM}_{\tau}$ and 
$\mathcal{KLM}_{\lambda\tau}$.
\subsection{$F_{n}$-irreducibility}
Let us introduce the $F_{n}$-irreducibility of 
$k[F_{n}\rtimes_{\alpha}G]$-modules.
\begin{dfn}[$F_{n}$-irreducibility]
	For a left $k[F_{n}\rtimes G]$-module $V$,
	we say that $V$ is $F_{n}$-irreducible if 
	$V$ has no nontrivial $k[F_{n}]$-submodule.
\end{dfn}
Obviously, if $V$ is $F_{n}$-irreducible, then $V$ is irreducible 
as a $k[F_{n}\rtimes_{\alpha}G]$-module.
Moreover we can see that 
if $V$ is  $F_{n}$-irreducible, 
then $V$ is an object of the category $\mathbf{Mod}_{k[F_{n}\rtimes_{\alpha}G]}^{\mathrm{NT}}$.

Then it follows that the $F_{n}$-irreducibility is preserved by the KLM-functor as below.
The following is a direct corollary of Theorem\ref{thm:KLM}.
\begin{crl}[cf. Theorem 2.9.8 in \cite{Katz}, Corollary 2.5 in \cite{Vol},
	and Corollary 3.6 in \cite{DR}]
	If a left $k[F_{n}\rtimes_{\alpha}G]$-module $V$ is $F_{n}$-irreducible,
	then $\mathcal{KLM}_{\lambda}(V)$ is $F_{n}$-irreducible for any $\lambda\in k^{\times}$.
\end{crl}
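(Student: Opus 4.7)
The plan is to reduce the statement to the known case $G=\{e\}$, which was settled by Dettweiler--Reiter (Corollary 3.6 in \cite{DR}), via a compatibility between the Katz--Long--Moody functor and the restriction functor $\mathrm{Res}:=\mathrm{Res}^{k[F_{n}\rtimes_{\alpha}G]}_{k[F_{n}]}$.

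First I would establish the following intertwining property: for every left $k[F_{n}\rtimes_{\alpha}G]$-module $V$, there is a canonical isomorphism of $k[F_{n}]$-modules
\[
\mathrm{Res}\,\mathcal{KLM}_{\lambda}(V)\;\cong\;\mathcal{KLM}_{\lambda}^{(\{e\})}(\mathrm{Res}\,V),
\]
where the right-hand side denotes the Katz--Long--Moody functor computed in the category $\mathbf{Mod}_{k[F_{n}]}$ (i.e.\ in the $G=\{e\}$ case). This is essentially tautological from the construction: the module $\mathcal{LM}_{\lambda}(V)=I_{F_{n},\,\lambda}\otimes_{k[F_{n}]}V$ depends, as a $k[F_{n}]$-module, only on the $k[F_{n}]$-structure of $V$; and the submodules $\mathcal{LM}_{\lambda}(V)^{[i]}$ and $\mathcal{LM}_{\lambda}(V)^{F_{n}}$ are characterized purely in terms of the $F_{n}$-action by Propositions \ref{prop:finite} and \ref{prop:infinity}, so their formation is also unaffected by forgetting the $G$-action. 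Thus the quotient defining $\mathcal{KLM}_{\lambda}$ is preserved by restriction.

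With this compatibility in hand, the corollary follows immediately. Assume $V$ is $F_{n}$-irreducible, which by definition means that $\mathrm{Res}\,V$ is an irreducible $k[F_{n}]$-module; in particular it has no nontrivial almost-trivial sub- or quotient modules, so it lies in $\mathbf{Mod}^{\mathrm{NT}}_{k[F_{n}]}$. Applying Corollary 3.6 of \cite{DR} (equivalently, the $G=\{e\}$ instance of Theorem \ref{thm:KLM}, whose auto-equivalence statement forces any nonzero subobject of $\mathcal{KLM}_{\lambda}^{(\{e\})}(\mathrm{Res}\,V)$ to pull back under $\mathcal{KLM}_{\lambda^{-1}}^{(\{e\})}$ to a nonzero subobject of $\mathrm{Res}\,V$), we conclude that $\mathcal{KLM}_{\lambda}^{(\{e\})}(\mathrm{Res}\,V)$ is an irreducible $k[F_{n}]$-module. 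Transporting this through the intertwining isomorphism above gives that $\mathrm{Res}\,\mathcal{KLM}_{\lambda}(V)$ is irreducible, i.e.\ $\mathcal{KLM}_{\lambda}(V)$ is $F_{n}$-irreducible.

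The only nontrivial point is the verification of the compatibility in the first step; the main potential obstacle is checking that the subspace $\sum_{i=1}^{n}\mathcal{LM}_{\lambda}(V)^{[i]}+\mathcal{LM}_{\lambda}(V)^{F_{n}}$ really is $G$-stable in such a way that the quotient's $k[F_{n}]$-structure coincides with what one computes directly on the restriction, but since each summand is already shown (in the proof preceding Proposition \ref{prop:finite}) to be a $k[F_{n}\rtimes_{\alpha}G]$-submodule whose definition is $F_{n}$-intrinsic, this reduces to bookkeeping.
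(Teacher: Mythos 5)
Your argument is correct and follows essentially the same route as the paper: both reduce to the $G=\{e\}$ case by restricting to $k[F_{n}]$, and both then invoke the auto-equivalence of $\mathbf{Mod}^{\mathrm{NT}}_{k[F_{n}]}$ from Theorem \ref{thm:KLM} (whose $G=\{e\}$ instance is Dettweiler--Reiter's Corollary 3.6). The only presentational difference is that you state the intertwining $\mathrm{Res}\,\mathcal{KLM}_{\lambda}\cong\mathcal{KLM}_{\lambda}^{(\{e\})}\circ\mathrm{Res}$ explicitly and then cite the known $F_{n}$-statement directly, whereas the paper uses that compatibility tacitly inside a contradiction argument (picking an irreducible $F_{n}$-submodule $\widetilde{W}\subset\mathcal{KLM}_{\lambda}(V)$ and applying $\mathcal{KLM}_{\lambda^{-1}}$ to it); making the intertwining explicit is, if anything, a modest improvement in rigor.
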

\begin{proof}
	Suppose that $\mathcal{KLM}_{\lambda}(V)$ is not $F_{n}$-irreducible.
	Then it contains an irreducible $k[F_{n}]$-submodule $\widetilde{W}$.
	Now let us regard $\mathcal{KLM}_{\lambda}(V)$ and $\widetilde{W}$
	as objects of $\mathbf{Mod}_{k[F_{n}]}^{\mathrm{NT}}$,
	and apply $\mathcal{KLM}_{\lambda^{-1}}$ to them.
	Since the Katz-Long-Moody functor is an auto-equivalence of 
	the category $\mathbf{Mod}_{k[F_{n}]}^{\mathrm{NT}}$,
	inclusion relations are preserved, namely,
	we have $\mathcal{KLM}_{\lambda^{-1}}(\widetilde{W})
	\subset\mathcal{KLM}_{\lambda^{-1}}\circ \mathcal{KLM}_{\lambda}(V)\cong V$.
	Moreover since $\mathcal{KLM}_{\lambda}\circ \mathcal{KLM}_{\lambda^{-1}}(\widetilde{W})\cong \widetilde{W}$,
	$\mathcal{KLM}_{\lambda^{-1}}(\widetilde{W})\neq \{0\}$.
	Therefore it conclude that 
	$V$ contains the nontrivial $k[F_{n}]$-submodule $\mathcal{KLM}_{\lambda^{-1}}(\widetilde{W})$.
\end{proof}

\section{Constructing local systems via Katz-Long-Moody functor}\label{sec:locsys}
The Katz middle convolution 
is known to be a quite  efficient machinery to construct local systems 
on $\mathbb{C}\backslash\{n\text{-points}\}$ 
and to investigate their properties.
This machinery is called by a special name, the Katz algorithm.
On the other hand,
we have seen that 
many good properties of Katz middle convolution 
and 
Long-Moody functors for representations of braid groups 
are naturally unified in our framework of
KLM-functor. Thus
it is possible 
to 
extend the Katz algorithm to many other local systems through our KLM-functor.
Namely,
if $F_{n}\rtimes_{\alpha} G$ is the fundamental group of a topological space $X$,
the category $\mathbf{Mod}_{k[F_{n}\rtimes_{\alpha}G]}$ is equivalent to that of local systems of $k$-vector spaces over $X$.
Therefore if we repeatedly apply the KLM-functor and the multiplication functor to a local system over $X$,
then infinitely many non-isomorphic local systems over $X$ are obtained.

\subsection{$F_{n}\rtimes B_{n}$ as a fundamental group}
Let us  see that $F_{n}\rtimes B_{n}$ can be regarded as 
a fundamental group of a topological space.
Let $B^{n}(\mathbb{C})=\{\{c_{1},\ldots,c_{n}\}\subset \mathbb{C}\mid c_{i}\neq c_{j},\,i\neq j\}$ 
be the configuration space of unordered $n$ points in $\mathbb{C}$.
Here we regard $B^{n}(\mathbb{C})$ as a topological space 
via the bijection $B^{n}(\mathbb{C})\ni \{c_{1},\ldots,c_{n}\}\mapsto [(c_{1},\ldots,c_{n})]
\in \{ (a_{1},\ldots,a_{n})\in \mathbb{C}^{n}\mid a_{i}\neq a_{j}\text{ for }i\neq j \}/ \mathfrak{S}_{n}$.
Let us take $a_{i}=(i,0)\in \mathbb{R}^{2}= \mathbb{C}$, $i=1,\ldots,n$, as before 
and put $\mathbf{a}:=\{a_{1},\ldots,a_{n}\}\in B^{n}(\mathbb{C})$.
Then for any closed path $\gamma$ in $B^{n}(\mathbb{C})$ with the base point $\mathbf{a}$
defines the geometric braid
\[
	b_{\gamma}\colon [0,1]\ni t\longmapsto (\gamma(t),t)\in B^{n}(\mathbb{C})\times [0,1]	
\]
and this correspondence defines 
the isomorphism $\pi_{1}(B_{n}(\mathbb{C}),\mathbf{a})\cong B_{n}$.

Let us consider another topological space 
\[
B^{1,n}(\mathbb{C}):=\{
	(z,\{c_{1},\ldots,c_{n}\})\in \mathbb{C}\times B^{n}(\mathbb{C})\mid z\notin \{c_{1},\ldots,c_{n}\}\}.
\]
Then the projection map 
\[
b_{n}\colon B^{1,n}(\mathbb{C})\ni (z,\{c_{1},\ldots,c_{n}\})\longmapsto \{c_{1},\ldots,c_{n}\}
\in B^{n}(\mathbb{C})
\]
is a locally trivial fibration with the fiber $\mathbb{C}\backslash \{n\text{-points}\}$,
and the cross section 
\[
s_{b_{n}}\colon B^{n}(\mathbb{C})\ni \{c_{1},\ldots,c_{n}\}\longmapsto \left( 1+\sum_{i=1}^{n}|c_{i}|,\{c_{1},\ldots,c_{n}\}\right)
\in B^{1,n}(\mathbb{C}) 	
\]
defines the split short exact sequence 
\[
	1\rightarrow \pi_{1}(\mathbb{C}\backslash \{a_{1},\ldots,a_{n}\},
	s_{b_{n}}(\mathbf{a}))\rightarrow 
	\pi_{1}(B^{1,n}(\mathbb{C}),s_{b_{n}}(\mathbf{a}))
	\rightarrow
	\pi_{1}(B^{n}(\mathbb{C}),\mathbf{a})\rightarrow 1.
\]
Here we notice that the $n$-points set $\mathbf{a}$ was denoted by $Q_{n}$ in previous sections
and $D\backslash Q_{n}=D\backslash\{a_{1},\ldots,a_{n}\}$
is a deformation retract of $\mathbb{C}\backslash\{a_{1},\ldots,a_{n}\}$. 
Therefore the line segment $\overline{d\,s_{b_{n}}(\mathbf{a})}$
defines the isomorphism
\[
	\pi_{1}(\mathbb{C}\backslash \{a_{1},\ldots,a_{n}\},
s_{b_{n}}(\mathbf{a}))
\cong \pi_{1}(D\backslash Q_{n},d)=\langle x_{1},\ldots,x_{n}\rangle=F_{n}.
\]
The above split exact sequence gives the decomposition 
\[
\pi_{1}(B^{1,n}(\mathbb{C}),s_{b_{n}}(\mathbf{a}))\cong \pi_{1}(\mathbb{C}\backslash \{a_{1},\ldots,a_{n}\},s_{b_{n}}(\mathbf{a}))\rtimes \pi_{1}(B^{n}(\mathbb{C}),\mathbf{a})
\]
where the action of $\pi_{1}(B^{n}(\mathbb{C}),\mathbf{a})$ on 
$\pi_{1}(\mathbb{C}\backslash \{a_{1},\ldots,a_{n}\},s_{b_{n}}(\mathbf{a}))$ 
is given by the Artin representation under the
identifications $\pi_{1}(\mathbb{C}\backslash \{a_{1},\ldots,a_{n}\},s_{b_{n}}(\mathbf{a}))\cong F_{n}$
and 
$\pi_{1}(B^{n}(\mathbb{C}),\mathbf{a})\cong B_{n}$. Thus we obtain the isomorphism
\[
	\pi_{1}(B^{1,n}(\mathbb{C}),s_{b_{n}}(\mathbf{a}))\cong F_{n}\rtimes_{\theta_{\mathrm{Artin}}}B_{n},
\]
which gives a realization of $F_{n}\rtimes_{\theta_{\mathrm{Artin}}}B_{n}$ as 
the fundamental group of $B^{1,n}(\mathbb{C})$.

\subsection{Complements of polynomial coverings}
Let us recall the polynomial covering introduced by Hansen, see \cite{Han}.
Let $X$ be a $0$-connected topological space. A \emph{simple Weierstrass polynomial}
is a function $f\colon X\times \mathbb{C}\rightarrow \mathbb{C}$ of the form
\[
	f(x,z)=z^{n}+\sum_{i=1}^{n}a_{i}(x)z^{i},\quad (x,z)\in X\times \mathbb{C},
\]
where $a_{i}\colon X\rightarrow \mathbb{C}$, $i=1,\ldots,n$, are continuous functions
such that for any $x\in X$, the polynomial $f(x,z)$  of $z$ has no multiple roots.
For a simple Weierstrass polynomial $f$, the map
\[
	z_{f}\colon X\ni x\longmapsto \{z\in \mathbb{C}\mid f(x,z)=0\}\in B^{n}(\mathbb{C})	
\]
is called the \emph{root map} associated with $f$.
Then it is known that 
\[
	e_{f}\colon E_{f}:=\{(x,z)\in X\times \mathbb{C}\mid f(x,z)=0\}\ni (x,z)\longmapsto x\in X	
\]
becomes an $n$-fold covering, see \cite{Han}.  
It is moreover known that 
the complement $C_{f}:=(X\times \mathbb{C})\backslash E_{f}$ defines 
a fiber bundle as follows.
\begin{thm}[Hansen \cite{Han}, \text{M\o ller} \cite{Mol}, Cohen-Suciu \cite{CS}]\label{thm:moller}
	The projection map 
	\[
		c_{f}\colon C_{f}=\{(x,z)\in X\times \mathbb{C}\mid f(x,z)\neq 0\}\ni (x,z)\longmapsto x\in X
	\]
	is a locally trivial bundle, with the structure group $B_{n}$, and fiber $\mathbb{C}\backslash \{n\text{-points}\}$.
	Moreover this fibration has a cross section $s_{f}\colon X\rightarrow C_{f}$
	defined by 
	\[
		s_{f}(x):=\left(x, 1+\sum_{z\in z_{f}(x)}|z|\right)\in C_{f},
	\] 
	which induces the split short exact sequence 
	\[
		1\rightarrow \pi_{1}(\mathbb{C}\backslash z_{f}(x),s_{f}(x))\rightarrow \pi_{1}(C_{f},s_{f}(x))\rightarrow \pi_{1}(X,x)\rightarrow 1
	\]
	for an arbitrarily fixed $x\in X$ and 
	the decomposition 
	\[
		\pi_{1}(C_{f},s_{f}(x))\cong \pi_{1}(\mathbb{C}\backslash z_{f}(x),s_{f}(x))\rtimes \pi_{1}(X,x).
	\]
	Here in the semidirect product, 
	$\pi_{1}(X,x)$ acts on $\pi_{1}(\mathbb{C}\backslash z_{f}(x),s_{f}(x))$
	through the Artin representation 
	\[
		\pi_{1}(X,x)\xrightarrow[]{z_{f\,*}}\pi_{1}(B^{n}(\mathbb{C}),z_{f}(x)) \xrightarrow[]{\theta_{\mathrm{Artin}}} \mathrm{Aut}(F_{n})
	\]
	under the natural identifications $\pi_{1}(\mathbb{C}\backslash z_{f}(x),s_{f}(x))\cong F_{n}$
	and $\pi_{1}(B^{n}(\mathbb{C}),z_{f}(x))\cong B_{n}$.
\end{thm}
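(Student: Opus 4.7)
The strategy I would pursue is to realize $c_{f}$ as the pullback of the universal fibration $b_{n}\colon B^{1,n}(\mathbb{C})\to B^{n}(\mathbb{C})$ along the root map $z_{f}\colon X\to B^{n}(\mathbb{C})$, and then inherit all the structure from the universal case, which was treated just before the theorem. First I would verify that $z_{f}$ is continuous: since $f$ is a simple Weierstrass polynomial, the coefficients $a_{i}(x)$ vary continuously in $x$ and no multiple roots occur, so the unordered root set varies continuously as a point of $B^{n}(\mathbb{C})$. The universal property of the pullback then identifies $C_{f}$ with $X\times_{B^{n}(\mathbb{C})}B^{1,n}(\mathbb{C})$ via the map sending $(x,z)\in C_{f}$ to $(x,(z,z_{f}(x)))$, which is readily checked to be a homeomorphism onto the fibre product. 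Since $b_{n}$ is a locally trivial fibration with fibre $\mathbb{C}\setminus\{n\text{-points}\}$ and structure group $B_{n}$, so is the pullback $c_{f}$.

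For the cross section, the function $x\mapsto 1+\sum_{z\in z_{f}(x)}|z|$ is continuous—it is the pushforward of the continuous function $|z|$ along the finite covering $e_{f}\colon E_{f}\to X$, plus a constant—and it is strictly larger than every $|z|$ with $z\in z_{f}(x)$, so $s_{f}(x)\in C_{f}$. Continuity of $s_{f}$ and the identity $c_{f}\circ s_{f}=\mathrm{id}_{X}$ are then immediate. Applying the long exact sequence in homotopy groups to the fibration $c_{f}$, and using that the fibre $\mathbb{C}\setminus z_{f}(x)$ is homotopy equivalent to a bouquet of $n$ circles (so a $K(F_{n},1)$), yields the short exact sequence
\begin{equation*}
1\to \pi_{1}(\mathbb{C}\setminus z_{f}(x),s_{f}(x))\to \pi_{1}(C_{f},s_{f}(x))\to \pi_{1}(X,x)\to 1,
\end{equation*}
and the induced map $s_{f\,*}$ splits it on the right, producing the desired semidirect product decomposition.

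The main obstacle is the identification of the resulting action with the Artin representation. By the pullback picture, the monodromy of $c_{f}$ factors through $z_{f\,*}\colon \pi_{1}(X,x)\to \pi_{1}(B^{n}(\mathbb{C}),z_{f}(x))\cong B_{n}$, so it suffices to show that the monodromy of the universal fibration $b_{n}$ coincides with $\theta_{\mathrm{Artin}}$. I would do this by lifting a loop $\beta$ in $B^{n}(\mathbb{C})$ based at $\mathbf{a}$ to a geometric braid $[0,1]\ni t\mapsto (\beta(t),t)$ and invoking the isotopy extension theorem to extend it to an ambient isotopy of the pair $(D,Q_{n})$ fixing $\partial D$; the time-$1$ map is an element $\Phi_{\beta}\in \mathfrak{M}(D,Q_{n})$, and the bijection $\eta\colon B_{n}\xrightarrow{\sim}\mathfrak{M}(D,Q_{n})$ recalled in Section \ref{sec:tlm} sends $\beta$ to $\Phi_{\beta}$. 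The induced automorphism $(\Phi_{\beta})_{*}$ of $\pi_{1}(D\setminus Q_{n},d)$ is then by definition $\theta_{\mathrm{Artin}}(\beta)$. The delicate part will be matching conventions—orientations of the $\gamma_{i}$, direction of the half-twists, and the transport of the base point from $s_{b_{n}}(\mathbf{a})$ to $d$ along the segment $\overline{d\,s_{b_{n}}(\mathbf{a})}$—so that the two actions really agree as homomorphisms with the correct signs; this is essentially the classical Birman-style dictionary between geometric braids and the mapping class group, and once it is laid out carefully the statement for general $X$ follows by naturality of the pullback.
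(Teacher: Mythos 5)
The paper does not prove Theorem~\ref{thm:moller}: it records the statement as a citation to Hansen, M\o ller, and Cohen--Suciu, and only afterwards remarks (citing M\o ller) that every polynomial complement fibration is the pullback of $b_{n}\colon B^{1,n}(\mathbb{C})\to B^{n}(\mathbb{C})$ along the root map. Your proposal reconstructs precisely that pullback argument, so there is no divergence of approach to report; the sketch is correct and matches the route the cited references take.

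A couple of small points worth tightening if you were to write this out in full. First, to get exactness on the left of the short exact sequence you need to know that the connecting map $\pi_{2}(X,x)\to\pi_{1}(\mathbb{C}\backslash z_{f}(x),s_{f}(x))$ vanishes; this follows because the section $s_{f}$ makes $\pi_{2}(C_{f})\to\pi_{2}(X)$ surjective, but you should state it rather than let it pass silently. Second, when you assert that the pullback of a bundle with structure group $B_{n}$ again has structure group $B_{n}$, you are implicitly using that $z_{f}$ is continuous (which you did verify) and that the structure-group reduction of $b_{n}$ is pulled back along maps of base spaces; this is standard fibre-bundle bookkeeping and fine to cite, but it is the place where the statement ``with structure group $B_{n}$'' actually lives. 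Finally, the identification of the monodromy of $b_{n}$ with $\theta_{\mathrm{Artin}}$ via the isotopy extension theorem and the Birman dictionary is indeed where the real content is; your description is the standard one, and the convention-matching you flag is the only genuine source of sign headaches.
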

\begin{dfn}[Polynomial complement fibration]\normalfont
	Let $X$ be a $0$-connected topological space and $f\colon X\times \mathbb{C}\rightarrow \mathbb{C}$
	a simple Weierstrass polynomial. Then 
	we call the fibration $c_{f}\colon C_{f}\rightarrow X$ defined above the 
	\emph{polynomial complement fibration} with respect to $f$.
\end{dfn}
We note that  
the previously defined fibration $b_{n}\colon B^{1,n}(\mathbb{C})\rightarrow B^{n}(\mathbb{C})$
is a polynomial complement fibration 
with respect to the simple Weierstrass polynomial 
\[
B^{n}(\mathbb{C})\times \mathbb{C}\ni (\{c_{1},\ldots,c_{n}\},z)\longmapsto \prod_{i=1}^{n}(z-c_{i})\in \mathbb{C},
\]
and 
any polynomial complement fibration $c_{f}\colon C_{f}\rightarrow X$
can be obtained by the pull-back of $b_{n}$
with respect to the root map $z_{f}\colon X\rightarrow B^{n}(\mathbb{C})$,
i.e., 
we have $c_{f}=z_{f}^{*}(b_{n})$ and $C_{f}=z_{f}^{*}(B^{1,n}(\mathbb{C}))$, see \cite{Mol}.

Further we note that complements of polynomial coverings are closed under the pull-back. Namely, 
let $\phi \colon Y\rightarrow X$ be a continuous map between $0$-connected topological spaces and 
$c_{f}\colon C_{f}\rightarrow  X$ the polynomial complement fibration  with respect to a
simple Weierstrass polynomial $f$.
Then $g:=f\circ (\phi\times \mathrm{id}_{\mathbb{C}})\colon Y\times \mathbb{C}\rightarrow \mathbb{C}$
becomes a simple Weierstrass polynomial again and 
we can see that $c_{g}=\phi^{*}(c_{f})$ and $C_{g}=\phi^{*}(C_{f})$.

Therefore one can conclude that for an Artin representation $(G,\alpha)$ on $F_{n}$,
if 
$G$ is a fundamental group of a topological space $Y$ which has the homotopy type of a 
CW-complex,
then there exists a simple Weierstrass polynomial $f\colon Y\times \mathbb{C}
\rightarrow \mathbb{C}$ and the corresponding fibration 
$c_{f}\colon C_{f}\rightarrow Y$
such that 
\begin{equation}\label{eq:fundreal}
	\pi_{1}(C_{f})\cong F_{n}\rtimes_{\alpha}G.	
\end{equation}
This is verified as follows.
Recall that it is well-known that 
for every group homomorphism $\alpha \colon \pi_{1}(Y,y)\rightarrow \pi_{1}(B^{n}(\mathbb{C}),\alpha(y))$,
there is a
continuous map $\phi \colon Y\rightarrow B^{n}(\mathbb{C})$
up to homotopy equivalence 
such that $\alpha=\phi_{*}$, see \cite{Hat} for instance.
Then the pull-back of $b_{n}\colon B^{n,1}(\mathbb{C})\rightarrow B^{n}(\mathbb{C})$
by $\phi$ defines the polynomial complement fibration 
$c_{f}\colon C_{f}\rightarrow Y$ with $\phi\colon Y\rightarrow B^{n}(\mathbb{C})$
as the root map.
Thus Theorem \ref{thm:moller} gives us the desired isomorphism 
$(\ref{eq:fundreal})$.
\subsection{Complements of fiber-type arrangements}
We recall that complements of fiber-type arrangements have 
fundamental group of the forms $F_{n}\rtimes_{\alpha} G$ with some Artin representations $(G,\alpha)$.
\begin{dfn}[Strictly linearly fibered arrangements]\label{dfn:slf}\normalfont
	Let $\mathcal{A}$ and $\mathcal{B}$ be hyperplane arrangements
	in $\mathbb{C}^{l+1}$ and $\mathbb{C}^{l}$ respectively.
	Then we say that $\mathcal{A}$ is \emph{strictly linearly fibered}
	over $\mathcal{B}$ 
	if there exists 
	an isomorphism $\mathbb{C}^{l+1}\cong \mathbb{C}^{l}\times \mathbb{C}$ and 
	the following holds. Namely,
	under the identification $\mathbb{C}^{l+1}\cong \mathbb{C}^{l}\times \mathbb{C}$ chosen above,
		the projection $p\colon \mathbb{C}^{l}\times \mathbb{C}\ni (x,z)\mapsto x\in \mathbb{C}^{l}$
		satisfies that $p(\mathbb{C}^{l+1}\backslash \mathcal{A})=\mathbb{C}^{l}\backslash \mathcal{B}$ 
		and $p\colon \mathbb{C}^{l+1}\backslash \mathcal{A}\rightarrow\mathbb{C}^{l}\backslash \mathcal{B}$
		is a locally trivial fibration with the fiber $\mathbb{C}\backslash\{n\text{-points}\}$
		for some positive integer $n$.	
\end{dfn}
Let $\mathcal{F}_{n}(\mathbb{C}):=\{(c_{1},\ldots,c_{n})\in \mathbb{C}^{n}\mid c_{i}\neq c_{j},\,i\neq j\}$
be the configuration space of $n$ points in $\mathbb{C}$.
Then we can see that 
the projection $f_{n}\colon \mathcal{F}_{n+1}(\mathbb{C})\ni (c_{1},\ldots,c_{n},c_{n+1})
\mapsto (c_{1},\ldots,c_{n})\in \mathcal{F}_{n}(\mathbb{C})$
is a polynomial complement fibration
with respect to the simple Weierstrass polynomial 
$\mathcal{F}_{n}(\mathbb{C})\times \mathbb{C}\ni((c_{1},\ldots,c_{n}),z)
\mapsto \prod_{i=1}^{n}(z-c_{i})\in \mathbb{C}$.
Then the  following result by Cohen assures that the above 
fibration $p\colon \mathbb{C}^{l+1}\backslash \mathcal{A}\rightarrow\mathbb{C}^{l}\backslash \mathcal{B}$
is a polynomial complement fibration.
\begin{thm}[Cohen \cite{Coh}]
Let $\mathcal{A}$ be an arrangement of $m+n$ hyperplanes 
in $\mathbb{C}^{l+1}$ and $\mathcal{B}$ an arrangement of $m$ hyperplanes 
in $\mathbb{C}^{l}$. Suppose that $\mathcal{A}$ is strictly linearly fibered 
over $\mathcal{B}$. Then the fibration $p\colon \mathbb{C}^{l+1}\backslash \mathcal{A}\rightarrow\mathbb{C}^{l}\backslash \mathcal{B}$
in Definition \ref{dfn:slf} is a pull-back of the polynomial complement fibration
$f_{n}\colon \mathcal{F}_{n+1}(\mathbb{C})\rightarrow \mathcal{F}_{n}(\mathbb{C})$.
\end{thm}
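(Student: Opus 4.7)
The plan is to construct the root map explicitly out of the defining linear equations of the hyperplanes in $\mathcal{A}$, and then verify that the resulting pullback of $f_{n}$ is the fibration $p$. First, I would use the isomorphism $\mathbb{C}^{l+1}\cong \mathbb{C}^{l}\times \mathbb{C}$ supplied by the strict linear fibration hypothesis to write each affine hyperplane $H\in \mathcal{A}$ as the zero locus of an affine linear form $\alpha_{H}(x,z)=L_{H}(x)+\beta_{H}z+c_{H}$ with $L_{H}$ linear in $x\in\mathbb{C}^{l}$ and $\beta_{H},c_{H}\in\mathbb{C}$. The hyperplanes with $\beta_{H}=0$ are precisely those of the form $H_{0}\times\mathbb{C}$ with $H_{0}\subset\mathbb{C}^{l}$; these are the $z$-constant hyperplanes, and they must coincide with the pullbacks of the members of $\mathcal{B}$ (since $p(\mathbb{C}^{l+1}\setminus\mathcal{A})=\mathbb{C}^{l}\setminus\mathcal{B}$ and $p$ is a locally trivial fibration). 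This accounts for $m$ of the hyperplanes of $\mathcal{A}$. The remaining $n$ hyperplanes $H_{1},\ldots,H_{n}$ satisfy $\beta_{H_{i}}\neq 0$ and therefore meet each fibre $\{x\}\times\mathbb{C}$ in the unique point $z=\phi_{i}(x):=-(L_{H_{i}}(x)+c_{H_{i}})/\beta_{H_{i}}$.

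Next, I would argue that the $n$ holomorphic functions $\phi_{1},\ldots,\phi_{n}\colon \mathbb{C}^{l}\setminus\mathcal{B}\to\mathbb{C}$ are pairwise distinct on $\mathbb{C}^{l}\setminus\mathcal{B}$. Indeed, if $\phi_{i}(x)=\phi_{j}(x)$ for some $i\neq j$ and some $x\in\mathbb{C}^{l}\setminus\mathcal{B}$, then the fibre $p^{-1}(x)$ would be $\mathbb{C}$ minus strictly fewer than $n$ points, contradicting the assumption that $p$ is a locally trivial fibration with fibre $\mathbb{C}\setminus\{n\text{-points}\}$. Consequently the tuple $(\phi_{1}(x),\ldots,\phi_{n}(x))$ lies in $\mathcal{F}_{n}(\mathbb{C})$ for every $x\in\mathbb{C}^{l}\setminus\mathcal{B}$, and
\[
\phi\colon \mathbb{C}^{l}\setminus\mathcal{B}\longrightarrow \mathcal{F}_{n}(\mathbb{C}),\qquad \phi(x):=(\phi_{1}(x),\ldots,\phi_{n}(x)),
\]
is a well-defined continuous (in fact holomorphic) map. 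I would take $\phi$ as the candidate root map.

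Finally, I would verify the pullback assertion by a direct set-theoretic identification. The pullback $\phi^{*}(f_{n})$ has total space
\[
\phi^{*}(\mathcal{F}_{n+1}(\mathbb{C}))=\{(x,(c_{1},\ldots,c_{n+1}))\mid x\in\mathbb{C}^{l}\setminus\mathcal{B},\ (c_{1},\ldots,c_{n+1})\in \mathcal{F}_{n+1}(\mathbb{C}),\ (c_{1},\ldots,c_{n})=\phi(x)\},
\]
which via $(x,(c_{1},\ldots,c_{n+1}))\mapsto (x,c_{n+1})$ is canonically homeomorphic to
\[
\{(x,z)\in(\mathbb{C}^{l}\setminus\mathcal{B})\times\mathbb{C}\mid z\neq \phi_{i}(x)\text{ for all }i=1,\ldots,n\}.
\]
By the computation above, the condition $z\neq\phi_{i}(x)$ is exactly $(x,z)\notin H_{i}$, and membership of $x$ in $\mathbb{C}^{l}\setminus\mathcal{B}$ rules out the $m$ pulled-back hyperplanes. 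Hence this set is precisely $\mathbb{C}^{l+1}\setminus\mathcal{A}$, and the identification is visibly compatible with the two projections to $\mathbb{C}^{l}\setminus\mathcal{B}$, giving the required isomorphism of fibrations $p\cong \phi^{*}(f_{n})$. There is no serious obstacle here; the only point requiring a moment's care is the pairwise distinctness of the $\phi_{i}$ on $\mathbb{C}^{l}\setminus\mathcal{B}$, which is where the full force of the \emph{strict} linear fibration hypothesis (constant fibre type with exactly $n$ removed points) is used.
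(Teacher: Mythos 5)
The paper does not give a proof of this statement; it is cited directly from Cohen's paper \cite{Coh}. So there is no internal argument to compare against, and I assess your proposal on its own terms.

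Your argument is correct and is the natural one. You split $\mathcal{A}$ into vertical hyperplanes (those with $\beta_H=0$, of the form $H_0\times\mathbb{C}$) and non-vertical ones, use the constraint $p(\mathbb{C}^{l+1}\setminus\mathcal{A})=\mathbb{C}^l\setminus\mathcal{B}$ to identify the vertical ones bijectively with the $m$ members of $\mathcal{B}$ (since the complement of the image is precisely $\bigcup_{\beta_H=0}H_0$, and this must equal the union of $\mathcal{B}$, whence the two arrangements agree hyperplane by hyperplane), define $\phi_i(x)=-(L_{H_i}(x)+c_{H_i})/\beta_{H_i}$ for the remaining $n$, and observe that local triviality with constant fibre $\mathbb{C}\setminus\{n\text{-points}\}$ forces the $\phi_i$ to be pairwise distinct on all of $\mathbb{C}^l\setminus\mathcal{B}$, so that $\phi=(\phi_1,\ldots,\phi_n)$ lands in $\mathcal{F}_n(\mathbb{C})$. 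The final set-theoretic identification of the fibre product $\phi^{*}(\mathcal{F}_{n+1}(\mathbb{C}))$ with $\mathbb{C}^{l+1}\setminus\mathcal{A}$ via $(x,(c_1,\ldots,c_{n+1}))\mapsto(x,c_{n+1})$ is correct and visibly commutes with the projections, which is all that a pullback of fibre bundles requires. The one place you gloss slightly is the claim that the vertical hyperplanes number exactly $m$: this follows because $H\mapsto H_0$ is injective on vertical hyperplanes (the product decomposition $H=H_0\times\mathbb{C}$ determines $H_0$) and the two arrangements $\{H_0:\beta_H=0\}$ and $\mathcal{B}$ coincide as sets of hyperplanes, but this is a minor omission. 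Overall this is a sound reconstruction of Cohen's proof.
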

Then the result in previous section 
shows that there exists an Artin representation $\alpha\colon \pi_{1}(\mathbb{C}^{l}\backslash \mathcal{B})
\rightarrow \mathrm{Aut}(F_{n})$ and 
the fundamental group $\pi_{1}(\mathbb{C}^{l+1}\backslash \mathcal{A})$
decomposes as the semi-direct product
\[
	\pi_{1}(\mathbb{C}^{l+1}\backslash \mathcal{A})\cong F_{n}\rtimes_{\alpha}\pi_{1}(\mathbb{C}^{l}\backslash \mathcal{B}).
\]

Now we introduces fiber-type arrangements.
\begin{dfn}[Fiber-type arrangements]\normalfont
	Let $\mathcal{A}$ be a hyperplane arrangement in $\mathbb{C}^{l}$.
	Then we say that $\mathcal{A}$ is \emph{fiber-type},
	if there exists a sequence $(\mathcal{A}_{k})_{k=1,\ldots,l}$ of 
	hyperplane arrangements in $\mathbb{C}^{k}$, $k=1,\ldots,l$, such that 
	$\mathcal{A}_{l}=\mathcal{A}$ and 
	$\mathcal{A}_{k+1}$ are strictly linearly fibered 
	over $\mathcal{A}_{k}$ for $k=1,\ldots,l-1$.
\end{dfn}
The above discussion shows that 
if $\mathcal{A}$ is a fiber-type arrangement in $\mathbb{C}^{l}$, $l\ge 2$, then 
there exists a sequence of Artin representations $\alpha_{k}\colon \pi_{1}(\mathbb{C}^{k}\backslash\mathcal{A}_{k})
\rightarrow \mathrm{Aut}(F_{d_{k}})$ with some positive integers $d_{k}$, $k=1,\ldots,l$,
and we have 
\begin{align*}
	\pi_{1}(\mathbb{C}^{l}\backslash \mathcal{A})&\cong F_{d_{l-1}}\rtimes_{\alpha_{l-1}}\pi_{1}(\mathbb{C}^{l-1}\backslash \mathcal{A}_{l-1})\\
	&\cong 	F_{d_{l-1}}\rtimes_{\alpha_{l-1}}F_{d_{l-2}}\rtimes_{\alpha_{l-2}}\pi_{1}(\mathbb{C}^{l-2}\backslash \mathcal{A}_{l-2})\\
	&\cdots\\
	&\cong	F_{d_{l-1}}\rtimes_{\alpha_{l-1}}F_{d_{l-2}}\rtimes_{\alpha_{l-2}}\cdots \rtimes_{\alpha_{1}}\pi_{1}(\mathbb{C}\backslash\mathcal{A}_{1})\\
	&\cong	F_{d_{l-1}}\rtimes_{\alpha_{l-1}}F_{d_{l-2}}\rtimes_{\alpha_{l-2}}\cdots \rtimes_{\alpha_{1}}F_{d_{0}}.
\end{align*}
Here we put $d_{0}$ to be the cardinality of the set $\mathcal{A}_{1}$ of points in $\mathbb{C}$.
\begin{rem}\normalfont
The braid arrangement $\mathcal{A}^{\mathrm{br}}_{\mathbb{C}}:=\{(x_{1},\ldots,x_{l})\in \mathbb{C}^{l}
\mid \prod_{i\le i<j\le l}(x_{i}-x_{j})=0\}$
is a typical example of fiber-type arrangements,
and it is well-known that the fundamental group of the complement of $\mathcal{A}^{\mathrm{br}}_{\mathbb{C}}$
is isomorphic to the pure braid group of degree $l$.
In the paper \cite{Har2},
Haraoka defines an endfunctor of the
category of $k$-local systems on $\mathbb{C}^{l}\backslash \mathcal{A}^{\mathrm{br}}_{\mathbb{C}}$
as a generalization of the Katz middle convolution.
Then we can check that this Haraoka's middle convolution is isomorphic 
to our KLM-functor in this case. The detail will be discussed in the forthcoming paper \cite{Neg}
by the second author.
\end{rem}

\subsection{Complements of closed braids in the solid torus}
We recall that fundamental groups of complements of closed braids in the solid torus
are of the forms $F_{n}\rtimes_{\alpha} G$ with Artin representations $(G,\alpha)$.

For $\beta\in B_{n}$, we write the closure of $\beta$ in the solid torus $T:=D\times S^{1}$ by
$\hat{\beta}$. 
Then we can see that the projection $T\backslash \hat{\beta}\rightarrow S^{1}$
becomes a fiber bundle with the fiber $D\backslash \{n\text{-points}\}$.
For a point $d\in \partial D$, 
we can take a cross section 
$s_{d}\colon S^{1}\ni x\mapsto (d,x)\in T\backslash \hat{\beta}$
of the bundle, which corresponds to a longitude of $T$.
Therefore we obtain the split short exact sequence 
\[
	1\rightarrow \pi_{1}(D\backslash \{n\text{-points}\},d)\rightarrow \pi_{1}(T\backslash \hat{\beta},s_{d}(s))
	\rightarrow\pi_{1}(S^{1},s)\rightarrow 1	
\]
for a fixed $s\in S^{1}$,
and the following holds. 
\begin{thm}[see Birman \cite{Bir} for example]
Let us take $\beta\in B_{n}$ and write the corresponding closed braid in the solid torus $T:=D\times S^{1}$ by
 $\hat{\beta}$. Then 
the fundamental group of the complement has the following decomposition,
\[
	\pi_{1}(T\backslash \hat{\beta})\cong F_{n}\rtimes_{\phi_{\beta}}\mathbb{Z},
\]
where the Artin representation
$\phi_{\beta}\colon \mathbb{Z}\rightarrow \mathrm{Aut}(F_{n})$
is defined by 
$1\mapsto \theta_{\mathrm{Artin}}(\beta)$.
\end{thm}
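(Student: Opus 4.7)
The plan is to identify the projection $T\setminus\hat{\beta}\to S^{1}$ as a locally trivial fibration with fiber $D\setminus Q_{n}$ and monodromy given by $\beta$, and then read off the statement from the associated split short exact sequence of fundamental groups.

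First I would make precise the geometric model of $\hat{\beta}$. Regard $\beta$ as a geometric braid $b_{\beta}\colon [0,1]\to D\times[0,1]$ preserving $Q_{n}\subset D$ endpoint-wise, and take $\hat{\beta}\subset T=D\times S^{1}$ as the image of $b_{\beta}$ after identifying $D\times\{0\}$ with $D\times\{1\}$. The projection $p\colon T\setminus\hat{\beta}\to S^{1}$ then has fiber $p^{-1}(s)\cong D\setminus Q_{n,s}$, where $Q_{n,s}$ is the cross-section of $\hat{\beta}$ at height $s$. Local triviality is standard for fibered links: on a small arc $U\subset S^{1}$ one trivializes by following a self-homeomorphism of $D$ along $U$ that carries $Q_{n,s}$ to $Q_{n,s'}$ and is the identity on $\partial D$, using that the points of $\hat{\beta}$ never collide and stay in $\mathrm{Int}\,D$.

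Next I would use the longitude section $s_{d}\colon S^{1}\to T\setminus\hat{\beta}$, $x\mapsto(d,x)$, which is well-defined because $d\in\partial D$ lies outside $Q_{n,s}$ for every $s$. The homotopy long exact sequence collapses to the split short exact sequence
\[
1\to \pi_{1}(D\setminus Q_{n},d)\to \pi_{1}(T\setminus\hat{\beta},s_{d}(s))\to \pi_{1}(S^{1},s)\to 1,
\]
so $\pi_{1}(T\setminus\hat{\beta})\cong F_{n}\rtimes_{\phi}\mathbb{Z}$ for some homomorphism $\phi\colon\mathbb{Z}\to\mathrm{Aut}(F_{n})$ determined by the monodromy of the bundle.

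The core step is to compute $\phi(1)$. By construction, parallel transport along the generator $\omega\in\pi_{1}(S^{1},s)$ lifts to an isotopy of the fiber $(D,Q_{n})$ that realizes the geometric braid $\beta$ and is the identity on $\partial D$; the endpoint of this isotopy is therefore the mapping class $\eta(\beta)=\tau_{\beta}\in\mathfrak{M}(D,Q_{n})$. The induced action on $\pi_{1}(D\setminus Q_{n},d)$, which is precisely $\phi(1)$, is thus $(\tau_{\beta})_{*}$, and under the identifications $\mathfrak{M}(D,Q_{n})\cong B_{n}$ and $\pi_{1}(D\setminus Q_{n},d)\cong F_{n}$ recalled in Section \ref{sec:tlm}, this is exactly $\theta_{\mathrm{Artin}}(\beta)$. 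Hence $\phi=\phi_{\beta}$ as claimed.

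The main subtle point will be checking that the monodromy of the fibration $T\setminus\hat{\beta}\to S^{1}$ really is the mapping class $\eta(\beta)$, rather than some conjugate depending on the choice of base point and trivializations. To handle this I would fix the base point $d\in\partial D$ once and for all and lift the isotopy so that it is the identity on $\partial D\times S^{1}$; the uniqueness of such a lift up to isotopy fixing $\partial D$ then pins down the monodromy unambiguously as $\eta(\beta)$, so that no ambiguity remains in the identification $\phi(1)=\theta_{\mathrm{Artin}}(\beta)$.
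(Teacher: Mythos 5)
Your proposal is correct and takes essentially the same approach that the paper sketches: it sets up the projection $T\setminus\hat{\beta}\to S^{1}$ as a fiber bundle with fiber $D\setminus Q_{n}$, uses the longitude section $s_{d}$ to split the homotopy exact sequence, and then identifies the monodromy with $\theta_{\mathrm{Artin}}(\beta)$. The paper stops at writing down the split short exact sequence and cites Birman for the identification of the monodromy; you carry that last computation out explicitly, including the base-point normalization via $\partial D$, which is exactly the content being deferred to the reference.
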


\subsection{A construction of integral representations of non-rigid local systems
via rigid local systems}
It is well-known that 
rank $2$ irreducible local systems on $\mathbb{C}\backslash\{0,1\}$ are 
realized as 
solution sheaves of Gauss hypergeometric differential equations and 
they are typical examples of 
rigid local systems in the sense of Katz \cite{Katz}.
On the other hand, rank $2$
local systems on $\mathbb{C}\backslash\{0,1,\lambda\}$
for some fixed $\lambda\in \mathbb{C}\backslash\{0,1\}$
are known 
to be the first example of non-rigid local systems,
we 
call these local systems \emph{Heun local systems},
because they correspond to solution sheaves of Heun differential equations.

It might not to be able to expect that 
solutions of linear differential equations 
corresponding to non-rigid local systems 
have integral representations 
like Gauss hypergeometric functions 
whose integrands are 
expressed by some elementary functions.
However our KLM-functor 
gives us some special examples of 
non-rigid local systems whose 
corresponding differential equations may have 
integral representations of their solutions.

Let us take an group homomorphisms 
$\alpha\in \mathrm{Hom}(F_{3},B_{4})$
and choose a continuous map $\phi_{\alpha}\colon \mathbb{C}\backslash \{0,1,\lambda\}
\rightarrow B^{4}(\mathbb{C})$
as before so that 
$(\phi_{\alpha})_{*}=\alpha$.
Then $p_{\alpha}\colon X_{\alpha}\rightarrow \mathbb{C}\backslash \{0,1,\lambda\}$
be the pull-back of $b_{4}\colon B^{1,4}(\mathbb{C})\rightarrow B^{4}(\mathbb{C})$
with respect to $\phi_{\alpha}\colon \mathbb{C}\backslash \{0,1,\lambda\}
\rightarrow B^{4}(\mathbb{C})$.
Then the cross section $s_{\alpha}\colon \mathbb{C}\backslash \{0,1,\lambda\}\rightarrow X_{\alpha}$
in Theorem \ref{thm:moller}
gives us the decomposition 
\[
	\pi_{1}(X_{\alpha})\cong F_{4}\rtimes_{\alpha}\pi_{1}(\mathbb{C}\backslash\{0,1,\lambda\})
\]
as before.
Let us suppose that there exists a character 
$\chi\colon F_{4}\rtimes_{\alpha}\pi_{1}(\mathbb{C}\backslash\{0,1,\lambda\})\rightarrow \mathbb{C}^{\times}$
such that 
\[
	\chi(x_{i})=1\text{ for }i=1,4,\quad \chi(x_{i})\neq 1\text{ for }i=2,3.
\]
Then if we take $\tau\in \mathbb{C}^{\times}\backslash \{\chi(x_{1}x_{2}x_{3}x_{4})^{-1}\}$,
a direct  computation tells us that $\mathcal{KLM}_{\tau}(\chi)$
becomes a rank $2$ local system.
Therefore the pull-back local system 
$s_{\alpha}^{*}(\mathcal{KLM}_{\tau}(\chi))$ 
with respect to the cross section
$s_{\alpha}\colon \mathbb{C}\backslash \{0,1,\lambda\}\rightarrow X_{\alpha}$
gives a Heun local system.
Since $\mathcal{KLM}_{\tau}(\chi)$ can be seen as a subspace 
of the Borel-Moore homology group
with rank $1$ local system as the coefficients,
we may conclude that 
$\mathcal{KLM}_{\tau}(\chi)$ gives an 
integral representation 
of this special Heun local system
whose integrand is the rank $1$ local system.

\end{document}